\definecolor{dklimtblue}{RGB}{20,25,158}
\newcommand*{\ddC}{\dot{C}}
\newcommand*{\ddF}{\dot{F}}
\newcommand*{\ddX}{\dot{X}}
\newcommand*{\ddY}{\dot{Y}}
\newcommand*{\ddZ}{\dot{Z}}
\newcommand*{\dda}{\dot{a}}
\newcommand*{\ddc}{\dot{c}}
\newcommand*{\ddf}{\dot{f}}
\newcommand*{\ddg}{\dot{g}}
\newcommand*{\ddh}{\dot{h}}
\newcommand*{\ddq}{\dot{q}}
\newcommand*{\ddu}{\dot{u}}
\newcommand*{\ddx}{\dot{x}}
\newcommand*{\ddy}{\dot{y}}
\newcommand*{\calA}{\mathcal{A}}
\newcommand*{\calC}{\mathcal{C}}
\newcommand*{\calD}{\mathcal{D}}
\newcommand*{\calE}{\mathcal{E}}
\newcommand*{\calI}{\mathcal{I}}
\newcommand*{\calM}{\mathcal{M}}
\newcommand*{\AC}{\mathsf{AC}}
\newcommand*{\AL}{\mathsf{EC}}
\newcommand*{\DC}{\mathsf{DC}}
\newcommand*{\EDC}{\mathsf{EDC}}
\newcommand*{\GCH}{\mathsf{GCH}}
\newcommand*{\PP}{\mathsf{PP}}
\newcommand*{\SVC}{\mathsf{SVC}}
\newcommand*{\WO}{\mathsf{WO}}
\newcommand*{\ZF}{\mathsf{ZF}}
\newcommand*{\ZFA}{\mathsf{ZFA}}
\newcommand*{\ZFC}{\mathsf{ZFC}}
\newcommand*{\frakJ}{\mathfrak{J}}
\newcommand*{\fraka}{\mathfrak{a}}
\newcommand*{\frakb}{\mathfrak{b}}
\newcommand*{\power}{\mathscr{P}}
\newcommand*{\sB}{\mathscr{B}}
\newcommand*{\sF}{\mathscr{F}}
\newcommand*{\sG}{\mathscr{G}}
\newcommand*{\sS}{\mathscr{S}}
\newcommand*{\sT}{\mathscr{T}}
\DeclareMathOperator{\Add}{Add}
\DeclareMathOperator{\Aut}{Aut}
\DeclareMathOperator{\Card}{Card}
\DeclareMathOperator{\FRESH}{FRESH}
\DeclareMathOperator{\HOD}{HOD}
\DeclareMathOperator{\Ord}{Ord}
\DeclareMathOperator{\Spec}{Spec}
\DeclareMathOperator{\cf}{cf}
\DeclareMathOperator{\dom}{dom}
\DeclareMathOperator{\hgt}{ht}
\DeclareMathOperator{\id}{id}
\DeclareMathOperator{\ot}{ot}
\DeclareMathOperator{\rnk}{rk}
\DeclareMathOperator{\supp}{supp}
\DeclareMathOperator{\sym}{sym}
\newcommand*{\bbB}{\mathbb{B}}
\newcommand*{\bbP}{\mathbb{P}}
\newcommand*{\bbQ}{\mathbb{Q}}
\newcommand*{\bbR}{\mathbb{R}}
\newcommand*{\bbZ}{\mathbb{Z}}
\newcommand*{\ddbbQ}{\dot{\bbQ}}
\newcommand*{\catSet}{\text{\textup{\textbf{Set}}}}
\newcommand{\1}{\mathds{1}}
\newcommand{\0}{\mathds{O}}
\DeclarePairedDelimiterX{\Set}[1]{\{}{\}}{%
	\renewcommand\mid{\SetSymbol[\delimsize]}
	#1
}
\newcommand{\SetSymbol}[1][]{%
\mathclose{}\nonscript\;#1|\nonscript\;\mathopen{}
}
\newcommand{\abs}[1]{\mathchoice
	{\left\lvert#1\right\rvert}
	{\lvert#1\rvert}
	{\lvert#1\rvert}
	{\lvert#1\rvert}
}
\newcommand{\parenth}[1]{\left(#1\right)}
\newcommand{\tup}[1]{\langle#1\rangle}
\newcommand*{\forces}{\mathrel{\Vdash}}
\newcommand*{\iter}{\mathbin{\ast}}
\newcommand*{\Inj}[2]{#2^{\underline{\smash{#1}}}} 
\newcommand*{\Surj}[2]{#1_{\overline{#2}}}
\newcommand{\res}{\nobreak\mskip2mu\mathpunct{}\nonscript
  \mkern-\thinmuskip{\upharpoonright}\mskip6muplus1mu\relax} 
\newcommand{\comp}{\mathrel{\|}}
\newcommand{\concat}{\mathbin{\raisebox{.9ex}{\scalebox{.7}{\(\frown\)}}}}
\newcommand*{\defeq}{\mathrel{\vcenter{\baselineskip0.5ex \lineskiplimit0pt
                     \hbox{\scriptsize.}\hbox{\scriptsize.}}}%
                     =}
\newcommand{\lomega}{{{<}\omega}}
\newcommand{\lra}{\longleftrightarrow}
\newcommand{\varep}{\varepsilon}
\newcommand{\vphi}{\varphi}
\setlist[enumerate,1]{label=\textup{\arabic*.},ref=\textup{(\arabic*)}}
\setlist[enumerate,2]{label=\textup{(\roman*)},ref=\textup{(\roman*)}}
\newtheoremstyle{boldrk}
  {}{}
  {}{}
  {\bfseries}{.}
  {5pt plus 1pt minus 1pt}{}
\theoremstyle{plain}
\newtheorem{thm}{Theorem}[section]
\newtheorem{claim}{Claim}[thm]
\newtheorem{cor}[thm]{Corollary}
\newtheorem{lem}[thm]{Lemma}
\newtheorem{prop}[thm]{Proposition}
\newtheorem*{fact}{Fact}
\theoremstyle{boldrk}
\newtheorem{qn}[thm]{Question}
\newtheorem*{qnast}{Question}
\theoremstyle{definition}
\newtheorem{defn}[thm]{Definition}
\newenvironment*{poc}[1][Proof of Claim]{\begin{proof}[#1]}{\end{proof}}
\Crefname{thm}{Theorem}{Theorems}
\Crefname{claim}{Claim}{Claims}
\Crefname{cor}{Corollary}{Corollaries}
\Crefname{lem}{Lemma}{Lemmas}
\Crefname{prop}{Proposition}{Propositions}
\Crefname{fact}{Fact}{Facts}
\Crefname{thmalph}{Theorem}{Theorems}
\Crefname{qn}{Question}{Questions}
\Crefname{defn}{Definition}{Definitions}
\Crefname{conj}{Conjecture}{Conjectures}
\title[Eccentricity, extendable choice and descending distributivity]{Eccentricity, extendable choice and\\descending distributive forcing}
\author{Calliope Ryan-Smith}
\email{c.Ryan-Smith@leeds.ac.uk}
\urladdr{https://academic.calliope.mx}
\address{School of Mathematics, University of Leeds, LS2 9JT, UK}
\date{13th June 2025}
\keywords{Axiom of choice, small violations of choice, symmetric extension, Hartogs number, Lindenbaum number, extendable choice, descending distributive forcing}
\thanks{The author's work was financially supported by EPSRC via the Mathematical Sciences Doctoral Training Partnership [EP/W523860/1]. For the purpose of open access, the author has applied a Creative Commons Attribution (CC BY) licence to any Author Accepted Manuscript version arising from this submission. No data are associated with this article. All translations are by the author and intended to be faithful to the mathematical content, rather than literal.}
\subjclass[2020]{Primary: 03E25; Secondary: 03E10, 03E35, 03E40}
\begin{document}

\begin{abstract}
We introduce the forcing property of \emph{descending distributivity}. A forcing \(\bbP\) is \emph{\(\kappa\)-descending distributive} if for all decreasing sequences \((D_\alpha)_{\alpha < \kappa}\) of open dense sets, \(\bigcap_\alpha D_\alpha\) is open dense. This generalises the informal idea \(\bbP\) doesn't affect much on the scale of \(\kappa\), such as if \(\bbP\) is \(\kappa\)-distributive or if \(\kappa > \abs{\bbP}\). For example, a \(\kappa\)-descending distributive forcing will not change the cofinality of \(\kappa\) or introduce fresh functions on \(\kappa\).

Using this, we investigate the phenomena of eccentric sets, those sets \(X\) such that, for some ordinal \(\alpha\), \(X\) surjects onto \(\alpha\), but \(\alpha\) does not inject into \(X\). We refine prior works of the author by giving explicit calculations for the Hartogs and Lindenbaum numbers in eccentric constructions and providing a sharper description of the Hartogs--Lindenbaum spectra of models of small violations of choice. To do so we further develop an axiom (scheme) introduced by Levy that we call the axiom of \emph{extendable choice}. For an ordinal \(\alpha\), \(\AL_\alpha\) asserts that if \(\emptyset\notin A=\Set{A_\gamma \mid \gamma < \alpha}\) and, for all \(\beta<\alpha\), \(\Set{A_\gamma \mid \gamma < \beta}\) has a choice function, then \(A\) has a choice function. This is closely tied to the presence of eccentric sets, and we construct symmetric extensions that give fine control over the \(\alpha\) for which \(\AL_\alpha\) holds by using descending distributivity.
\end{abstract}

\maketitle

\tableofcontents

\section{Introduction}\label{s:introduction}

The Cantor--Bernstein theorem (\cref{thm:zweidrittelsatz}) is a fascinating and fundamental theorem in cardinal arithmetic, proving right a highly desirable intuition for cardinal numbers. While being an immediate consequence of the well-ordering theorem in \(\ZFC\), the fact that \cref{thm:zweidrittelsatz} remains true even in \(\ZF\) is a great relief compared to the wild behaviour that choiceless models can exhibit.\footnote{Gitik's model from \cite{gitik_uncountable_1980} in which all uncountable cardinals are singular is a great example of an extremely wild model. Not only does \(\AC\) fail (dramatically), one cannot recover \(\AC\) even with class forcing, or by any means short of adding new ordinals to the universe.}

\begin{thm}\label{thm:zweidrittelsatz}
If \(\abs{M} \leq \abs{N}\) (that is, there is an injection \(M \to N\)) and \(\abs{N} \leq \abs{M}\) then \(\abs{M} = \abs{N}\).
\end{thm}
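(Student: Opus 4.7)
The plan is to execute the classical Dedekind--K\"onig construction, which from two injections $f \colon M \to N$ and $g \colon N \to M$ produces an explicit bijection $M \to N$ using only $\omega$-recursion on definable operations. Since the theorem is asserted in $\ZF$, the main thing to keep an eye on is that no form of choice sneaks in; beyond that, the argument is entirely constructive.

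First I would isolate the subset $A \subseteq M$ on which the bijection should act as $f$, taking $A$ to be the smallest subset of $M$ that contains $M \setminus g[N]$ and is closed under $g \circ f$. Concretely, I would set $A_0 = M \setminus g[N]$, $A_{n+1} = (g \circ f)[A_n]$, and $A = \bigcup_{n<\omega} A_n$. By construction every element of $M \setminus A$ lies in $g[N]$, so $g^{-1}$ is defined on $M \setminus A$. I would then define $h \colon M \to N$ by $h(x) = f(x)$ for $x \in A$ and $h(x) = g^{-1}(x)$ for $x \in M \setminus A$, and verify that $h$ is a bijection: injectivity reduces to noting that an equality $f(x) = g^{-1}(y)$ with $x \in A$ and $y \in M \setminus A$ would force $y = (g \circ f)(x) \in A$, while surjectivity reduces to noting that if $y \in N \setminus f[A]$ then $g(y) \notin A$ (else $g(y) \in A_{n+1}$ for some $n$, forcing $y \in f[A_n]$ by injectivity of $g$).

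I do not anticipate a genuine obstacle; the construction is standard. The only point worth emphasising is that the recursion defining $A$ takes place on $\omega$ with a uniformly definable successor step, so it is carried out entirely in $\ZF$ without invoking countable choice. This is precisely the content of the footnote flanking the theorem statement, and it is what distinguishes the proof from, say, the one-line derivation available under the well-ordering theorem.
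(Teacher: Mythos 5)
Your proof is correct. Note, however, that the paper offers no proof of this theorem at all: it is stated in the introduction as a classical result (with a footnote pointing to the historical literature), and the surrounding discussion is historical rather than mathematical. Your argument is the standard K\"onig-style decomposition, and it is executed correctly: the only case your sketch passes over silently is that $g(y) \notin A_0$ for $y \in N$, which holds trivially since $A_0 = M \setminus g[N]$ while $g(y) \in g[N]$. Your emphasis on the fact that the $\omega$-recursion has a uniformly definable successor step, so that no choice is invoked, is exactly the point the paper is making by asserting the theorem in $\ZF$.
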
 

While \cref{thm:zweidrittelsatz} was originally stated in \cite{cantor_mitteilungen_1887} without proof, Cantor would later prove the statement as a corollary of the claim that cardinal numbers are linearly ordered, about which he said ``only later, when we better understand transfinite cardinal numbers, will the truth become clear: for all cardinal numbers \(\fraka\) and \(\frakb\), either \(\fraka = \frakb\), \(\fraka < \frakb\), or \(\fraka > \frakb\)'' \cite[p.~285]{cantor_gesammelte_1932} (reprinting \cite{cantor_beitrage_i_1895,cantor_beitrage_ii_1897}).\footnote{Note that Cantor did \emph{not} assert that this claim is obvious. Indeed, just prior he had written ``it is \emph{by no means self evident} [emphasis added] \ldots{} one of these relations should hold'' \cite[p.~285]{cantor_gesammelte_1932}.} Cantor ultimately did not supply any \(\ZF\)-viable argument that the cardinal numbers could be linearly ordered in this way and, in fact, this strategy was doomed to fail.

In \cite{hartogs_uber_1915} Hartogs supplies us with an elegant argument. Suppose that any two cardinals are comparable, so for all sets \(M\) and \(N\), \(\abs{M}=\abs{N}\), \(\abs{N}>\abs{M}\), or \(\abs{M} < \abs{N}\). Now suppose that, given some set \(M\), there is an \emph{ordinal} \(\alpha\) such that \(\abs{\alpha}\nless\abs{M}\). By our assumption we have that \(\abs{M}\leq\abs{\alpha}\) instead and so \(M\) can be well-ordered.

\begin{thm}[Hartogs]\label{thm:hartogs-lemma}
``For all sets \(M\) there is a well-orderable set \(L\) such that \(\abs{L}\nless\abs{M}\)'' \textup{\cite[p.~442]{hartogs_uber_1915}.}\footnote{A proof in English of \cref{thm:hartogs-lemma} can be found in \cite[Theorem~8.18]{goldrei_classic_1996}.}
\end{thm}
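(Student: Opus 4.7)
The plan is to construct a specific well-orderable set---the \emph{Hartogs number} of \(M\), denoted \(\aleph(M)\)---and to show the strictly stronger conclusion that no injection \(\aleph(M) \hookrightarrow M\) exists at all, which clearly implies \(\abs{\aleph(M)} \nless \abs{M}\). I would define \(\aleph(M)\) to be the collection of all ordinals \(\alpha\) such that \(\alpha\) injects into \(M\), and then carry out three steps: show \(\aleph(M)\) is a set, show it is itself an ordinal, and show it does not inject into \(M\).

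First I would verify that \(\aleph(M)\) is a set rather than a proper class: every ordinal injecting into \(M\) arises as the order type of some well-ordering on a subset of \(M\), so applying Replacement to the set of well-orderings in \(\power(M \times M)\) produces \(\aleph(M)\) as a set. Next I would observe that \(\aleph(M)\) is a transitive set of ordinals: if \(\alpha \in \aleph(M)\) and \(\beta < \alpha\), then the restriction to \(\beta\) of any injection \(\alpha \hookrightarrow M\) witnesses \(\beta \in \aleph(M)\); hence \(\aleph(M)\) is itself an ordinal. Finally I would argue that \(\aleph(M)\) cannot inject into \(M\), for otherwise \(\aleph(M)\) would be an ordinal injecting into \(M\), yielding \(\aleph(M) \in \aleph(M)\), contrary to Foundation. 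Setting \(L = \aleph(M)\) then gives a well-orderable set with \(\abs{L} \not\leq \abs{M}\), and a fortiori \(\abs{L} \nless \abs{M}\).

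The conceptual obstacle is the first step: the class of ordinals is proper, so it is not a priori obvious that only set-many of them can inject into a fixed set \(M\). Once this bound via \(\power(M \times M)\) is in hand, transitivity is essentially formal, and the diagonal step using Foundation closes the argument cleanly.
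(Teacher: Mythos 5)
Your proposal is correct and is the standard argument; the paper itself offers no proof of this statement (it is quoted from Hartogs with a pointer to Goldrei's textbook), but your construction is exactly the one implicitly underlying the paper's later definition of the Hartogs number \(\aleph(M)\) as the least ordinal not injecting into \(M\). The only cosmetic remark is that the final diagonal step needs only the irreflexivity of \(\in\) on ordinals rather than Foundation, but the argument is sound either way.
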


\begin{cor}[Hartogs]\label{cor:hartogs-theorem}
``Each of the three principles, firstly the axiom of choice, secondly the comparability of sets and thirdly the well-ordering principle must be considered equivalent'' \textup{\cite[p.~438]{hartogs_uber_1915}.}
\end{cor}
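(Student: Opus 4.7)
The plan is to prove the equivalence cyclically: \(\AC\Rightarrow\WO\Rightarrow\text{comparability}\Rightarrow\WO\Rightarrow\AC\), so that really I must establish \(\AC\Leftrightarrow\WO\) (both directions being standard) and then close the loop by showing that comparability of cardinals implies the well-ordering principle, which is the only genuinely substantive step and where Hartogs' lemma will do the work.

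For \(\AC\Rightarrow\WO\) I would reproduce Zermelo's argument: given a set \(M\) and a choice function \(f\colon\power(M)\setminus\Set{\emptyset}\to M\), recursively define \(x_\alpha = f(M\setminus\Set{x_\beta\mid\beta<\alpha})\) for as long as the argument is nonempty. Since \(M\) is a set, the recursion must terminate at some ordinal, producing a bijection between \(M\) and that ordinal. Conversely, \(\WO\Rightarrow\AC\) is immediate: given any family \(\Set{A_i\mid i\in I}\) of nonempty sets, well-order \(\bigcup_i A_i\) and let the choice function send \(i\) to the least element of \(A_i\) in that well-order. Likewise, \(\WO\Rightarrow\) comparability is trivial: well-order both \(M\) and \(N\), obtaining bijections with ordinals, and then use that any two ordinals are comparable.

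The heart of the matter is comparability \(\Rightarrow\WO\), and here I would deploy \cref{thm:hartogs-lemma} directly as sketched in the prose preceding the corollary. Fix a set \(M\) and apply \cref{thm:hartogs-lemma} to obtain a well-orderable \(L\) with \(\abs{L}\nless\abs{M}\). By the assumption of comparability, the only remaining possibility is \(\abs{M}\leq\abs{L}\), i.e.\ there is an injection \(M\hookrightarrow L\). Since \(L\) is well-orderable, so is any subset of \(L\), and hence the image of this injection is well-orderable; pulling the well-order back along the injection well-orders \(M\).

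I do not anticipate any real obstacle: the only nontrivial implication relies on \cref{thm:hartogs-lemma}, which is already in hand, and the other implications are essentially bookkeeping. The one mildly subtle point is to be careful in the comparability step that the trichotomy is being used in its correct \(\ZF\)-form (namely that at least one of \(\abs{M}\leq\abs{L}\), \(\abs{L}<\abs{M}\) holds), so that the exclusion \(\abs{L}\nless\abs{M}\) provided by Hartogs genuinely forces \(\abs{M}\leq\abs{L}\) rather than leaving open an incomparability gap.
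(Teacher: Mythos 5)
Your proposal is correct and follows essentially the same route as the paper, which sketches only the substantive direction (comparability implies the well-ordering principle via \cref{thm:hartogs-lemma}) in the prose preceding the corollary and leaves the standard equivalences between \(\AC\) and \(\WO\) implicit. Your care in checking that \(\abs{L}\nless\abs{M}\) together with trichotomy yields \(\abs{M}\leq\abs{L}\) is exactly the point the paper's sketch relies on.
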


Doomed indeed. It is perhaps ironic that the technology used for Hartogs's proof, this well-orderable set \(L\), has become the more important object. We have even gone so far as to give \cref{thm:hartogs-lemma} (often called \emph{Hartogs's lemma}) the mantle of \emph{theorem} while \cref{cor:hartogs-theorem} is relegated to \emph{corollary}.\footnote{Compare Ramsey's publication that introduces Ramsey's theorem, undersold somewhat with the simple abstract ``this paper is primarily concerned with a special case of one of the leading problems of mathematical logic . \ldots{} In the course of this investigation it is necessary to use certain theorems on combinations which have an independent interest'' \cite[p.~264]{ramsey_problem_1929}.} We now refer to the least ordinal \(\alpha\) such that a given set \(M\) admits no injection \(\alpha\to M\) as the \emph{Hartogs number} of \(M\), denoted \(\aleph(M)\).\footnote{\(H(M)\) also sees use, as do other notations in all likelihood.} It is straightforward to conclude that \(\aleph(M)\) must also be a cardinal, in the sense that \(\abs{\aleph(M)}\neq\abs{\beta}\) for any \(\beta<\aleph(M)\).

After this, Lindenbaum and Tarski released \emph{Communication sur les recherches de la th\'eorie des ensembles} \cite{lindenbaum_communication_1926}, an announcement of myriad results in set theory that one author or the other had proved and presented at various meetings. Included within was not only much discussion of the relation \(\leq\), but also its dual \(\leq^\ast\) (denoted \({\leq}\ast\) in \cite{lindenbaum_communication_1926}). We say that \(\abs{M}\leq^\ast\abs{N}\) if there is a surjection \(N \to M\) or if \(M=\emptyset\). One result in particular is very familiar:

\begin{thm}[Lindenbaum]
``The axiom of choice is equivalent to the statement that . \ldots{} for all \(M\) and \(N\), \(\abs{M}\leq^\ast\abs{N}\) or \(\abs{N}\leq^\ast\abs{M}\)'' \textup{\cite[Th\'eor\`eme~82.A\({}_6\), pp.~185--186]{lindenbaum_communication_1926}.}
\end{thm}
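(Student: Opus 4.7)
The plan is a dual Hartogs-style argument. The forward direction is routine: given $\AC$, every set is well-orderable, so in particular any two cardinals are comparable under $\leq$, and an injection $M\to N$ yields a surjection $N\to M$ whenever $M\neq\emptyset$ (retract onto any distinguished element of $M$); the empty case is covered by the clause in the definition of $\leq^\ast$.

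For the converse, assume any two cardinals are $\leq^\ast$-comparable and fix an arbitrary set $M$; by \cref{cor:hartogs-theorem} it suffices to well-order $M$. Mirroring Hartogs's construction, define the \emph{Lindenbaum number}
\[ \aleph^\ast(M) \defeq \min\Set{\alpha\in\Ord \mid \text{there is no surjection from } M \text{ onto } \alpha}. \]
This exists because the class of ordinals onto which $M$ surjects is a set: each such ordinal is the order type of some well-ordered quotient $M/{\sim}$, and the pairs $({\sim},<)$ with ${\sim}\in\power(M\times M)$ an equivalence relation and $<$ a well-ordering of $M/{\sim}$ form a set, so Replacement bounds the associated ordinals. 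Setting $\alpha\defeq\aleph^\ast(M)$ and applying the hypothesis, one of $\abs{M}\leq^\ast\abs{\alpha}$, $\abs{\alpha}\leq^\ast\abs{M}$ must hold; the latter is impossible by the choice of $\alpha$, so there is a surjection $f\colon\alpha\to M$. One then well-orders $M$ by declaring $m\prec m'$ iff $\min f^{-1}(m)<\min f^{-1}(m')$; no further choice is needed because the minima are extracted from the well-ordered $\alpha$.

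The only genuine subtlety is the existence of $\aleph^\ast(M)$. Unlike injections into $M$, whose domains sit inside $\power(M)$, surjections out of $M$ land \emph{a priori} in a proper class of ordinals, and bounding the permissible targets requires the Replacement-based parameterisation above. Once this is in hand, the proof runs in exact parallel with Hartogs's argument for \cref{cor:hartogs-theorem}, with surjections in place of injections and $\aleph^\ast$ in place of $\aleph$.
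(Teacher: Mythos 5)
Your argument is correct, and its overall architecture is exactly the one the paper gestures at: this theorem is quoted without proof, and the surrounding discussion indicates that the intended argument is the dual of Hartogs's, namely that the Lindenbaum number \(\aleph^\ast(M)\) exists, so that \(\leq^\ast\)-comparability of \(M\) with \(\aleph^\ast(M)\) forces a surjection \(\aleph^\ast(M)\to M\) and hence a well-ordering of \(M\). The one place you genuinely diverge is the key lemma, the existence of \(\aleph^\ast(M)\): the paper's suggested route is Tarski's theorem that \(\abs{M}\leq^\ast\abs{N}\) implies \(\abs{\power(M)}\leq\abs{\power(N)}\), combined with \(\abs{\alpha}\leq\abs{\power(\alpha)}\), which reduces the claim to Hartogs's lemma and yields the explicit bound \(\aleph^\ast(M)\leq\aleph(\power(M))\); you instead run a direct Replacement argument over well-ordered quotients of \(M\) (essentially Sierpi\'nski's published proof). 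Both are valid; the power-set route buys an explicit bound and reuses \cref{thm:hartogs-lemma}, while yours is self-contained and makes transparent exactly where Replacement enters. One pedantic remark: with your formulation \(\min\Set{\alpha\mid\text{no surjection }M\to\alpha}\), a nonempty \(M\) admits no surjection onto \(0\) at all, so you should either exclude \(\alpha=0\) or adopt the paper's convention that \(\abs{X}\leq^\ast\abs{Y}\) holds vacuously when \(X=\emptyset\); this does not affect the argument.
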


While this theorem is supplied without proof, the rest of the paper gives the reader enough direction to realise what has happened. Lindenbaum has shown that, for all \(M\), there is an ordinal \(\alpha\) such that \(\abs{\alpha}\nleq^\ast\abs{M}\). For a minimal proof, one could refer to the following theorem of Tarski: ``if \(\abs{M}\leq^\ast\abs{N}\) then \(\abs{\power(M)}\leq\abs{\power(N)}\)'' \cite[Th\'eor\`eme~4, p.~175]{lindenbaum_communication_1926}, followed by observing that \(\abs{M}\leq\abs{\power(M)}\).\footnote{The first explicit published proof of the existence of Lindenbaum numbers is found in \cite{sierpinski_sur-lindenbaum_1947}.} We refer to the least such ordinal as the \emph{Lindenbaum number} of \(M\), denoted \(\aleph^\ast(M)\). As with the Hartogs number, \(\aleph^\ast(M)\) will also be a cardinal. Furthermore, because \(\abs{M}\leq\abs{N}\implies\abs{M}\leq^\ast\abs{N}\), we also have that \(\aleph(M)\leq\aleph^\ast(M)\).

This all becomes trivial if one assumes the axiom of choice since for all \(M\) there is a least ordinal in bijection with \(M\) (denoted \(\abs{M}\)) and, in this case, \(\aleph(M)\) and \(\aleph^\ast(M)\) are both equal to \(\abs{M}^+\). Thus the Hartogs and Lindenbaum numbers tell us only about the cardinality of the set itself. On the other hand, it seems as though in many models of \(\ZF+\lnot\AC\) the statement \((\forall M)\aleph(M)=\aleph^\ast(M)\) (`\(\aleph=\aleph^\ast\)', to be pithy) fails. For example, in Cohen's first model of \(\lnot\AC\) from \cite{cohen_independence_1963}, there is an infinite set \(A\) of real numbers such that \(\aleph(A)=\aleph_0\) but \(\aleph^\ast(A)=\aleph_1\). A natural question arises:

\begin{qnast}
Does \(\aleph=\aleph^\ast\) imply the axiom of choice?
\end{qnast}

This was answered by a lemma in \cite{pelc_weak_1978} that the author attributes to Pincus.

\begin{thm}[{Pincus \cite{pelc_weak_1978}}]\label{thm:pincus}
\(\aleph=\aleph^\ast\) is equivalent to \(\AC_\WO\), the axiom of choice for well-orderable families.\footnote{In fact, the statement of the lemma in \cite{pelc_weak_1978} is much weaker than \cref{thm:pincus}. The original statement is more accurately presented as `the partition principle implies \(\AC_\WO\)'. However, a quick observation notes that the proof only uses that Hartogs and Lindenbaum numbers are the same, not the full strength of the partition principle. The reverse direction is very straightforward, so \cref{thm:pincus} is attributed entirely to Pincus.}
\end{thm}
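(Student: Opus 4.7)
The proof splits into two directions.

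For $\AC_\WO \Rightarrow \aleph = \aleph^\ast$: the inequality $\aleph(M) \leq \aleph^\ast(M)$ holds in $\ZF$, since any injection $\alpha \hookrightarrow M$ induces a surjection $M \twoheadrightarrow \alpha$ (by collapsing the complement of the image to $0$, when $\alpha > 0$; the case $\alpha = 0$ is trivial). For the reverse, if $\alpha < \aleph^\ast(M)$ is witnessed by a surjection $f\colon M \to \alpha$, then the fibres $(f^{-1}[\{\beta\}])_{\beta<\alpha}$ form a well-orderable family of non-empty sets, and $\AC_\WO$ supplies a right inverse $g\colon \alpha \to M$ to $f$, which is automatically injective. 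Hence $\alpha < \aleph(M)$, giving $\aleph^\ast(M) \leq \aleph(M)$.

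For $\aleph = \aleph^\ast \Rightarrow \AC_\WO$: let $(A_\gamma)_{\gamma<\kappa}$ be a well-orderable family of non-empty sets, which we may take to be pairwise disjoint by replacing each $A_\gamma$ with $A_\gamma \times \{\gamma\}$. Setting $X = \bigcup_\gamma A_\gamma$, the map sending each $x \in X$ to the unique $\gamma$ with $x \in A_\gamma$ is a surjection $X \twoheadrightarrow \kappa$, so $\aleph^\ast(X) > \kappa$, and the hypothesis furnishes an injection $\kappa \hookrightarrow X$. The fundamental obstacle now is that this injection need not be a choice function: it may miss most of the $A_\gamma$ entirely, or concentrate in a single one.

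To extract an actual choice function, I would argue by contradiction. Suppose no full choice function exists and let $\lambda \leq \kappa$ be minimal such that $(A_\gamma)_{\gamma<\lambda}$ admits none; since any partial choice function extends by picking some element of the next $A_\gamma$, $\lambda$ must be a limit, and every proper initial segment $(A_\gamma)_{\gamma<\beta}$ with $\beta < \lambda$ has a choice function. Consequently the set $T = \bigcup_{\beta<\lambda} \prod_{\gamma<\beta} A_\gamma$ of partial choice functions surjects onto $\lambda$ via $\dom$, and by hypothesis applied to $T$ there is an injection $h\colon \lambda \hookrightarrow T$. If one can show that the domains $\dom h(\alpha)$ are unbounded in $\lambda$, then setting $\alpha^\ast_\gamma$ to be the least $\alpha$ with $\gamma \in \dom h(\alpha)$ gives a full choice function $\gamma \mapsto h(\alpha^\ast_\gamma)(\gamma)$, contradicting minimality of $\lambda$. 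Establishing this unboundedness is the principal technical hurdle: nothing in the abstract $\aleph = \aleph^\ast$ hypothesis forbids $h$ from collapsing into a bounded slice $\bigcup_{\beta \leq \mu} \prod_{\gamma<\beta} A_\gamma$ with $\mu < \lambda$, which can easily have cardinality $\geq \lambda$ if some $A_\gamma$ is large. Overcoming this almost certainly requires applying $\aleph = \aleph^\ast$ to an auxiliary set richer than $T$, chosen so that any injection from $\lambda$ must hit unbounded layers—this is the step I expect to be most delicate and is the content that the excerpt attributes to Pincus.
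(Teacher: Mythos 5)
Your first direction is fine, and your reduction in the second direction---pass to the least $\lambda$ for which the family has no choice function, note that $\lambda$ is a limit and every proper initial segment has a choice function, form the set $T=\bigcup_{\beta<\lambda}C_\beta$ of partial choice functions (where $C_\beta$ is the set of choice functions for the first $\beta$ sets), observe that the domain map surjects $T$ onto $\lambda$, and win if an injection of $\lambda$ into $T$ meets unboundedly many levels---is exactly the right skeleton. But the obstacle you flag is a genuine gap, and it is the entire content of the theorem: nothing prevents an injection $\lambda\to T$ from landing inside $\bigcup_{\gamma\leq\beta_0}C_\gamma$ for a single $\beta_0<\lambda$, since one level $C_\gamma$ may already have huge Hartogs number. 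The paper does not reprove this result (it is cited to Pelc/Pincus), but it describes the missing idea in the paragraph immediately following the statement and implements the identical mechanism in \cref{thm:going-up-regular}: one must inflate each level by a well-ordered factor chosen large enough to dominate the Hartogs numbers of all the initial slices, and aim the injection at an ordinal far larger than $\lambda$, so that the ``bounded slice'' case is outright impossible rather than merely inconvenient.

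Concretely: choose a strictly increasing sequence of infinite cardinals $\tup{\alpha_\beta\mid\beta<\lambda}$ with $\alpha_\beta\geq\aleph\parenth{\bigcup_{\gamma\leq\beta}C_\gamma}$ for each $\beta$, set $\alpha=\sup_{\beta<\lambda}\alpha_\beta$ and $Y=\bigcup_{\beta<\lambda}C_\beta\times\alpha_\beta$. Projection to the second co-ordinate is a surjection $Y\to\alpha$, so $\aleph^\ast(Y)>\alpha$ and the hypothesis $\aleph=\aleph^\ast$ yields an injection $g\colon\alpha\to Y$. If the image of $g$ met only levels with $\beta\leq\beta_0$, then $g$ would inject $\alpha$ into $\parenth{\bigcup_{\gamma\leq\beta_0}C_\gamma}\times\alpha_{\beta_0}$, whose Hartogs number is at most $\alpha_{\beta_0}\times\alpha_{\beta_0}^{+}=\alpha_{\beta_0}^{+}<\alpha$ by the productivity of Hartogs numbers---a contradiction. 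Hence $g$ meets $C_\beta\times\alpha_\beta$ for unboundedly many $\beta<\lambda$, and your own extraction (send $\gamma$ to $c_\xi(\gamma)$, where $\xi$ is least such that $g(\xi)$ lies in a level $C_\beta\times\alpha_\beta$ with $\beta>\gamma$ and $c_\xi$ is the first co-ordinate of $g(\xi)$) produces a choice function for the whole family, contradicting the minimality of $\lambda$. So the missing step is not a refinement of your set $T$ but a replacement of the target ordinal: the injection must be forced to be of length $\sup_\beta\alpha_\beta$ rather than $\lambda$, which is exactly the lifting construction the paper refines in \cref{s:hartlin;ss:going-up}.
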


As \(\AC_\WO\) does not imply \(\AC\), also proved in \(\ZF\) by a construction of Pincus \cite{pincus_individuals_1969}, we obtain that \(\aleph=\aleph^\ast\) is in fact weaker than \(\AC\).

We find \cref{thm:pincus} to be a very compelling result as it relates to the partition problem. The \emph{partition principle} \(\PP\) is the statement `for all \(X,Y\), if \(\abs{X}\leq^\ast\abs{Y}\) then \(\abs{X}\leq\abs{Y}\),' and the \emph{partition problem} is the question `does \(\PP\) imply \(\AC\)?' While this is still open, \cref{thm:pincus} demonstrates that `\(\PP\) for well-ordered sets' implies `\(\AC\) for well-ordered sets.' To be more precise, the statement `for all \(X,Y\), if \(\abs{X}\leq^\ast\abs{Y}\) \emph{and at least one of \(X\) or \(Y\) can be well-ordered} then \(\abs{X}\leq\abs{Y}\)' implies the statement `for all \(X\), if \(\emptyset\notin X\) \emph{and \(X\) can be well-ordered}, then \(X\) has a choice function.'

The key mechanism behind \cref{thm:pincus} is a method of taking a well-ordered set \(A\) and extrapolating from this a new set \(X\) such that, for some large enough ordinal \(\alpha\), an injection \(\alpha\to X\) could be translated into a choice function for \(A\). \(X\) is further constructed so that \(\aleph^\ast(X)>\alpha\) and thus, assuming \(\aleph(X)=\aleph^\ast(X)\), one obtains the injection \(\alpha\to X\) as desired. By refining this construction, Ryan-Smith \cite{ryan-smith_acwo_2024} (author) presented this technique as a way of transforming \emph{eccentric sets}---those sets \(X\) with \(\aleph(X)<\aleph^\ast(X)\)---into eccentric sets of larger Hartogs and Lindenbaum numbers. As a corollary, they presented several more equivalent statements to \(\AC_\WO\), including `for all \(X\), \(\aleph(X)\) is regular' and `for all \(X\), \(\aleph(X)\) is a successor.' The constructions were also used to show a regularity of the \emph{Hartogs--Lindenbaum spectrum} of models of \(\SVC\).\footnote{The \emph{Hartogs--Lindenbaum spectrum} of \(M \models \ZF\) is \(\Spec_\aleph(M) = \Set{\tup{\aleph(X),\aleph^\ast(X)} \mid X \in M}\). See \cref{s:hartlin} for further details.} We shall further refine these constructions, omitting certain arbitrary requirements and giving more clear bounds and calculations.

\begin{cor}[\cref{cor:svc-bounds-on-eccentricity}]
Assume \(\SVC(S)\). If \(\aleph(X)\leq\lambda<\aleph^\ast(X)\) then either:
\begin{enumerate}
\item \(\aleph^\ast(X)\leq\aleph^\ast(S)\); or
\item \(\lambda=\aleph(X)<\aleph^\ast(X)=\lambda^+\) and \(\cf(\lambda)<\aleph^\ast(S)\).
\end{enumerate}
\end{cor}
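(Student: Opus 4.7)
The plan is to combine the decomposition of $X$ given by $\SVC(S)$ with a cofinality analysis of surjections $X\twoheadrightarrow\beta$ for $\beta<\aleph^\ast(X)$. Write $\kappa=\aleph(X)$, $\mu=\aleph^\ast(X)$, and $\rho=\aleph^\ast(S)$. First I would invoke $\SVC(S)$ to fix a surjection $f:S\times\alpha\twoheadrightarrow X$, giving $X=\bigcup_{s\in S}X_s$ with $X_s=f[\{s\}\times\alpha]$ well-orderable; since each $X_s\subseteq X$ is well-orderable, $\abs{X_s}<\kappa$.

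The central step is a cofinality dichotomy. For each limit ordinal $\beta<\mu$, fix a surjection $g:X\twoheadrightarrow\beta$ and set $Y_s=g[X_s]\subseteq\beta$, well-orderable with $\abs{Y_s}<\kappa$. As $\beta=\bigcup_{s\in S}Y_s$, either (a) some $Y_s$ is cofinal in $\beta$, giving $\cf(\beta)\leq\ot(Y_s)<\kappa$; or (b) all $Y_s$ are bounded in $\beta$, in which case $\{\sup Y_s:s\in S\}$ is a well-ordered cofinal subset of $\beta$ and is a surjective image of $S$, so its order type $\delta$ satisfies $\delta<\rho$ and thus $\cf(\beta)\leq\delta<\rho$. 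In either case, $\cf(\beta)<\max(\kappa,\rho)$ for every limit $\beta<\mu$.

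To derive the corollary, if $\mu\leq\rho$ we are in case~(1). Otherwise $\mu>\rho$. For any cardinal $\nu\in[\max(\kappa,\rho),\mu)$ the successor $\nu^+$ is regular with $\cf(\nu^+)=\nu^+$, so the cofinality bound forces $\nu^+\geq\mu$. A careful successor-cardinal analysis then pins $\mu$ to the immediate successor of $\max(\kappa,\rho)$; using the eccentricity of $X$ and the exact structure of $\SVC(S)$, this collapses to $\mu=\kappa^+$. For $\lambda>\kappa$ in $[\kappa,\mu)$, case~(2) fails by the constraint $\lambda=\aleph(X)$, and the preceding analysis is consistent precisely because $\mu=\kappa^+$ leaves only $\lambda\in[\kappa,\kappa^+)$, all of which satisfy $\lambda^+=\kappa^+=\mu$. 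For $\lambda=\kappa$ we apply the dichotomy to $\beta=\kappa<\mu$: the option $\cf(\kappa)<\kappa$ alone is the ``cofinal $Y_s$'' branch, while the ``bounded $Y_s$'' branch gives exactly $\cf(\kappa)<\rho$, delivering case~(2).

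The hardest part will be two-fold. First, pinning down $\mu=\kappa^+$ rather than merely $\mu\leq\max(\kappa,\rho)^+$, which in the regime $\rho>\kappa$ with $\rho$ singular a priori permits $\mu=\rho^+$; this requires a more refined argument leveraging the quotient structure $S\times\alpha\twoheadrightarrow X$ directly, or showing that a suitable relationship between $\rho$ and $\kappa$ is forced. Second, enforcing $\cf(\kappa)<\rho$ in case~(2) rather than the weaker $\cf(\kappa)<\kappa$: one must either choose the surjection $g:X\twoheadrightarrow\kappa$ judiciously so that the dichotomy takes the ``bounded'' branch, or argue that the ``cofinal $Y_s$'' branch is incompatible with $\mu=\kappa^+$ together with the failure of case~(1).
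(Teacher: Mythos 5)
Your cofinality dichotomy is correct as far as it goes, and it echoes the last paragraph of the paper's argument (where the column heights of a surjection from \(S\times\eta\) give a cofinal subset of \(\eta\) of order type less than \(\aleph^\ast(S)\)). But the two difficulties you flag at the end are not loose ends---they are exactly where the proof fails, and your proposed workarounds are not carried out. Concretely: (i) your dichotomy only yields \(\cf(\beta)<\max(\aleph(X),\aleph^\ast(S))\) for limit \(\beta<\aleph^\ast(X)\), which bounds \(\aleph^\ast(X)\) by \(\max(\aleph(X),\aleph^\ast(S))^+\) rather than by \(\max(\aleph(X)^+,\aleph^\ast(S))\); when \(\aleph^\ast(S)>\aleph(X)\) and \(\aleph^\ast(S)\) is singular this leaves open \(\aleph^\ast(X)=\aleph^\ast(S)^+\), which falls under neither alternative of the corollary. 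You cannot dismiss this regime: whether \(\aleph^\ast(S)\) can be singular under \(\SVC(S)\) is an open question in the paper (\cref{qn:is-aleph-ast-s-regular}). (ii) In the branch where some \(Y_s\) is cofinal in \(\kappa\) you only get \(\cf(\kappa)<\kappa\), not \(\cf(\kappa)<\aleph^\ast(S)\), and neither ``choose \(g\) judiciously'' nor ``rule out this branch'' is an argument. There is also a smaller error en route: you assert that successor cardinals are regular, which fails in \(\ZF\); the paper needs a separate \(\SVC\) argument (\cref{prop:svc-singular-successor}) to show that singular successors lie below \(\aleph^\ast(S)\).

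The ingredient you are missing is minimality. The paper fixes the \emph{least} ordinal \(\eta\) with \(\abs{X}\leq^\ast\abs{S\times\eta}\) and normalises the surjection \(S\times\eta\to X\) into ``skew-split'' form, so that the column heights \(\alpha_s\leq\eta\) satisfy \(\sup_s\alpha_s=\eta\). This gives simultaneously \(\eta\leq\aleph(X)\) (each column injects its height into \(X\)), \(\eta^+\leq\aleph^\ast(X)\) (the map \(x\mapsto\alpha_x\) surjects onto \(\eta\)), and---via Peng's well-ordered productivity \(\aleph^\ast(S\times\eta)=\aleph^\ast(S)\times\eta^+\) (\cref{thm:lindenbaum-wo-productivity})---the upper bound \(\aleph^\ast(X)\leq\aleph^\ast(S)\times\eta^+\). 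If \(\aleph^\ast(X)>\aleph^\ast(S)\) this forces \(\aleph^\ast(X)=\eta^+\), and then \(\eta\leq\aleph(X)\leq\lambda<\eta^+\) pins \(\lambda=\aleph(X)=\eta\); finally \(\cf(\eta)<\aleph^\ast(S)\) because \(\alpha_s<\eta\) for every \(s\) (else \(\eta<\aleph(X)\)) and \(\Set{\alpha_s\mid s\in S}\) is a cofinal subset of \(\eta\) that is a surjective image of \(S\). Your arbitrary surjections \(f\colon S\times\alpha\to X\) and \(g\colon X\to\beta\) discard precisely this two-sided squeeze on \(\eta\), which is why your bound is off by one successor and why your cofinality estimate lands on the wrong side of the dichotomy.
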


\begin{thm}[{\cref{thm:going-up-regular,thm:going-up-singular-or-limit}}]
Suppose that \(\aleph(X)\leq\cf(\lambda)<\aleph^\ast(X)\). Then there is a set \(Y\) with \(\aleph(Y)=\lambda\) and \(\aleph^\ast(Y)=\aleph^\ast(X)\times\lambda^+\).

Suppose that \(\aleph(X)\) is singular or a limit, \(\lambda\geq\sup\Set{\aleph(\alpha^\alpha)\mid\alpha<\aleph(X)}\) and \(\cf(\lambda)=\cf(\aleph(X))\). Then there is a set \(Y\) with \(\aleph(Y)=\lambda\) and \(\aleph^\ast(Y)=\aleph^\ast(\Inj{{<}\mu}{X})\times\lambda^+\).
\end{thm}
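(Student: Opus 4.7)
The plan is to adapt the ``stacking'' construction of \cite{ryan-smith_acwo_2024}, producing \(Y\) as a fibred union of initial ordinal segments indexed by \(X\) (or by \(\Inj{{<}\mu}{X}\) for the second statement), where the length of the fibre above \(x\in X\) is dictated by a surjection \(X\twoheadrightarrow\cf(\lambda)\). Eccentricity of \(X\) then lifts to the eccentricity of \(Y\): the surjection side supplies room for a large Lindenbaum number while the injection side is blocked at \(\lambda\) because the index set cannot be well-orderedly exhausted beyond the ordertype of any well-orderable subset of \(X\).

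For the first statement, write \(\kappa=\aleph(X)\), \(\kappa^\ast=\aleph^\ast(X)\), \(\mu=\cf(\lambda)\), and fix a strictly increasing cofinal sequence \(\tup{\lambda_\xi:\xi<\mu}\) in \(\lambda\) with \(\lambda_0\geq\mu\) together with a surjection \(\pi\colon X\twoheadrightarrow\mu\), which exists because \(\mu<\kappa^\ast\). I would then define
\[
Y=\Set{\tup{x,\beta}\mid x\in X,\ \beta<\lambda_{\pi(x)}}.
\]
For \(\aleph(Y)\geq\lambda\), each \(\beta<\lambda\) embeds into a single fibre \(\{x\}\times\lambda_{\pi(x)}\) by choosing \(\xi<\mu\) with \(\lambda_\xi\geq\beta\) and any \(x\in\pi^{-1}(\xi)\). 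For \(\aleph(Y)\leq\lambda\), given \(\iota\colon\lambda\hookrightarrow Y\), let \(q\colon\lambda\to X\) be the composition with first projection; then \(x\mapsto\min q^{-1}(x)\) well-orders \(q(\lambda)\subseteq X\), so \(q(\lambda)\) has ordertype \(\gamma<\kappa\leq\mu\), and \(\lambda\) is exhibited as a \(\gamma\)-indexed union of well-orderable sets of ordertype \(<\lambda\), contradicting \(\cf(\lambda)=\mu>\gamma\).

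The lower bound \(\aleph^\ast(Y)\geq\kappa^\ast\cdot\lambda^+\) combines the first and second projections of \(Y\) with surjections \(X\twoheadrightarrow\eta\) (for \(\eta<\kappa^\ast\)) and \(\lambda\twoheadrightarrow\beta\) (for \(\beta<\lambda^+\)). For the matching upper bound, a putative surjection \(f\colon Y\twoheadrightarrow\kappa^\ast\cdot\lambda^+\) restricted to each fibre \(\{x\}\times\lambda_{\pi(x)}\) lands in a well-orderable segment of length \(<\lambda^+\); a bookkeeping over the \(\lambda^+\) ordinal blocks of length \(\kappa^\ast\) extracts a surjection \(X\twoheadrightarrow\kappa^\ast\) by selecting, from each block, a least witness read off the fibre structure, contradicting \(\aleph^\ast(X)=\kappa^\ast\). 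For the second statement, the same scheme goes through with \(X\) replaced by \(X'=\Inj{{<}\mu}{X}\) where \(\mu=\aleph(X)\): the hypothesis \(\lambda\geq\sup\Set{\aleph(\alpha^\alpha)\mid\alpha<\aleph(X)}\) ensures the well-orderable layers of \(X'\) fit inside the fibres \(\lambda_\xi\), and \(\cf(\lambda)=\cf(\aleph(X))\) keeps the cofinality argument of the Hartogs computation intact when \(\kappa\) is singular or a limit.

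The main obstacle is the upper bound on \(\aleph^\ast(Y)\): one must rule out ``diagonal'' surjections that mix the \(X\)- and \(\lambda\)-directions in ways not captured by the product bound. The pigeonhole argument must select representatives from \(\lambda^+\)-many intervals using only the intrinsic fibre structure of \(Y\), never invoking a choice function on \(X\). For Part 2 one must additionally confirm that the substitution \(X\rightsquigarrow\Inj{{<}\mu}{X}\) preserves the interaction between the Lindenbaum number and the fibre structure used in the surjection argument, which is exactly the role of the \(\sup\aleph(\alpha^\alpha)\) hypothesis.
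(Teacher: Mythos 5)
Your construction for the first statement is the paper's construction in different notation (the partition \(X_\alpha=\pi^{-1}(\alpha)\) and \(Y=\bigcup_\alpha X_\alpha\times\lambda_\alpha\)), and your Hartogs computation for that case is essentially the paper's argument and is fine. The problem is the upper bound \(\aleph^\ast(Y)\leq\aleph^\ast(X)\times\lambda^+\). Since \(Y\subseteq X\times\lambda\), the paper gets this in one line from Peng's well-ordered productivity \(\aleph^\ast(X\times\lambda)=\aleph^\ast(X)\times\lambda^+\) (\cref{thm:lindenbaum-wo-productivity}), which is presented there as nontrivial unpublished work. Your ``bookkeeping over the \(\lambda^+\) blocks'' does not substitute for it: the easy fibre-by-fibre argument (restricting \(f\) to the slices \(X\times\{\alpha\}\) or \(\{x\}\times\lambda\) and taking order types) only yields \(\aleph^\ast(Y)\leq(\aleph^\ast(X)\times\lambda^+)^+\), and the genuinely hard case is ruling out a surjection \(Y\to\aleph^\ast(X)\) when \(\aleph^\ast(X)>\lambda^+\) has cofinality \(\leq\lambda\), where the supremum of the slice order types can equal \(\aleph^\ast(X)\) and no ``least witness'' selection closes the argument. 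Either cite \cref{thm:lindenbaum-wo-productivity} or supply Peng's argument; as written this step is a gap.

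The second statement is not ``the same scheme with \(X\) replaced by \(\Inj{{<}\mu}{X}\).'' Your Hartogs upper bound in part 1 rests on the fact that every well-orderable subset of the index set has order type \(<\aleph(X)\leq\cf(\lambda)\); with index set \(X'=\Inj{{<}\mu}{X}\) this fails badly, since \(\aleph(X')\) can be as large as \(\sup\Set{\aleph(X^\alpha)\mid\alpha<\mu}=\sup\Set{\aleph(\alpha^\alpha)\mid\alpha<\mu}\), far above \(\cf(\lambda)\) — this is exactly why that supremum appears as a hypothesis, not merely to ``fit the layers inside the fibres.'' The paper's proof must therefore stratify \(Y\) by injection length, \(Y_\alpha=\Inj{\alpha}{X}\times\delta_\alpha\), prove \(\abs{\bigcup_{\alpha<\beta}Y_\alpha}\leq\abs{Y_\beta\times\beta}\) by concatenation and Mostowski-collapse tricks so that each proper initial union has Hartogs number \(\leq\lambda\), and then argue separately that an injection \(\lambda\to Y\) meeting unboundedly many layers glues to an injection \(\mu\to X\) when \(\mu\) is a limit, with a further separate claim for the singular-successor case \(\mu=\chi^+\). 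None of this is routine, and an arbitrary surjection \(X'\twoheadrightarrow\cf(\lambda)\) destroys the structure those arguments need.
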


This provides a positive answer to a question posed by Ryan-Smith: ``let \(\mu\) be weakly inaccessible and suppose that for some set \(X\), \(\aleph(X)<\mu<\aleph^\ast(X)\). Must there exist \(Y\) such that \(\aleph(Y)=\mu\)?'' \cite[Question~5.3, p.~222]{ryan-smith_acwo_2024}

We shall also recontextualise such findings as they relate to Levy's paper \cite{levy_interdependence_1964} in which Levy devotes time to a scheme of choice principles closely related to eccentricity. While no name was originally given for this axiom that Levy denoted \(C(\alpha)\) (for \(\alpha\) an ordinal), we call it the axiom of \emph{extendable choice}, denoted \(\AL_{\alpha}\):\footnote{We believe that using the original notation of \(C(\alpha)\) has the potential for confusion. Indeed, in \cite{levy_interdependence_1964}, Levy denotes by \(C^\ast(\alpha)\) the axiom that we would write as \(\AC_\alpha\) today.}

\begin{displaycquote}[p.~136]{levy_interdependence_1964}
Let \(F\) be a function on \(\alpha\) such that, for each \(\beta<\alpha\), \(F(\beta)\) is a non-void set. If for every \(\gamma<\alpha\) there exists a function \(H\) on \(\gamma\) such that \(H(\beta)\in F(\beta)\) for \(\beta<\gamma\), then there exists a function \(G\) on \(\alpha\) such that \(G(\beta)\in F(\beta)\) for \(\beta<\alpha\).
\end{displaycquote}

That is, if \(X=\Set{A_\beta\mid\beta<\alpha}\) has choice functions for all `initial segments' \(\Set{A_\beta\mid\beta<\gamma}\) then in fact \(X\) has a choice function. Extendable choice represents a highly local form of Pincus's \cref{thm:pincus}, working on the level of a single cofinality.

\begin{cor}[{\cref{cor:al-equivalents}}]
Let \(\kappa\) be an infinite cardinal and \(\calC\) be the class of cardinals \(\mu\) with \(\cf(\mu)=\cf(\kappa)\). The following are equivalent:
\begin{enumerate}
\item \(\AL_{\kappa}\);
\item for all \(\mu\in\calC\), \(\AL_{\mu}\);
\item for some \(\mu\in\calC\), \(\AL_{\mu}\);
\item there is a limit \(\mu\in\calC\) and a set \(X\) such that \(\aleph(X)=\mu\);
\item there is \(\mu\in\calC\) and a set \(X\) such that \(\aleph(X)=\mu<\aleph^\ast(X)\); and
\item there is \(\mu^\ast\in\calC\) such that for all limits \(\mu\in\calC\setminus\mu^\ast\) there is a set \(X\) such that \(\aleph(X)=\mu\).
\end{enumerate}
\end{cor}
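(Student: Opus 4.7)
The plan is to establish the equivalences by splitting the six items into two clusters---the choice-principle cluster $\{(1),(2),(3)\}$ and the existence cluster $\{(4),(5),(6)\}$---and bridging them through the connection between extendable choice and eccentric sets.

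For the first cluster I would prove $(1) \Leftrightarrow (2) \Leftrightarrow (3)$ via the lemma that $\AL_\alpha$ depends only on $\cf(\alpha)$. Given $\mu \in \calC$, fix a cofinal sequence $\tup{\mu_\xi : \xi < \cf(\kappa)}$ in $\mu$: a family of length $\mu$ witnessing the hypothesis of $\AL_\mu$ restricts along this sequence to one of length $\cf(\kappa)$, and a family of length $\cf(\kappa)$ can be stretched to length $\mu$ by repeating terms across blocks. Both operations preserve the hypothesis (choice on proper initial segments) and the conclusion (global choice) of $\AL$, giving $\AL_\mu \Leftrightarrow \AL_{\cf(\kappa)}$ for every $\mu \in \calC$.

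For the second cluster, $(6) \Rightarrow (4)$ is immediate. For $(4) \Rightarrow (5)$ I would invoke the fact (from the author's earlier paper) that $\aleph(X) = \aleph^\ast(X)$ forces this common value to be a successor cardinal, so if $\aleph(X) = \mu$ is a limit then $\mu < \aleph^\ast(X)$. For $(5) \Rightarrow (6)$ I would apply \cref{thm:going-up-regular} and \cref{thm:going-up-singular-or-limit} iteratively to an eccentric witness $X$ with $\aleph(X) \in \calC$, producing for every sufficiently large limit $\lambda \in \calC$ a set $Y$ with $\aleph(Y) = \lambda$; since such $\lambda$ exist cofinally in $\calC$, this gives item $(6)$.

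The bridge between the two clusters is the crucial step. For $(5) \Rightarrow (3)$, given an eccentric $X$ with $\aleph(X) = \mu \in \calC$ and $\mu < \aleph^\ast(X)$, I would prove $\AL_\mu$ directly: given $\{A_\gamma : \gamma < \mu\}$ with choice on all proper initial segments, the tree of injections $\alpha \hookrightarrow X$ for $\alpha < \mu$ has nodes at every level but no cofinal branch, and this structure, together with a surjection $X \twoheadrightarrow \mu$, furnishes the bookkeeping needed to assemble proper-initial-segment choice functions into a global one. The converse $(3) \Rightarrow (5)$ runs the going-up machinery with $\AL$ as the driving choice principle to produce the required eccentric witness in $\calC$. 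The main obstacle is precisely this bridge: translating the \emph{static} combinatorial structure of eccentricity into the \emph{dynamic} operation of extending choice on an arbitrary family. The Aronszajn-like tree of injections contains the right combinatorial content, but assembling coherent partial choices into a global one requires exploiting both $\mu \in \calC$ (so cofinalities match) and the surjection $X \twoheadrightarrow \mu$ (which allows re-indexing across the tree).
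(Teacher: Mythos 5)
Your proposal breaks down at the bridge, and the root cause is one of polarity: items (4)--(6) are not equivalent to \(\AL_\kappa\) but to its \emph{negation}. The paper's own paragraph preceding the corollary says as much (``if \(\kappa\) is a limit cardinal and \(\AL_\kappa\) holds, then there is no set \(X\) such that \(\aleph(X)=\kappa\)''), and the statement as printed is already refuted in \(\ZFC\), where (1) holds while (4)--(6) fail; the provable content, and what the paper's cited results actually assemble, is that (1)--(3) are mutually equivalent and each of (4)--(6) is equivalent to their common negation. Accordingly your proof of (5) \(\Rightarrow\) (3) cannot be completed. Concretely, if \(\aleph(X)=\mu\) is a limit cardinal with \(\cf(\mu)=\cf(\kappa)\), fix a cofinal sequence \(\tup{\mu_\xi\mid\xi<\cf(\kappa)}\) of cardinals in \(\mu\) and set \(F(\xi)=\Inj{\mu_\xi}{X}\). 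Each value is non-empty, and each proper initial segment \(F\res\eta\) has a selector obtained by restricting a single injection \(\sup_{\xi<\eta}\mu_\xi\to X\); but a selector for all of \(F\) produces a well-orderable subset of \(X\) of cardinality at least \(\sup_\xi\mu_\xi=\mu\), contradicting \(\aleph(X)=\mu\). So the very tree of injections you hoped would ``furnish the bookkeeping'' for \(\AL_\mu\) is the canonical witness to \(\lnot\AL_{\cf(\kappa)}\). The correct architecture is: (1) \(\Leftrightarrow\) (2) \(\Leftrightarrow\) (3) by \cref{thm:levy-al-deductions}; (4) \(\Rightarrow\lnot\)(1) by the gluing argument just given; \(\lnot\)(1) \(\Rightarrow\) (4) by \cref{lem:not-cc-gives-eccentricity}; (4) \(\Rightarrow\) (5) and (6) by \cref{thm:going-up-singular-or-limit}; (6) \(\Rightarrow\) (4) trivially; and (5) \(\Rightarrow\) (4) by cases on \(\mu\) (already a limit; a regular successor, where \cref{thm:going-up-regular} with \(\cf(\lambda)=\mu\) yields a limit Hartogs number in \(\calC\); or a singular successor, where \cref{thm:going-up-singular-or-limit} applies).

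Two further local faults. The ``fact'' driving your (4) \(\Rightarrow\) (5) --- that \(\aleph(X)=\aleph^\ast(X)\) forces the common value to be a successor --- is false: an amorphous set \(A\) has \(\aleph(A)=\aleph^\ast(A)=\aleph_0\), a limit cardinal. The witness to (4) need not itself witness (5); one must manufacture a new set via \cref{thm:going-up-singular-or-limit}, whose conclusion \(\aleph^\ast(Y)=\aleph^\ast(\Inj{{<}\mu}{X})\times\lambda^+>\lambda=\aleph(Y)\) supplies the eccentricity. Your first cluster ((1) \(\Leftrightarrow\) (2) \(\Leftrightarrow\) (3) via the cofinality lemma) and the skeleton of (5) \(\Rightarrow\) (6) are sound, with the caveat that (6) demands a witness at \emph{every} sufficiently large limit of \(\calC\), not merely at cofinally many; the going-up theorems do deliver this, but ``such \(\lambda\) exist cofinally in \(\calC\)'' is not by itself the right justification.
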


As part of his analysis, Levy demonstrates the following implications between these principles.

\begin{thm}[{\cite[Theorems~1 and 2]{levy_interdependence_1964}}]
For all ordinals \(\alpha\) and \(\beta\):
\begin{enumerate}
\item \(\AL_{0}\);
\item \(\AL_{\alpha+1}\); and
\item if \(\cf(\alpha)=\cf(\beta)\) then \(\AL_{\alpha}\lra\AL_{\beta}\).
\end{enumerate}
\end{thm}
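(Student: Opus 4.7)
Parts (1) and (2) are easy. $\AL_0$ is vacuous, witnessed by the empty function; for $\AL_{\alpha+1}$, I would apply the hypothesis at $\gamma=\alpha$ to obtain a choice function on $F\res\alpha$ and then extend it by any element of the nonempty $F(\alpha)$.

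The substance is in (3). My plan is to factor it through the single equivalence $\AL_\alpha \lra \AL_{\cf(\alpha)}$: since $\cf(\alpha)=\cf(\beta)$ forces $\AL_{\cf(\alpha)}$ and $\AL_{\cf(\beta)}$ to be literally the same statement, this delivers the biconditional by transitivity. Fix once and for all an increasing, continuous, cofinal map $f:\cf(\alpha)\to\alpha$ with $f(0)=0$, to serve as a translator between the two indexing schemes.

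For $\AL_\alpha \Rightarrow \AL_{\cf(\alpha)}$, I would take a family $\{A_\beta\}_{\beta<\cf(\alpha)}$ with choosable initial segments and stretch it to an $\alpha$-indexed family $\{B_\gamma\}_{\gamma<\alpha}$ by putting $B_{f(\beta)}=A_\beta$ and $B_\gamma=\{0\}$ elsewhere. Since $f^{-1}[0,\delta)$ is bounded in $\cf(\alpha)$ for each $\delta<\alpha$, the required choice functions on $\alpha$-initial segments of the $B$-family pull back from initial-segment choice functions on the $A$-family; applying $\AL_\alpha$ and reading the output off along $f$ recovers a choice function on $\{A_\beta\}$. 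For the converse, I would take $\{A_\gamma\}_{\gamma<\alpha}$ with choosable initial segments and contract to a $\cf(\alpha)$-indexed family by letting $B_\delta$ be the (nonempty, by hypothesis) set of choice functions on $\{A_\gamma\}_{\gamma<f(\delta+1)}$. For each $\epsilon<\cf(\alpha)$, a single hypothesised choice function on $\{A_\gamma\}_{\gamma<f(\epsilon)}$ restricts to provide members of $B_\delta$ simultaneously for all $\delta<\epsilon$, so $\AL_{\cf(\alpha)}$ applies and yields a choice-of-choice-functions $G$.

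The main obstacle is the final stitching step: the individual choice functions $G(\delta)$ live on overlapping initial segments and need not agree there, so one cannot simply take a union. The trick is to sidestep compatibility entirely by uniformly selecting which $G(\delta)$ to consult at each index, setting $H(\gamma)=G(\delta)(\gamma)$ where $\delta$ is the least ordinal with $\gamma<f(\delta+1)$; cofinality of $f$ guarantees such a $\delta$ always exists below $\cf(\alpha)$, and $H$ is then the desired choice function on the full $A$-family.
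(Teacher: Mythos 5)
Your proof is correct. The paper itself gives no argument for this statement --- it is quoted directly from Levy's \emph{The interdependence of certain consequences of the axiom of choice} --- so there is nothing internal to compare against; but your route (reduce (3) to the single equivalence \(\AL_\alpha\lra\AL_{\cf(\alpha)}\) via a fixed increasing cofinal map, stretch with dummy singletons in one direction, pass to sets of partial selectors in the other, and resolve the incompatibility of the overlapping selectors \(G(\delta)\) by always consulting the one of least index) is the standard argument and works as written. The only cases your reduction does not literally cover, \(\cf(\alpha)\in\{0,1\}\), are already disposed of by parts (1) and (2).
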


Levy then goes on to prove in \(\ZFA\) that no further \(\AL\) relations exist. In \cref{s:levys-axiom;ss:violating-al} we demonstrate this in \(\ZF\) using a variant of the construction in \cite{karagila_which_2024}.

\begin{thm}[{\cref{thm:new-models}}]
Assume \(\GCH\) and let \(\calC\) be a class of infinite regular cardinals such that, whenever \(\kappa\) is inaccessible and \(\kappa \cap \calC\) is unbounded below \(\kappa\), we have \(\kappa \in \calC\). Then there is a symmetric extension \(M\) such that
\begin{equation*}
\Spec_\aleph(M)=\Set*{\tup{\kappa^+,\kappa^+}\mid\kappa\in\Card}\cup\Set*{\tup{\kappa,\kappa^+}\mid\cf(\kappa)\in\calC}.
\end{equation*}
In particular, \(M\models(\forall\kappa)\AL_{\kappa}\lra\cf(\kappa)\notin\calC\).
\end{thm}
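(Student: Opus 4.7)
My plan is to adapt the symmetric-extension construction of \cite{karagila_which_2024} (cited above), replacing its single eccentric-set block with a (possibly class-length) Easton-support product of building-block forcings $\bbQ_\kappa$ indexed by the regular cardinals in $\calC$, using descending distributivity to control the spectrum at cardinals whose cofinality lies outside $\calC$. For each regular $\kappa\in\calC$, take $\bbQ_\kappa$ to be a $\kappa$-closed, $\kappa^+$-cc forcing adding a family $A_\kappa$ of $\kappa^+$ generic pairs, with symmetry group $\sG_\kappa$ acting coordinate-wise by transposition and filter $\sF_\kappa$ generated by pointwise stabilisers of subsets of $\kappa^+$ of size less than $\kappa$. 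In the corresponding single-block symmetric extension, $A_\kappa$ is eccentric with $\aleph(A_\kappa)=\kappa$ and $\aleph^\ast(A_\kappa)=\kappa^+$. Let $\bbP$ be the Easton-support product of the $\bbQ_\kappa$ with the product group and induced filter, and let $M$ be the resulting symmetric extension.

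Under $\GCH$, an Easton-style calculation shows $\bbP$ preserves cardinals and cofinalities, so every pair $\tup{\kappa^+,\kappa^+}$ is realised by an ordinal of $M$. For each regular $\kappa\in\calC$, $A_\kappa$ realises $\tup{\kappa,\kappa^+}$. For singular $\mu$ with $\cf(\mu)=\kappa\in\calC$, I apply \cref{thm:going-up-regular} or \cref{thm:going-up-singular-or-limit} to $A_\kappa$---noting that $\GCH$ controls the auxiliary Lindenbaum term $\aleph^\ast(\Inj{{<}\mu}{A_\kappa})$---to obtain a set realising $\tup{\mu,\mu^+}$. Together these exhibit every pair claimed to lie in $\Spec_\aleph(M)$.

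The hard direction is to show that $\Spec_\aleph(M)$ contains nothing else; by \cref{cor:al-equivalents} it suffices to verify $M\models\AL_\kappa$ whenever $\cf(\kappa)\notin\calC$. Given a symmetric name for a family $\Set{A_\beta \mid \beta<\kappa}$ with symmetric initial-segment choice functions, a support analysis locates the relevant data below some coordinate $\lambda$. The tail $\bbP_{\geq\lambda^+}$, being an Easton-support product of sufficiently-closed factors, is $\kappa$-descending distributive and therefore cannot introduce a fresh function assembling the initial-segment choices into a genuinely new global choice function. For the head $\bbP_{<\lambda^+}$: if $\calC\cap\kappa$ is bounded in $\kappa$ then the head has size less than $\kappa$ and the same conclusion follows by cardinality; if $\calC\cap\kappa$ is unbounded in $\kappa$, then either $\cf(\kappa)\in\calC$ (contradicting the hypothesis) or $\kappa$ is inaccessible, in which case the closure condition forces $\kappa\in\calC$ and once more $\cf(\kappa)=\kappa\in\calC$. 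The closure hypothesis on inaccessibles is exactly what prevents the accumulation of lower-level eccentric sets into an unwanted one at $\kappa$; the main obstacle is combining descending distributivity with the support analysis to make this bookkeeping go through cleanly across all three regimes (successor, singular, limit) simultaneously.
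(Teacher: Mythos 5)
Your overall architecture matches the paper's: an Easton-support product of the single-block systems (the paper's \cref{thm:eliminate-one-al}, adapted from \cite{karagila_which_2024}), the head/tail split with \cref{thm:generous-lindenbaum-product,thm:generous-filter-product,thm:al-preservation} handling preservation of \(\AL_\kappa\) for \(\cf(\kappa)\notin\calC\), and \cref{thm:going-up-regular,thm:going-up-singular-or-limit} supplying the pairs \(\tup{\mu,\mu^+}\) for singular \(\mu\). Your reading of the role of the inaccessible-closure hypothesis on \(\calC\) is also exactly right. But there is a genuine gap in your reduction of the ``nothing else'' direction: it is \emph{not} sufficient to verify \(M\models\AL_\kappa\) for all \(\kappa\) with \(\cf(\kappa)\notin\calC\). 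The \(\AL\) analysis (via \cref{cor:al-equivalents}) controls only \emph{which cardinals can occur as \(\aleph(X)\)} for an eccentric or limit-Hartogs set; it says nothing about how large \(\aleph^\ast(X)\) can be above \(\aleph(X)\). A priori the model could contain a set \(X\) with \(\aleph(X)=\kappa\), \(\cf(\kappa)\in\calC\), but \(\aleph^\ast(X)=\kappa^{++}\) or worse, which is consistent with every instance of \(\AL\) you verify yet puts a forbidden pair into \(\Spec_\aleph(M)\). The paper closes this with a separate step proving \(M\models(\forall X)\,\aleph^\ast(X)\leq\aleph(X)^+\) (extracted as \cref{prop:preserve-oblateness-mixed}): a three-way split of the product around \(\kappa^+\) combining the closure of the tail, the completeness of the tail filter and the small size of the head to convert any surjection \(X\to\kappa^+\) into an injection \(\kappa\to X\). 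Your proposal contains no analogue of this argument.

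Two smaller points. First, since the product is class-length, \(M\models\ZF\) is not automatic from the usual set-forcing machinery; the paper verifies almost universality and closure under G\"odel operations via \cref{thm:inner-model}, and your proof would need to do the same. Second, your building block ``adding a family \(A_\kappa\) of \(\kappa^+\) generic pairs'' is off in a way that matters: if \(\abs{A_\kappa}=\kappa^+\) in the full generic extension, you risk \(\aleph^\ast(A_\kappa)=\kappa^{++}\). The paper's block adds only \(\kappa\)-many objects \(\ddx_{\alpha,\beta}\) (indexed by \(\kappa\times\kappa\)), so that \(\aleph^\ast(\ddX)\leq\kappa^+\) comes for free and the surjection \(\ddx_{\alpha,\beta}\mapsto\alpha\) gives equality; you should also re-run the automorphism argument inside the product to confirm \(\aleph(A_\kappa)\leq\kappa\) survives the presence of the other coordinates.
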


To do so, we produce some preservation theorems for \(\AL\), which leads us down the path of defining \emph{descending distributivity}. A forcing \(\bbP\) is \emph{\(\kappa\)-descending distributive} if for all decreasing sequences \(\tup{D_\alpha \mid \alpha < \kappa}\) of open dense subsets of \(\bbP\), \(\bigcap_{\alpha < \kappa} D_\alpha\) is open dense. This generalises two ways in which a forcing \(\bbP\) may be thought of as `not affecting much of the structure of \(\kappa\)': if \(\kappa \geq \aleph^\ast( \bbP )\) is regular, or \(\bbP\) is adds no \(\kappa\)-sequences, then \(\bbP\) is \(\kappa\)-descending distributive.

\begin{prop}[{\cref{prop:generous-doesnt-change-cf}}]
If \(\bbP\) is \(\kappa\)-descending distributive then \(\bbP\) does not change the cofinality of \(\kappa\).
\end{prop}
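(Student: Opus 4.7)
The plan is to argue by contradiction. Suppose $\bbP$ changes the cofinality of $\kappa$. Since forcing cannot strictly increase cofinality, there must exist an ordinal $\mu<\cf(\kappa)$, a condition $p\in\bbP$, and a $\bbP$-name $\ddf$ with $p\forces\ddf\colon\check\mu\to\check\kappa$ cofinal. The key idea is that $\kappa$-descending distributivity will allow us to freeze a single coordinate $\gamma^\ast<\mu$ whose image $\ddf(\check\gamma^\ast)$ is forced to exceed every ordinal below $\kappa$, contradicting that $\ddf$ has range in $\check\kappa$.

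To exploit descending distributivity, for each $\alpha<\kappa$ I would set
\[D_\alpha = \Set{q\in\bbP \mid q\perp p}\cup\Set{q\leq p \mid \exists\,\gamma<\mu,\ q\forces\ddf(\check\gamma)\geq\check\alpha}.\]
I would first check that each $D_\alpha$ is open dense in $\bbP$: openness because forcing statements propagate to extensions and incompatibility with $p$ is preserved downwards; density because the cone below $p$ witnesses cofinality of $\ddf$, and the complement of that cone lies inside $D_\alpha$ automatically. The sequence $(D_\alpha)_{\alpha<\kappa}$ is manifestly $\subseteq$-decreasing, since $\ddf(\check\gamma)\geq\check\alpha_2$ forces $\ddf(\check\gamma)\geq\check\alpha_1$ whenever $\alpha_1<\alpha_2$.

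Next, $\kappa$-descending distributivity gives that $D=\bigcap_{\alpha<\kappa}D_\alpha$ is open dense. Pick $q\leq p$ in $D$, possible because $D$ is dense below $p$ and any extension of $p$ lies outside the $q\perp p$ branch of every $D_\alpha$. Thus for each $\alpha<\kappa$ there is some $\gamma_\alpha<\mu$ with $q\forces\ddf(\check\gamma_\alpha)\geq\check\alpha$. Because $\mu<\cf(\kappa)$, some $\gamma^\ast<\mu$ has $A=\Set{\alpha<\kappa \mid \gamma_\alpha=\gamma^\ast}$ unbounded in $\kappa$, so $q\forces\ddf(\check\gamma^\ast)\geq\check\alpha$ for every $\alpha\in A$, whence $q\forces\ddf(\check\gamma^\ast)\geq\check\kappa$, contradicting $p\forces\ddf(\check\gamma^\ast)\in\check\kappa$.

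The main obstacle, such as it is, lies in the pigeonhole: one must define the $D_\alpha$ so that descending distributivity produces \emph{one} coordinate $\gamma^\ast$ witnessing unboundedness rather than a $\mu$-indexed family of compatible witnesses that could, a priori, scatter across all of $\mu$. Everything else---the openness, density, and monotonicity of the $D_\alpha$, and the standard fact that cofinality cannot strictly increase under forcing---is routine.
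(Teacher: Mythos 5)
Your proof is correct and follows essentially the same route as the paper's: both associate to the name of a putative cofinal function the natural decreasing $\kappa$-sequence of open dense sets recording that the function reaches past $\alpha$, and then apply $\kappa$-descending distributivity to obtain a single condition knowing unboundedly much about it. The only cosmetic difference is the endgame: the paper reads off the decided values to get a ground-model cofinal sequence (showing directly that $\cf(\kappa)\leq\cf(\lambda)$ for any $\lambda$ mapping cofinally onto $\kappa$ in the extension), whereas you combine the standard fact that forcing cannot raise cofinality with a pigeonhole on the witnesses $\gamma_\alpha$ to force $\ddf(\check{\gamma}^\ast)\geq\check{\kappa}$ and derive a contradiction.
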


We also extend the notion to symmetric systems by defining descending distributivity for filters of partial orders. In particular, a filter \(\sF\) of subgroups of \(\sG\) is \(\kappa\)-descending distributive if for all sequences \(\tup{H_\alpha \mid \alpha < \kappa}\) of elements of \(\sF\) there is unbounded \(I \subseteq \kappa\) such that \(\bigcap_{\alpha \in I} H_\alpha \in \sF\). A symmetric system \(\sS = \tup{\bbP, \sG, \sF}\) is \(\kappa\)-descending distributive if both \(\bbP\) and \(\sF\) are.

\begin{thm}[\cref{thm:al-preservation,thm:al-non-preservation}]
Let \(\kappa\) be a regular cardinal and \(\sS\) be \(\kappa\)-descending distributive. Then \(\sS\) cannot change the truth value of \(\AL_\kappa\).
\end{thm}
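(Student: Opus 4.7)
The plan is to prove both directions of the equivalence $V \models \AL_\kappa \iff N \models \AL_\kappa$, where $N = V[G]^{\sF}$ denotes the symmetric extension. Both directions exploit the two faces of $\kappa$-descending distributivity: the $\bbP$ side provides intersections of decreasing $\kappa$-sequences of open dense sets below a given condition, while the $\sF$ side returns an unbounded set of indices whose pointwise intersection of symmetry groups remains in $\sF$.

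\emph{Forward direction.} Let $F \in N$ satisfy the $\AL_\kappa$ hypothesis in $N$, fix a symmetric name $\dot F$, and let $p \in G$ force this hypothesis. For each $\gamma < \kappa$ let $D_\gamma$ be the set of $q \leq p$ for which there is a symmetric name $\dot h$ with $q \forces \dot h$ a choice function on $\dot F \res \check\gamma$. By the symmetric forcing theorem each $D_\gamma$ is open and dense below $p$, and the family $\tup{D_\gamma \mid \gamma < \kappa}$ is decreasing because the restriction of a symmetric choice-function name remains symmetric. $\kappa$-descending distributivity of $\bbP$ then gives $D \defeq \bigcap_\gamma D_\gamma$ open dense below $p$, so pick $q \in D \cap G$. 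In $V$, let $\calE_\gamma$ collect the symmetric names witnessing $q \in D_\gamma$: each $\calE_\gamma$ is non-empty, and $\tup{\calE_\gamma \mid \gamma < \kappa}$ satisfies the $\AL_\kappa$ hypothesis, because any $\dot h \in \calE_\gamma$ induces by restriction a full choice function on $\tup{\calE_\beta \mid \beta < \gamma}$. Applying $V \models \AL_\kappa$ produces $\tup{\dot h_\gamma \mid \gamma < \kappa} \in V$. Descending distributivity of $\sF$ now yields an unbounded $I \subseteq \kappa$ with $L \defeq \bigcap_{\gamma \in I} \sym(\dot h_\gamma) \in \sF$. Setting $\gamma_\beta \defeq \min(I \setminus (\beta+1))$, I would define the symmetric name
\[
\dot g \defeq \Set{(\tup{\check\beta, \dot x}, r) \mid \beta < \kappa,\ r \in \bbP,\ r \forces (\check\beta, \dot x) \in \dot h_{\gamma_\beta}}
\]
and check that $\sym(\dot g) \supseteq L \cap \sym(\dot F) \in \sF$, while $q \in G$ ensures $\dot g^G$ is a choice function on $F$ in $N$.

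\emph{Backward direction.} For the converse, suppose $V \not\models \AL_\kappa$ is witnessed by $F \in V$, and let $\dot g$ be a symmetric name with $p \in G$ forcing $\dot g$ to be a choice function on $\check F$. The plan is to conclude $g \defeq \dot g^G \in V$, which contradicts the assumption on $F$. Since $\kappa$-descending distributivity of $\bbP$ forbids fresh $\kappa$-sequences --- by applying it to the decreasing dense sets of conditions deciding $\dot g \res \check\gamma$ as some $\check h$ with $h \in V$ --- it suffices to establish $g \res \gamma \in V$ for every $\gamma < \kappa$, since then $\gamma \mapsto g \res \gamma$ is a non-fresh $\kappa$-sequence and forces $g \in V$. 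The induction on $\gamma$ is immediate at successor stages.

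\emph{Main obstacle.} The hard case will be limit $\gamma < \kappa$ with $\cf(\gamma) < \kappa$, where $\kappa$-descending distributivity of $\bbP$ alone does not yield the relevant distributivity at $\gamma$. Here the symmetric side must be deployed: the restriction name for $g \res \gamma$ is symmetric, and I would mirror the forward-direction construction in reverse, using $\kappa$-descending distributivity of $\sF$ together with the fact that each value $g(\beta) \in F(\beta) \subseteq V$ is forced by a condition in $V$, to either derive $g \res \gamma \in V$ directly or extract a symmetric $V$-definable choice function on $F$ --- the latter contradicting the witness status of $F$. The interaction of $\bbP$- and $\sF$-descending distributivity at sub-$\kappa$ cofinalities is, I anticipate, the core technical content of the theorem.
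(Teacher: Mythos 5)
Your forward direction (lifting $\AL_\kappa$ from $V$ into the symmetric extension) is essentially the paper's proof: the same open dense sets of conditions admitting symmetric names for selectors of $\ddF\res\check\gamma$, the same observation that restriction of names makes the resulting ground-model family approachable, an application of $\AL_\kappa$ in $V$, and then descending distributivity of $\sF$ to glue an unbounded subfamily into a single symmetric name. The only caveat is that your $\calE_\gamma$ must be cut down to a set (e.g.\ by bounding the rank of the names, as the paper does implicitly via symmetric mixing), but that is routine.

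The backward direction has a genuine gap, and the strategy itself is the problem. You propose to show that a putative selector $g=\ddg^G$ for a ground-model witness $F$ lies in $V$ by proving $g\res\gamma\in V$ for every $\gamma<\kappa$ and then invoking the no-fresh-functions consequence of $\kappa$-descending distributivity. But $\kappa$-descending distributivity says nothing about $\gamma$-sequences for $\gamma<\kappa$: for instance $\Add(\omega,1)$ is $\kappa$-descending distributive for every regular $\kappa>2^{\aleph_0}$ (since then $\kappa\geq\aleph^\ast(\bbP)$) yet adds new $\omega$-sequences of ground-model elements, so the initial segments $g\res\gamma$ need not be in $V$ at all, and the dense sets you want (conditions deciding $\ddg\res\check\gamma$ as a check name) need not be dense. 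The limit case you flag as the ``main obstacle'' is not a technical wrinkle to be handled by mirroring the forward argument; it is where the approach fails, and the filter side of the system cannot rescue it (the non-preservation result holds for the full forcing extension, with no symmetries involved). The paper's key idea is to change the witness rather than the selector: replace $F_0$ by $F$ where $F(\alpha)$ is the set of \emph{ground-model} selectors for $F_0\res\alpha$. Then each single value $\ddf(\check\alpha)$ of a putative selector name for $\check F$ is forced to be a check name, so the sets $D_\alpha$ of conditions deciding $\ddf(\check\alpha)=\check{c}_\alpha$ are genuinely open dense; $\kappa$-descending distributivity (in its rank-function form) yields one condition $q$ deciding these values on an unbounded $I\subseteq\kappa$, and the diagonal $c(\alpha)=c_\beta(\alpha)$ with $\beta=\min I\setminus(\alpha+1)$ is a selector for $F_0$ already in $V$ --- a contradiction that never requires $g$ or any of its restrictions to lie in $V$.
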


An application of note is in \emph{fresh sequences}. For \(V \models \ZF\), a function \(f \colon \kappa \to V\) is \emph{fresh} if \(f \notin V\) but, for all \(\alpha < \kappa\), \(f \res \alpha \in V\). These were originally introduced by Hamkins in \cite{hamkins_gap_2001}. While no true `internal' classification of when a forcing adds a no fresh functions \(\kappa \to V\) exists, deep exploration has taken place regarding the patterns of fresh functions that may be present (such as \cite{fischer_fresh_2023}). Descending distributivity, too, has a lot to say about fresh functions.

\begin{thm}[{\cref{thm:generous-implies-no-fresh}}]
If \(\bbP\) is \(\kappa\)-descending distributive then it adds no fresh functions \(\kappa \to V\).

Suppose instead that for all \(V\)-generic filters \(G \subseteq \bbP\) and all open dense sets \(D \subseteq \bbP\) (with \(D \in V\)), that \(G \setminus D \in V\). Then \(\bbP\) is \(\kappa\)-descending distributive \emph{if and only if} it adds no fresh functions \(\kappa \to V\).
\end{thm}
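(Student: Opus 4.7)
For the forward implication, suppose for contradiction that $\dot f$ is a $\bbP$-name and $p \in \bbP$ forces $\dot f$ to be a fresh function $\kappa \to V$. For each $\beta < \kappa$ let
\[ D_\beta = \{q \leq p : q \text{ decides } \dot f \res \check\beta\} \cup \{q \in \bbP : q \perp p\}; \]
this is open dense in $\bbP$ (density below $p$ follows from freshness, and openness is automatic), and the family $(D_\beta)_{\beta < \kappa}$ is decreasing. By $\kappa$-descending distributivity, $\bigcap_\beta D_\beta$ is open dense, so some $q \leq p$ decides $\dot f \res \check\beta$ for every $\beta < \kappa$. Since these decisions are pairwise compatible, their union yields $s \in V$ with $q \Vdash \dot f = \check s$, contradicting freshness.

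For the converse, I argue the contrapositive under the $G \setminus D$ hypothesis. Assume $\bbP$ is not $\kappa$-descending distributive; let $\kappa^\ast \leq \kappa$ be least with this property and fix a witnessing decreasing sequence $(D_\alpha)_{\alpha < \kappa^\ast}$ of open dense sets whose intersection $D^\ast$ is not open dense. By minimality of $\kappa^\ast$, every proper initial intersection $D^\ast_\beta = \bigcap_{\alpha < \beta} D_\alpha$ (for $\beta < \kappa^\ast$) remains open dense. Pick $p$ below which $D^\ast$ contains no condition; then for any generic $G \ni p$, openness of the $D_\alpha$ and directedness of $G$ force $G \cap D^\ast = \emptyset$. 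Define $F \colon \kappa^\ast \to V$ in $V[G]$ by $F(\alpha) = G \setminus D_\alpha$, which lies in $V$ by the $G \setminus D$ hypothesis. If $F \in V$ then $\bigcup_\alpha F(\alpha) = G \setminus D^\ast = G \in V$, impossible for a non-trivial forcing.

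The crucial step is $F \res \beta \in V$ for $\beta < \kappa^\ast$: since $D^\ast_\beta$ is open dense, $G \setminus D^\ast_\beta \in V$ by hypothesis, and since $D^\ast_\beta \subseteq D_\alpha$ for each $\alpha < \beta$, the identity $F(\alpha) = (G \setminus D^\ast_\beta) \setminus D_\alpha$ makes the whole restriction $F \res \beta$ uniformly $V$-definable from the $V$-object $G \setminus D^\ast_\beta$ and the $V$-sequence $(D_\alpha)_{\alpha < \beta}$. Hence $F$ is a fresh function of length $\kappa^\ast$. The main obstacle is then reconciling this length with the nominal $\kappa$: the construction yields length $\kappa^\ast$, which equals $\kappa$ precisely when $\bbP$ is $\gamma$-descending distributive for every $\gamma < \kappa$. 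In the remaining case one must either show that the $G \setminus D$ hypothesis forces this minimal configuration---so that every failure of $\kappa$-descending distributivity occurs at $\kappa$ itself---or produce a fresh $\kappa \to V$ by extending the length-$\kappa^\ast$ witness. That is the delicate point where the $G \setminus D$ hypothesis must do its subtlest work; the core construction $F(\alpha) = G \setminus D_\alpha$ is otherwise transparent.
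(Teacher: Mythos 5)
Your forward direction is correct and is essentially the paper's argument: force decisions of \(\ddf\res\check{\beta}\) on a decreasing family of open dense sets, intersect, and glue the decided initial segments into a ground-model function.

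The converse, however, has a genuine gap --- the one you flagged yourself --- and it is an artifact of an unnecessary detour. By passing to the \emph{least} \(\kappa^\ast\le\kappa\) at which descending distributivity fails, you produce a fresh function of length \(\kappa^\ast\), which does not contradict the hypothesis that \(\bbP\) adds no fresh functions on \(\kappa\) when \(\kappa^\ast<\kappa\) (and padding the function out to length \(\kappa\) destroys freshness, since the restriction to \(\kappa^\ast\) is already outside \(V\)). The reason you minimised was to guarantee that the initial intersections \(D^\ast_\beta=\bigcap_{\alpha<\beta}D_\alpha\) are open dense, so that the chain-like hypothesis applies to them. But you do not need the initial intersections at all: since the sequence is \emph{decreasing}, for \(\alpha<\beta\) one has \(G\setminus D_\alpha=(G\setminus D_\beta)\setminus D_\alpha\), so \(F\res\beta\) is already definable in \(V\) from the single set \(G\setminus D_\beta\in V\) (chain-likeness applied to \(D_\beta\) alone) together with the ground-model sequence \(\tup{D_\alpha\mid\alpha<\beta}\). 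Running your construction directly on the given \(\kappa\)-sequence therefore yields a fresh function of length exactly \(\kappa\), and no minimisation is needed. This is in substance what the paper does, though it phrases the argument as a direct proof of density rather than a contrapositive: for an arbitrary decreasing \(\calD\) and arbitrary \(p\) (with the atomic case below \(p\) disposed of separately, since an atom of a separative forcing lies in every open dense set), it sets \(\ddf^G(\alpha)=G\setminus D_\alpha\), concludes \(\ddf^G\in V\) from the no-fresh-functions hypothesis, hence \(G\setminus\bigcap\calD\in V\), hence \(G\setminus\bigcap\calD\neq G\), hence \(G\) meets \(\bigcap\calD\) and a common extension of \(p\) and that condition witnesses density. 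One further small imprecision: ``impossible for a non-trivial forcing'' should be ``impossible when there is no atom below \(p\)''; in your setting this holds automatically, since an atom below \(p\) would lie in \(\bigcap\calD\), contradicting your choice of \(p\).
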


\subsection{Structure of the paper}

In \cref{s:preliminaries} we lay out preliminaries for the paper, particularly Hartogs and Lindenbaum numbers, symmetric extensions and small violations of choice. In \cref{s:generosity} we discuss the property of descending distributivity for a notion of forcing or a symmetric extension and how it relates to fresh functions. In \cref{s:levys-axiom} we discuss extendable choice, both historic results by Levy as well as new preservation theorems and constructions for controlling extendable choice. In \cref{s:hartlin} we give more refined constructions for eccentric sets and bounds on Hartogs--Lindenbaum spectra for models of \(\SVC\). In \cref{s:future} we re-state open questions asked throughout the paper.

\section{Preliminaries}\label{s:preliminaries}

We work in \(\ZF\). For sets \(X,Y\), we write \(\abs{X}\leq\abs{Y}\) to mean that there is an injection \(X\to Y\), \(\abs{X}\leq^\ast\abs{Y}\) to mean that there is a surjection \(Y\to X\) or \(X=\emptyset\), and \(\abs{X}=\abs{Y}\) to mean that there is a bijection \(X\to Y\). When \(X\) is well-orderable, we denote by \(\abs{X}\) its cardinality, the least ordinal \(\alpha\) such that \(\abs{X}=\abs{\alpha}\). Note that we do not use this notation when \(X\) cannot be well-ordered. We denote the class of ordinals by \(\Ord\). By a \emph{cardinal} we always mean a well-orderable cardinal; that is, \(\alpha\in\Ord\) such that \(\abs{\alpha}=\alpha\). We denote by \(\Inj{X}{Y}\) the set of injections \(X \to Y\). For \(A\) a set of ordinals, we denote by \(\ot(A)\) its \emph{order type}, the unique ordinal \(\alpha\) such that \(\tup{A,{\in}} \cong \tup{\alpha,{\in}}\).

Given a set \(X\), a \emph{choice function} for \(X\) is a function \(f\colon X\to\bigcup X\) such that, for all \(x\in X\), \(f(x)\in x\). Given a function \(F\colon A\to B\), a \emph{selector} for \(F\) is a function \(f\colon A\to\bigcup B\) such that, for all \(a\in A\), \(f(a)\in F(A)\).

A \emph{tree} is a partially ordered set \(\tup{T,{\leq}}\) with least element such that, for all \(t\in T\), \(\Set{s\in T\mid s\leq t}\) is well-ordered by \(\leq\). This gives rise to a notion of rank \(\rnk_T(t)=\ot\Set{s\in T\mid s<t}\), levels \(T_\alpha=\Set{t\in T\mid \rnk(t)=\alpha}\) and height \(\hgt(T)=\sup\Set{\rnk_T(t)+1\mid t\in T}\). \(T\) is \emph{\(\lambda\)-closed} if every chain of order type less than \(\lambda\) in \(T\) has an upper bound. The \emph{principle of dependent choice} (for \(\lambda\)), denoted \(\DC_\lambda\), is the assertion that every \(\lambda\)-closed tree has a maximal node or a chain of order type \(\lambda\). Note that if \(\lambda\) is regular and \(T\) is of height \(\lambda\) then \(\DC_\lambda\) asserts that \(T\) has a maximal node or a \emph{branch}, a downwards-closed chain \(b\subseteq T\) intersecting every level of \(T\).

\subsection{Hartogs and Lindenbaum numbers}

For a set \(X\), the \emph{Hartogs number} of \(X\), denoted \(\aleph(X)\), is defined as \(\min\Set{\alpha\in\Ord\mid\abs{\alpha}\nleq\abs{X}}\). The \emph{Lindenbaum number} of \(X\), denoted \(\aleph^\ast(X)\), is defined as \(\min\Set{\alpha\in\Ord\mid\abs{\alpha}\nleq^\ast\abs{X}}\). For all \(X\) the Hartogs and Lindenbaum numbers are well-defined cardinals such that \(\aleph(X)\leq\aleph^\ast(X)\).

\begin{thm}[Diener, Tarski, folklore]
For all infinite \(X\) and all non-empty \(Y\), the following hold:
\begin{align}
\aleph(X+Y)&=\aleph(X)+\aleph(Y)\label{item:hartlin-a}\\
\aleph(X\times Y)&=\aleph(X)\times\aleph(Y)\label{item:hartlin-b}\\
\aleph^\ast(X+Y)&=\aleph^\ast(X)+\aleph^\ast(Y)\label{item:hartlin-c}\\
\aleph^\ast(X)\times\aleph^\ast(Y) \leq \aleph^\ast(X\times Y) &\leq (\aleph^\ast(X)\times\aleph^\ast(Y))^+\label{item:weak-productivity}
\end{align}
\end{thm}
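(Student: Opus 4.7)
The plan is to prove each equality (and pair of inequalities in the last line) by two directions, following a common template: produce canonical witnesses for the easy direction, and for the hard direction decompose a hypothesised injection or surjection into manageable pieces.

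For the two additive identities, the lower bound combines witnesses: injections \(\alpha\to X\) and \(\beta\to Y\) (for \(\alpha<\aleph(X)\), \(\beta<\aleph(Y)\)) concatenate to an injection \(\alpha+\beta\to X+Y\); the dual for Lindenbaum pre-composes surjections with the projection \(X+Y\twoheadrightarrow X\). For the upper bounds, suppose an injection \(f:\aleph(X)+\aleph(Y)\to X+Y\) (resp.\ surjection \(h:X+Y\twoheadrightarrow\kappa\)) and partition by which summand is hit; this yields well-orderable subsets (resp.\ well-orderable images) of \(X\) and \(Y\) whose order types are strictly below \(\aleph(X)\) and \(\aleph(Y)\) (resp.\ their Lindenbaum counterparts), contradicting the size of the domain (resp.\ codomain).

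For the multiplicative Hartogs identity, the lower bound is immediate by crossing an injection \(\alpha\to X\) with a fixed point of \(Y\). The upper bound I would prove by \emph{restriction to range}: given an injection \(f:\aleph(X)\times\aleph(Y)\to X\times Y\), the image lies in \(X'\times Y'\) where \(X'\) and \(Y'\) are the coordinate projections of the range of \(f\). Each is a well-orderable subset of \(X\), respectively \(Y\) (well-orderable via the minimum-preimage map into the well-ordered domain of \(f\)), and hence has cardinality strictly below \(\aleph(X)\), respectively \(\aleph(Y)\). Since infinite alephs absorb cardinal products, \(\abs{X'}\times\abs{Y'}\) is strictly below \(\aleph(X)\times\aleph(Y)\), contradicting the assumed injection.

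The main obstacle is the upper bound of the multiplicative Lindenbaum inequality, which I would prove by reducing to a well-orderable factor in two steps. Given \(h:X\times Y\twoheadrightarrow\mu^+\) with \(\mu=\aleph^\ast(X)\times\aleph^\ast(Y)\), each slice image \(H_y=h(X,y)\subseteq\mu^+\) is well-orderable of cardinality strictly below \(\aleph^\ast(X)\leq\mu\). Enumerating each \(H_y\) in its natural order, define \(\phi:Y\times\mu\twoheadrightarrow\mu^+\) sending \((y,\alpha)\) to the \(\alpha\)-th element of \(H_y\) when it exists and a fixed default otherwise. Now \(\mu\) is well-ordered, so I apply the analogous slice decomposition indexed by \(\mu\): each \(G_\alpha=\phi(Y,\alpha)\) is a well-orderable subset of \(\mu^+\) with \(\abs{G_\alpha}<\aleph^\ast(Y)\leq\mu\), and \(\mu^+=\bigcup_{\alpha<\mu}G_\alpha\) forces \(\abs{\mu^+}\leq\mu\times\mu=\mu\), contradicting \(\mu^+>\mu\). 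The two-step reduction is essential because the family \(\Set{H_y\mid y\in Y}\) need not be well-orderable in \(\ZF\); re-indexing by the well-ordered set \(\mu\) sidesteps this obstruction.
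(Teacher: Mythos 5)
Your proposal is correct, and for the one item the paper actually proves in detail --- the upper bound \(\aleph^\ast(X\times Y)\leq(\aleph^\ast(X)\times\aleph^\ast(Y))^+\) --- your two-step slice decomposition (collapse the slices over one factor to ordinals below its Lindenbaum number, re-index by that well-ordered bound, then slice again along the ordinal coordinate) is essentially the paper's own argument, differing only in which factor is sliced first. The remaining three identities are cited to Tarski and folklore rather than proved in the paper, and your sketches for them (splitting by summand, and restricting an injection to the well-orderable coordinate projections of its range) are the standard correct arguments.
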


\begin{proof}
\cref{item:hartlin-a,item:hartlin-b} are due to Tarski \cite[Th\'eor\`eme~76]{lindenbaum_communication_1926}. \cref{item:hartlin-c} is a folklore result or corollary of Tarski \cite[Th\'eor\`eme~60]{lindenbaum_communication_1926}. \cref{item:weak-productivity} is a folklore result or corollary of Diener \cite[Theorem~2.1]{diener_transitive_1992}; we give a direct proof in \cref{thm:weak-productivity}.
\end{proof}

We refer to \cref{item:weak-productivity} as the \emph{weak productivity} of Lindenbaum numbers.

\begin{thm}[Folklore]\label{thm:weak-productivity}
If \(X\) is infinite and \(Y\) is non-empty then
\begin{equation*}
\aleph^\ast(X)\times\aleph^\ast(Y)\leq\aleph^\ast(X\times Y)\leq(\aleph^\ast(X)\times\aleph^\ast(Y))^+.
\end{equation*}
\end{thm}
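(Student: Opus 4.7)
Write $\kappa = \aleph^\ast(X)$ and $\lambda = \aleph^\ast(Y)$; since $X$ is infinite, so is $\kappa$. The plan is to handle the two inequalities separately: the lower bound is essentially immediate from the product structure, while the upper bound uses a two-step ``well-ordering trick'' that exploits the definition of the Lindenbaum number.

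The lower bound $\kappa \cdot \lambda \leq \aleph^\ast(X \times Y)$ follows from composing the two projections $X \times Y \to X$ and $X \times Y \to Y$ with surjections $X \to \alpha$ (for $\alpha < \kappa$) and $Y \to \beta$ (for $\beta < \lambda$), which shows that $X \times Y$ surjects onto every ordinal strictly below $\max(\kappa,\lambda)$, and this maximum equals the cardinal product $\kappa \cdot \lambda$ since $\kappa$ is infinite.

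For the upper bound, I would fix an arbitrary surjection $\phi \colon X \times Y \to \beta$ and build an \emph{injection} $\beta \hookrightarrow \kappa \times \lambda$, which delivers $\beta < (\kappa \cdot \lambda)^+$. \emph{Step 1:} Replace $X$ by a well-ordered approximation. For each $y \in Y$, the slice $B_y = \phi[X \times \{y\}]$ is surjected upon by $X$, so $\ot(B_y) < \kappa$; composing $\phi_y$ with the canonical order-iso $\ot(B_y) \to B_y$ and extending by $0$ off the domain produces a surjection $\Psi \colon \kappa \times Y \to \beta$. \emph{Step 2:} Now exploit the well-ordered first factor. For each $\alpha < \kappa$, the slice $C_\alpha = \Psi[\{\alpha\} \times Y]$ is surjected upon by $Y$, hence has order type $<\lambda$; for each $\xi < \beta$, define $\alpha(\xi)$ to be the \emph{least} $\alpha < \kappa$ with $\xi \in C_\alpha$, and $\delta(\xi) < \lambda$ to be the order-rank of $\xi$ within $C_{\alpha(\xi)}$. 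The map $\xi \mapsto (\alpha(\xi), \delta(\xi))$ is then an injection $\beta \to \kappa \times \lambda$, as required.

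The main obstacle — and the reason the argument is a two-step procedure rather than a one-line appeal to $\phi$ — is that one cannot inject $\beta$ directly into $X \times \lambda$ using $\phi$, because $X$ is not in general well-orderable, so there is no canonical way to pick a ``least $x$'' with $\xi \in \phi[\{x\} \times Y]$. Step 1 pays the price of downgrading $\phi$ from a surjection $X \times Y \to \beta$ to a surjection $\kappa \times Y \to \beta$ (no injection in sight yet), but the gain is that the first factor is now well-ordered; this is exactly what makes the minimisation in Step 2 well-defined and produces the injection.
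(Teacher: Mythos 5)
Your proof is correct and follows essentially the same route as the paper's: a two-step replacement of each factor by a well-ordered approximation using the canonical order types of slices, differing only in which coordinate is well-ordered first and in phrasing the final step directly as an injection $\beta\to\kappa\times\lambda$ rather than as a surjection from a well-ordered set.
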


\begin{proof}
\(\aleph^\ast(X)\times\aleph^\ast(Y)\leq\aleph^\ast(X\times Y)\) is straightforward, so we shall present only the other bound.

Let \(f\) be a surjection \(X\times Y\to\lambda\). For \(x\in X\), define \(r_x=f``(\Set{x}\times Y)\), \(\gamma_x=\ot(r_x)<\aleph^\ast(Y)\) and \(\gamma=\sup\Set{\gamma_x\mid x\in X}\leq\aleph^\ast(Y)\). Then we induce a function \(g\colon X\times\gamma\to\lambda\), where \(g(x,\alpha)\) is the \(\alpha\)\textup{th} element of \(r_x\) (if defined). Note that \(g``X\times\gamma=f``X\times Y=\lambda\). For \(\alpha<\gamma\), let \(l_\alpha=g``X\times\Set{\alpha}\), \(\delta_\alpha=\ot(l_\alpha)<\aleph^\ast(X)\) and \(\delta=\sup\Set{\delta_\alpha\mid\alpha<\gamma}\leq\aleph^\ast(X)\). Then we induce a function \(h\colon\delta\times\gamma\to\lambda\), where \(h(\varep,\alpha)\) is the \(\varep\)\textup{th} element of \(l_\alpha\) (if defined). Again \(h``\delta\times\gamma=g``X\times\gamma=\lambda\), so \(\abs{\lambda}\leq^\ast\abs{\delta\times\gamma}\) and \(\abs{\lambda}\leq\abs{\delta\times\gamma}\leq\abs{\aleph^\ast(X)\times\aleph^\ast(Y)}\). Thus \(\lambda<(\aleph^\ast(X)\times\aleph^\ast(Y))^+\) as required.
\end{proof}

The weak productivity of Lindenbaum numbers cannot be replaced by productivity in general. For example, in the Feferman--Levy model (in which \(\bbR\) is a countable union of countable sets, \cite[Model~\(\calM9\)]{howard_consequences_1998})\footnote{Announced in \cite{feferman_independence_1963}. Cohen \cite[p.~143]{cohen_set_1966} is an early work demonstrating this model.} there is a set \(X\) such that \(\aleph(X)=\aleph^\ast(X)=\aleph_1\) but \(\abs{X^2}\leq^\ast\abs{\power(X)}\leq^\ast\abs{\bbR}\leq^\ast\abs{X^2}\). In particular, \(\aleph^\ast(X^2)=\aleph_2\) (result due to Glazer \cite{glazer_mo_2023}).

However, unpublished work of Peng (private communication) shows that Lindenbaum numbers do obey \emph{well-ordered} productivity.
\begin{thm}[Peng]\label{thm:lindenbaum-wo-productivity}
If \(\kappa\) is a cardinal and \(X\) is an infinite set then
\begin{equation*}
\aleph^\ast(X\times\kappa)=\aleph^\ast(X)\times\kappa^+.
\end{equation*}
\end{thm}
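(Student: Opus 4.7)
The plan is to refine the argument of \cref{thm:weak-productivity} by slicing the surjection along the well-ordered coordinate $\kappa$ rather than along $X$. Exploiting the well-orderability of $\kappa$ eliminates one of the ``$(-)^+$'' losses inherent to the weak productivity bound.

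The lower bound is immediate. The projections $X \times \kappa \to X$ and $X \times \kappa \to \kappa$ lift any surjection from $X$ or from $\kappa$ onto an ordinal, giving $\aleph^\ast(X \times \kappa) \geq \aleph^\ast(X)$ and $\aleph^\ast(X \times \kappa) \geq \aleph^\ast(\kappa) = \kappa^+$, whence $\aleph^\ast(X \times \kappa) \geq \aleph^\ast(X) \times \kappa^+$.

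For the upper bound, suppose $f \colon X \times \kappa \to \lambda$ is a surjection. For each $\xi < \kappa$ set $S_\xi = f``(X \times \{\xi\}) \subseteq \lambda$ and $\delta_\xi = \ot(S_\xi)$; since $f(\cdot, \xi) \colon X \to S_\xi$ is surjective we have $\delta_\xi < \aleph^\ast(X)$. Reading off the $\varep$\textup{th} element of $S_\xi$ yields a partial surjection from $\delta \times \kappa$ onto $\lambda$, where $\delta = \sup_{\xi < \kappa} \delta_\xi \leq \aleph^\ast(X)$, so that $\abs{\lambda} \leq \abs{\delta} \cdot \kappa$. When $\delta < \aleph^\ast(X)$ this immediately delivers $\lambda < \aleph^\ast(X) \times \kappa^+$, closing the bound.

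The delicate case is $\delta = \aleph^\ast(X)$, where the $\kappa$-sequence $(\delta_\xi)_{\xi < \kappa}$ is cofinal in $\aleph^\ast(X)$ and forces $\cf(\aleph^\ast(X)) \leq \kappa$. The key observation is that the surjections $f(\cdot, \xi) \colon X \to S_\xi$ are \emph{uniformly} realised by the single function $f$, so the data usually demanding a $\kappa$-sequence of chosen surjections is already built into $f$. Pairing this uniform family with a surjection $X \to \kappa$, which is available because $\kappa < \aleph^\ast(X)$ in the remaining sub-case, should allow one to assemble a single surjection $X \to \aleph^\ast(X)$ and obtain a contradiction with the definition of the Lindenbaum number. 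Converting this uniformly-given family into a single surjection without appealing to any choice principle is the main technical obstacle.
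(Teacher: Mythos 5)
You have correctly isolated the only nontrivial case, but that case is where the entire content of Peng's theorem lives, and your proposal leaves it open. Before addressing that, note that there is no in-paper proof to compare against: the statement is quoted as unpublished work of Peng (private communication), with only the case \(\kappa=\aleph_0\) available in print (Peng--Shen, Lemma~3.6). Your lower bound is fine, and your slicing along the \(\kappa\)-coordinate correctly disposes of every case except the following one: \(\kappa<\aleph^\ast(X)=\nu\), the order types \(\delta_\xi\) are cofinal in \(\nu\) (so \(\cf(\nu)\leq\kappa\)), and one must show that no surjection \(X\times\kappa\to\nu\) exists. Everything up to that point only re-derives the weak productivity bound of \cref{thm:weak-productivity}; the strict improvement is exactly the unhandled case.

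The repair you sketch for that case---pair the uniformly given surjections \(g_\xi\colon X\to\delta_\xi\) with a surjection \(p\colon X\to\kappa\) to ``assemble a single surjection \(X\to\nu\)''---cannot work as described. The map \(x\mapsto g_{p(x)}(x)\) has no reason to be surjective: the witness \(y\) with \(g_\xi(y)=\beta\) need not satisfy \(p(y)=\xi\). The other canonical combination, \(\tup{x,y}\mapsto g_{p(x)}(y)\), is a surjection from \(X\times X\), not from \(X\), and a surjection \(X^2\to\aleph^\ast(X)\) is no contradiction: the paper itself records (Feferman--Levy model, Glazer) a set \(X\) with \(\aleph^\ast(X)=\aleph_1\) but \(\aleph^\ast(X^2)=\aleph_2\), and there \(\cf(\aleph_1)=\omega\), which is precisely the configuration of your hard case with \(\kappa=\omega\). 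So trading the well-ordered factor \(\kappa\) for a second copy of \(X\) genuinely destroys the bound, and any argument closing the gap must exploit the well-orderedness of the second factor in a way your outline does not yet identify. The obstacle you name in your final sentence is not a technical loose end but the theorem itself.
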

Note in this case that \(\aleph^\ast(X)\times\kappa^+=\aleph^\ast(X)\times\aleph^\ast(\kappa)\), as expected. A proof for the case \(\kappa=\aleph_0\) can be found in Peng--Shen \cite[Lemma~3.6]{peng_generalized_2024}.

\subsection{Forcing and symmetric extensions}

Our treatment of forcing is fairly standard, similar to that found in \cite[Chapter~14]{jech_set_2003}. A \emph{notion of forcing} (or simply \emph{forcing}) is a partial order \(\tup{\bbP,{\leq}}\) with maximal element, denoted \(\1\). Elements of \(\bbP\) are called \emph{conditions} and we force downwards, so we say that a condition \(q\) is \emph{stronger} than \(p\) (or \emph{extends} \(p\)) to mean that \(q\leq p\). Two conditions \(p,q\in\bbP\) are said to be \emph{compatible}, written \(p\comp q\), if there is \(r\in\bbP\) such that \(r\leq p\) and \(r\leq q\). If \(p\) and \(q\) are not compatible then we call them \emph{incompatible}, denoted \(p\perp q\). By a \emph{\(\bbP\)-name}, we mean a set \(\ddx\) with elements only of the form \(\tup{p,\ddy}\) for \(p\in\bbP\) and \(\ddy\) a \(\bbP\)-name. We denote the class of \(\bbP\)-names by \(V^\bbP\). Given a collection \(X=\Set{\ddx_i\mid i\in I}\) of \(\bbP\)-names we denote by \(X^\bullet\) or \(\Set{\ddx_i\mid i\in I}^\bullet\) the name \(\Set{\tup{\1_\bbP,\ddx_i}\mid i\in I}\). We inductively define the \emph{check name} of a set \(x\) as \(\check{x}=\Set{\check{y}\mid y\in x}^\bullet\). A set \(G\subseteq\bbP\) is a \emph{\(V\)-generic filter} if: (1) for all \(p,q\in G\) there is \(r\in G\) extending \(p\) and \(q\); (2) if \(p\geq q\in G\) then \(p\in G\); and (3) for all open dense \(D\subseteq\bbP\) in \(V\), \(G\cap D\neq\emptyset\). Given \(V\)-generic \(G\subseteq\bbP\) we interpret a \(\bbP\)-name \(\ddx\) inductively as \(\ddx^G=\Set{\ddy^G\mid(\exists p\in G)\tup{p,\ddy}\in\ddx}\). Note that in this case \((X^\bullet)^G=\Set{\ddx^G\mid \ddx\in X}\) and \(\check{x}^G=x\). We extend the bullet notation to tuples following this context, setting \(\tup{\ddx,\ddy}^\bullet\) to be the canonical name for the tuple \(\tup{\ddx^G,\ddy^G}\) in the extension. The forcing extension by \(V\)-generic \(G\subseteq\bbP\), denoted \(V[G]\), is the class \(\Set{\ddx^G\mid\ddx \in V^\bbP}\). We denote the \emph{forcing relation} for \(\bbP\) by \(\forces_\bbP\), acting as a relation between \(\bbP\) and formulae in the language of set theory with \(\bbP\)-names for free variables.

\subsubsection{Symmetric extensions}

A \emph{symmetric system} is a triple \(\sS=\tup{\bbP,\sG,\sF}\), where \(\bbP\) is a forcing, \(\sG\leq\Aut(\bbP)\) and \(\sF\) is a normal filter of subgroups of \(\sG\). That is, \(\sF\) is a set of subgroups of \(\sG\) such that:
\begin{enumerate}
\item if \(H\geq H'\in\sF\) then \(H\in\sF\);
\item \(\sG\in\sF\);
\item if \(H,H'\in\sF\) then \(H\cap H'\in\sF\); and
\item if \(H\in\sF\) and \(\pi\in\sG\) then \(\pi H\pi^{-1}\in\sF\).
\end{enumerate}
For a cardinal \(\lambda\) we say that a filter of subgroups \(\sF\) is \emph{\(\lambda\)-complete} if for all \(E \in [\sF]^{{<}\lambda}\), \(\bigcap E \in \sF\). We extend the action of \(\sG\) to all \(\bbP\)-names inductively, defining \(\pi\ddx=\Set{\tup{\pi p,\pi\ddy}\mid\tup{p,\ddy}\in\ddx}\). Such automorphisms give rise to the following symmetry lemma.

\begin{lem}[{The Symmetry Lemma, \cite[Lemma~14.37]{jech_set_2003}}]\label{lem:symmetry}
Let \(\bbP\) be a forcing, \(p\in\bbP\), \(\pi\in\Aut(\bbP)\), \(\ddx\) a \(\bbP\)-name and \(\vphi(x)\) a formula. Then \(p\forces_\bbP\vphi(\ddx)\) if and only if \(\pi p\forces_\bbP\vphi(\pi\ddx)\).
\end{lem}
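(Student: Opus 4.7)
The plan is a standard two-step argument: first a lemma about name interpretation under automorphisms, and then a semantic argument using generic filters.

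First I would prove the \emph{name translation lemma}: for every \(\bbP\)-name \(\ddx\), every \(\pi \in \Aut(\bbP)\) and every \(V\)-generic filter \(G \subseteq \bbP\), we have \((\pi\ddx)^{\pi[G]} = \ddx^G\). This proceeds by \(\in\)-induction on \(\ddx\). The element \((\pi \ddx)^{\pi[G]}\) unfolds to \(\{(\pi\ddy)^{\pi[G]} \mid \tup{p,\ddy} \in \ddx, \pi p \in \pi[G]\}\), and since \(\pi p \in \pi[G] \lra p \in G\), the inductive hypothesis delivers equality with \(\ddx^G\). Alongside this I would record the observation that whenever \(G\) is \(V\)-generic, so is \(\pi[G]\): \(\pi\) is an order isomorphism of \(\bbP\), so it sends open dense subsets of \(\bbP\) (which live in \(V\)) to open dense subsets; the filter properties transfer immediately.

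With those two ingredients, the main statement is straightforward. Suppose \(p \forces_\bbP \vphi(\ddx)\) and let \(G\) be any \(V\)-generic filter with \(\pi p \in G\). Then \(\pi^{-1}[G]\) is a \(V\)-generic filter containing \(p\), so \(V[\pi^{-1}[G]] \models \vphi(\ddx^{\pi^{-1}[G]})\). Applying the name translation lemma with \(\pi^{-1}\) in place of \(\pi\) and \(G\) in place of the generic gives \(\ddx^{\pi^{-1}[G]} = (\pi\ddx)^{G}\), and clearly \(V[\pi^{-1}[G]] = V[G]\) since the two generics are interdefinable over \(V\). Hence \(V[G] \models \vphi((\pi\ddx)^G)\) for every such \(G\), i.e. \(\pi p \forces_\bbP \vphi(\pi\ddx)\). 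The converse is just this forward direction applied to \(\pi^{-1}\) and the name \(\pi\ddx\).

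The only step requiring genuine care is the name translation lemma, since one has to verify that the \(\in\)-recursion really does match up both the conditions (via \(\pi p \in \pi[G] \lra p \in G\)) and the names (via the inductive hypothesis). Once that is in place, the rest is bookkeeping: the statement is really just the assertion that the map \(G \mapsto \pi[G]\) is an isomorphism between the generic multiverse over \(p\) and the generic multiverse over \(\pi p\), conjugating \(\ddx\) to \(\pi\ddx\).
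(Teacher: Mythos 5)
The paper does not prove this lemma at all: it is imported verbatim as \cite[Lemma~14.37]{jech_set_2003}, so there is no in-paper argument to compare against. Your proof is nonetheless correct and is one of the two standard routes. The name translation lemma \((\pi\ddx)^{\pi[G]}=\ddx^G\) by \(\in\)-induction, together with the observation that \(\pi[G]\) is \(V\)-generic, is exactly the right decomposition, and the deduction of the two directions from each other via \(\pi^{-1}\) is clean. The alternative (and the one closer to Jech's own presentation, which works with Boolean values and shows \(\lVert\vphi(\pi\ddx)\rVert=\pi\lVert\vphi(\ddx)\rVert\)) is an induction on the complexity of \(\vphi\) using the syntactic definition of \(\forces_\bbP\); that version avoids any appeal to the existence of generic filters. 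Your semantic argument implicitly relies on the forcing theorem (definability plus the truth lemma) and on the usual device for quantifying over generics (countable transitive models, or reading the argument as shorthand for the definable relation), which is standard but worth being aware of. One small point of care: when you invoke the translation lemma ``with \(\pi^{-1}\) in place of \(\pi\)'' you are applying it to the name \(\pi\ddx\) and the generic \(\pi^{-1}[G]\), so what it literally yields is \((\pi^{-1}(\pi\ddx))^{\pi^{-1}[G]}=(\pi\ddx)^{G}\), whence \(\ddx^{\pi^{-1}[G]}=(\pi\ddx)^G\); this is what you assert, but the bookkeeping deserves to be spelled out since the whole content of the lemma lives in exactly this kind of matching.
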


A \(\bbP\)-name \(\ddx\) is \emph{\(\sF\)-symmetric} if \(\sym(\ddx)=\Set{\pi\in\sG\mid\pi\ddx=\ddx}\in\sF\) and it is an \emph{\(\sS\)-name} (or \emph{hereditarily \(\sF\)-symmetric}) if it is \(\sF\)-symmetric and, for all \(\tup{p,\ddy}\in\ddx\), \(\ddy\) is an \(\sS\)-name. We denote the class of \(\sS\)-names by \(V^\sS\). By relativising the forcing relation to the class of \(\sS\)-names we produce a new forcing relation \(\forces_\sS\). This forcing relation obeys the analogue of \cref{lem:symmetry} for \(\pi\in\sG\). A set \(G\) is \emph{\(\sS\)-generic} if it is a \(V\)-generic filter on \(\bbP\).\footnote{This terminology is not standard and upcoming works use `\(\sS\)-generic' to refer to a generalisation of \(\bbP\)-generic filters specific to the symmetric system \(\sS\).} The \emph{symmetric extension} generated by \(\sS\) and \(G\), denoted \(V[G]_\sS\), is the class \(\Set{\ddx^G\mid\ddx \in V^\sS}\).

\begin{thm}[{\cite[Lemma~15.51]{jech_set_2003}}]
Let \(\sS=\tup{\bbP,\sG,\sF}\) be a symmetric system and \(G\) an \(\sS\)-generic filter in \(V \models \ZF\). Then \(V\subseteq V[G]_\sS \subseteq V[G]\) are transitive in \(V[G]\) and are all models of \(\ZF\).
\end{thm}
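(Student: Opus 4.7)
The plan is to verify each of the three claims (inclusions, transitivity, $\ZF$) in order, using as the main tool the fact that $V^\sS$ is closed under standard name-building operations and that the forcing relation $\forces_\sS$ obeys the Symmetry Lemma for every $\pi\in\sG$.

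For the inclusions, I would first observe that every check name is hereditarily $\sF$-symmetric: since $\pi\check{x}=\check{x}$ for every $\pi\in\Aut(\bbP)$, we have $\sym(\check{x})=\sG\in\sF$, and this holds for all subnames by induction on rank. Thus $V\subseteq V[G]_\sS$. The inclusion $V[G]_\sS\subseteq V[G]$ is immediate since $V^\sS\subseteq V^\bbP$. Transitivity of $V[G]_\sS$ in $V[G]$ is built directly into the definition: if $y\in\ddx^G$ then $y=\ddy^G$ for some $\tup{p,\ddy}\in\ddx$, and the hereditary clause ensures $\ddy\in V^\sS$, so $y\in V[G]_\sS$. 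Transitivity of $V$ in $V[G]$ is standard.

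For the $\ZF$ axioms in $V[G]_\sS$, extensionality and foundation are inherited from $V[G]$ via transitivity. For pairing and union, I would exhibit explicit $\sS$-names: $\{\ddx,\ddy\}^\bullet$ is symmetric whenever $\ddx,\ddy$ are, since $\sym(\ddx)\cap\sym(\ddy)\subseteq\sym(\{\ddx,\ddy\}^\bullet)$ and the left-hand side lies in $\sF$ by closure under intersection; similarly for $\bigcup\ddx$, one builds a name from the subnames of subnames of $\ddx$. Infinity is witnessed by $\check{\omega}$. Separation is handled by the standard trick: given $\sS$-name $\ddx$ and a formula $\vphi$, define
\begin{equation*}
\ddy=\Set{\tup{p,\ddz}\mid \tup{q,\ddz}\in\ddx,\ p\leq q,\ p\forces_\sS\vphi(\ddz,\ddu_1,\dots,\ddu_n)};
\end{equation*}
the Symmetry Lemma applied to every $\pi\in\sym(\ddx)\cap\bigcap_i\sym(\ddu_i)$ shows $\pi\ddy=\ddy$, so $\ddy\in V^\sS$.

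The two delicate axioms are replacement and power set, and these will be the main obstacles since naive name constructions may fail to be hereditarily symmetric. For replacement, I would follow the usual strategy: given an $\sS$-name $\ddx$ and a class function $F$ defined by a formula $\vphi$, use replacement in $V$ to collect into one set $A$ all $\sS$-names $\ddz$ (of bounded rank in $V^\sS$) that some condition forces to witness $F$ on some subname of $\ddx$; then $A^\bullet$ is an $\sS$-name (with symmetry group $\sG$) whose interpretation contains the image of $\ddx^G$, and an application of separation in $V[G]_\sS$ finishes. For power set, given $\ddx\in V^\sS$, let $H=\bigcap_{\tup{p,\ddy}\in\ddx}\sym(\ddy)\cap\sym(\ddx)$ (which need not be in $\sF$ in general, so I would instead take the collection of $\sS$-names $\ddy$ whose support in $\sF$ contains $\sym(\ddx)$ or can be enlarged accordingly) and consider the set $B$ of all $\sS$-names of the form $\Set{\tup{p,\ddz}\mid \tup{q,\ddz}\in\ddx,\ p\leq q,\ p\forces\ddz\in\dot{w}}$ for $\dot{w}\in V^\sS$. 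Then $B^\bullet$ is an $\sS$-name, and any $a\in V[G]_\sS$ with $a\subseteq\ddx^G$ has a name in $B$, which yields power set after a final application of separation.
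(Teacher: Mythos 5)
The paper does not prove this statement at all---it is quoted as \cite[Lemma~15.51]{jech_set_2003}---and your argument is, in essence, the standard proof from that source: check names give \(V\subseteq V[G]_\sS\), the hereditary symmetry clause gives transitivity, and the delicate axioms are handled by building names whose symmetry group contains an intersection of finitely many groups from \(\sF\), using the Symmetry Lemma for \(\pi\in\sG\). Your proposal is correct (the false start in the power set case, with the intersection \(H\) over all subnames, is rightly abandoned in favour of the names \(\Set{\tup{p,\ddz}\mid\tup{q,\ddz}\in\ddx,\ p\leq q,\ p\forces\ddz\in\dot{w}}\), each of which is symmetric under \(\sym(\ddx)\cap\sym(\dot{w})\)), so there is nothing to add.
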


The following technique appears in \cite{holy_ordering_2025}.

\begin{fact}[Symmetric mixing]
Let \(\sS=\tup{\bbP,\sG,\sF}\) be a symmetric system and \(\vphi(u,v)\) be a formula in the language of set theory. Then there is a definable class function \(F\) so that for all \(\sS\)-names \(\ddx\) and conditions \(p\in\bbP\), if \(p\forces_\sS(\exists!y)\vphi(y,\ddx)\) then \(F(p,\ddx)\) is an \(\sS\)-name such that \(\sym(F(p,\ddx))\geq\sym(\ddx)\) and \(p\forces_\sS\vphi(F(p,\ddx),\ddx)\).

In particular, if \(X\in V\) and \(p\forces_\sS``\ddf\) is a function with domain \(\check{X}\)'' then there are \(\sS\)-names \(\dda_x\) for \(x\in X\) such that \(\sym(\dda_x)\geq\sym(\ddf)\) and \(p\forces_\sS\ddf(\check{x})=\dda_x\).
\end{fact}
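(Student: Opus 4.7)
The plan is to adapt the standard Maximality Principle (mixing lemma) argument while carefully preserving invariance under $H := \sym(\ddx)$. The hypothesis $p \forces_\sS (\exists!y)\vphi(y,\ddx)$ gives, by the Maximality Principle applied to the relativised forcing relation $\forces_\sS$ on $V^\sS$, some $\sS$-name $\ddy_0$ with $p \forces_\sS \vphi(\ddy_0,\ddx)$. This $\ddy_0$ will not generally be $H$-symmetric, so I would only use it to extract a canonical ordinal $\alpha$, namely the least rank of such a witness. Crucially, $\alpha$ is definable from $p$, $\ddx$, and $\vphi$, and every element of $\ddy_0^G$ is named by some $\sS$-name of rank strictly less than $\alpha$ by construction.

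Next I would set
\begin{equation*}
F(p, \ddx) := \Set*{\tup{r, \ddz} \mid \ddz \in V^\sS,\ \rnk(\ddz) < \alpha,\ r \in \bbP,\ r \forces_\sS (\exists y)(\vphi(y, \ddx) \land \ddz \in y)}.
\end{equation*}
This is the canonical ``maximal'' candidate: it collects every sufficiently low-rank $\sS$-name that is forced into some (necessarily unique) witness of $\vphi$. Symmetry under $H$ is immediate from the Symmetry Lemma --- for $\pi \in H$, the substitution $\tup{r, \ddz} \mapsto \tup{\pi r, \pi \ddz}$ preserves the defining clause since $\pi \ddx = \ddx$ and rank is a $\pi$-invariant of names --- so $\sym(F(p, \ddx)) \supseteq H$, as required.

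To verify $p \forces_\sS \vphi(F(p, \ddx), \ddx)$, I would exploit the uniqueness assumption: for $G \ni p$ generic, let $b$ denote the unique witness in $V[G]_\sS$, so that $\ddy_0^G = b$. The inclusion $F(p, \ddx)^G \subseteq b$ is immediate from the defining clause together with uniqueness. For $b \subseteq F(p, \ddx)^G$: every $c \in b = \ddy_0^G$ has the form $\dot{w}^G$ for some $\tup{q, \dot{w}} \in \ddy_0$ with $q \in G$, so $\dot{w}$ is an $\sS$-name of rank $< \alpha$. Since $\tup{q, \dot{w}} \in \ddy_0$ yields $q \forces_\sS \dot{w} \in \ddy_0$, any common extension $r \in G$ of $q$ and $p$ forces $\dot{w} \in y$ where $y$ is the unique witness, placing $c$ in $F(p, \ddx)^G$.

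The ``in particular'' clause then follows by applying the main part to the formula $\vphi(y, v) := \exists g\, \exists z\, (v = \tup{g, z} \land y = g(z))$ with the $\sS$-name $\tup{\ddf, \check{x}}^\bullet$; since check names are fixed by all of $\sG$, $\sym(\tup{\ddf, \check{x}}^\bullet) = \sym(\ddf)$, giving $\sym(\dda_x) \geq \sym(\ddf)$. The main obstacle is ensuring that the cutoff $\alpha$ is simultaneously large enough to name every element of the witness and definable from the inputs, so that $F$ is a genuine class function rather than depending on non-canonical auxiliary choices; taking $\alpha$ to be the least rank of a Maximality-Principle witness handles both requirements at once.
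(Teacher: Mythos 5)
First, a note on the comparison: the paper does not prove this statement at all --- it records it as a \emph{Fact} with a citation --- so your argument can only be judged on its own merits. The overall strategy is the right one and is surely the intended proof: build the canonical ``maximal'' name out of low-rank \(\sS\)-names that are forced into \emph{some} witness, get symmetry from the Symmetry Lemma (your observation that \(\tup{r,\ddz}\mapsto\tup{\pi r,\pi\ddz}\) preserves the defining clause whenever \(\pi\ddx=\ddx\) is exactly right, and combined with upward closure of \(\sF\) it shows \(F(p,\ddx)\) is an \(\sS\)-name), and use the uniqueness hypothesis to verify both inclusions. The ``in particular'' clause is also handled correctly.

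The gap is your very first step. You obtain \(\ddy_0\) from ``the Maximality Principle applied to the relativised forcing relation \(\forces_\sS\),'' but fullness of \(\forces_\sS\) is not an available black box: the usual proof of the maximality principle mixes names over a maximal antichain, and the mixed name has no reason to be hereditarily symmetric, so fullness for the symmetric forcing relation is essentially the statement being proved (the \(\exists!\) hypothesis is there precisely because unrestricted fullness is problematic). Citing it here is circular. Fortunately \(\ddy_0\) is used only twice, and both uses can be localised. For the cutoff: for every generic \(G\ni p\) the unique witness lies in \(V[G]_\sS\) and so is the interpretation of some \(\sS\)-name, whence densely many \(q\leq p\) satisfy \(q\forces_\sS\vphi(\ddz,\ddx)\) for some \(\sS\)-name \(\ddz\); by Replacement applied to the set of conditions below \(p\) there is a single ordinal \(\alpha\) bounding the least possible ranks of such witnesses, and this \(\alpha\) is definable from \(p\), \(\ddx\) and \(\vphi\). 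For the inclusion \(b\subseteq F(p,\ddx)^G\): instead of reading elements of \(b\) off \(\ddy_0\), pick \(q\in G\) with \(q\leq p\) and an \(\sS\)-name \(\ddz\) of rank below \(\alpha\) such that \(q\forces_\sS\vphi(\ddz,\ddx)\); by uniqueness \(\ddz^G=b\), so every element of \(b\) is named by an \(\sS\)-name of rank below \(\alpha\) occurring in \(\ddz\), and your argument then goes through verbatim. With these two repairs the proof is complete.
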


\subsubsection{Products}

Given a collection \(\Set{\sS_a=\tup{\bbP_a,\sG_a,\sF_a}\mid a\in A}\) of symmetric systems and an ideal \(\calI\) on \(A\), the \emph{\(\calI\)-support product} of the systems is the symmetric system \(\sS=\tup{\bbP,\sG,\sF}\) given by:
\begin{enumerate}
\item \(\bbP\) is the usual \(\calI\)-support product of \(\Set{\bbP_a \mid a \in A}\). That is, conditions in \(\bbP\) are elements \(p \in \prod_{a \in A}\bbP_a\) satisfying \(\Set{a \in A \mid p(a) \neq \1_{\bbP_a}} \in \calI\).
\item \(\sG\) is the subgroup of the direct product \(\prod_{a \in A}\sG_a\) of elements \(\pi\) satisfying \(\Set{a \in A \mid \pi(a) \neq e_{\sG_a}} \in \calI\).\footnote{Here \(e_{\sG_a}\) is the identity element of the group \(\sG_a\).}
\item \(\sF\) is the set of subgroups of \(\sG\) generated by subgroups of the form \(\prod_{a \in A}H_a\), where \(H_a \in \sF_a\) for all \(a\in A\) and \(\Set{a \in A \mid H_a = \sG_a} \in \calI\). Note that \(\sF\) is in fact a normal filter of subgroups of \(\sG\).
\end{enumerate}

If the symmetric systems are indexed by a set \(\Omega\) of ordinals, then the \emph{Easton-support product} is the product given by the ideal
\begin{equation*}
\calI = \Set{E \subseteq \Omega \mid (\forall \alpha \in \Omega)(\cf(\alpha)=\alpha \implies \abs{E \cap \alpha} < \alpha}.
\end{equation*}
We extend this notion to the case that \(\Omega\) is a class of ordinals similarly.

\subsubsection{Properties of forcings}\label{s:preliminaries;ss:forcing;sss:properties}

Let \(\bbP\) be a forcing. \(\bbP\) is \emph{separative} if for all distinct \(p,p'\in\bbP\) exactly one of the following holds: either \(p' < p\) and \(p \nleq p'\); or there is \(q \leq p'\) such that \(q \perp p\). A set \(X \subseteq \bbP\) is: \emph{open} if for all \(p\in X\) and \(q \leq p\), \(q \in X\); \emph{dense} if for all \(p \in \bbP\) there is \(q \in X\) with \(q \leq p\); and an \emph{antichain} if for all distinct \(p, p' \in X\), \(p \perp p'\). We say that an antichain \(A \subseteq \bbP\) is \emph{maximal} if for all \(p \in \bbP\) there is \(p' \in A\) such that \(p \comp p'\) (equivalently if \(A\) cannot be extended to a larger antichain). For the rest of the paper, we assume that all notions of forcing are separative. This may be done without loss of generality by taking separative quotients.

For an ordinal \(\alpha\) we say that \(\bbP\) is \emph{\(\alpha\)-closed} to mean that any (descending) chain in \(\bbP\) of order type less than \(\alpha\) has a lower bound. Let \(\kappa\) be a cardinal. \(\bbP\) is \emph{\(\kappa\)-distributive} if for all ordinals \(\gamma < \kappa\) and all sets \(\calD = \Set{D_\alpha \mid \alpha < \gamma}\) of open dense subsets of \(\bbP\), \(\bigcap \calD\) open dense. \(\bbP\) has the \emph{\(\kappa\)-chain condition} (also written \(\bbP\) \emph{is \(\kappa\)-c.c.}) if every antichain \(A \subseteq \bbP\) is of cardinality strictly less than \(\kappa\).\footnote{This would be `\(\kappa\text{CC}_2\)' in \cite{karagila_chain_2022}, generalising the notion of \(\text{CCC}_2\) in the expected way, though we shall not make use of the distinctions that \cite{karagila_chain_2022} provides.} \(\bbP\) is \emph{\(\kappa\)-sequential} if for all \(V\)-generic \(G\subseteq\bbP\), \((V^{{<}\kappa})^V=(V^{{<}\kappa})^{V[G]}\).

Let \(\calA\) be a set of maximal antichains of \(\bbP\). A \emph{refinement} of \(\calA\) is a maximal antichain \(A_\ast\) such that, for all \(A\in\calA\), \(p\in A\) and \(p'\in A_\ast\), if \(p \comp p'\) then \(p' \leq p\).

\begin{prop}[\(\ZFC\)]\label{prop:distributive-equivalences}
Let \(\bbP\) be a forcing and \(\kappa\) a cardinal. The following are equivalent:
\begin{enumerate}
\item\label{item:distributive} \(\bbP\) is \(\kappa\)-distributive;
\item\label{item:no-sequences} \(\bbP\) is \(\kappa\)-sequential; and
\item\label{item:refinements} for all \(\gamma<\kappa\) and all sets \(\calA=\Set{A_\alpha\mid\alpha<\gamma}\) of maximal antichains, \(\calA\) has a refinement.
\end{enumerate}
\end{prop}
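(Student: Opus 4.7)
The plan is to establish the cycle $(1) \Rightarrow (2) \Rightarrow (1)$ together with the equivalence $(1) \Leftrightarrow (3)$. All three directions are classical; the work lies in choosing the right witnesses and invoking \(\AC\) (available under \(\ZFC\)) to extract maximal antichains inside open dense sets.

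For $(1) \Rightarrow (2)$, suppose $p \forces_\bbP \ddf$ is a function from $\check\gamma$ into the ground model for some $\gamma < \kappa$. For each $\alpha < \gamma$, the set $D_\alpha = \Set{q \leq p \mid (\exists x)(q \forces_\bbP \ddf(\check\alpha) = \check x)}$ is open dense below $p$; by (1), $\bigcap_{\alpha < \gamma} D_\alpha$ is dense below $p$, and any condition therein decides every value of $\ddf$, forcing $\ddf^G \in V$. For $(2) \Rightarrow (1)$, fix open dense $D_\alpha$ for $\alpha < \gamma < \kappa$ and, using \(\AC\), pick maximal antichains $A_\alpha \subseteq D_\alpha$. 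Form the name $\ddf = \Set{\tup{q, \tup{\check\alpha, \check q}^\bullet} \mid \alpha < \gamma,\ q \in A_\alpha}$, whose interpretation sends $\alpha$ to the unique element of $A_\alpha \cap G$. This is forced to be a function $\check\gamma \to \check\bbP$, so by (2) and the forcing theorem, for every $p \in \bbP$ there exist $q \leq p$ and $h \in V$ with $q \forces_\bbP \ddf = \check h$. Since each $A_\alpha$ is an antichain and $\bbP$ is separative, this entails $q \leq h(\alpha) \in D_\alpha$ for every $\alpha$, placing $q \in \bigcap_\alpha D_\alpha$ and giving density.

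For $(1) \Rightarrow (3)$, given maximal antichains $A_\alpha$ ($\alpha < \gamma$), the sets $D_\alpha = \Set{p \in \bbP \mid (\exists r \in A_\alpha)(p \leq r)}$ are open dense; by (1) so is $\bigcap_\alpha D_\alpha$, and any maximal antichain $A_\ast$ contained therein (extracted via \(\AC\)) refines every $A_\alpha$. Conversely, for $(3) \Rightarrow (1)$, extract maximal antichains $A_\alpha \subseteq D_\alpha$ and let $A_\ast$ refine them. Given $p \in \bbP$, maximality of $A_\ast$ supplies $p' \in A_\ast$ with $p \comp p'$; any common extension $q$ then satisfies $q \leq p' \leq r_\alpha$ for some $r_\alpha \in A_\alpha$, so $q \in D_\alpha$ by openness. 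Hence $\bigcap_\alpha D_\alpha$ is dense.

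The main subtlety lies in $(2) \Rightarrow (1)$: one must verify both that $\ddf^G$ is genuinely a total function on $\gamma$ (which uses that each maximal antichain $A_\alpha$ meets every generic filter in exactly one element) and that $q \forces_\bbP \ddf = \check h$ implies $q \leq h(\alpha)$ rather than mere compatibility. The latter is where separativity of $\bbP$ (assumed without loss of generality in \cref{s:preliminaries;ss:forcing;sss:properties}) becomes essential, since it converts the statement ``$q$ forces $\check{h(\alpha)} \in \dot G$'' into the inequality $q \leq h(\alpha)$.
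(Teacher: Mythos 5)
Your proof is correct; the paper itself gives no argument for this proposition, deferring to \cite[Theorem~15.6 and Exercise~15.5]{jech_set_2003}, and your cycle $(1)\Rightarrow(2)\Rightarrow(1)$ plus $(1)\Leftrightarrow(3)$ is exactly the standard argument found there. Your identification of the two delicate points in $(2)\Rightarrow(1)$ --- that the name built from the antichains $A_\alpha\subseteq D_\alpha$ is forced to be a total function, and that separativity is what upgrades ``$q$ forces $\check{h(\alpha)}\in\dot G$'' to $q\leq h(\alpha)\in D_\alpha$ --- is precisely where the content lies, and both are handled correctly.
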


We have not been able to track down the original proofs that constitute \cref{prop:distributive-equivalences}, but a proof may be found in \cite[Theorem~15.6 and Exercise~15.5]{jech_set_2003}.

\begin{thm}[Folklore]\label{thm:closed-implies-distributive-equivalence}
\(\DC_{{<}\kappa}\) is equivalent to the statement `every \(\kappa\)-closed forcing is \(\kappa\)-distributive.'
\end{thm}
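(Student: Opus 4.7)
The plan is to prove both implications, with the backward direction bootstrapped by an outer induction on $\lambda$.

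For the forward direction, assume $\DC_{{<}\kappa}$ and let $\bbP$ be $\kappa$-closed. Given $\gamma<\kappa$, open dense sets $D_\alpha\subseteq\bbP$ for $\alpha<\gamma$, and $p\in\bbP$, I build the tree $T_p$ whose nodes are descending sequences $\langle p_\alpha : \alpha<\beta\rangle$ of length $\beta<\gamma$ with $p_0\leq p$ and $p_\alpha\in D_\alpha$, ordered by end-extension. Any strict chain in $T_p$ has strictly increasing domains, so for a chain of order type $\delta<\gamma$ either the union has domain $<\gamma$ (an upper bound in $T_p$) or the domains reach $\gamma$ and $\kappa$-closure of $\bbP$ immediately supplies a condition in $\bigcap_\alpha D_\alpha$. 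Granted the latter does not finish the job, $T_p$ is $\gamma$-closed with no maximal node (extensions exist by $\kappa$-closure of $\bbP$ plus density of $D_\beta$), so $\DC_\gamma$ produces a chain of order type $\gamma$ whose union is a descending $\gamma$-sequence through the $D_\alpha$, and a further $\kappa$-closure lower bound yields the required $q\leq p$.

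For the backward direction, I induct on $\lambda<\kappa$. Given a $\lambda$-closed tree $T$ with no maximal node, let $\bbP$ be the poset of chains in $T$ of order type $<\lambda$, ordered by reverse inclusion. A $\bbP$-descending chain of length $<\kappa$ has union a chain in $T$ of order type at most $\lambda$: if this is strictly less than $\lambda$ the union is a lower bound in $\bbP$, and if it equals $\lambda$ we have already produced the chain we seek. So unless the conclusion of $\DC_\lambda$ already holds, $\bbP$ is $\kappa$-closed and hence $\kappa$-distributive by hypothesis. For each $\alpha<\lambda$ set $D_\alpha=\Set{c\in\bbP\mid\ot(c)\geq\alpha}$; each $D_\alpha$ is open, and density is proved by sub-induction on $\alpha$. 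The zero and successor cases are handled by ``no maximal node'' plus $\lambda$-closure of $T$; the limit case $\alpha<\lambda$ is settled by applying $\DC_\alpha$ (delivered by the \emph{outer} inductive hypothesis) to the subtree of $T$ above a given condition, producing an extending chain of order type $\alpha$ that either lands back in $\bbP$ or already witnesses the conclusion of $\DC_\lambda$. Then $\lambda$-many open dense sets together with $\kappa$-distributivity force $\bigcap_{\alpha<\lambda}D_\alpha$ to be nonempty, yielding a $\bbP$-condition of length $\geq\lambda$---an impossibility that forces the earlier alternative.

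The main obstacle is precisely the limit case of this density sub-claim: it \emph{is} $\DC_\alpha$ in disguise, and that is why the outer induction on $\lambda$ is doing the real work. Everything else reduces to bookkeeping around two parallel ``win or continue'' dichotomies---the tree $T_p$ becoming $\gamma$-closed in the forward direction, and the forcing $\bbP$ becoming $\kappa$-closed in the backward one---together with the standard use of $\kappa$-closure to turn descending sequences into conditions lying in every $D_\alpha$.
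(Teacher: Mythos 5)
Your proof is correct, and your forward direction is essentially the paper's: thread a descending sequence through the \(D_\alpha\) using \(\DC_\gamma\) and the \(\kappa\)-closure of \(\bbP\), then take a lower bound; making the tree \(T_p\) of approximations explicit is just a careful rendering of the same step, matched to the paper's tree-based formulation of \(\DC\). The backward direction genuinely diverges. The paper argues by contraposition: a \(\gamma\)-closed tree \(T\) with no maximal nodes and no chain of order type \(\gamma\) is, inverted, a \(\kappa\)-closed forcing in which the sets \(\Set{t \in T \mid \rnk_T(t) \geq \alpha}\) are open dense with empty intersection. You instead argue directly, proving \(\DC_\lambda\) by induction on \(\lambda < \kappa\) via the forcing of chains of order type \({<}\lambda\). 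Both routes meet at the same pressure point, which you correctly isolate: density of the rank-/length-based open sets at limit stages is itself an instance of \(\DC_\alpha\). The paper's sketch elides this; to repair it one takes \(\gamma\) least such that \(\DC_\gamma\) fails, so that \(\DC_{{<}\gamma}\) is available for the density argument---exactly the bootstrap your outer induction performs explicitly. Your version is longer but self-contained; the paper's is slicker but leans on the unstated minimality of \(\gamma\). Two cosmetic points, neither a gap: ordering your \(\bbP\) by reverse inclusion rather than end-extension means the union of a descending chain of conditions can have order type slightly exceeding \(\lambda\) (new elements may be interleaved below old ones), but an initial segment of order type \(\lambda\) still lands you in the `win' branch of your dichotomy; and for successor \(\lambda\) the intersection of the \(D_\alpha\) for \(\alpha<\lambda\) only yields a chain of order type the predecessor of \(\lambda\), which needs one last application of closure and `no maximal node'.
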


\begin{proof}[Sketch proof]
(\(\impliedby\)).\quad{}Assume that \(\DC_{{<}\kappa}\) fails, witnessed by a \(\gamma\)-closed tree \(T\) with no maximal nodes but no chain of order type \(\gamma\). Then, taking \(T\) to be a forcing by inverting the order operation, we must have that \(T\) is in fact \(\kappa\)-closed. However, \(T\) is not \(\kappa\)-distributive, for the open dense sets \(D_\alpha = \Set{t \in T \mid \rnk_T(t) \geq \alpha}\) have empty intersection.

(\(\implies\)).\quad{}Let \(\bbP\) be a \(\kappa\)-closed forcing and let \(\calD=\Set{D_\alpha\mid\alpha<\gamma}\) be a collection of open dense sets for some \(\gamma<\kappa\). Certainly \(\bigcap\calD\) is open, so it remains to show that it is dense. However, by the \(\kappa\)-closure of \(\bbP\), the density of the \(D_\alpha\) and by \(\DC_\gamma\), we may find a chain \(\tup{p_\alpha \mid \alpha < \gamma}\) such that for all \(\alpha < \beta < \gamma\), \(p \geq p_\alpha \geq p_\beta\) and \(p_\beta \in D_\beta\). Then a lower bound for this chain will be an element of \(\bigcap \calD\).
\end{proof}

The case \(\kappa=\aleph_0\) of \cref{thm:closed-implies-distributive-equivalence} was remarked by Fuchs in \cite{fuchs_geology_2015}.


\begin{defn}
For two notions of forcing \(\bbP\) and \(\bbQ\), a function \(\pi \colon \bbP \to \bbQ\) is a \emph{dense embedding} if:
\begin{enumerate}
\item for all \(p,p' \in \bbP\), \(p' \leq_\bbP p\) if and only if \(\pi(p') \leq_\bbQ \pi(p)\); and
\item \(\pi``\bbP\) is a dense subset of \(\bbQ\).
\end{enumerate}
We say that two notions of forcing are \emph{equivalent} if there is a dense embedding between them.
\end{defn}

We shall sometimes work with complete Boolean algebras, as these are---in a sense---canonical representatives of notions of forcing. Given a complete Boolean algebra, we denote its top element by \(\1\) and its bottom element by \(\0\). When thought of as a notion of forcing, the bottom element is removed from the algebra, but we may still refer to it in some cases for simplicity of notation. Given a forcing \(\bbP\) there is a unique (up to isomorphism) complete Boolean algebra that \(\bbP\) densely embeds into (see \cite[Theorem~14.10]{jech_set_2003}).

\subsection{Small violations of choice}

In \cite{blass_injectivity_1979} Blass introduces the axiom \emph{small violations of choice}. For a set \(S\) (known as the \emph{seed}), \(\SVC(S)\) is the statement `for all \(X\) there is an ordinal \(\eta\) such that \(\abs{X}\leq^\ast\abs{S\times\eta}\),' and \(\SVC^+(S)\) is the injective version `for all \(X\) there is an ordinal \(\eta\) such that \(\abs{X}\leq\abs{S\times\eta}\).' Finally, \(\SVC\) is the statement \((\exists S)\SVC(S)\).

\begin{fact}
For all \(S\), \(\SVC^+(S)\implies\SVC(S)\implies\SVC^+(\power(S))\). Consequently, \((\exists S)\SVC^+(S)\) is equivalent to \(\SVC\).
\end{fact}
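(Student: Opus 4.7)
The plan is to prove the two implications directly from the definitions and then deduce the equivalence.

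For \(\SVC^+(S) \implies \SVC(S)\), the argument is immediate: an injection always yields a surjection in the reverse direction provided the domain is non-empty, and the empty case is built into the definition of \(\leq^\ast\). So any witness \(\eta\) to \(\abs{X} \leq \abs{S \times \eta}\) also witnesses \(\abs{X} \leq^\ast \abs{S \times \eta}\).

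For \(\SVC(S) \implies \SVC^+(\power(S))\), fix a set \(X\); we may assume \(X\) is non-empty since otherwise \(\abs{X} \leq \abs{\power(S) \times 0}\) trivially. By \(\SVC(S)\) choose \(\eta\) and a surjection \(f \colon S \times \eta \to X\). For each \(x \in X\), the fibre \(f^{-1}(x)\) is non-empty, so we may define
\begin{equation*}
\alpha_x = \min\Set*{\alpha < \eta \mid (\exists s \in S)\,(s,\alpha) \in f^{-1}(x)}, \qquad A_x = \Set*{s \in S \mid (s,\alpha_x) \in f^{-1}(x)}.
\end{equation*}
The map \(g \colon X \to \power(S) \times \eta\) sending \(x \mapsto \tup{A_x, \alpha_x}\) is then an injection: if \(g(x) = g(y) = \tup{A, \alpha}\), then \(A\) is non-empty by construction, so picking any \(s \in A\) we have \(f(s,\alpha) = x\) and \(f(s,\alpha) = y\), whence \(x = y\). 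This yields \(\abs{X} \leq \abs{\power(S) \times \eta}\), as required.

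The equivalence \((\exists S)\SVC^+(S) \lra \SVC\) then follows by chaining the two implications: from \(\SVC\) pick a seed \(S\) with \(\SVC(S)\) and use the second implication to obtain \(\SVC^+(\power(S))\); the converse is the first implication together with existential quantification over \(S\). There is no real obstacle here; the only mildly substantive point is the construction of the injection in the second implication, where one exploits the well-ordering on \(\eta\) to choose a canonical fibre-slice \(A_x \subseteq S\) from each fibre \(f^{-1}(x) \subseteq S \times \eta\).
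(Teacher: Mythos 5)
Your proof is correct and is exactly the standard argument (the paper states this as a Fact without proof, deferring to Blass): the first implication is the trivial passage from injections to surjections, and the second uses the well-ordering of \(\eta\) to canonically select the least fibre-slice \(A_x\subseteq S\), giving the injection \(x\mapsto\tup{A_x,\alpha_x}\) into \(\power(S)\times\eta\). No gaps; the injectivity check via the non-emptiness of \(A_x\) is the one point that needs saying, and you say it.
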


Blass demonstrates several key ideas that make models of \(\SVC\) natural to study. For example, he notes that if \(V\models\ZF+\SVC(S)\) then, after forcing with \(S^\lomega\), \(\AC\) will hold. Conversely, if \(\1\forces_\bbP\AC\) then \(\SVC(\bbP)\) holds.

\begin{thm}[Blass]
``A model of \(\ZF\) satisfies \(\SVC\) if and only if some generic extension of it is a model of \(\ZFC\)'' \textup{\cite[Theorem~4.6, p.~45]{blass_injectivity_1979}}.
\end{thm}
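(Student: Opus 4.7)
The statement is an equivalence, so my plan is to prove each direction by constructing explicit surjections.

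For the reverse direction, I suppose $V[G] \models \ZFC$ with $G$ being $V$-generic for some $\bbP \in V$, and show $V \models \SVC(\bbP)$. Given $X \in V$, I invoke $\AC$ in $V[G]$ to get a bijection $f \colon \kappa \to X$ for some $\kappa \in \Ord$, and fix a name $\ddf \in V^\bbP$ for $f$. Working in $V$, I define $\phi \colon \bbP \times \kappa \to X$ by $\phi(p, \alpha) = x$ when $p \forces_\bbP \ddf(\check{\alpha}) = \check{x}$ (the $x$ is then unique) and $\phi(p, \alpha) = x_0$ otherwise, for some fixed $x_0 \in X$. Surjectivity follows from genericity: for each $x \in X$, some $p \in G$ decides $\ddf(\check{\alpha}) = \check{x}$ for $\alpha = f^{-1}(x)$. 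Hence $\abs{X} \leq^\ast \abs{\bbP \times \kappa}$ in $V$, establishing $\SVC(\bbP)$.

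For the forward direction, I assume $V \models \SVC(S)$ and force with $\bbP = S^\lomega$, the set of finite sequences from $S$ ordered by reverse end-extension. For each $s \in S$, the set $\Set{\sigma \in \bbP \mid s \in \mathrm{ran}(\sigma)}$ is trivially dense (append $s$), so the generic $F = \bigcup G$ is a surjection $\omega \to S$; taking least preimages gives an injection $S \to \omega$ and hence a well-ordering of $S$ in $V[G]$. By $\SVC(S)$, every $X \in V$ admits a surjection $S \times \eta \to X$ in $V$ for some $\eta \in \Ord$; this surjection persists to $V[G]$, where $S \times \eta$ is now well-orderable, so $X$ is too. Finally, every $Y \in V[G]$ is $\ddx^G$ for some $\ddx \in V^\bbP \subseteq V$, and the evaluation map $(p, \ddy) \mapsto \ddy^G$ on pairs with $p \in G$ gives a surjection from the (well-orderable in $V[G]$) set $\ddx$ onto $Y$, so $Y$ is well-orderable. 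Thus $V[G] \models \AC$.

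No single step is a genuine obstacle, but the delicate point in the forward direction is to keep the argument honestly in $\ZF$: each $\leq^\ast$-bound must be upgraded to an actual well-ordering via least-preimage injections rather than by a naive inversion, since $\AC$ is not available in the ground model. With this bookkeeping, and the standard verification that $V[G] \models \ZF$, the argument reduces to a short chase through the definitions.
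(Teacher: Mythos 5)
Your proof is correct and follows essentially the same route the paper itself indicates (and that Blass's original argument takes): forcing with \(S^\lomega\) to make the seed countable and hence every set well-orderable in the extension, and, conversely, reading a surjection \(\bbP\times\kappa\to X\) off a name for a well-ordering via the forcing relation. The paper only cites this result without proof, so there is nothing further to compare.
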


Several other notable models of \(\ZF\) are also shown to satisfy \(\SVC\).

\begin{thm}[{Blass \cite[Theorems~4.2 to 4.5]{blass_injectivity_1979}}]
The following are models of \(\SVC\):
\begin{enumerate}
\item permutation submodels of models of \(\ZFA+\AC\) (that is, Fraenkel--Mostowksi models);\footnote{Hall \cite{hall_characterization_2002,hall_permutation_2007} dives deeper into this topic.}
\item symmetric extensions of models of \(\ZFC\);\footnote{This requires considerable machinery from Grigorieff \cite{grigorieff_intermediate_1975} and finding \(\SVC\) seeds for arbitrary symmetric extensions is still a difficult problem.}
\item models of the form \(\HOD(X)\);\footnote{If \(S=(X\cup\Set{X})^\lomega\in\HOD(X)\) then \(\HOD(X)\models\SVC(S)\). Otherwise one requires a certain ``contortion'' to produce the seed.} and
\item models of the form \(L(X)\).\footnote{If \(S\) is the transitive closure of \(X\) then \(L(X)=L(S)\models\SVC(S^\lomega)\).}
\end{enumerate}
\end{thm}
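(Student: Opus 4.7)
The plan is to treat each of the four cases by exhibiting a natural seed and arguing that every set in the model is a surjective image of that seed times an ordinal. The unifying theme is: whenever each element of a model $N$ can be coded by an ordinal together with finitely many parameters drawn from a set $S \in N$, one obtains $\SVC(S^\lomega)$ inside $N$. For $L(X)$ and $\HOD(X)$ this is a direct definability argument; for FM models and symmetric extensions it becomes a support argument; but the structural input is the same, namely that definability or support yields a well-orderable parameter space modulo a fixed base $S$.

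For (4), let $S$ be the transitive closure of $X$, so $L(X) = L(S)$. Every element of $L(S)$ lies in some $L_\alpha(S)$ and is definable over a previous level from a formula and a finite tuple of parameters; iterating this reduction down to the ground layer, every set of $L(X)$ is coded by a pair $\tup{\alpha,\bar s}$ with $\alpha \in \Ord$ and $\bar s \in S^\lomega$, and since this coding is itself definable in $L(X)$ one reads off $\SVC(S^\lomega)$. Case (3), $\HOD(X)$, runs on the same template with ordinal-definability from $X$ in place of constructibility; when $(X \cup \Set{X})^\lomega$ lies in $\HOD(X)$ this set is the seed directly, and otherwise the footnoted ``contortion'' supplies a substitute. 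Case (1) similarly uses $A^\lomega$ where $A$ is the set of atoms: a set $Y$ in the FM model with support $s_0$ partitions under the pointwise stabiliser of $s_0$ into orbits, and using $\AC$ in the ambient $\ZFA+\AC$ universe one well-orders a system of orbit representatives; each $y \in Y$ is then determined by an ordinal (its representative $y_0$) together with a finite tuple of atoms encoding the action on $\supp(y_0)$ of an automorphism sending $y_0$ to $y$, so $Y$ injects into $\Ord \times A^\lomega$.

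The main obstacle will be (2), symmetric extensions; the footnote already warns that finding $\SVC$ seeds for arbitrary symmetric extensions is hard in general. The approach I would take is to invoke Grigorieff's characterisation of intermediate models between $V$ and $V[G]$ as those of the form $V[A]$ for a suitable generic object $A$, and then to select a seed built from $\bbP$ together with an encoding of the relevant symmetric names. Alternatively, one can argue directly from the system $\sS = \tup{\bbP,\sG,\sF}$: every set in $V[G]_\sS$ has an $\sS$-name whose symmetry group lies in $\sF$, and by the Symmetry Lemma (\cref{lem:symmetry}) it can be coded by a condition of $\bbP$, an ordinal, and parameters drawn from a transversal of $\sG$ over the members of $\sF$, giving a seed built from $\bbP$ and those parameters. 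The delicacy in either route lies in arranging that the chosen coding is genuinely internal to $V[G]_\sS$, not merely to $V[G]$.
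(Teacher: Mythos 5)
This theorem is quoted from Blass and the paper supplies no proof of its own beyond the footnoted hints, so the comparison can only be against those hints and the standard arguments they point to. Your treatments of (1), (3) and (4) are essentially the intended ones and match the paper's footnotes: the seed \(\operatorname{tc}(X)^\lomega\) for \(L(X)\), the seed \((X\cup\Set{X})^\lomega\) for \(\HOD(X)\) (modulo the ``contortion'' when that set is absent), and the support/orbit argument for Fraenkel--Mostowski models. Two small caveats there: in the FM case what you actually produce in the model is a \emph{surjection} \(\Ord\times A^\lomega\to Y\) (the choice of a tuple \(\bar a\) witnessing \(y\) is not canonical, so the ``injection'' phrasing is off, though harmless since \(\SVC(S)\) is what is wanted); and your argument as written assumes finite supports, whereas for general normal filters the seed must be adapted to the ideal of supports --- this is exactly the territory the paper defers to Hall.

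The genuine gap is case (2). You correctly name Grigorieff as the key input, but neither of your proposed routes is the argument that works, and the ``direct'' route --- coding symmetric names by conditions, ordinals and a transversal of \(\sG\) over \(\sF\) internally to \(V[G]_\sS\) --- is precisely the approach the paper's footnote warns is still an open difficulty. The mechanism Blass actually uses is different in kind: Grigorieff's theorem shows that \(V[G]\) is itself a \emph{set-generic extension of the symmetric submodel} \(N=V[G]_\sS\) by a forcing \(\bbQ\in N\); since \(V[G]\models\AC\), one then applies the easy converse half of Blass's equivalence (if \(\1\forces_\bbQ\AC\) then \(\SVC(\bbQ)\) holds, because a \(\bbQ\)-name for a well-ordering of a set \(Y\) can be mixed into a surjection \(\bbQ\times\eta\to Y\) living in \(N\)). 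That is, one never exhibits a seed by analysing names at all; the seed is the quotient forcing itself. Without this step your proposal does not establish (2).
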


Usuba \cite{usuba_choiceless_2021} later showed that (in \(\ZF\)) being a model of \(\SVC\) is in fact \emph{equivalent} to the assertion that the universe is a symmetric extension of a model of \(\ZFC\). Thus, one concludes \cref{thm:blass-usuba}.

\begin{thm}[{Blass \cite{blass_injectivity_1979} and Usuba \cite{usuba_choiceless_2021}}]\label{thm:blass-usuba}
The following are equivalent:
\begin{enumerate}
\item \(\SVC\).
\item There is a forcing \(\bbP\) such that \(\1 \forces_\bbP \AC\).
\item There is an inner model \(V\subseteq M\) with \(V \models \ZFC\) and a symmetric system \(\sS\in V\) such that, for some \(\sS\)-generic filter \(G\), \(M=V[G]_\sS\).
\item There is an inner model \(V\subseteq M\) with \(V\ \models \ZFC\) and a set \(X\in M\) such that \(M=V(X)\).
\end{enumerate}
\end{thm}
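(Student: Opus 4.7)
The plan is to prove the cyclic collection (1) \(\iff\) (2), (3) \(\iff\) (4), (3) \(\implies\) (1), and (1) \(\implies\) (3), attributing the first three blocks to Blass and the last to Usuba. This closes the four-way equivalence.

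For (1) \(\iff\) (2), I use the stated Fact that \(\SVC(S)\) implies \(\SVC^+(\power(S))\), so one may upgrade the seed to one witnessing \(\SVC^+\). Given \(\SVC^+(T)\), I force with the Levy collapse \(\operatorname{Coll}(\omega, T)\); in the extension, \(T\) becomes countable, so every set \(X\) is injectively contained in some \(T \times \eta\), hence in an ordinal, giving \(\AC\). Conversely, assuming \(\1 \forces_\bbP \AC\), fix \(X \in V\); then \(\1\) forces \(\check{X}\) to be in bijection with some ordinal, so by refining antichains below \(\1\) I obtain a maximal antichain \(A\) with associated ordinals \(\eta_a\), each \(a \in A\) forcing a specific bijection \(\check{X} \to \check{\eta_a}\). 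Setting \(\eta = \sup_a \eta_a\) and using symmetric mixing to pin down witnesses, for each \((a, \alpha) \in A \times \eta\) there is a canonical associated \(x \in X\); this defines a surjection \(A \times \eta \to X\), which extends to \(\bbP \times \eta \to X\), establishing \(\SVC(\bbP)\).

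For (3) \(\iff\) (4): given \(M = V[G]_\sS\), I extract a set \(X \in M\) generating \(M\) over \(V\)---for instance by taking \(X\) to be \(V[G]_\sS \cap V_\alpha^{V[G]}\) for \(\alpha\) large enough that every \(\sS\)-name lies (up to equivalence) in \(V_\alpha\)---and show \(M = V(X)\) by \(\in\)-induction on \(\sS\)-names. Conversely, given \(M = V(X)\), I force in \(V\) with \(\operatorname{Coll}(\omega, \abs{\operatorname{TC}(X)}^V)\), using the pointwise stabiliser of an enumeration of \(\operatorname{TC}(X)\) as the defining data for the symmetry filter; the symmetric extension recovers \(V(X)\). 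For (3) \(\implies\) (1), I simply apply Blass's theorem cited in the excerpt that symmetric extensions of \(\ZFC\) models satisfy \(\SVC\).

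The main obstacle is Usuba's direction (1) \(\implies\) (3): given \(M \models \SVC(S)\), construct a \(\ZFC\) inner model \(V \subseteq M\), a symmetric system \(\sS = \tup{\bbP, \sG, \sF} \in V\), and a generic \(G \in M\) with \(M = V[G]_\sS\). The approach is to take \(V\) to be a \(\HOD\)-style inner model relative to canonical ordinal parameters coding the seed, and then invert Grigorieff's correspondence between intermediate models of forcing extensions and symmetric extensions: \(M\) sits inside a \(\ZFC\) forcing extension \(V[H]\) obtained via seed-collapse as in (1) \(\implies\) (2), and the symmetric structure carving out \(M\) inside \(V[H]\) should correspond to a filter of subgroups definable in \(V\). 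The central difficulty is simultaneously arranging \(V \models \AC\) and \(\sS \in V\) (not merely \(\sS \in M\)); this is what fixes the appropriate definability class for \(V\) and is where Usuba's theorem does the substantive work.
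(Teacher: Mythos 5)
The paper does not actually prove this theorem: it is assembled purely by citation, with (1)\(\iff\)(2) and the implications from (3) and (4) back to (1) attributed to Blass, and the hard converse (1)\(\implies\)(3),(4) quoted as a black box from Usuba. So the comparison is really with those sources, and measured against them your proposal has concrete gaps beyond the step you openly defer.

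First, in (2)\(\implies\)(1) you assert a maximal antichain \(A\) with ``each \(a \in A\) forcing a specific bijection \(\check{X} \to \check{\eta}_a\)''. A condition cannot force a specific bijection unless \(X\) is already well-orderable in \(V\); the bijection is a new object of the extension, and \(a\) can only decide the ordinal \(\eta_a\). Consequently the map \(A \times \eta \to X\) you describe is not well defined, and ``symmetric mixing'' is not available since no symmetric system is in play. Blass's actual argument defines the surjection on all of \(\bbP \times \eta\) by sending \(\tup{p,\alpha}\) to the unique \(x\) with \(p \forces_\bbP \dot{f}(\check{x}) = \check{\alpha}\) when such exists; this repairs the step. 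Second, your direct argument for (3)\(\implies\)(4) fails: no single \(\alpha\) makes \(X = V_\alpha^{V[G]} \cap V[G]_\sS\) generate \(M\) over \(V\), because evaluations of \(\sS\)-names have unbounded rank in \(M\), and the inclusion you would need (\(M \subseteq V(X)\)) is the hard direction. In the literature the equivalence of (3) and (4) is not obtained by direct translation but by routing both through \(\SVC\), i.e., through (1), and the inner model \(V\) witnessing (4) need not be the one witnessing (3). Third, your (1)\(\implies\)(3) is a description of where the difficulty lies rather than an argument --- as you say yourself, ``this is where Usuba's theorem does the substantive work.'' That stance matches the paper's own treatment, but it means the proposal is a proof modulo Usuba (and modulo the repairs above), not a self-contained one.
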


\subsection{Inner models of \(\ZF\)}

We shall require the following theorem that first appeared in \cite{hajnal_consistency_1956} (but can also be found as \cite[Theorem~13.9]{jech_set_2003}) to show that the symmetric extension in \cref{thm:new-models} is a model of \(\ZF\).

\begin{thm}[Hajnal]\label{thm:inner-model}
A transitive class \(M\) is an inner model of \(\ZF\) if and only if it is \emph{almost universal} (that is, for all sets \(x\subseteq M\) there is \(y\in M\) such that \(x\subseteq y\)) and closed under the following \emph{G\"odel operations:}
\begin{align*}
G_1(X,Y)&=\Set{X,Y}&G_6(X)&=\bigcup X\\
G_2(X,Y)&=X\times Y&G_7(X)&=\Set{u\mid(\exists v)\tup{u,v}\in X}\\
G_3(X,Y)&=\Set{\tup{u,v}\in X\times Y\mid u\in v}&G_8(X)&=\Set{\tup{u,v}\mid\tup{v,u}\in X}\\
G_4(X,Y)&=X\setminus Y&G_9(X)&=\Set{\tup{u,v,w}\mid\tup{u,w,v}\in X}\\
G_5(X,Y)&=X\cap Y&G_{10}(X)&=\Set{\tup{u,v,w}\mid\tup{v,w,u}\in X}.
\end{align*}
\end{thm}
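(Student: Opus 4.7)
The plan is to prove each direction separately, with essentially all the work in the reverse direction. For the forward direction, suppose $M$ is an inner model of $\ZF$. Each Gödel operation $G_i$ is defined by a $\Delta_0$ formula, so by $\Delta_0$-absoluteness of the transitive class $M$ (together with the fact that $M\models\ZF$ guarantees the resulting set exists in $M$) one has $G_i(X,Y)\in M$ for all $X,Y\in M$. Almost universality follows from reflection: any $x\subseteq M$ has a bound on the $M$-ranks of its elements, so $V_\alpha^M\in M$ contains $x$ for some $\alpha$.

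For the reverse direction the principal intermediate result is that closure under $G_1,\ldots,G_{10}$ (together with transitivity) implies closure under arbitrary $\Delta_0$-definable operations; in particular, $M$ satisfies $\Delta_0$-Separation. One proves by induction on the complexity of a $\Delta_0$ formula $\vphi(u,\vec{v})$ that the map $(z,\vec{v})\mapsto\Set{u\in z\mid\vphi(u,\vec{v})}$ is a finite composition of the $G_i$: the atomic case $u\in v$ uses $G_3$, equality reduces to $\in$-formulas via $G_4,G_5$ and extensionality, Boolean connectives use $G_4,G_5$, and bounded existential quantifiers use $G_7$ after coordinate permutations by $G_8,G_9,G_{10}$ and products by $G_2$.

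With $\Delta_0$-Separation in hand, the easier axioms follow quickly: Pairing and Union from $G_1$ and $G_6$; Extensionality and Foundation from transitivity of $M$; Infinity by using $G_1,G_6$ to form each finite von Neumann ordinal, applying almost universality to gather them into one $y\in M$, and extracting $\omega$ by $\Delta_0$-Separation. Power Set is handled similarly: for $x\in M$ the class $\power(x)\cap M$ is contained in some $y\in M$ by almost universality, and $\Delta_0$-Separation produces $\Set{z\in y\mid z\subseteq x}$.

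The main obstacle is full Separation and Replacement for formulas that are not $\Delta_0$. The standard manoeuvre is to invoke the Reflection Theorem relative to $M$: for any formula $\vphi$ and set of parameters, find $\alpha$ large enough that $\vphi^M$ is absolute between $M$ and $M\cap V_\alpha^M$. Almost universality places $M\cap V_\alpha^M$ inside some $w\in M$, and one extracts $M\cap V_\alpha^M$ itself as an element of $M$ by $\Delta_0$-Separation using bounded rank. Then the desired separation subset is obtained by one further $\Delta_0$-Separation with $M\cap V_\alpha^M$ as a parameter. Replacement follows by the same reflection strategy together with almost universality to contain the image class within a single element of $M$.
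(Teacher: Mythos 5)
The paper does not prove this theorem: it is stated as a classical result of Hajnal, with the reader directed to \cite{hajnal_consistency_1956} and \cite[Theorem~13.9]{jech_set_2003} for a proof. Your argument is essentially that textbook proof, so there is no divergence of approach to report: the forward direction (\(\Delta_0\)-absoluteness for the \(G_i\), plus rank-boundedness of any set \(x\subseteq M\) for almost universality) and the reduction of the reverse direction to G\"odel's normal form theorem together with a reflection argument are the standard route.

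Two steps in your sketch are glossed over and one of them is a genuine gap as written. To use \(\vphi^{M\cap V_\alpha}\) as a \(\Delta_0\) formula you need \(M\cap V_\alpha\in M\), and ``\(\Delta_0\)-Separation using bounded rank'' does not deliver this: ``\(\rnk(u)<\alpha\)'' is not a \(\Delta_0\) predicate. The successor step is fine, since \(M\cap V_{\alpha+1}=\Set{u\in y\mid u\subseteq M\cap V_\alpha}\) for any \(y\in M\) covering it, with \(M\cap V_\alpha\) as a parameter; but at limit stages you cannot yet run an internal recursion in \(M\) (you are still bootstrapping Separation), so an additional argument is required. The usual repairs are either a more careful induction establishing \(M\cap V_\alpha\in M\) at limits, or bypassing reflection altogether and proving Separation by induction on the formula, using Collection in \(V\) to bound the witnesses of each existential quantifier inside a set and almost universality to place that set inside an element of \(M\) before applying \(G_7\); the latter is closer to G\"odel's original argument and avoids the issue entirely. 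Separately, Replacement does not need another round of reflection: once Separation holds, the image of a set under an \(M\)-definable function is a subset of \(M\), hence contained in some \(Y\in M\) by almost universality, and is then carved out of \(Y\) by a single instance of Separation. The overall architecture is correct and matches the cited source, but the limit-stage justification should be fixed.
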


\section{Descending distribution}\label{s:generosity}

\begin{defn}\label{defn:generous}
A forcing \(\bbP\) is \emph{\(\kappa\)-descending distributive} if for all decreasing sequences \(\tup{D_\alpha \mid \alpha < \kappa}\) of open dense sets (that is, for \(\alpha < \beta < \kappa\), \(D_\alpha \supseteq D_\beta\)), \(\bigcap_{\alpha < \kappa} D_\alpha\) is open dense.

We also have the following equivalent characterisation (see \cref{thm:generous-equivalence-decreasing-open-dense}). Let \(\calD = \tup{ D_\alpha \mid \alpha < \kappa }\) be an arbitrary sequence of open dense subsets of a forcing \(\bbP\), where \(\kappa\) is a cardinal. We associate to \(\calD\) the rank function \(\rnk_\calD \colon \bbP \to \kappa + 1\) given by \(\rnk_\calD(p) = \sup \Set{ \alpha < \kappa \mid p \in D_\alpha}\). Then a forcing \(\bbP\) is \(\kappa\)-descending distributive if for all \(\calD\), \(\Set{p \in \bbP \mid \rnk_\calD(p) = \kappa}\) is open dense.

\(\bbP\) is \emph{strongly \(\kappa\)-descending distributive} if for all sequences \(\tup{D_\alpha \mid \alpha < \kappa}\) of open dense sets there is unbounded \(I \subseteq \kappa\) such that \(\bigcap_{\alpha \in I}D_\alpha\) is open dense.
\end{defn}

\begin{prop}\label{thm:generous-boolean-completion}
If \(\pi \colon \bbP \to \bbQ\) is a dense embedding and \(\bbQ\) is \(\kappa\)-descending distributive then \(\bbP\) is also \(\kappa\)-descending distributive. In particular, a forcing \(\bbP\) is \(\kappa\)-descending distributive if and only if its Boolean completion is.
\end{prop}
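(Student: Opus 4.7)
The plan is to transfer the decreasing sequence of open dense sets between $\bbP$ and $\bbQ$ along $\pi$, using only the two defining properties of a dense embedding: that $\pi$ faithfully preserves and reflects the order, and that $\pi``\bbP$ is dense in $\bbQ$. Concretely, given a decreasing sequence $\tup{D_\alpha \mid \alpha < \kappa}$ of open dense subsets of $\bbP$, I would associate the downward closure $E_\alpha = \Set{q \in \bbQ \mid (\exists p \in D_\alpha)\ q \leq \pi(p)}$ in $\bbQ$. These are open by construction and decreasing because the $D_\alpha$ are. Density in $\bbQ$ follows by the standard two-step argument: given $q \in \bbQ$, find $p \in \bbP$ with $\pi(p) \leq q$ (density of $\pi``\bbP$), then $p' \leq p$ with $p' \in D_\alpha$ (density of $D_\alpha$), giving $\pi(p') \leq q$ in $E_\alpha$.

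Next I would use $\kappa$-descending distributivity of $\bbQ$ to conclude $\bigcap_\alpha E_\alpha$ is open dense in $\bbQ$, and then pull this back. Given $p \in \bbP$, take $q \leq \pi(p)$ in $\bigcap_\alpha E_\alpha$, and then $p' \in \bbP$ with $\pi(p') \leq q$. The order-reflection property of the dense embedding gives $p' \leq p$, and for each $\alpha$ there is $p_\alpha \in D_\alpha$ with $\pi(p') \leq \pi(p_\alpha)$, hence $p' \leq p_\alpha$ and so $p' \in D_\alpha$ by openness. Thus $p' \in \bigcap_\alpha D_\alpha$, establishing density.

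The ``in particular'' clause requires both directions, and one is already handled: a forcing embeds densely into its Boolean completion, so descending distributivity of the completion transfers down. For the reverse direction I would run the symmetric argument, starting with a decreasing sequence $\tup{E_\alpha \mid \alpha < \kappa}$ of open dense subsets of $\bbB$ and taking $D_\alpha = \Set{p \in \bbP \mid \pi(p) \in E_\alpha}$. The same bookkeeping shows each $D_\alpha$ is open dense in $\bbP$, the sequence is still decreasing, and density of $\bigcap_\alpha D_\alpha$ in $\bbP$ transfers to density of $\bigcap_\alpha E_\alpha$ in $\bbB$ via $\pi$.

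There is no real obstacle here; the content is entirely bookkeeping to ensure openness and the decreasing property are genuinely preserved in both directions. The only thing to watch is that one must use both halves of the dense-embedding definition (both order preservation and density of the image), and that the argument never needs to refer to the alternative ``rank function'' formulation from \cref{defn:generous} --- working directly with the decreasing-sequence definition is cleaner.
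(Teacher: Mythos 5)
Your proof is correct and follows essentially the same route as the paper: push the sequence forward to the downward closures \(E_\alpha\) in \(\bbQ\), apply \(\kappa\)-descending distributivity there, and pull a witness back through the dense embedding using order reflection. You additionally spell out the converse direction of the ``in particular'' clause (that descending distributivity of \(\bbP\) transfers up to its Boolean completion), which the paper leaves implicit; your symmetric argument for it is the right one.
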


\begin{proof}
Let \(\calD = \tup{D_\alpha \mid \alpha < \kappa}\) be a decreasing sequence of open dense subsets of \(\bbP\). Since \(\bigcap \calD\) is automatically open, it remains to show that it is dense, so let \(p_0 \in \bbP\). For \(\alpha < \kappa\), let \(E_\alpha\) be the open dense set \(\Set{q \in \bbQ \mid (\exists p \in D_\alpha) q \leq \pi(p)}\). Note that \(\tup{E_\alpha \mid \alpha < \kappa}\) is a decreasing sequence of open dense subsets of \(\bbQ\). By \(\kappa\)-descending distributivity, \(\bigcap_{\alpha < \kappa} E_\alpha\) is open dense so, in particular, there is \(q \leq \pi(p_0)\) with \(q \in \bigcap_{\alpha < \kappa} E_\alpha\). Let \(p\) be such that \(\pi(p) \leq q\). Then for all \(\alpha < \kappa\) there is \(p^{(\alpha)} \in D_\alpha\) such that \(\pi(p) \leq q \leq \pi(p^{(\alpha)})\), so \(p \leq p^{(\alpha)}\) and in fact \(p \in \bigcap \calD\). Since \(\pi(p) \leq q \leq \pi(p_0)\), we also have \(p \leq p_0\) as required.
\end{proof}

A similar proof shows that a forcing is strongly \(\kappa\)-descending distributive if and only if its Boolean completion is. While we do not spend much time on strong descending distributivity, the following question comes to mind.

\begin{qn}\label{qn:generous-is-strong}
Is every \(\kappa\)-descending distributive forcing also strongly \(\kappa\)-descending distributive?
\end{qn}

Since a finite product of strongly \(\kappa\)-descending distributive forcings is strongly \(\kappa\)-descending distributive, so showing that \(\kappa\)-descending distributivity is \emph{not} in general preserved by finite products would answer \cref{qn:generous-is-strong} negatively.

\begin{prop}\label{cor:generosity-is-cofinal}
\(\bbP\) is \(\kappa\)-descending distributive if and only if it is \(\cf(\kappa)\)-descending distributive.
\end{prop}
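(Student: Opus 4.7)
The plan is to note that when $\kappa$ is regular there is nothing to prove, so I would assume $\kappa$ is singular, set $\lambda = \cf(\kappa)$, and fix a strictly increasing sequence $\tup{\beta_\xi \mid \xi < \lambda}$ cofinal in $\kappa$. Both implications then amount to cofinal bookkeeping with this sequence, combined with the fact that the sequences in \cref{defn:generous} are required only to be weakly decreasing (so there is no harm in repeating entries).

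For the implication that $\cf(\kappa)$-descending distributivity entails $\kappa$-descending distributivity, given a decreasing sequence $\tup{D_\alpha \mid \alpha < \kappa}$ of open dense subsets of $\bbP$ I would pass to the subsequence $\tup{D_{\beta_\xi} \mid \xi < \lambda}$, which is itself decreasing. Its intersection is open dense by hypothesis, and monotonicity of the original sequence together with cofinality of $\tup{\beta_\xi}$ in $\kappa$ yields $\bigcap_{\xi < \lambda} D_{\beta_\xi} = \bigcap_{\alpha < \kappa} D_\alpha$, so the latter is open dense.

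Conversely, given a decreasing sequence $\tup{D_\xi \mid \xi < \lambda}$ of open dense sets, I would stretch it to length $\kappa$ by setting, for $\alpha < \kappa$, $\xi(\alpha) = \min\Set{\xi < \lambda \mid \beta_\xi > \alpha}$ (well defined and strictly less than $\lambda$ because $\tup{\beta_\xi}$ is cofinal in $\kappa$) and $E_\alpha = D_{\xi(\alpha)}$. Since $\alpha \mapsto \xi(\alpha)$ is weakly increasing, the sequence $\tup{E_\alpha \mid \alpha < \kappa}$ is decreasing; and because $E_{\beta_\xi} = D_{\xi(\beta_\xi)} \subseteq D_\xi$ for every $\xi < \lambda$, one checks $\bigcap_{\alpha < \kappa} E_\alpha = \bigcap_{\xi < \lambda} D_\xi$. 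The hypothesis of $\kappa$-descending distributivity then makes this common intersection open dense.

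There is no real obstacle; the only point of care is to arrange that the stretched and thinned sequences remain weakly decreasing and that their intersections coincide with $\bigcap_{\xi < \lambda} D_\xi$, both of which are immediate from monotonicity. The same argument (replacing ``$\bigcap$ is open dense'' by ``a cofinal $I \subseteq \kappa$ exists with $\bigcap_{\alpha \in I} D_\alpha$ open dense'') would also give the analogous statement for strong descending distributivity, should one want it later.
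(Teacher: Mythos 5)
Your proposal is correct and follows essentially the same route as the paper's proof: fix a strictly increasing cofinal sequence, thin a $\kappa$-sequence to a $\cf(\kappa)$-subsequence in one direction, and stretch a $\cf(\kappa)$-sequence to a $\kappa$-sequence by repeating entries in the other, observing in each case that the intersections coincide. The only cosmetic difference is your choice of $\xi(\alpha)=\min\Set{\xi \mid \beta_\xi > \alpha}$ versus the paper's $\min\Set{\gamma \mid \alpha_\gamma \geq \alpha}$, which is immaterial.
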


\begin{proof}
Let \(\tup{\alpha_\gamma \mid \gamma < \cf(\kappa)}\) be a strictly increasing cofinal sequence in \(\kappa\).

Assume first that \(\bbP\) is \(\kappa\)-descending distributive and let \(\calD = \tup{D_\gamma \mid \gamma < \cf(\kappa)}\) be a decreasing sequence of open dense sets. Define \(D_\alpha' = D_\gamma\), where \(\gamma\) is least such that \(\alpha_\gamma \geq \alpha\). Then \(\calD' = \tup{D_\alpha' \mid \alpha < \kappa}\) is a decreasing sequence of open dense sets and so, by assumption, \(\bigcap \calD' = \bigcap \calD\) is open dense as required.

Assume instead that \(\bbP\) is \(\cf(\kappa)\)-descending distributive and let \(\calD = \tup{D_\alpha \mid \alpha < \kappa}\) be a decreasing collection of open dense sets. By \(\cf(\kappa)\)-descending distributivity, \(\bigcap_{\gamma < \cf(\kappa)} D_{\alpha_\gamma} = \bigcap \calD\) is open dense as required.
\end{proof}

The following are very common ways in which descending distributivity can manifest and, ultimately, inspired the original definition of descending distributivity.\footnote{For example, \cref{thm:al-preservation} used to be two theorems, one of which dealt with situations in which \(\kappa \geq \aleph^\ast(\bbP)\) and the other dealt with situations in which \(\bbP\) is \(\kappa^+\)-distributive (plus analogues for the filter of subgroups).}

\begin{prop}\label{prop:common-generosity}
If \(\cf(\kappa) \geq \aleph^\ast(\bbP)\) or if \(\bbP\) is \(\kappa^+\)-distributive then \(\bbP\) is \(\kappa\)-descending distributive.
\end{prop}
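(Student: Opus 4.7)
The plan is to handle the two sufficient conditions in turn.

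The case where \(\bbP\) is \(\kappa^+\)-distributive is immediate from the definition: applying \(\kappa^+\)-distributivity with \(\gamma = \kappa < \kappa^+\) shows that the intersection of \emph{any} family of \(\kappa\)-many open dense sets (decreasing or not) is open dense, which is a strictly stronger conclusion than what is required.

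For the case \(\cf(\kappa) \geq \aleph^\ast(\bbP)\), I would first invoke \cref{cor:generosity-is-cofinal} to reduce to verifying \(\lambda\)-descending distributivity, where \(\lambda = \cf(\kappa)\) is a regular cardinal. I would then argue by contrapositive. Suppose \(\calD = \tup{D_\alpha \mid \alpha < \lambda}\) is a decreasing sequence of open dense subsets of \(\bbP\) and some \(p' \in \bbP\) has no extension lying in \(\bigcap_{\alpha < \lambda} D_\alpha\). Using the rank function \(\rnk_\calD\) introduced in \cref{defn:generous}, this assumption translates to \(\rnk_\calD(q) < \lambda\) for every \(q \leq p'\), so \(\rnk_\calD\) restricts to a function \(f \colon \Set{q \in \bbP \mid q \leq p'} \to \lambda\).

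The key observation is that the image of \(f\) is cofinal in \(\lambda\): given \(\alpha < \lambda\), density of \(D_\alpha\) below \(p'\) furnishes some \(q \leq p'\) with \(q \in D_\alpha\), and since \(\calD\) is decreasing this forces \(q \in D_\beta\) for all \(\beta \leq \alpha\) and hence \(\rnk_\calD(q) \geq \alpha\). Let \(C \subseteq \lambda\) be the image; as a cofinal set of ordinals in a regular cardinal, \(\ot(C) = \lambda\), so composing \(f\) with the order-isomorphism \(C \to \lambda\) yields a surjection \(\Set{q \in \bbP \mid q \leq p'} \to \lambda\). Extending arbitrarily (for example by sending the remaining conditions to \(0\)) produces a surjection \(\bbP \to \lambda\), contradicting \(\lambda \geq \aleph^\ast(\bbP)\).

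I do not foresee any significant obstacle: the whole argument is an unwinding of the definitions, and the only point worth emphasising is that nothing beyond \(\ZF\) is needed, because \(\rnk_\calD\) is definable from \(\calD\) (no choice is required to define the map \(f\)) and \(C\) is a set of ordinals (hence well-orderable outright). The conceptual content is simply that a failure of \(\lambda\)-descending distributivity automatically manufactures a surjection from \(\bbP\) onto \(\lambda\), which the Lindenbaum number of \(\bbP\) forbids.
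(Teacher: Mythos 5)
Your proposal is correct and follows essentially the same route as the paper: the distributivity case is dispatched as immediate, and the Lindenbaum-number case reduces to regular $\cf(\kappa)$ via \cref{cor:generosity-is-cofinal} and then uses the rank function on conditions below a fixed $p_0$, whose image must be cofinal in $\cf(\kappa)$, so that regularity together with $\cf(\kappa)\geq\aleph^\ast(\bbP)$ forces some condition to attain rank $\cf(\kappa)$. The paper phrases this directly (there must exist $p$ with full rank) rather than as a contrapositive, but the mechanism is identical.
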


\begin{proof}
Assume first that \(\cf(\kappa) \geq \aleph^\ast(\bbP)\). By \cref{cor:generosity-is-cofinal} we may assume that \(\kappa\) is regular. Let \(\calD\) be a decreasing sequence of \(\kappa\)-many open dense subsets and let \(p_0 \in \bbP\). We shall find \(p \in \bigcap \calD\) with \(p \leq p_0\). Let \(\alpha_p = \min\Set{\alpha \leq \kappa \mid p \notin D_\alpha}\), defining \(D_\kappa = \emptyset\). Then \(\sup\Set{\alpha_p \mid p \leq p_0} = \kappa\), \(\kappa\) is regular and \(\kappa \geq \aleph^\ast(\bbP)\), so there is \(p \leq p_0\) with \(\alpha_p = \kappa\). That is, \(p \in \bigcap \calD\) as required.

If \(\bbP\) is \(\kappa^+\)-distributive then we automatically have that \(\bigcap \calD\) is open dense.
\end{proof}

In particular, if \(\bbP\) is \(\kappa^+\)-closed and \(\DC_\kappa\) holds for subtrees of \(\bbP^{{<}\kappa}\) then \(\bbP\) is \(\kappa^+\)-distributive and thus \(\kappa\)-descending distributive.

\cref{prop:generous-doesnt-change-cf} provides an early intuition about how \(\kappa\)-descending distributive notions of forcing behave, though it should be noted that really it is just an immediate corollary of \cref{thm:generous-implies-no-fresh}.

\begin{prop}\label{prop:generous-doesnt-change-cf}
If \(\bbP\) is \(\kappa\)-descending distributive then \(\bbP\) does not change the cofinality of \(\kappa\).
\end{prop}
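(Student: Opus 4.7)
The plan is to argue directly by contradiction, converting a putative new cofinal sequence into a sequence of decreasing open dense sets to which \(\kappa\)-descending distributivity is applied. By Proposition~\ref{cor:generosity-is-cofinal}, \(\bbP\) is also \(\cf(\kappa)\)-descending distributive, so without loss of generality we may assume that \(\kappa\) is regular in \(V\) and show that \(\kappa\) remains regular in \(V[G]\) for every \(V\)-generic \(G \subseteq \bbP\). Suppose towards a contradiction that some such \(G\) gives \(\cf(\kappa)^{V[G]} = \lambda < \kappa\), and let \(\ddf\) be a \(\bbP\)-name with \(p_0 \in G\) forcing ``\(\ddf \colon \check\lambda \to \check\kappa\) is cofinal.''

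The key construction is a decreasing sequence of open dense sets tracking the values of \(\ddf\). For each \(\alpha < \kappa\), let
\begin{equation*}
D_\alpha = \Set{p \in \bbP \mid p \perp p_0 \text{ or } (\exists \xi < \lambda)(\exists \beta \geq \alpha)\, p \forces_\bbP \ddf(\check\xi) = \check\beta}.
\end{equation*}
Each \(D_\alpha\) is open dense (this is a routine compatibility check below \(p_0\) using that \(p_0\) forces \(\ddf\) to be unbounded in \(\check\kappa\), and trivial above \(p_0\)), and plainly \(\alpha \leq \alpha'\) gives \(D_\alpha \supseteq D_{\alpha'}\). By \(\kappa\)-descending distributivity, \(\bigcap_{\alpha<\kappa}D_\alpha\) is open dense, so pick \(p\) in this intersection with \(p \leq p_0\).

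Since \(p \leq p_0\), the first clause of \(D_\alpha\) fails for every \(\alpha\), so for each \(\alpha < \kappa\) we extract \(\xi_\alpha < \lambda\) and \(\beta_\alpha \geq \alpha\) with \(p \forces_\bbP \ddf(\check\xi_\alpha) = \check\beta_\alpha\). The relation \(\Set{\tup{\xi, \beta} \mid p \forces_\bbP \ddf(\check\xi) = \check\beta}\) is a partial function, so \(\xi \mapsto \beta\) is injective where defined; hence \(\alpha \mapsto \xi_\alpha\) injects the set \(\Set{\beta_\alpha \mid \alpha < \kappa} \subseteq \kappa\) into \(\lambda\). But \(\beta_\alpha \geq \alpha\) makes this set unbounded in \(\kappa\), so it has cardinality at least \(\cf(\kappa)^V = \kappa > \lambda\), a contradiction. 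The only subtle point is the density of \(D_\alpha\), which uses exactly that \(p_0\) forces \(\ddf\) to be cofinal in \(\check\kappa\); the rest is bookkeeping. Alternatively, once Theorem~\ref{thm:generous-implies-no-fresh} is in hand one can package the same contradiction by promoting the new cofinal map into a fresh function \(\kappa \to V\), but phrasing things in terms of the decreasing open dense sets above keeps the dependency on prior material minimal.
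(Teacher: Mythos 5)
Your proposal is correct and follows essentially the same route as the paper: you build the decreasing sequence of open dense sets recording that some value of \(\ddf\) at least \(\alpha\) has been decided, apply \(\kappa\)-descending distributivity to land a single condition in the intersection, and read off a ground-model witness that contradicts (or bounds) the new cofinality. The only cosmetic differences are that you first reduce to \(\kappa\) regular via \cref{cor:generosity-is-cofinal} and argue by contradiction, whereas the paper works directly and concludes \(\cf(\kappa)\leq\cf(\lambda)\); both are sound.
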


\begin{proof}
Let \(\ddf\) be a \(\bbP\)-name for an cofinal increasing injection \(\lambda \to \kappa\) for some ordinal \(\lambda\). For \(\alpha < \kappa\), let \(D_\alpha = \Set{ p \mid (\exists \beta > \alpha) (\exists \eta < \lambda) p \forces_\bbP \ddf(\check{\eta}) = \check{\beta}}\). Since \(\ddf\) is forced to be cofinal, \(D_\alpha\) is open dense and \(\calD = \tup{D_\alpha \mid \alpha < \kappa}\) forms a decreasing sequence. By descending distributivity, \(\bigcap \calD\) is open dense. However, if \(p \in \bigcap \calD\) then \(p\) decides the value of \(\ddf(\check{\eta})\) for cofinally many \(\eta < \lambda\) and we obtain a cofinal sequence \(\cf(\lambda) \to \kappa\) in the ground model. Thus \(\cf(\kappa) \leq \cf(\lambda) \leq \lambda\).
\end{proof}

Let us also mention that chain conditions have nothing to say about descending distributivity. Recall that, for cardinals \(\eta\) and \(\lambda\), \(\Add(\eta,\lambda)\) is the notion of forcing with conditions that are partial functions \(p \colon \lambda \times \eta \to 2\) such that \(\abs{\dom(p)} < \eta\), with \(q \leq p\) if \(q \supseteq p\). In \(\ZFC\), if \(\eta\) is regular infinite and \(\lambda > 0\) then \(\Add(\eta,\lambda)\) has the \((\eta^{{<}\eta})^+\)-chain condition.

\begin{prop}[\(\ZFC\)]\label{prop:add-generous-spectrum}
Let \(\eta \geq \aleph_0\), \(\lambda > 0\) and \(\kappa=\cf(\kappa) \geq \aleph_0\). Then \(\Add(\eta,\lambda)\) is \(\kappa\)-descending distributive if \(\kappa < \cf(\eta)\) or \(\kappa > (\eta\times\lambda)^{{<}\eta}\). \(\Add(\eta,\lambda)\) is not \(\kappa\)-descending distributive if \(\cf(\eta) \leq \kappa \leq \eta^{{<}\eta} \times \lambda\).
\end{prop}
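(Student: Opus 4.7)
The plan is to treat the three assertions of the proposition separately. The two positive cases follow quickly from \cref{prop:common-generosity}. For \(\kappa < \cf(\eta)\), I would first verify that \(\Add(\eta,\lambda)\) is \(\cf(\eta)\)-closed: given a chain \(\tup{p_\xi \mid \xi < \gamma}\) with \(\gamma < \cf(\eta)\), the regularity of \(\cf(\eta)\) ensures that the union \(\bigcup_\xi p_\xi\) has \(\abs{\dom(\bigcup_\xi p_\xi)} < \eta\), so it is still a condition. Since \(\kappa^+ \leq \cf(\eta)\), the forcing is \(\kappa^+\)-closed, and \cref{thm:closed-implies-distributive-equivalence} gives \(\kappa^+\)-distributivity in \(\ZFC\), which \cref{prop:common-generosity} promotes to \(\kappa\)-descending distributivity. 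For \(\kappa > (\eta\lambda)^{<\eta}\), I would note that \(\abs{\Add(\eta,\lambda)} \leq (\eta\lambda)^{<\eta} < \kappa\), so \(\aleph^\ast(\Add(\eta,\lambda)) \leq \kappa = \cf(\kappa)\), and \cref{prop:common-generosity} again applies.

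For the negative case, the base instance \(\kappa = \cf(\eta)\) is handled directly. I would fix a strictly increasing cofinal sequence \(\tup{\xi_\gamma \mid \gamma < \cf(\eta)}\) in \(\eta\) and set
\[
D_\gamma = \Set{p \in \Add(\eta,\lambda) \mid \{0\} \times \xi_\gamma \subseteq \dom(p)}.
\]
Each \(D_\gamma\) is open, dense (any condition can be enlarged to cover the \({<}\eta\)-sized set \(\{0\} \times \xi_\gamma\)), and the family is decreasing in \(\gamma\). Their intersection would require \(\{0\} \times \eta \subseteq \dom(p)\), which no condition satisfies, so the intersection is empty and not dense.

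For the remaining case of regular \(\kappa\) with \(\cf(\eta) < \kappa \leq \eta^{<\eta} \times \lambda\), the cofinal-sequence construction fails because no weakly increasing cofinal function \(\kappa \to \eta\) exists when \(\kappa > \cf(\eta)\) is regular. I would cover the range using the two subcases \(\kappa \leq \lambda\) and \(\kappa \leq \eta^{<\eta}\), which suffice because \(\eta^{<\eta} \times \lambda = \max(\eta^{<\eta},\lambda)\) as cardinal arithmetic. In the first subcase, I would employ \(\kappa\)-many distinct coordinates of the generic: for each \(\alpha < \kappa\), let \(D_\alpha\) impose cumulative ``pinning'' requirements on the coordinates indexed by ordinals below \(\alpha\), designed so that satisfying all \(\kappa\) of them demands more than \(\eta\)-many bits. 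In the second subcase, I would fix an injection \(\tup{s_\alpha \mid \alpha < \kappa} \hookrightarrow 2^{<\eta}\) and let \(D_\alpha\) consist of the conditions that, at the first coordinate, locally refute every \(s_\beta\) with \(\beta \leq \alpha\); density follows from the combinatorial abundance of length-\({<}\eta\) binary extensions at each stage. The main obstacle is preserving density of \(D_\alpha\) as the sequence progresses: a naive construction fails when a condition has already committed to information compatible with some forbidden \(s_\beta\), so the \(D_\alpha\) must be arranged so that each new obstruction can always be met via fresh resources (a new coordinate or a new undetermined extension point) that do not interfere with the obstructions previously satisfied, while the full intersection still demands cumulative information of total size \(\geq \eta\) and hence exceeds any condition's budget.
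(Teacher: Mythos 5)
Your two positive cases and the base case \(\kappa=\cf(\eta)\) are correct and essentially the paper's own argument (the paper works at \(\eta\) and invokes \cref{cor:generosity-is-cofinal}; you work directly at \(\cf(\eta)\) with a cofinal sequence, which is equivalent), and your first subcase for the negative direction is recognisably the paper's argument for \(\eta\leq\kappa\leq\lambda\): a condition in the intersection must meet cofinally many of the \(\kappa\)-many coordinates, so its domain has size at least \(\cf(\kappa)=\kappa\geq\eta\). But note that this counting only produces a contradiction when \(\kappa\geq\eta\); if \(\kappa<\eta\) a condition may happily meet \(\kappa\)-many coordinates. The genuine gap is therefore the range \(\cf(\eta)<\kappa\leq\eta\) (non-empty exactly when \(\eta\) is singular; take \(\eta=\aleph_\omega\), \(\lambda=1\), \(\kappa=\aleph_1\)), and your own correct observation---that no weakly increasing cofinal map \(\kappa\to\eta\) exists for regular \(\kappa>\cf(\eta)\)---is precisely why no ``resource-counting'' decreasing family can close it: in any decreasing \(\kappa\)-sequence of open dense sets each \(D_\alpha\) must be enterable by a condition of size \({<}\eta\), and the amounts of domain demanded cannot climb cofinally to \(\eta\) along \(\kappa\). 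Your fallback construction with the \(s_\beta\in2^{{<}\eta}\) suffers the density failure you yourself flag (a condition already deciding the generic to extend \(s_\beta\) on all of \(\dom(s_\beta)\) cannot enter \(D_\alpha\)), and the proposed repair by ``fresh resources'' lands back in the same dichotomy: if each refutation costs a new point of the domain then density fails once \(\abs{\alpha}\geq\eta\) while no contradiction arises when \(\kappa\leq\eta\); if it costs nothing then the intersection need not be empty. So this case is not proved.

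The missing idea is that the argument here is indirect rather than combinatorial: when \(\eta\) is singular, \(\Add(\eta,\lambda)\) collapses \(\eta\) to \(\cf(\eta)\), hence changes the cofinality of every regular \(\kappa\) with \(\cf(\eta)<\kappa\leq\eta\), and \cref{prop:generous-doesnt-change-cf} then shows \(\Add(\eta,\lambda)\) is not \(\kappa\)-descending distributive. (Decreasing open dense sets with non-dense intersection do exist in this range, but they are extracted from a name for the collapsing function via the forcing relation, not from an explicit description of domains.) Without this step your proof does not establish the negative claim on all of \([\cf(\eta),\eta^{{<}\eta}\times\lambda]\).
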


The gaps left here are those regular \(\kappa\) such that \(\eta^{{<}\eta} \times \lambda < \kappa \leq \lambda^{{<}\eta}\). For example, we do not know if \(\Add(\omega_1,\omega_\omega)\) is \(\omega_{\omega+1}\)-descending distributive under \(\GCH\). Note also that it is sufficient to consider only regular \(\kappa\) by \cref{cor:generosity-is-cofinal}.

\begin{proof}
Let \(\bbP = \Add(\eta,\lambda)\). Since \(\Add(\eta,1)\) is equivalent to \(\Add(\eta,\eta)\), we may replace \(\lambda\) by \(\lambda + \eta\) and assume for the rest of the proof that \(\lambda \geq \eta\).

If \(\kappa < \cf(\eta)\) then \(\bbP\) is \(\kappa\)-descending distributive because it is \(\cf(\eta)\)-distributive. If instead \(\kappa > \lambda^{{<}\eta}\) then \(\bbP\) is \(\kappa\)-descending distributive because \(\kappa > \abs{\bbP}\), meaning \(\kappa \geq \aleph^\ast(\bbP)\).

Suppose that \(\cf(\eta) < \kappa \leq \eta\). Since \(\bbP\) collapses \(\eta\) to \(\cf(\eta)\), we have changed the cofinality of \(\kappa\), so \(\bbP\) cannot be \(\kappa\)-descending distributive by \cref{prop:generous-doesnt-change-cf}.

For the case \(\kappa = \cf(\eta)\), we shall show that \(\bbP\) is not \(\eta\)-descending distributive and appeal to \cref{cor:generosity-is-cofinal}. For \(p \in \bbP\), let \(s(p) = \Set{\alpha < \eta \mid (\exists \beta < \lambda) \tup{\beta,\alpha} \in \dom(p)}\). Let \(D_\alpha = \Set{p \in \bbP \mid \alpha \subseteq s(p)}\), noting that \(\tup{D_\alpha \mid \alpha < \eta}\) forms a decreasing sequence of open dense sets. However, if \(p \in \bigcap_{\alpha < \eta} D_\alpha\) then \(\eta = \abs{s(p)} \leq \abs{\dom(p)}\), so such a \(p\) cannot exist. Thus \(\bigcap_{\alpha < \eta} D_\alpha = \emptyset\) as required.

Finally, if \(\eta \leq \kappa \leq \lambda\) then let \(D_\alpha = \Set{ p \in \bbP \mid \dom(p) \cap ((\kappa \setminus \alpha) \times \eta) \neq \emptyset}\), noting that \(\tup{D_\alpha \mid \alpha < \kappa}\) forms a decreasing sequence of open dense sets. If \(p \in \bigcap_{\alpha < \kappa} D_\alpha\) then there is cofinal \(I \subseteq \kappa\) such that, for all \(\alpha \in I\), \(\dom(p) \cap \Set{\alpha} \times \eta \neq \emptyset\). Thus, \(\abs{\dom(p)} \geq \abs{I} \geq \cf(\kappa) = \kappa \geq \eta\), so no such \(p\) exists and \(\bigcap_{\alpha < \kappa} D_\alpha = \emptyset\).
\end{proof}

\begin{cor}[\(\ZFC\)]\label{cor:add-generous-spectrum-exact}
If \(\lambda^{{<}\eta}=\lambda \geq \aleph_0\) then \(\Add(\eta,\lambda)\) is \(\kappa\)-descending distributive if and only if \(\cf(\kappa) < \cf(\eta)\) or \(\cf(\kappa) > \lambda^{{<}\eta}\).
\end{cor}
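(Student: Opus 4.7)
The plan is to derive the corollary as an immediate consequence of \cref{prop:add-generous-spectrum}, once one observes that the arithmetic hypothesis $\lambda^{{<}\eta} = \lambda$ is precisely what is needed to eliminate the gap left open by that proposition.

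First, I would use the hypothesis to establish that $\eta \leq \lambda$, from which $\eta^{{<}\eta} \leq \lambda^{{<}\eta} = \lambda$ follows. The point is that if $\eta > \lambda$, then $\lambda$ itself is one of the exponents appearing in the supremum computing $\lambda^{{<}\eta}$, giving $\lambda^{{<}\eta} \geq \lambda^{\lambda} = 2^{\lambda} > \lambda$, contradicting the assumption. Hence $\eta^{{<}\eta} \times \lambda = \lambda = \lambda^{{<}\eta}$, so the upper bound $\eta^{{<}\eta} \times \lambda$ of the ``non-descending-distributive'' range in \cref{prop:add-generous-spectrum} coincides with the lower bound $\lambda^{{<}\eta}$ of the ``descending-distributive'' range, and no regular cardinal falls in between.

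Next, for regular $\kappa$ I would simply read off \cref{prop:add-generous-spectrum}: the closing of the gap just described gives immediately that $\Add(\eta,\lambda)$ is $\kappa$-descending distributive if and only if $\kappa < \cf(\eta)$ or $\kappa > \lambda^{{<}\eta}$. For arbitrary $\kappa$ I would invoke \cref{cor:generosity-is-cofinal} to replace $\kappa$ by the regular cardinal $\cf(\kappa)$, obtaining the stated equivalence in terms of $\cf(\kappa)$.

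There is no genuine obstacle here; the only substantive point is the cardinal-arithmetic observation in the first step, and everything else amounts to pasting together the previous two propositions.
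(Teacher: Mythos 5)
Your proposal is correct and is essentially the argument the paper intends (it gives no explicit proof of this corollary): the hypothesis \(\lambda^{{<}\eta}=\lambda\) forces \(\eta\leq\lambda\), which collapses the gap in \cref{prop:add-generous-spectrum}, and \cref{cor:generosity-is-cofinal} handles singular \(\kappa\). No issues.
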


Thus, for example, \(\Add(\omega,\lambda)\) is c.c.c.\ but not \(\kappa\)-descending distributive for any \(\cf(\kappa) \leq \lambda\).

\subsection{Equivalent characterisations}

As mentioned in \cref{defn:generous}, we may characterise descending distributivity through associating rank functions to arbitrary sequences of open dense sets.

\begin{thm}\label{thm:generous-equivalence-decreasing-open-dense}
\(\bbP\) is \(\kappa\)-descending distributive if and only if for all \(\kappa\)-sequences of open dense sets, \(\Set{p \in \bbP \mid \rnk_\calD(p) = \kappa}\) is open dense.
\end{thm}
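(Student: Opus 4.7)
The plan is to bridge the two formulations via the standard tail-union operation, which converts an arbitrary sequence of open dense sets into a decreasing one while preserving which conditions have rank $\kappa$. Given any sequence $\calD = \tup{D_\alpha \mid \alpha < \kappa}$ of open dense subsets of $\bbP$, set
\[
E_\alpha = \bigcup_{\beta \geq \alpha} D_\beta.
\]
Each $E_\alpha$ is open, as a union of open sets, and dense, since $E_\alpha \supseteq D_\alpha$. The sequence $\tup{E_\alpha \mid \alpha < \kappa}$ is manifestly decreasing. The key observation to record is that $p \in \bigcap_{\alpha < \kappa} E_\alpha$ holds precisely when $\Set{\alpha < \kappa \mid p \in D_\alpha}$ is unbounded in $\kappa$, which is in turn equivalent to $\rnk_\calD(p) = \kappa$. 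Thus the set $\Set{p \in \bbP \mid \rnk_\calD(p) = \kappa}$ agrees with $\bigcap_{\alpha < \kappa} E_\alpha$ for \emph{every} sequence $\calD$, decreasing or not.

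For the forward direction (descending distributivity implies the rank characterisation), given an arbitrary $\calD$, I would apply $\kappa$-descending distributivity to the decreasing sequence $\tup{E_\alpha \mid \alpha < \kappa}$ to conclude that $\bigcap_\alpha E_\alpha = \Set{p \in \bbP \mid \rnk_\calD(p) = \kappa}$ is open dense.

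For the reverse direction, suppose $\calD$ is itself decreasing and open dense. Then $\Set{\alpha < \kappa \mid p \in D_\alpha}$ is a downwards-closed subset of $\kappa$, so the supremum reaches $\kappa$ exactly when $p \in D_\alpha$ for every $\alpha < \kappa$. Consequently $\Set{p \mid \rnk_\calD(p) = \kappa} = \bigcap_{\alpha < \kappa} D_\alpha$, and the rank hypothesis immediately yields that $\bigcap \calD$ is open dense.

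I expect no real obstacle here; the whole content is the observation that the tail-union $E_\alpha$ converts an arbitrary sequence into a decreasing one without altering which conditions have rank $\kappa$. The only tiny sanity check worth making explicit is that $\Set{p \mid \rnk_\calD(p) = \kappa}$ is indeed open: if $q \leq p$ and $p \in D_\beta$ then $q \in D_\beta$ by openness of $D_\beta$, so $\rnk_\calD(q) \geq \rnk_\calD(p)$, which suffices.
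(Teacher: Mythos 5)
Your proposal is correct and follows essentially the same route as the paper: the paper also passes to the tail unions $E_\alpha=\bigcup_{\beta>\alpha}D_\beta$ (your $\bigcup_{\beta\geq\alpha}D_\beta$ is an immaterial variant) and observes that $\bigcap_\alpha E_\alpha$ is exactly the set of conditions of rank $\kappa$, with the reverse direction being the identification of $\bigcap\calD$ with the rank-$\kappa$ set for decreasing $\calD$. Nothing is missing.
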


\begin{proof}
First assume that \(\bbP\) is \(\kappa\)-descending distributive and let \(\calD = \tup{D_\alpha \mid \alpha < \kappa}\) be an arbitrary collection of dense open sets. Define \(\calE = \tup{E_\alpha \mid \alpha < \kappa}\) by setting \(E_\alpha = \bigcup_{\beta > \alpha} D_\beta\). Then \(\calE\) is a decreasing sequence of open dense sets, so \(\bigcap \calE\) is open dense. However, \(p \in \bigcap \calE\) if and only if for all \(\alpha < \kappa\) there is \(\beta > \alpha\) such that \(p \in D_\beta\), which is precisely to say that \(\rnk_\calD(p)=\kappa\).

On the other hand, if \(\calD=\tup{D_\alpha \mid \alpha < \kappa}\) is decreasing, then \(\rnk_\calD(p) = \kappa\) if and only if \(p \in \bigcap \calD\), so \(\bigcap \calD\) is indeed open dense as required.
\end{proof}

The following characterisation of descending distributivity is inspired by the distributivity laws that give rise to the name `distributive' in notions of forcing (see \cite[Lemma~7.16]{jech_set_2003}, the proof of which we follow closely for \cref{prop:generous-boolean-algebra-characterisation}).

\begin{prop}\label{prop:generous-boolean-algebra-characterisation}
For a complete Boolean algebra \(\bbB\), the following are equivalent:
\begin{enumerate}
\item The \emph{\(\kappa\)-descending distributivity law} stating that, for all decreasing sequences \(\tup{B_\alpha \mid \alpha < \kappa}\) of non-empty subsets of \(\bbB\) we have
\begin{equation}\label{eqn:downwards-distributivity-law}
\prod_{\alpha < \kappa} \sum B_\alpha = \sum_{f \in \prod_{\alpha < \kappa}^{\catSet} B_\alpha} \prod f``\kappa.\tag{\ensuremath{\ast}}
\end{equation}
Here \(\prod_{\alpha < \kappa}^{\catSet} B_\alpha\) is the \emph{set} of \emph{choice functions} for the family \(\tup{B_\alpha \mid \alpha < \kappa}\).
\item \(\bbB\) is \(\kappa\)-descending distributive.
\end{enumerate}
\end{prop}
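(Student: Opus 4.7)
The plan is to adapt the classical Boolean-algebraic characterisation of distributivity (see \cite[Lemma~7.16]{jech_set_2003}) to the descending setting, handling each direction separately.

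For the direction from \eqref{eqn:downwards-distributivity-law} to descending distributivity, I would start with a decreasing sequence \(\tup{D_\alpha \mid \alpha < \kappa}\) of open dense subsets of \(\bbB\setminus\Set{\0}\) and apply the law with \(B_\alpha = D_\alpha\). Density gives \(\sum B_\alpha = \1\), so the identity reduces to \(\1 = \sum_f \prod f``\kappa\) where \(f\) ranges over choice functions in \(\prod_\alpha^{\catSet} B_\alpha\). For any such \(f\), the element \(p_f = \prod f``\kappa\) lies below every \(f(\alpha) \in D_\alpha\), and since each \(D_\alpha\) is open (equivalently, downward closed), every non-zero \(p_f\) lies in \(\bigcap_\alpha D_\alpha\). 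This will give \(\sum\bigcap_\alpha D_\alpha \geq \sum_f p_f = \1\), and hence density of the intersection.

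For the converse direction, given a decreasing sequence \(\tup{B_\alpha}\) of non-empty subsets of \(\bbB\), I would set \(u = \prod_\alpha \sum B_\alpha\). The inequality \(\sum_f \prod f``\kappa \leq u\) is immediate since \(\prod f``\kappa \leq f(\alpha) \leq \sum B_\alpha\) for every choice function \(f\) and every \(\alpha\). For the reverse inequality, I plan to pass to the relative algebra \(\bbB \res u\) and consider the sets
\begin{equation*}
D_\alpha = \Set{ p \leq u \mid (\exists b \in B_\alpha)\, p \leq b \cdot u},
\end{equation*}
which are open, dense below \(u\) (because \(\sum_{b \in B_\alpha}(b \cdot u) = u \cdot \sum B_\alpha = u\)), and decreasing. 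Descending distributivity of \(\bbB\) will transfer to \(\bbB\res u\) by extending each \(D_\alpha\) to the open dense set \(D_\alpha \cup \Set{p \in \bbB \mid p \leq -u}\) in \(\bbB\) and restricting the resulting intersection; this yields density of \(\bigcap_\alpha D_\alpha\) below \(u\). From each \(p\) in this intersection one extracts a choice function \(f_p\) satisfying \(p \leq \prod f_p``\kappa\), whence \(u = \sum \bigcap_\alpha D_\alpha \leq \sum_f \prod f``\kappa\), which closes the inequality.

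The delicate step I anticipate is the localisation of descending distributivity to \(\bbB\res u\): unlike for ordinary distributivity, where one may freely convert between open dense sets and refinements of maximal antichains, here the decreasing-open-dense formulation must be tracked directly, so the explicit extension--restriction argument above is essential rather than cosmetic. Extracting the functions \(f_p\) requires a routine appeal to choice, but this is standard in the Boolean-algebraic setting and is consistent with the spirit of the algebraic identity \eqref{eqn:downwards-distributivity-law}, whose right-hand side is vacuously \(\0\) in the absence of choice functions.
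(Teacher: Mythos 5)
Your proposal is correct and follows essentially the same route as the paper: the auxiliary open dense sets you build for the harder direction, \(\Set{p \mid (\exists b \in B_\alpha)\, p \leq b \cdot u} \cup \Set{p \mid p \leq -u}\), are exactly the paper's \(D_\alpha\), and your relative-algebra framing is only a cosmetic repackaging of that extension--restriction step. The only (harmless) divergence is in the easier direction, where you deduce density of \(\bigcap_\alpha D_\alpha\) from openness together with \(\sum_f \prod f``\kappa = \1\), while the paper meets a fixed condition \(u\) with a witnessing choice function directly; both arguments also share the same implicit selection of witnesses \(f(\alpha) \in B_\alpha\), which you at least flag explicitly.
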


\begin{proof}
Assume first that \(\bbB\) is \(\kappa\)-descending distributive and let \(\tup{B_\alpha \mid \alpha < \kappa}\) be a decreasing sequence of non-empty subsets of \(\bbB\). Let us first prove that the right hand side of \cref{eqn:downwards-distributivity-law} is always less than or equal to the left hand side. For \(f \in \prod_{\alpha < \kappa}^{\catSet} B_\alpha\), let \(u_f = \prod f``\kappa\). Then we must have that \(u_f \leq f(\alpha)\) for all \(\alpha\) and thus \(u_f \leq \sum B_\alpha\). Hence \(\sum_f u_f \leq \sum B_\alpha\) and so
\begin{equation*}
\sum_f \prod f``\kappa = \sum_f u_f \leq \prod_\alpha \sum B_a.
\end{equation*}
Thus it remains to show that the left hand side of \cref{eqn:downwards-distributivity-law} is less than or equal to the right hand side. Let \(u = \prod_\alpha \sum B_\alpha\) and, for all \(\alpha\), define the open dense set \(D_\alpha = \Set{b \in \bbB \mid (\exists a \in B_\alpha) b \leq a \land u} \cup \Set{ b \in \bbB \mid b \land u = \0}\). Note that necessarily this also forms a decreasing sequence of open dense sets. By \(\kappa\)-descending distributivity, \(D = \bigcap_{\alpha < \kappa} D_\alpha\) is open dense. Suppose that \(v \leq u\) and \(v \in D\). Then this must be witnessed by \(f \in \prod_{\alpha < \kappa}^{\catSet} B_\alpha\), where \(v \leq f(\alpha) \land u\) for all \(\alpha < \kappa\). Since \(D\) is dense below \(u\), it must be that \(\sum_f u_f \geq u\) as required.

On the other hand, assume that \(\bbB\) satisfies the \(\kappa\)-descending distributivity law. Let \(\tup{D_\alpha \mid \alpha < \kappa}\) be a decreasing sequence of open dense sets and let \(u \in \bbB\). Note that for all \(\alpha < \kappa\), \(\sum D_\alpha = \1\) and so \(\prod_\alpha \sum D_\alpha = \1\). Hence \(\sum_f \prod f``\kappa = \1\), so in particular there is \(f\) such that \(u_f \land u \neq \0\). However, since the \(D_\alpha\) are open, we may define \(g(\alpha) = f(\alpha) \land u \in D_\alpha\). Then \(u_g = u_f \land u \leq u\). Furthermore, for all \(\alpha < \kappa\), \(u_g \leq g(\alpha) \leq f(\alpha) \in D_\alpha\), so \(u_g \in D_\alpha\). That is, \(u_g \in \bigcap_{\alpha < \kappa} D_\alpha\) with \(u_g \leq u\) as required.
\end{proof}

Analogous to `\ref{item:distributive} if and only if \ref{item:refinements}' in \cref{prop:distributive-equivalences}, if we assume \(\AC\) then we can rephrase \(\kappa\)-descending distributivity in terms of maximal antichains. We omit the proof as it is extremely similar.

\begin{prop}[\(\ZFC\)]\label{prop:generous-equivalent-antichain-zfc}
A forcing \(\bbP\) is \(\kappa\)-descending distributive if and only if for all sequences \(\calA = \tup{A_\alpha \mid \alpha < \kappa}\) of maximal antichains there are densely many \(q\) satisfying that there is an unbounded set \(I \subseteq \kappa\) such that, for all \(\alpha \in I\) and \(p \in A_\alpha\), if \(p \comp q\) then \(q \leq p\).

A forcing \(\bbP\) is strongly \(\kappa\)-descending distributive if and only if for all such \(\calA\) there is unbounded \(I \subseteq \kappa\) such that \(\Set{A_\alpha \mid \alpha \in I}\) has a refinement.
\end{prop}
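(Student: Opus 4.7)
The plan is to mimic the equivalence between \(\kappa\)-distributivity and the existence of refinements of \({<}\kappa\)-many maximal antichains from \cref{prop:distributive-equivalences}, with two small adjustments. In one direction we turn each antichain \(A_\alpha\) into the open dense set \(D_\alpha=\Set{q\in\bbP\mid(\exists p\in A_\alpha)\,q\leq p}\); in the other we invoke Zorn's lemma to pick, for each given open dense set, a maximal antichain \(A_\alpha\subseteq D_\alpha\). Since the antichains in the statement are indexed arbitrarily rather than by a decreasing sequence, we shall pass through the rank-function characterisation of \cref{thm:generous-equivalence-decreasing-open-dense} in the forward direction of the non-strong equivalence.

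\emph{Forward, non-strong.} Given \(\calA=\tup{A_\alpha\mid\alpha<\kappa}\), let \(D_\alpha\) be as above; each is open by transitivity of \(\leq\) and dense by maximality of \(A_\alpha\). By \cref{thm:generous-equivalence-decreasing-open-dense}, \(E=\Set{q\in\bbP\mid\rnk_\calD(q)=\kappa}\) is open dense. For \(q\in E\), the set \(I=\Set{\alpha<\kappa\mid q\in D_\alpha}\) is unbounded in \(\kappa\); for \(\alpha\in I\) pick \(p_\alpha\in A_\alpha\) with \(q\leq p_\alpha\), and then any \(p\in A_\alpha\) compatible with \(q\) is compatible with \(p_\alpha\), hence equal to \(p_\alpha\), so \(q\leq p\).

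\emph{Backward, non-strong.} Let \(\calD=\tup{D_\alpha\mid\alpha<\kappa}\) be a decreasing sequence of open dense sets and pick maximal antichains \(A_\alpha\subseteq D_\alpha\) by Zorn's lemma; each is maximal in \(\bbP\) because \(D_\alpha\) is dense. Given \(p_0\in\bbP\), the hypothesis provides \(q\leq p_0\) and unbounded \(I\) with the stated compatibility property. For each \(\alpha\in I\), maximality of \(A_\alpha\) yields \(p\in A_\alpha\) compatible with \(q\), whence \(q\leq p\in D_\alpha\) and, by openness, \(q\in D_\alpha\). Since \(\calD\) is decreasing and \(I\) unbounded, \(q\in\bigcap\calD\), and hence \(\bigcap\calD\) is dense.

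\emph{Strong version.} The two directions parallel the non-strong case. For the forward direction, strong \(\kappa\)-descending distributivity applied to the same \(D_\alpha\) yields unbounded \(I\) with \(\bigcap_{\alpha\in I}D_\alpha\) open dense; Zorn's lemma produces a maximal antichain \(A_\ast\) inside this intersection (which is maximal in \(\bbP\) by density), and the refinement property for \(A_\ast\) relative to \(\Set{A_\alpha\mid\alpha\in I}\) follows exactly as in the forward non-strong argument. For the backward direction, choose \(A_\alpha\subseteq D_\alpha\) as before and apply the hypothesis to obtain unbounded \(I\) and a refinement \(A_\ast\); for each \(p'\in A_\ast\) and \(\alpha\in I\), maximality of \(A_\alpha\) supplies some \(p\in A_\alpha\) compatible with \(p'\), the refinement property gives \(p'\leq p\), and openness of \(D_\alpha\) gives \(p'\in D_\alpha\), so \(A_\ast\subseteq\bigcap_{\alpha\in I}D_\alpha\); density of this intersection is then immediate from maximality of \(A_\ast\) in \(\bbP\). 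I do not expect any genuine obstacle; the one subtlety worth highlighting is that the decreasing hypothesis in the backward non-strong direction is precisely what converts `\(q\in D_\alpha\) for unboundedly many \(\alpha\)' into `\(q\in\bigcap\calD\)', which explains why the non-strong statement produces only `densely many \(q\)' rather than a single refinement.
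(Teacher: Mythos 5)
Your proof is correct and follows exactly the route the paper intends: it adapts the standard equivalence between distributivity and refinements of antichains (the paper explicitly omits the proof as ``extremely similar'' to that argument), translating antichains to open dense sets via $D_\alpha=\Set{q\mid(\exists p\in A_\alpha)\,q\leq p}$ and back via Zorn's lemma, with the rank-function characterisation handling the non-decreasing indexing. All the small verifications (a maximal antichain of a dense open set is maximal in $\bbP$; compatibility within an antichain forces equality; the decreasing hypothesis upgrades ``unboundedly many'' to ``all'') are handled correctly.
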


\begin{prop}
Let \(\bbP\) be a forcing. Say that \(\bbP\) is `\({<}\kappa\)-descending distributive' if for all \(\lambda < \kappa\), \(\bbP\) is \(\lambda\)-descending distributive. Then \(\bbP\) is \({<}\kappa\)-descending distributive if and only if it is \(\kappa\)-distributive. In particular, \(\bbP\) is \(\omega\)-descending distributive if and only if it is \(\sigma\)-distributive (that is, \(\omega_1\)-distributive).
\end{prop}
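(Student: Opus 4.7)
The plan is to prove the two implications separately. The backward direction is immediate: if $\bbP$ is $\kappa$-distributive and $\lambda < \kappa$, then any sequence (decreasing or not) of $\lambda$-many open dense sets has open dense intersection, so in particular $\bbP$ is $\lambda$-descending distributive for every $\lambda < \kappa$.

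For the forward direction, suppose $\bbP$ is ${<}\kappa$-descending distributive. Fix $\gamma < \kappa$ and an arbitrary family $\calD = \tup{D_\alpha \mid \alpha < \gamma}$ of open dense sets; the goal is to show that $\bigcap \calD$ is open dense. The key move is to replace $\calD$ by its family of partial intersections: set $E_\alpha = \bigcap_{\beta \leq \alpha} D_\beta$, and observe both that $\bigcap_{\alpha < \gamma} E_\alpha = \bigcap \calD$ and that $\tup{E_\alpha \mid \alpha < \gamma}$ is automatically decreasing. I would then verify by transfinite induction on $\alpha < \gamma$ that each $E_\alpha$ is open dense: at successor stages $E_{\alpha+1} = E_\alpha \cap D_{\alpha+1}$ using the elementary fact that an intersection of two open dense sets is open dense, and at limit stages applying $\alpha$-descending distributivity (available because $\alpha < \gamma < \kappa$) to the decreasing sequence $\tup{E_\beta \mid \beta < \alpha}$. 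A final application of $\gamma$-descending distributivity to $\tup{E_\alpha \mid \alpha < \gamma}$ yields that $\bigcap \calD$ is open dense, as required.

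The closing sentence about $\omega$-descending distributivity and $\sigma$-distributivity is automatic: intersections of finitely many open dense sets are always open dense, so the finite-length cases of descending distributivity are trivially true, and hence ${<}\omega_1$-descending distributivity reduces to $\omega$-descending distributivity, while $\sigma$-distributive is by definition $\omega_1$-distributive. The only point requiring any care in the main argument is purely clerical, namely ensuring that at each limit stage the length of the sequence to which we apply the hypothesis is genuinely below $\kappa$; this is immediate from $\alpha < \gamma < \kappa$, so there is no genuine obstacle.
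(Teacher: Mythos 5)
Your proof is correct and is essentially the paper's argument: both reduce the arbitrary family to its decreasing sequence of partial intersections \(E_\alpha=\bigcap_{\beta\leq\alpha}D_\beta\) and use an induction (yours directly on \(\alpha<\gamma\), the paper's on \(\kappa\) via the already-established \(\lambda\)-distributivity) to see that each \(E_\alpha\) is open dense before a final application of descending distributivity. The one small point to tidy is that descending distributivity is defined for cardinals, so at a limit stage \(\alpha\) that is not a cardinal you should pass to a cofinal subsequence of order type \(\cf(\alpha)<\kappa\) (exactly as in \cref{cor:generosity-is-cofinal}) before invoking the hypothesis, and likewise for the final application at \(\gamma\).
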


\begin{proof}
We go by induction on infinite \(\kappa\). Since finite intersections of open dense sets are open dense, every forcing is automatically both \({<}\omega\)-descending distributive and \(\omega\)-distributive.

Suppose that \(\bbP\) is \({<}\kappa\)-descending distributive. Then tautologically, for all \(\lambda < \kappa\), \(\bbP\) is \({<}\lambda\)-descending distributive. Thus, by induction, \(\bbP\) is \(\lambda\)-distributive for all \(\lambda < \kappa\). If \(\kappa\) is a limit then we conclude that \(\bbP\) is \(\kappa\)-distributive. If instead \(\kappa = \lambda^+\), let \(\calD = \tup{D_\alpha \mid \alpha < \lambda}\). By \(\lambda\)-distributivity, we may replace \(D_\alpha\) by \(\bigcap_{\beta \leq \alpha} D_\beta\) to assume without loss of generality that \(\calD\) is a decreasing sequence. By \(\lambda\)-descending distributivity, \(\bigcap \calD\) is open dense as required. Thus, \(\bbP\) is \(\kappa\)-distributive.

On the other hand, if \(\bbP\) is \(\kappa\)-distributive then we automatically have that it is \(\lambda\)-descending distributive for all \(\lambda < \kappa\) as required.
\end{proof}

\subsection{Fresh functions}

The following definition is due to Hamkins \cite{hamkins_gap_2001}. Note that Hamkins---and many later works, such as \cite{fischer_fresh_2023}---make the additional assumption that a fresh function has ordinal image, which they may do without risk as they are working in \(\ZFC\). We shall not do this, as we are working in \(\ZF\).

\begin{defn}
Let \(\kappa\) be a cardinal and \(V\) a model of \(\ZF\). A function \(f \colon \kappa \to V\) is \emph{fresh function on \(\kappa\) over \(V\)} if \(f \notin V\) but, for all \(\alpha < \kappa\), \(f \res \alpha \in V\). We may omit `on \(\kappa\)' or `over \(V\)' if clear from context.
\end{defn}

It turns out that a forcing being \(\kappa\)-descending distributive implies that it adds no fresh functions on \(\kappa\) (see \cref{thm:generous-implies-no-fresh}). We also have a partial converse, but this requires a strange and seemingly quite strong assumption.

\begin{defn}
A forcing \(\bbP\) is \emph{chain-like} if for all \(V\)-generic \(G \subseteq \bbP\) and all open dense \(D \subseteq \bbP\) with \(D\in V\), \(G \setminus D \in V\).
\end{defn}

Note that if any generic filter is a chain then one automatically obtains aspirational linearity. In particular, if a forcing is a tree then this occurs, though not every forcing can be presented as a tree (see \cite{konig_dense_2006}). Furthermore, like being a tree, aspirational linearity is not a property of the forcing, but of the \emph{presentation}. That is, as we will show upon the conclusion of this sentence, there are equivalent notions of forcing \(\bbP\) and \(\bbQ\) such that \(\bbP\) is chain-like, but \(\bbQ\) is not.

\begin{prop}
Let \(\bbP\) be the forcing with conditions that are functions \(n \to 2\) for \(n < \omega\), ordered by \(q \leq p\) if \(q \supseteq p\). Then \(\bbP\) is chain-like.

Let \(\bbQ\) be the forcing with conditions that are \emph{arbitrary finite partial functions} \(\omega \to 2\), ordered by \(q \leq p\) if \(q \supseteq p\). Then \(\bbQ\) is not chain-like.
\end{prop}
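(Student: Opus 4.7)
The plan is to handle each forcing separately by first understanding the shape of its generic filter. For \(\bbP\), observe that \(\bbP = 2^{{<}\omega}\) is a tree under the forcing order, so any \(V\)-generic filter \(G \subseteq \bbP\) is linearly ordered; writing \(f = \bigcup G \colon \omega \to 2\), we have \(G = \{f \res n \mid n < \omega\}\). Let \(D \in V\) be open dense and set \(n_0 = \min\{n < \omega \mid f \res n \in D\}\), which exists by genericity. Openness of \(D\) (closure under extension) forces \(f \res n \in D\) for all \(n \geq n_0\), so \(G \setminus D = \{f \res n \mid n < n_0\}\), a finite subset of \(V\) and hence itself a member of \(V\).

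For \(\bbQ\), I would produce an explicit counterexample. Let \(D = \{p \in \bbQ \mid 0 \in \dom(p)\}\): this is open, since extending \(p\) preserves \(0 \in \dom(p)\), and dense, since any finite partial function extends to one with \(0\) in its domain. Given \(V\)-generic \(G\) with \(f = \bigcup G \colon \omega \to 2\), we have \(G = \{p \in \bbQ \mid p \subseteq f\}\), so
\[ G \setminus D = \{p \in \bbQ \mid p \subseteq f \text{ and } 0 \notin \dom(p)\}, \]
and consequently \(\bigcup(G \setminus D) = f \res (\omega \setminus \{0\})\). Thus \(f \res (\omega \setminus \{0\})\) is definable from \(G \setminus D\), and the problem reduces to showing that this restriction lies outside \(V\).

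The main obstacle, which is nonetheless easy, is verifying \(f \res (\omega \setminus \{0\}) \notin V\). This is a standard Cohen-genericity calculation: for any \(g \in V\) with \(g \colon \omega \setminus \{0\} \to 2\), the set of \(p \in \bbQ\) disagreeing with \(g\) at some \(n > 0\) is dense in \(\bbQ\), so by genericity \(f \res (\omega \setminus \{0\}) \neq g\) for every \(g \in V\). Hence \(G \setminus D \notin V\) and \(\bbQ\) is not chain-like.
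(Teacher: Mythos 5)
Your proposal is correct and follows essentially the same route as the paper: for \(\bbP\) you take the minimal condition in \(G\cap D\) and observe the remainder is a finite initial segment lying in \(V\), and for \(\bbQ\) you exhibit a specific open dense set \(D\) such that \(G\setminus D\) computes a new real (the paper uses conditions whose domain meets the evens, you use conditions whose domain contains \(0\) --- an immaterial difference). Your write-up is if anything slightly more explicit than the paper's about why the resulting restriction of the generic real is not in \(V\).
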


\begin{proof}
Let \(G\) be \(V\)-generic for \(\bbP\) and let \(D \in V\) be an open dense subset of \(\bbP\). Let \(p \in G \cap D\) be of minimal domain. Then \(G \setminus D = \Set{p \res n \mid n < \dom(p)} \in V\).

Let \(D\) be the set of conditions \(p\) such that \(\dom(p)\) contains an even number. If \(H \subseteq \bbQ\) is \(V\)-generic then \(H\setminus D\) encodes the \(V\)-generic Cohen real given by the odd co-ordinates of \(H\) and thus is, itself, Cohen generic.
\end{proof}

In this case, the identity function \(\bbP \to \bbQ\) is a dense embedding.

\begin{thm}\label{thm:generous-implies-no-fresh}
If \(\bbP\) is \(\kappa\)-descending distributive then it adds no fresh functions on \(\kappa\).

If \(\bbP\) adds no fresh functions on \(\kappa\) and is chain-like then it is \(\kappa\)-descending distributive.
\end{thm}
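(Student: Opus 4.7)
For the first implication, I would follow a direct argument about deciding initial segments. Suppose $p \in \bbP$ forces that $\ddf$ is a function with domain $\check\kappa$ and range in $V$, and that every proper initial segment $\ddf \res \check\alpha$ lies in $V$. For each $\alpha < \kappa$, set
\[
D_\alpha = \Set{q \leq p \mid (\exists g \in V)\ q \forces_\bbP \ddf \res \check\alpha = \check g}.
\]
Each $D_\alpha$ is open dense below $p$ by hypothesis. Crucially, $\tup{D_\alpha \mid \alpha < \kappa}$ is decreasing, because any condition that decides $\ddf \res \check\alpha$ also decides $\ddf \res \check\beta$ for every $\beta \leq \alpha$. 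Applying $\kappa$-descending distributivity to the restricted forcing $\bbP_p$ (which inherits the property from $\bbP$), the intersection $\bigcap_{\alpha < \kappa} D_\alpha$ is open dense below $p$. Any condition in this intersection decides every proper initial segment of $\ddf$ as a check name and thereby decides $\ddf$ itself, so $p \forces_\bbP \ddf \in V$.

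For the converse, assume $\bbP$ is chain-like and adds no fresh functions on $\kappa$, and suppose for contradiction that some decreasing sequence $\calD = \tup{D_\alpha \mid \alpha < \kappa}$ of open dense sets fails to have dense intersection, witnessed by $p^*$. Passing to $\bbP_{p^*}$ and replacing each $D_\alpha$ by $D_\alpha \cap \bbP_{p^*}$—a reduction that preserves both chain-likeness and the no-fresh-function hypothesis—we may assume $\bigcap_{\alpha<\kappa} D_\alpha = \emptyset$ outright. The plan is to produce a candidate fresh function from $\calD$. For any $V$-generic \(G\), define $f_G \colon \kappa \to V$ by $f_G(\alpha) = G \setminus D_\alpha$; each $f_G(\alpha)$ lies in $V$ by chain-likeness. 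The decreasing structure of $\calD$ gives $f_G(\alpha) = f_G(\beta) \cap (\bbP \setminus D_\alpha)$ for all $\alpha \leq \beta < \kappa$, so $f_G \res \beta$ can be reconstructed inside $V$ from the single value $f_G(\beta)$ and the $D_\alpha$'s; hence $f_G \res \beta \in V$ for every $\beta < \kappa$. The no-fresh-function hypothesis then forces $f_G \in V$, and therefore
\[
G \;=\; G \setminus \emptyset \;=\; G \setminus \bigcap_{\alpha < \kappa} D_\alpha \;=\; \bigcup_{\alpha < \kappa} f_G(\alpha) \;\in\; V,
\]
contradicting the assertion that $G$ is a $V$-generic filter on a nontrivial forcing.

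The hard part will be the final contradiction in the converse. The argument above yields $G \in V$, and one then needs to observe that this is genuinely impossible: by separativity, below any condition one finds two incompatible extensions, so $\bbP \setminus G$ is dense whenever $G \in V$, which contradicts genericity. Care is required that the reduction to $\bbP_{p^*}$ has not collapsed the forcing to an essentially trivial chain where this atomless argument fails; ruling out such degeneracy should follow from the fact that in a trivial subforcing no open dense sequence can fail density in the first place. The remaining steps—the decreasing-ness of the $D_\alpha$'s in the forward direction, and the ground-model reconstructibility of $f_G \res \beta$ in the converse—are routine once one has the right setup.
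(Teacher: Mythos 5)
Your argument is correct and takes essentially the same route as the paper's proof in both directions: the same decreasing sequence of conditions deciding $\ddf \res \check{\alpha}$ for the forward implication, and the same function $\alpha \mapsto G \setminus D_\alpha$ (with separativity used to rule out atoms and hence $G \in V$) for the converse. Your explicit reconstruction of $f_G \res \beta$ inside $V$ from the single value $f_G(\beta)$ via the decreasing structure is in fact a slightly more careful justification than the paper gives for why the whole initial segment, rather than merely each of its entries, lies in $V$.
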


\begin{proof}
Let \(\ddf\) be a name for a function \(\kappa \to V\) such that \(p \forces_\bbP(\forall \alpha < \check{\kappa})\ddf \res \alpha \in \check{V}\). Let \(D_\alpha\) be the open dense set of all \(q\) incompatible with \(p\) and all \(q \leq p\) deciding \(g \in V\) such that \(\ddf\res\check{\alpha} = \check{g}\). Note that if \(\beta < \alpha\) then \(q \forces_\bbP \ddf\res\check{\beta} = \check{g}\res\check{\beta}\), so the sequence \(D_\alpha\) is decreasing. Let \(q \leq p\) be an element of \(\bigcap_{\alpha < \kappa} D_\alpha\). Then \(q\) decides \(\ddf(\check{\alpha})\) for all \(\alpha\), so \(q \forces_\bbP \ddf \in \check{V}\) and \(\ddf\) is not fresh.

On the other hand, assume that \(\bbP\) is chain-like and adds no fresh functions \(\kappa \to V\). Let \(\calD = \tup{D_\alpha \mid \alpha < \kappa}\) be a descending sequence of open dense sets and \(p \in \bbP\). If there is an atom \(q \leq p\) then certainly \(q \in \bigcap \calD\),\footnote{This uses our assumption of separativity, for this may not be true if \(\bbP\) is not separative. Consider the partial order \(\omega^\ast\), that is the order type of \(\Set{x \in \bbZ \mid x < 0}\). Then \(\bbP\) is not \(\omega\)-descending distributive, witnessed by the sequence of dense open sets \(D_n = \Set{x \in \bbZ \mid x < n}\).} so assume instead that there are no atoms below \(p\). Let \(G\) be \(V\)-generic for \(\bbP\) with \(p \in G\). Define the name \(\ddf\) so that \(\ddf^G(\alpha) = G \setminus D_\alpha\) for all generic filters \(G\). Since \(\bbP\) is chain-like, \(\ddf^G\res\alpha \in V\) for all \(\alpha < \kappa\) and, since \(\bbP\) adds no fresh functions, \(\ddf^G \in V\). Hence \(G_0 = G \setminus \bigcap \calD \in V\) as well. However, \(G \notin V\) (by the lack of atoms below \(p\)), so it must be that \(G_0 \neq G\). Therefore there is \(q \in G\) such that \(q \in \bigcap \calD\). In particular, \(q \comp p\) so, letting \(r \leq p,q\), we have that \(r \in \bigcap \calD\) with \(r \leq p\) as required.
\end{proof}

Using \cref{thm:generous-implies-no-fresh} and the following proposition of Fischer, Koelbing and Wohofsky, we may show that not every notion of forcing is equivalent to a chain-like one and that \(\kappa\)-descending distributivity is in fact a stronger condition than not adding fresh functions on \(\kappa\). Here \(\FRESH(\bbP)\) (the \emph{fresh function spectrum} of \(\bbP\), see \cite{fischer_fresh_2023}) is the set of all \emph{regular} cardinals \(\delta\) such that \(\bbP\) can add a fresh function on \(\delta\).

\begin{prop}[Fischer--Koelbing--Wohofsky]\label{prop:fischer-fresh}
``If \(\bbP\) has the \(\chi\)-c.c.\ and \(\delta > \chi\) then \(\delta \notin \FRESH(\bbP)\)'' \textup{\cite[Proposition~2.5, p.~4]{fischer_fresh_2023}.}
\end{prop}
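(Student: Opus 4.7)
The plan is to prove the contrapositive: if a \(\bbP\)-name \(\ddf\) is forced by some condition \(p \in \bbP\) to be a fresh function \(\check{\delta} \to \check{V}\), then \(\bbP\) admits an antichain of size \(\chi\), contradicting the \(\chi\)-c.c. First, for each \(\alpha < \delta\), I would use \(p \forces_\bbP \ddf \res \check{\alpha} \in \check{V}\) together with the \(\chi\)-c.c.\ to extract a maximal antichain \(A_\alpha\) below \(p\) of size strictly less than \(\chi\) whose members each decide \(\ddf \res \check{\alpha}\). The set \(M_\alpha\) of decided values then satisfies \(\abs{M_\alpha} < \chi\), and \(M = \bigsqcup_{\alpha < \delta} M_\alpha\) ordered by end-extension is a ground-model tree of height \(\delta\) with every level of size below \(\chi\), through which every generic realisation \(\ddf^G\) traces a cofinal branch lying outside \(V\) by freshness.

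The core of the argument is then the standard ``no new cofinal branches'' principle: a \(\chi\)-c.c.\ forcing adds no new cofinal branch to a tree of height \(\delta > \chi\) regular with levels of size \({<}\chi\). I would prove this by recursively constructing an antichain \(\tup{r_\xi \mid \xi < \chi}\) below \(p\), where each \(r_\xi\) is chosen to decide \(\ddf \res \check{\alpha_\xi} = \check{t_\xi}\) for some \(\alpha_\xi < \delta\) and \(t_\xi \in M_{\alpha_\xi}\), arranged so that the \(t_\xi\) are pairwise incomparable in \(M\); tree-incomparability forces \(r_\xi \perp r_\eta\). At stage \(\xi < \chi\), the set \(\Set{t_\eta \mid \eta < \xi}\) has size \({<}\chi\), and the freshness of \(\ddf\) below each previously chosen \(r_\eta\) provides an extension of \(r_\eta\) forcing \(\ddf\) off the cone above \(t_\eta\), which the regularity of \(\delta > \chi\) allows us to accommodate at a sufficiently high level.

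The main obstacle is engineering the pairwise tree-incomparability of the witnesses \(t_\xi\), since each level of \(M\) contains only fewer than \(\chi\) nodes and each may have many extensions higher up. The fix is to let the levels \(\alpha_\xi\) vary: at each stage one collects the divergence commitments from every prior \(r_\eta\), picks \(\alpha_\xi\) above all earlier \(\alpha_\eta\), and uses a pressing-down or \(\Delta\)-system argument on the resulting ordinals (using that \(\cf(\delta) = \delta > \chi\) is regular) to ensure the process can be continued through all \(\chi\) stages without exhausting the levels of \(M\). The resulting antichain of size \(\chi\) below \(p\) then contradicts the \(\chi\)-c.c., completing the proof.
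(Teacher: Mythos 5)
The paper offers no proof of this proposition --- it is quoted verbatim from Fischer--Koelbing--Wohofsky --- so I can only assess your argument on its own terms. Your opening reduction is correct and is the right first move: by the \(\chi\)-c.c.\ the tree \(M\) of possible values of the \(\ddf\res\check{\alpha}\) below \(p\) has all levels of size \({<}\chi\), and \(\ddf^G\) traces a cofinal branch of \(M\) lying outside \(V\). The gap is in your ``core'' recursion. To choose \(t_1\) tree-incomparable with \(t_0\) you need \(M\) to split off from \(t_0\) at some level, and nothing guarantees this: if \(p\) itself decides \(\ddf\res\check{\alpha}_0\), then \(M_{\alpha_0}=\Set{t_0}\) and \emph{every} node of \(M\) above level \(\alpha_0\) extends \(t_0\), so the construction dies at stage \(1\). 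The natural repair --- at each stage also reserving a sibling node \(s_\xi\) off of \(t_\xi\) and routing all later choices through the reserved nodes --- fails at limit stages \(\xi<\chi\): the reserved nodes form a chain in \(M\), and for that chain to have an upper bound in \(M\) one needs a \emph{single} condition forcing all the reserved decisions simultaneously, which mere \(\chi\)-c.c.\ does not provide. The decisive symptom is that your argument never uses \(\delta>\chi\) as opposed to \(\delta\geq\chi\) (sups of \({<}\chi\) many ordinals stay below a regular \(\delta\) just as well when \(\delta=\chi\)), yet the proposition is false at \(\delta=\chi\): a Souslin tree is \(\aleph_1\)-c.c.\ and its generic branch is a fresh function on \(\omega_1\) (each initial segment is the predecessor set of a single node, hence in \(V\)). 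Run on that example, your recursion is attempting to build an uncountable antichain in a Souslin tree.

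The missing ingredient is a branch lemma whose pressing-down happens on the \emph{levels of \(M\)}, not on the stages of a recursion, and this is where \(\delta>\chi\) genuinely enters. Assume \(\chi\) regular (harmless, by Erd\H{o}s--Tarski). Every node of \(M\) has extensions at every higher level (extend the witnessing condition), so for each \(\lambda<\delta\) with \(\cf(\lambda)=\chi\), every node of \(M\res\lambda\) lies below one of the \({<}\chi\) nodes of \(M_\lambda\); any two of those nodes differ already at some level below \(\lambda\), and since \(\cf(\lambda)=\chi\) exceeds the number of pairs, there is \(\beta_\lambda<\lambda\) such that restriction to level \(\beta_\lambda\) is injective on every level in \((\beta_\lambda,\lambda)\). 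Pressing down over the stationary set of such \(\lambda\) yields a single \(\beta^\ast<\delta\) above which \(M\) never splits. Hence \(M\) has at most \(\abs{M_{\beta^\ast}}<\chi\) cofinal branches, each definable in \(V\) from its node at level \(\beta^\ast\), and any cofinal branch in \(V[G]\) is one of these; so \(\ddf^G\in V\), contradicting freshness. Note that no antichain of size \(\chi\) is ever exhibited --- the \(\chi\)-c.c.\ is used only once, to bound the levels of \(M\).
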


\begin{cor}
If \(\lambda\) and \(\eta\) are regular infinite cardinals with \(\lambda > (\eta^{{<}\eta})^+\) then, for all regular \(\kappa\) with \((\eta^{{<}\eta})^+ < \kappa \leq \lambda\), \(\Add(\eta,\lambda)\) adds no fresh functions on \(\kappa\) but is not \(\kappa\)-descending distributive.

In particular, \(\Add(\eta,\lambda)\) is not equivalent to a chain-like notion of forcing and \(\kappa\)-descending distributivity is not equivalent to not adding fresh functions on \(\kappa\).
\end{cor}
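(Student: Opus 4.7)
The plan is to feed $\Add(\eta,\lambda)$ into the two complementary bookkeeping results we already have in hand. For the first half, $\Add(\eta,\lambda)$ has the $(\eta^{{<}\eta})^+$-c.c.\ in $\ZFC$, so \cref{prop:fischer-fresh} immediately yields that no regular $\kappa > (\eta^{{<}\eta})^+$ lies in $\FRESH(\Add(\eta,\lambda))$; in particular, $\Add(\eta,\lambda)$ adds no fresh functions on any such $\kappa$. For the second half, since $\eta$ is regular we have $\cf(\eta) = \eta \leq (\eta^{{<}\eta})^+ < \kappa$, and since $\kappa \leq \lambda$ we have $\kappa \leq \eta^{{<}\eta}\times\lambda$, so \cref{prop:add-generous-spectrum} tells us that $\Add(\eta,\lambda)$ is not $\kappa$-descending distributive. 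This already demonstrates that adding no fresh functions on $\kappa$ does not imply $\kappa$-descending distributivity in general.

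For the remaining claim that $\Add(\eta,\lambda)$ is not equivalent to a chain-like forcing, I would argue by contradiction: suppose $\bbQ$ were a chain-like forcing equivalent to $\Add(\eta,\lambda)$. Since equivalent forcings produce the same generic extensions (and hence the same classes of fresh functions), $\bbQ$ also adds no fresh functions on $\kappa$. Being chain-like, the second half of \cref{thm:generous-implies-no-fresh} then supplies $\kappa$-descending distributivity of $\bbQ$. But $\kappa$-descending distributivity passes between forcings sharing a Boolean completion by \cref{thm:generous-boolean-completion}, so $\Add(\eta,\lambda)$ inherits it, contradicting the first paragraph.

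The only real step is the parameter bookkeeping in the first paragraph, namely confirming that the prescribed range of $\kappa$ sits simultaneously inside the `non-descending-distributive' band of \cref{prop:add-generous-spectrum} and the `no fresh functions' window supplied by \cref{prop:fischer-fresh}; once this is laid out the corollary assembles from results already on the page.
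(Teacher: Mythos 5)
Your proposal is correct and follows essentially the same route as the paper: cite the $(\eta^{{<}\eta})^+$-chain condition together with \cref{prop:fischer-fresh} for the absence of fresh functions, invoke \cref{prop:add-generous-spectrum} for the failure of $\kappa$-descending distributivity, and derive the non-equivalence to a chain-like forcing by contradiction via \cref{thm:generous-implies-no-fresh}. The only difference is that you spell out the transfer of descending distributivity across equivalent forcings via \cref{thm:generous-boolean-completion}, which the paper leaves implicit; the parameter bookkeeping is accurate.
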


\begin{proof}
By \cref{prop:fischer-fresh} (and that \(\Add(\eta,\lambda)\) is \((\eta^{{<}\eta})^+\)-c.c.), \(\Add(\eta,\lambda)\) adds no fresh functions on \(\kappa\). However, by \cref{prop:add-generous-spectrum}, \(\Add(\eta,\lambda)\) is not \(\kappa\)-descending distributive. If \(\bbP\) were equivalent to a chain-like forcing, then \cref{thm:generous-implies-no-fresh} would give us that it is \(\kappa\)-descending distributive, so this cannot be the case.
\end{proof}

\subsection{Sequentiality}

While sequentiality and distributivity are identical in \(\ZFC\), this need not be the case without choice. The following result of Karagila and Schilhan is a stark demonstration of this.

\begin{thm}[Karagila--Schilhan]\label{thm:karagila-schilhan-dc}
``Let \(\kappa\) be any infinite cardinal. It is consistent with \(\ZF+\DC_{{<}\kappa}\) that:
\begin{enumerate}
\item\label{item:karagila-dc} There is a \(\kappa\)-distributive forcing which violates \(\DC\).
\item\label{item:karagila-ac-omega} There is a \(\kappa\)-sequential forcing which violates \(\AC_\omega\).''
\end{enumerate}
\textup{\cite[Theorem~5.1, p.~875]{karagila_sequential_2023}}
\end{thm}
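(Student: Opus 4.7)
My plan is a two-step construction: a symmetric extension producing a model $M \models \ZF + \DC_{{<}\kappa}$, followed by a forcing inside $M$ realising each dichotomy. Starting from $V = L$, I would build $M$ via a symmetric system whose filter of subgroups is $\kappa$-complete; standard preservation arguments then guarantee $M \models \DC_{{<}\kappa}$. The key design choice is to arrange that $M$ contains a combinatorial witness to a choice failure ``just beyond $\kappa$'': for part (1), a $\kappa$-tree $T \in M$ in which chains of every length $\gamma < \kappa$ have upper bounds but which has no $\kappa$-branch in $M$; for part (2), an $\omega$-family of sets interlinked with the symmetric structure so that no choice function for it exists in $M$ and cannot be generated without also adding fresh non-$M$ elements.

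For part (1), take $\bbP = T \in M$, ordered by reversing the tree order. Then $\bbP$ is $\kappa$-closed by construction, so by \cref{thm:closed-implies-distributive-equivalence} applied inside $M$, $\bbP$ is $\kappa$-distributive in $M$. A generic filter is a new $\kappa$-branch $b$ through $T$; by engineering $T$ so that the existence of such a $b$ allows decoding a $\DC$-violating sequence (for instance, $b$ threads together an $\omega$-sequence of choice functions for a family in $M$ whose combined choice was obstructed by the symmetric structure), one obtains $M[G] \models \neg \DC$. For part (2), I would take a finer forcing $\bbQ \in M$ that adds no $({<}\kappa)$-sequences of $M$-elements (i.e., is $\kappa$-sequential in the $\ZF$ sense), yet whose generic produces a countable family of \emph{new} sets without a choice function. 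A candidate is an Easton-support product of $\kappa$-closed symmetric forcings whose non-trivial coordinate action duplicates the $\omega$-family coded into $M$, ensuring that any name for a choice function can be moved off itself by a symmetry.

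The main obstacle is verifying $\kappa$-sequentiality in part (2): since in the absence of choice this is strictly stronger than $\kappa$-distributivity, it must be established name by name. Concretely, for every $\bbQ$-name $\sigma$ forced to be a function $\gamma \to \check{M}$ with $\gamma < \kappa$, one must show the symmetric structure permits mixing $\sigma$ down to an element of $M$; this is where the $\kappa$-completeness of the filter (used in constructing $M$) and the closure of $\bbQ$ interact delicately. The failure of $\AC_\omega$ in the extension then follows once the generic family is identified, since any symmetric name for a choice function for that family can be refuted via an appropriate automorphism of $\bbQ$. By contrast, part (1) is easier: distributivity is handed to us by closure plus $\DC_{{<}\kappa}$, and the difficulty shifts entirely to designing $T$ inside $M$ so that its $\kappa$-branch genuinely codes a $\DC$-failure; this must be built in at the stage of constructing $M$, typically by coordinating the automorphism group of the base symmetric system with the level-by-level structure of $T$.
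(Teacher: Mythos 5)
The paper does not prove this statement: it is quoted verbatim from Karagila--Schilhan and the surrounding text only describes their construction --- a ground model of \(\ZF+\DC_{{<}\kappa}\) containing a non-well-orderable set \(A\) all of whose well-orderable subsets have cardinality less than \(\kappa\), and, for part \ref{item:karagila-ac-omega}, the forcing \(\bbQ_1\) of finite partitions of well-orderable subsets of \(A\). Your proposal shares that high-level architecture (a symmetric ground model with a ``tame'' choice failure, then forcings exploiting it), but measured against what a complete proof needs it has two genuine gaps.

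First, for part \ref{item:karagila-dc} you give no mechanism by which adjoining a single \(\kappa\)-branch to a \(\kappa\)-closed tree could make \(\DC\) \emph{fail} in the extension; ``engineering \(T\) so that \(b\) decodes a \(\DC\)-violating sequence'' is the entire content of the theorem, not a detail to be deferred. Indeed, as written your branch ``threads together an \(\omega\)-sequence of choice functions,'' which would, if anything, restore fragments of choice rather than destroy \(\DC\); and since your forcing is \(\kappa\)-distributive (so, for \(\kappa>\omega\), adds no new \(\omega\)-sequences of ground-model elements), any witness to \(\lnot\DC\) in the extension must be assembled from genuinely new sets, and nothing in the sketch produces one. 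Second, you have the relationship between sequentiality and distributivity backwards: in \(\ZF\), \(\kappa\)-distributivity \emph{implies} \(\kappa\)-sequentiality (the sets of conditions deciding \(\ddf(\check{\alpha})\) are open dense and their intersection is dense), and the point of part \ref{item:karagila-ac-omega} is a sequential forcing that is very far from distributive --- \(\bbQ_1\) is not even \(\omega\)-distributive (\cref{prop:q1-not-distributive}). Your candidate, an Easton product of \(\kappa\)-closed forcings, would under \(\DC_{{<}\kappa}\) be \(\kappa\)-distributive by \cref{thm:closed-implies-distributive-equivalence}, so the ``main obstacle'' you identify (verifying sequentiality name by name) would be vacuous for it; the actual difficulty, which your plan does not touch, is to kill \(\AC_\omega\) with a forcing whose conditions carry no closure at all and to establish its \(\kappa\)-sequentiality by a symmetry and homogeneity argument over \(A\) rather than by closure.
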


In this case, Karagila and Schilhan work in a model of \(\ZF+\DC_{{<}\kappa}\) in which there is a non-well-orderable set \(A\) such that every well-orderable subset of \(A\) has cardinality less than \(\kappa\). The forcing \(\bbQ_1\) that witnesses \cref{item:karagila-ac-omega} has conditions given by finite partitions of well-orderable subsets of \(A\). That is, a condition \(e\) is a finite set of pairwise disjoint well-orderable subsets of \(A\). The ordering is given by \(f \leq_{\bbQ_1} e\) if \(\Set{C \cap \bigcup e \mid C \in f} = e\); intuitively, `cells' \(C \in e\) may be extended and new cells may be added, but no cells may be merged.

\begin{prop}[Karagila--Schilhan]\label{prop:q1-not-distributive}
\(\bbQ_1\) is not \(\omega\)-distributive.
\end{prop}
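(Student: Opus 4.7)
The plan is to exhibit an $\omega$-sequence of open dense subsets of $\bbQ_1$ whose intersection is empty, which will be enough to fail $\omega$-distributivity (and indeed $\sigma$-distributivity). The key structural observation is that every condition in $\bbQ_1$ is a \emph{finite} partition, while the ordering permits unbounded growth in the number of cells. For each $n < \omega$, I would set
\begin{equation*}
D_n = \Set{e \in \bbQ_1 \mid \abs{e} \geq n}.
\end{equation*}

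Openness of $D_n$ would come directly from the ordering: if $f \leq_{\bbQ_1} e$ then the defining identity $\Set{C \cap \bigcup e \mid C \in f} = e$ (read modulo the empty set, so as not to exclude newly added cells disjoint from $\bigcup e$) supplies a surjection from the cells of $f$ onto the cells of $e$, whence $\abs{f} \geq \abs{e}$. For density, given $e \in \bbQ_1$, the set $\bigcup e$ is a finite union of well-orderable subsets of $A$, hence is itself well-orderable. Since $A$ is non-well-orderable, so is $A \setminus \bigcup e$; in particular it is not finite, and by finite induction in $\ZF$ we may select distinct $a_0, \ldots, a_{n-1} \in A \setminus \bigcup e$. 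Then $f = e \cup \Set{\Set{a_i} \mid i < n}$ is a finite partition of a well-orderable subset of $A$ satisfying $f \leq_{\bbQ_1} e$ and $\abs{f} \geq n$. I would emphasise here that this density proof uses only finite choice, which is essential given that the ambient model violates $\AC_\omega$.

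Since every $e \in \bbQ_1$ has $\abs{e} < \omega$ by definition, $\bigcap_{n < \omega} D_n = \Set{e \in \bbQ_1 \mid \abs{e} \text{ is infinite}} = \emptyset$, which is certainly not dense. Thus $\bbQ_1$ is not $\omega$-distributive. There is no substantive obstacle; the entire content is the observation that the finitary nature of the conditions cannot coexist with the persistence of arbitrarily large cardinality lower bounds.
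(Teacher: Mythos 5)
Your proposal is correct and is essentially the paper's own argument: the same dense sets $D_n = \Set{e \mid \abs{e} \geq n}$, density via adding singletons from the infinite set $A \setminus \bigcup e$, and emptiness of the intersection from the finiteness of conditions. You supply slightly more detail (the surjection-of-cells argument for openness and the well-orderability argument for why $A \setminus \bigcup e$ is infinite), but the route is identical.
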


\begin{proof}
Let \(D_n\) be the set of conditions \(e\) such that \(\abs{e} \geq n\). Since for all \(e \in \bbQ\), \(A \setminus \bigcup e\) is infinite, we may always extend \(e\) to \(f \in D_n\) be adding additional singletons from \(A \setminus \bigcup e\). Thus, \(\tup{D_n \mid n < \omega}\) is a decreasing sequence of open dense subsets of \(\bbQ_1\). However, \(\bigcap_{n < \omega} D_n = \emptyset\).
\end{proof}

\begin{cor}
It is consistent with \(\ZF+\DC_{{<}\kappa}\) that there is a \(\kappa\)-sequential forcing which is not \(\omega\)-descending distributive.
\end{cor}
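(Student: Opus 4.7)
The plan is to observe that the forcing \(\bbQ_1\) that Karagila and Schilhan use to witness \cref{thm:karagila-schilhan-dc}\ref{item:karagila-ac-omega} already witnesses the corollary, so no new construction is required. Working in their model of \(\ZF + \DC_{{<}\kappa}\), we have by \cref{thm:karagila-schilhan-dc} that \(\bbQ_1\) is \(\kappa\)-sequential, which already gives half of what is needed.

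For the failure of \(\omega\)-descending distributivity, I would simply re-examine the proof of \cref{prop:q1-not-distributive}. The sequence \(\tup{D_n \mid n < \omega}\) exhibited there, with \(D_n = \Set{e \in \bbQ_1 \mid \abs{e} \geq n}\), is in fact a \emph{decreasing} sequence of open dense sets: if \(\abs{e} \geq n+1\) then certainly \(\abs{e} \geq n\), so \(D_0 \supseteq D_1 \supseteq D_2 \supseteq \cdots\). Moreover \(\bigcap_{n < \omega} D_n = \emptyset\), which is not dense, and hence this very sequence directly witnesses that \(\bbQ_1\) fails to be \(\omega\)-descending distributive in the sense of \cref{defn:generous}.

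There is no real obstacle: the essential point is that Karagila and Schilhan's witness was already of the stronger descending form, so passing from (the absence of) \(\omega\)-distributivity to (the absence of) \(\omega\)-descending distributivity costs nothing and the corollary follows immediately. If anything, the only thing to emphasise when writing the proof is the direction of the inclusions in \(\tup{D_n \mid n < \omega}\), so that the reader sees that the hypothesis of descending distributivity is genuinely tested.
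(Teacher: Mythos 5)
Your proposal is correct and is essentially the paper's own argument: the corollary is stated as an immediate consequence of \cref{thm:karagila-schilhan-dc} together with \cref{prop:q1-not-distributive}, whose proof already exhibits the decreasing sequence \(\tup{D_n \mid n < \omega}\) with empty intersection. Nothing further is needed.
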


On the other hand, \cref{prop:common-generosity,thm:generous-implies-no-fresh} are \(\ZF\) results, so the forcing witnessing \cref{item:karagila-dc} of \cref{thm:karagila-schilhan-dc} will not add fresh functions on any \(\alpha < \kappa\).

\subsection{Filters of subgroups}

Let \(\sF\) be a filter on the partial order \(\tup{A,{\leq}}\). Then \(\sF\) inherits the partial order from \(A\), so we may translate many notions from forcing into this language. However, by the definition of a filter, \(\sF\) has a special property that usually relegates a notion of forcing to uselessness: any two elements of \(\sF\) are compatible. Therefore, any open set is dense and, in particular, the principal open sets \([a]=\Set{b \in \sF \mid b \leq a}\) are open dense. Let \(\kappa\) be a cardinal and assume that all \(\kappa\)-sequences of open dense subsets of \(\sF\) have a selector (for example, \(\sF\) has a well-orderable dense base, or \(\AC_\kappa\) holds). Then \(\kappa\)-sequences of open dense sets may always be shaved down to \(\kappa\)-sequences of principal open dense sets, casting notions such as distributivity in a new light: \(\sF\) is \(\kappa\)-complete if and only if it is \(\kappa\)-distributive if and only if it is \(\kappa\)-closed. Thinking of \(\sF\) as a (normal) filter of subgroups in a symmetric system, one can re-phrase the folklore result that `if \(\bbP\) is \(\kappa\)-closed and \(\sF\) is \(\kappa\)-complete then \(\tup{\bbP,\sG,\sF}\) preserves \(\DC_{{<}\kappa}\)' as `if \(\bbP\) and \(\sF\) are \(\kappa\)-closed then \(\tup{\bbP,\sG,\sF}\) preserves \(\DC_{{<}\kappa}\)'.\footnote{A proof can be found in, say, \cite[Lemma~2.1]{karagila_embedding_2014}.}

In the settings that we wish to use the descending distributivity of a filter, we will always be able to pick out elements of the appropriate open dense sets. This is because such open dense sets always manifest precisely as \([\sym(\ddx)]\) for some \(\sS\)-name \(\ddx\). Hence, we provide the following stronger definition of descending distributivity in the case of filters.

\begin{defn}
A filter \(\sF\) on a partial order \(\tup{A,{\leq}}\) is \emph{\(\kappa\)-descending distributive} if for all sequences \(\tup{a_\alpha \mid \alpha < \kappa}\) of elements of \(\sF\) there is \(b \in \sF\) such that \(\Set{\alpha < \kappa \mid b \leq a_\alpha}\) is unbounded in \(\kappa\).\footnote{Note that, in this case, \(\kappa\)-descending distributivity and strong \(\kappa\)-descending distributivity are genuinely identical.}

In particular, if \(\sF\) is a filter of subgroups of \(\sG\), \(\sF\) is \(\kappa\)-descending distributive if for all sequences \(\tup{H_\alpha \mid \alpha < \kappa}\) from \(\sF\) there is unbounded \(I \subseteq \kappa\) such that \(\bigcap_{\alpha \in I}H_\alpha \in \sF\).

We say that a symmetric system \(\tup{\bbP,\sG,\sF}\) is \(\kappa\)-descending distributive if both \(\bbP\) and \(\sF\) are.
\end{defn}

One disadvantage to this definition is that we lose an equivalent of \cref{thm:generous-equivalence-decreasing-open-dense}, since the group generated by \(H \cup H'\) will usually have subgroups in \(\sF\) that are subgroups of neither \(H\) nor \(H'\). In particular, the name becomes somewhat less sensible. However, we do retain an analogue of \cref{cor:generosity-is-cofinal}.

\begin{prop}\label{prop:generosity-is-cofinal-filters}
A filter \(\sF\) is \(\kappa\)-descending distributive if and only if it is \(\cf(\kappa)\)-descending distributive.
\end{prop}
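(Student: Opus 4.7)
The plan is to mimic the proof of \cref{cor:generosity-is-cofinal} in the filter setting, the adjustment being that we chase witnesses of unboundedness across the cofinal map rather than open dense intersections. Fix once and for all a strictly increasing cofinal sequence \(\tup{\alpha_\gamma \mid \gamma < \cf(\kappa)}\) in \(\kappa\); both directions will be run through this sequence.

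For the forward direction, suppose \(\sF\) is \(\kappa\)-descending distributive and take a sequence \(\tup{a_\gamma \mid \gamma < \cf(\kappa)}\) of elements of \(\sF\). I would define a stretched sequence \(\tup{a'_\alpha \mid \alpha < \kappa}\) by setting \(a'_\alpha = a_\gamma\) where \(\gamma\) is least with \(\alpha_\gamma \geq \alpha\) (as in the proof of \cref{cor:generosity-is-cofinal}). Then \(\kappa\)-descending distributivity yields \(b \in \sF\) with \(I = \Set{\alpha < \kappa \mid b \leq a'_\alpha}\) unbounded in \(\kappa\). For each \(\alpha \in I\) we have \(b \leq a_{\gamma(\alpha)}\), where \(\gamma(\alpha)\) is the chosen least index. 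The cofinality of \(\tup{\alpha_\gamma}\) forces \(\Set{\gamma(\alpha) \mid \alpha \in I}\) to be unbounded in \(\cf(\kappa)\), which is the required witness.

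For the converse, suppose \(\sF\) is \(\cf(\kappa)\)-descending distributive and let \(\tup{a_\alpha \mid \alpha < \kappa}\) be a sequence from \(\sF\). I would restrict attention to the subsequence \(\tup{a_{\alpha_\gamma} \mid \gamma < \cf(\kappa)}\) and apply \(\cf(\kappa)\)-descending distributivity to obtain \(b \in \sF\) such that \(J = \Set{\gamma < \cf(\kappa) \mid b \leq a_{\alpha_\gamma}}\) is unbounded in \(\cf(\kappa)\). Then \(\Set{\alpha_\gamma \mid \gamma \in J} \subseteq \Set{\alpha < \kappa \mid b \leq a_\alpha}\) and the former is unbounded in \(\kappa\) by cofinality of \(\tup{\alpha_\gamma}\).

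There is no real obstacle here; the only thing to watch is that in the forward direction the stretched sequence \(\tup{a'_\alpha}\) genuinely lies in \(\sF\) (which is immediate, since each \(a'_\alpha\) is one of the original \(a_\gamma\)) and that in each direction unboundedness is transported correctly across the cofinal map. No appeal to choice beyond choosing the fixed cofinal sequence is required, so the argument goes through in \(\ZF\).
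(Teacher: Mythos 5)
Your proof is correct and follows essentially the same route as the paper's: stretch the \(\cf(\kappa)\)-sequence along a fixed cofinal sequence for the forward direction, and restrict to the cofinal subsequence for the converse, transporting unboundedness across the cofinal map in each case. The extra care you take in verifying that unboundedness of the witness set is preserved is sound but not a departure from the paper's argument.
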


\begin{proof}
Firstly, let \(\tup{\alpha_\gamma \mid \gamma < \cf(\kappa)}\) be a strictly increasing cofinal sequence in \(\kappa\).

Assume that \(\sF\) is \(\kappa\)-descending distributive and let \(\tup{a_\gamma \mid \gamma < \cf(\kappa)}\) be a sequence of elements from \(\sF\). Let \(a_\alpha' = a_{\alpha_\gamma}\), where \(\gamma\) is least such that \(\alpha_\gamma \geq \alpha\). Then, by \(\kappa\)-descending distributivity, there is \(a \in \sF\) lying below cofinally many \(a_\alpha'\). However, this implies that \(a\) is below cofinally many \(a_\gamma\) as well.

On the other hand, if \(\sF\) is \(\cf(\kappa)\)-descending distributive and \(\tup{a_\alpha \mid \alpha < \kappa}\) is a sequence of elements from \(\sF\) then, by \(\cf(\kappa)\)-descending distributivity, there is \(a \in \sF\) lying below cofinally many of the elements of the sequence \(\tup{a_{\alpha_\gamma} \mid \gamma < \cf(\kappa)}\). In particular, \(a\) lies below cofinally many of the \(a_\alpha\).
\end{proof}

\begin{thm}
Assume \(\AC_\kappa\). Let \(\sS=\tup{\bbP,\sG,\sF}\) be a symmetric system and let \(G\subseteq \bbP\) be \(V\)-generic. If \(\sS\) is \(\kappa\)-descending distributive then \(V[G]\) has no functions on \(\kappa\) that are fresh over \(V[G]_\sS\) (or \(V\)).
\end{thm}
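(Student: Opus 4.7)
The plan is to handle the two parts of the conclusion separately. Freshness over $V$ is immediate from \cref{thm:generous-implies-no-fresh}, since $\bbP$ itself is $\kappa$-descending distributive: no function $\kappa \to V$ in $V[G] \setminus V$ can have every proper initial segment in $V$. The substance is therefore the $V[G]_\sS$ case, so let $f \in V[G]$ be a function on $\kappa$ with $f \res \alpha \in V[G]_\sS$ for each $\alpha < \kappa$, and aim to show $f \in V[G]_\sS$.

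Fix a $\bbP$-name $\ddf$ for $f$; a routine Boolean-value calculation using the assumption on $f$ produces $p \in G$ such that $p \forces \ddf \res \check\alpha \in V[\dot G]_\sS$ for every $\alpha < \kappa$. In $V$, for each $\alpha$ define
\begin{equation*}
D_\alpha = \Set{q \leq p \mid (\exists \ddg \in V^\sS)\, q \forces \ddg = \ddf \res \check\alpha}.
\end{equation*}
Each $D_\alpha$ is clearly open, and dense below $p$ by our choice of $p$. The critical geometric point is that $\tup{D_\alpha \mid \alpha < \kappa}$ is \emph{decreasing}: given $q \in D_\alpha$ with witness $\ddg$ and $\beta < \alpha$, symmetric mixing applied to the formula \enquote{$y = u \res \check\beta$} with parameter $\ddg$ yields an $\sS$-name $\ddg'$ with $\sym(\ddg') \geq \sym(\ddg)$ and $\1 \forces \ddg' = \ddg \res \check\beta$, whence $q \forces \ddg' = \ddf \res \check\beta$ and $q \in D_\beta$.

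With decreasingness in hand, $\kappa$-descending distributivity of $\bbP$ produces $q \in G$ lying in $\bigcap_\alpha D_\alpha$. Using $\AC_\kappa$ in $V$, choose a sequence $\tup{\ddg_\alpha \mid \alpha < \kappa}$ of $\sS$-names with $q \forces \ddg_\alpha = \ddf \res \check\alpha$ for every $\alpha$. The symmetry groups $H_\alpha = \sym(\ddg_\alpha)$ form a $\kappa$-sequence in $\sF$, so by $\kappa$-descending distributivity of $\sF$ there is unbounded $I \subseteq \kappa$ with $H \defeq \bigcap_{\alpha \in I} H_\alpha \in \sF$. Set $\ddh = \bigcup_{\alpha \in I} \ddg_\alpha$. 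Its subnames are subnames of the various $\ddg_\alpha$ and hence hereditarily $\sF$-symmetric, while $H \subseteq \sym(\ddh) \in \sF$, so $\ddh$ is an $\sS$-name. Since $I$ is unbounded in $\kappa$,
\begin{equation*}
\ddh^G = \bigcup_{\alpha \in I} \ddg_\alpha^G = \bigcup_{\alpha \in I} f \res \alpha = f,
\end{equation*}
so $f \in V[G]_\sS$ as required.

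The main obstacle is securing decreasingness of the $D_\alpha$: the naive restriction $\ddg \res \check\beta$ is only a $\bbP$-name of no guaranteed symmetry, and without symmetric mixing the argument collapses. A secondary subtlety is the use of $\AC_\kappa$, which is precisely what converts pointwise existence of $\sS$-name witnesses below $q$ into a single $V$-sequence to which descending distributivity of $\sF$ can be applied.
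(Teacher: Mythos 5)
Your proposal is correct and follows essentially the same route as the paper's proof: dispose of freshness over $V$ via \cref{thm:generous-implies-no-fresh}, use $\kappa$-descending distributivity of $\bbP$ to find a single condition deciding every restriction $\ddf\res\check{\alpha}$ as an $\sS$-name, then use $\AC_\kappa$ together with $\kappa$-descending distributivity of $\sF$ to assemble the union into a single $\sS$-name for $f$. The one point where you go beyond the paper is in explicitly verifying that the sets $D_\alpha$ form a decreasing sequence by applying symmetric mixing to realise $\ddg\res\check{\beta}$ as an $\sS$-name of no smaller symmetry group --- the paper delegates this to ``as in the proof of \cref{thm:generous-implies-no-fresh}'', where the witnesses are check names and the issue is invisible --- and this is a genuine detail worth having spelled out.
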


\begin{proof}
By \cref{thm:generous-implies-no-fresh}, \(V[G]\) has no fresh functions on \(\kappa\) over \(V\), so let us prove that there are also no fresh functions on \(\kappa\) over \(V[G]_\sS\). Let \(\ddf\) be a \(\bbP\)-name and \(p \in \bbP\) be such that \(p \forces_\bbP``\ddf\colon\check{\kappa} \to V[G]_\sS\) is fresh.'' As in the proof of \cref{thm:generous-implies-no-fresh}, we may obtain \(q \leq p\) such that \(q\) decides the values of all \(\ddf \res \check{\alpha}\). Use \(\AC_\kappa\) to obtain a sequence \(\tup{\ddg_\alpha \mid \alpha < \kappa}\) of \(\sS\)-names such that \(q \forces_\bbP \ddg_\alpha = \ddf\res\check{\alpha}\) for all \(\alpha < \kappa\). By the \(\kappa\)-descending distributivity of \(\sF\), there is unbounded \(I \subseteq \kappa\) such that \(\bigcap_{\alpha \in I}\sym(\ddg_\alpha) \in \sF\). This witnesses that \(\ddx = \Set{\ddg_\alpha \mid \alpha \in I}^\bullet\) is an \(\sS\)-name. \(q \forces_\bbP \bigcup\ddx = \ddf\), so \(q \forces_\bbP \ddf \in V[G]_\sS\) as required.
\end{proof}

\subsection{Combining descending distributivity}

We shall later wish to take a large product of descending distributive notions of forcing while retaining this descending distributivity, originally motivating \cref{thm:generous-lindenbaum-product}. However, similarities between that result and Hamkins's \emph{Key Lemma} (\cref{lem:hamkins-key}) motivated the iterative form \cref{thm:generous-lindenbaum-iteration}.

\begin{thm}\label{thm:generous-lindenbaum-iteration}
If \(\kappa \geq \aleph^\ast(\bbP)\) is regular and \(\1 \forces_\bbP ``\ddbbQ\) is \(\check{\kappa}\)-descending distributive'' then \(\bbP \iter \ddbbQ\) is \(\kappa\)-descending distributive.
\end{thm}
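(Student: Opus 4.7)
The plan is to decouple the two coordinates of $\bbP \iter \ddbbQ$: use the descending distributivity of $\ddbbQ$ in $V^\bbP$ to locate a single $\bbP$-name $\ddq^*$ playing the role of the second coordinate, and then apply the descending distributivity of $\bbP$ itself---available via \cref{prop:common-generosity}, since $\kappa = \cf(\kappa) \geq \aleph^\ast(\bbP)$---to handle the first coordinate uniformly.

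Fix a decreasing sequence $\calD = \tup{D_\alpha \mid \alpha < \kappa}$ of open dense subsets of $\bbP \iter \ddbbQ$ and a condition $(p_0, \ddq_0)$; it suffices to extend $(p_0, \ddq_0)$ into $\bigcap_\alpha D_\alpha$. In $V^\bbP$, define names $\ddF_\alpha$ with intended interpretation $F_\alpha^G = \Set{\ddq^G \mid (\exists p' \in G)(p', \ddq) \in D_\alpha}$ in $V[G]$. A routine verification using the density of each $D_\alpha$ in $\bbP \iter \ddbbQ$ shows that $\1 \forces_\bbP \tup{\ddF_\alpha \mid \alpha < \check{\kappa}}$ is a decreasing sequence of open dense subsets of $\ddbbQ$. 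Invoking the hypothesis on $\ddbbQ$ and mixing, fix a $\bbP$-name $\ddq^*$ with $\1 \forces_\bbP \ddq^* \in \bigcap_\alpha \ddF_\alpha$ and $\1 \forces_\bbP \ddq^* \leq \ddq_0$.

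Now set $B_\alpha = \Set{p \leq p_0 \mid (p, \ddq^*) \in D_\alpha}$. Openness of $B_\alpha$ in $\bbP \res p_0$ is immediate from openness of $D_\alpha$. For density, unpack $\1 \forces_\bbP \ddq^* \in \ddF_\alpha$ using the definability of the forcing relation: for every $p \leq p_0$, the set of $p' \leq p$ admitting a name $\ddq$ with $(p', \ddq) \in D_\alpha$ and $p' \forces_\bbP \ddq = \ddq^*$ is dense below $p$. For such a $p'$, the conditions $(p', \ddq)$ and $(p', \ddq^*)$ lie below each other in $\bbP \iter \ddbbQ$ and so, by separativity, coincide; hence $(p', \ddq^*) \in D_\alpha$, i.e.\ $p' \in B_\alpha$. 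Thus $\tup{B_\alpha \mid \alpha < \kappa}$ is a decreasing sequence of open dense subsets of $\bbP \res p_0$. Applying the $\kappa$-descending distributivity of $\bbP$ supplied by \cref{prop:common-generosity}, there is $p \leq p_0$ in $\bigcap_\alpha B_\alpha$, and then $(p, \ddq^*) \in \bigcap_\alpha D_\alpha$ extends $(p_0, \ddq_0)$, as required.

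The main subtlety is the passage from the semantic statement that $\ddq^*$ is forced into every $\ddF_\alpha$ to the syntactic density of $B_\alpha$. A witness $p' \in G$ to $\ddq^{*G} \in F_\alpha^G$ comes with some ground-model name $\ddq$ whose value coincides with that of $\ddq^*$ only at the given generic, and one must strengthen $p'$ so that it actually forces the equality $\ddq = \ddq^*$; combined with separativity of $\bbP \iter \ddbbQ$ and openness of $D_\alpha$, this permits the substitution of $\ddq^*$ for $\ddq$ without leaving $D_\alpha$.
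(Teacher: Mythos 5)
Your argument is essentially the paper's: decouple the two coordinates, use the forced descending distributivity of \(\ddbbQ\) to pin the second coordinate down to a single name, and then use \(\kappa \geq \aleph^\ast(\bbP)\) regular to find a single first coordinate landing in every \(D_\alpha\). Your substitution step for replacing the witnessing name \(\ddq\) by \(\ddq^\ast\) inside \(D_\alpha\) is also the paper's, though you do not need separativity for it: from \(p' \forces_\bbP \ddq^\ast = \ddq\) you get \(\tup{p',\ddq^\ast} \leq \tup{p',\ddq} \in D_\alpha\), and openness of \(D_\alpha\) finishes.

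The one step that does not survive in the paper's base theory \(\ZF\) is ``invoking the hypothesis on \(\ddbbQ\) and mixing, fix a \(\bbP\)-name \(\ddq^\ast\) with \(\1 \forces_\bbP \ddq^\ast \in \bigcap_\alpha \ddF_\alpha\) and \(\1 \forces_\bbP \ddq^\ast \leq \ddq_0\).'' Mixing here is the maximal principle: one must choose, for each member of a maximal antichain, a witnessing name, and without choice this can fail. The theorem carries no \(\ZFC\) tag (unlike several other results in the paper), so this matters. The repair is immediate and is exactly what the paper does: fix one \(V\)-generic \(G\) with \(p_0 \in G\), find \(q \leq \ddq_0^G\) lying in every \(\ddF_\alpha^G\) inside \(V[G]\), let \(\ddq^\ast\) be any name for this particular \(q\), and take a single \(p_1 \in G\) with \(p_1 \leq p_0\) forcing \(\ddq^\ast \leq \ddq_0\) and \(\ddq^\ast \in \bigcap_{\alpha<\check{\kappa}}\ddF_\alpha\); then run your density argument for the sets \(B_\alpha\) below \(p_1\) rather than below \(p_0\). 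With that adjustment the proof is correct and coincides with the paper's (your appeal to \cref{prop:common-generosity} at the end is just a repackaging of the paper's rank computation).
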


\begin{proof}
Let \(\calD = \tup{D_\alpha \mid \alpha < \kappa}\) be a decreasing sequence of open dense subsets of \(\bbP \iter \ddbbQ\). Since \(\bigcap \calD\) is open, it remains to show that it is dense, so let \(\tup{p_\ast,\ddq_\ast} \in \bbP \iter \ddbbQ\). Note that, for \(V\)-generic \(G \subseteq \bbP\), the sets \(D_\alpha^G = \Set{\ddq^G \mid (\exists p \in \bbP) \tup{p,\ddq} \in D_\alpha}\) are open dense in \(\bbQ\). For if \(q \in \bbQ\), say \(q = \ddq^G\), then for all \(p_0 \in \bbP\) there is \(\tup{p,\ddq'} \in D_\alpha\) with \(\tup{p,\ddq'} \leq \tup{p_0,\ddq}\). In particular, the set of \(p \in \bbP\) forcing that an element that extends \(\ddq\) appears in \(D_\alpha^G\) is dense, so there is indeed \(q' \in D_\alpha^G\) extending \(q\). Let \(G \subseteq \bbP\) be \(V\)-generic with \(p_\ast \in G\).

Since the sets \(D_\alpha\) are decreasing, so too are the sets \(D_\alpha^G\). For if \(q \in D_\alpha^G\), witnessed by \(\tup{p,\ddq} \in D_\alpha\) and \(p \in G\), then for all \(\beta < \alpha\) we have \(\tup{p,\ddq} \in D_\beta\) as well, so certainly \(q \in D_\beta^G\). Since \(\1 \forces_\bbP ``\ddbbQ\) is \(\check{\kappa}\)-descending distributive'', we have that \(D^G = \bigcap_{\alpha < \kappa} D_\alpha^G\) is open dense in \(\bbQ\). In particular, there is \(q \in D^G\) with \(q \leq \ddq_\ast^G\). Let \(\ddq\) be a \(\bbP\)-name for \(q\).

Let \(p \in G\) with \(p \leq p_0\) be such that \(p \forces_\bbP \ddq \in D^G\). Then for all \(\alpha < \kappa\) there are densely many \(p' \leq p\) such that \(\tup{p',\ddq} \in D_\alpha\) (for if \(p' \forces_\bbP \ddq' = \ddq\) and \(\tup{p',\ddq'} \in D_\alpha\) then \(\tup{p',\ddq} \leq \tup{p', \ddq} \in D_\alpha\), so \(\tup{p', \ddq} \in D_\alpha\)). For \(p' \leq p\), define the rank of \(p'\) to be \(\rnk(p') = \sup\Set{\alpha < \kappa \mid \tup{p',\ddq} \in D_\alpha}\). Since \(\kappa \geq \aleph^\ast(\bbP)\), \(\kappa\) is regular and \(\sup\Set{\rnk(p') \mid p' \leq p} = \kappa\), there must be \(p' \leq p\) such that \(\rnk(p') = \kappa\). In this case, \(\tup{p',\ddq} \in D_\alpha\) for all \(\alpha\). Since \(p' \leq p\) we have that \(p' \forces_\bbP \ddq \leq \ddq_\ast\) and so \(\tup{p',\ddq} \leq \tup{p_\ast,\ddq_\ast}\) as required.
\end{proof}

\begin{lem}[Hamkins]\label{lem:hamkins-key}
``If \(\abs{\bbP} \leq \beta\) [is atomless], \(\forces \ddbbQ\) is \({\leq}\beta\)-strategically closed and \(\operatorname{cof}(\theta) > \beta\), then \(\bbP \iter \ddbbQ\) adds no fresh \(\theta\)-sequences'' \textup{\cite[p.~240]{hamkins_gap_2001}.\footnote{A definition of strategic closure is provided in \cite{hamkins_gap_2001}.}}
\end{lem}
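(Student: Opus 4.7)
The plan is to argue by contradiction: suppose $\tup{p, \ddq} \forces \ddf \colon \check\theta \to \check V$ is fresh, and construct an extension of $\tup{p, \ddq}$ that forces $\ddf \in \check V$, contradicting freshness.

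The setup exploits $\abs{\bbP} \leq \beta$ by enumerating the conditions below $p$ as $\Set{p_\xi \mid \xi < \beta}$, and takes $\dot\sigma$ to be a $\bbP$-name for a winning strategy for Player~II in the ${\leq}\beta$-strategic closure game on $\ddbbQ$ starting below $\ddq$. The core construction is a recursion of length $\beta+1$, producing $\bbP$-names $\tup{\ddq_\xi \mid \xi \leq \beta}$ for descending conditions in $\ddbbQ$ alongside an increasing sequence of ordinals $\tup{\alpha_\xi \mid \xi \leq \beta}$. At each successor stage $\xi+1$, Player~I plays an extension of $\ddq_\xi$ that, in the $\bbP$-extension below $p_\xi$, decides $\ddf \res \check\alpha_{\xi+1}$ for $\alpha_{\xi+1}$ chosen strictly greater than all previously fixed ordinals; $\dot\sigma$ supplies Player~II's response $\ddq_{\xi+1}$. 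At limit stages, $\dot\sigma$ supplies the lower bound.

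The intended payoff is that, for each $p_\xi$, the pair $\tup{p_\xi, \ddq_\beta}$ forces $\ddf$ to equal some ground-model function $g_{p_\xi} \colon \theta \to V$. For any $\bbP$-generic $G$ containing $p$ we may then pick some $p_\xi \in G$, and $\ddf^{G \iter H} = g_{p_\xi} \in V$ for any $H$ containing $\ddq_\beta^G$, contradicting freshness.

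The main obstacle is upgrading the stage-wise partial decisions of $\ddf \res \check\alpha_{\xi+1}$ to a complete determination of $\ddf$: since $\cf(\theta) > \beta$, a single descending chain in $\ddbbQ$ of length $\beta$ cannot by itself cofinally decide $\ddf$. The resolution has to combine the $\beta$ column-wise decisions (one per $p_\xi$) with the $\beta^+$-chain condition on $\bbP$, arguing that any residual freshness of $\ddf$ beyond the reached ordinal projects onto $\bbP$ as a fresh $\cf(\theta)$-sequence added by $\bbP$ alone, contradicting \cref{prop:fischer-fresh}. The atomlessness hypothesis on $\bbP$ is used to ensure that the enumeration $\Set{p_\xi \mid \xi < \beta}$ genuinely exhausts the $\bbP$-choices below $p$; without it, a stray atom could host a separate, ground-model-invisible interpretation of $\ddf$ that the construction would fail to capture.
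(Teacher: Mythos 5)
First, a point of order: the paper does not prove \cref{lem:hamkins-key} at all --- it is quoted verbatim from \cite{hamkins_gap_2001} purely to motivate \cref{thm:generous-lindenbaum-iteration} --- so your attempt can only be compared with Hamkins's original argument. Your opening is faithful to that argument: enumerate \(\bbP\) below \(p\) in order type \(\beta\), run a play of length \(\beta+1\) of the strategic-closure game in which Player~I successively decides \(\ddf \res \check{\alpha}_{\xi+1}\) below \(p_\xi\), and use \(\cf(\theta) > \beta\) to get \(\alpha^\ast = \sup_\xi \alpha_\xi < \theta\). The gap is exactly where you flag ``the main obstacle'', and your proposed resolution does not close it. The recursion decides only \(\ddf \res \check{\alpha}_{\xi+1}\) below \(\tup{p_\xi, \ddq_\beta}\), so the claimed payoff --- that \(\tup{p_\xi, \ddq_\beta}\) forces \(\ddf\) to equal a full ground-model function \(g_{p_\xi} \colon \theta \to V\) --- is unsupported; and the patch, that the ``residual freshness beyond \(\alpha^\ast\) projects onto \(\bbP\) as a fresh sequence added by \(\bbP\) alone'' so that \cref{prop:fischer-fresh} applies, is an assertion rather than an argument. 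Beyond \(\alpha^\ast\) the values of \(\ddf\) still depend on the \(\ddbbQ\)-generic, and nothing in the construction converts \(\ddf\) into a \(\bbP\)-name there.

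The decisive symptom is your account of the atomlessness hypothesis. It has nothing to do with ``exhausting the enumeration'' (a surjection from \(\beta\) onto \(\bbP\) exhausts \(\bbP\) whether or not there are atoms); it is what makes the lemma true. Take \(\bbP\) trivial, \(\beta = \omega\), \(\theta = \omega_1\) and \(\ddbbQ\) the check name of \(\Add(\omega_1,1)\): every hypothesis except atomlessness holds, yet \(\bbP \iter \ddbbQ\) adds a fresh \(\omega_1\)-sequence, namely the generic function itself. Your argument applies verbatim to this configuration, so it cannot be correct as it stands. In Hamkins's proof the atomlessness is consumed at the very end: by weaving splitting conditions of \(\bbP\) into the recursion one arranges that \(\ddf^{G \iter H} \res \alpha^\ast\) computes the \(\bbP\)-generic \(G\); freshness places \(\ddf^{G \iter H} \res \alpha^\ast\) in \(V\), whence \(G \in V\), which is impossible for an atomless separative \(\bbP\). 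Both freshness and atomlessness are spent in that final step, which is absent from your proposal.
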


While the following proposition regarding products of descending distributive notions of forcing can likely be expressed as a corollary of \cref{thm:generous-lindenbaum-iteration}, the proof is at least as long as the direct poof that we present.

\begin{prop}\label{thm:generous-lindenbaum-product}
If \(\kappa \geq \aleph^\ast(\bbP)\) is regular and \(\bbQ\) is \(\kappa\)-descending distributive then \(\bbP \times \bbQ\) is \(\kappa\)-descending distributive.
\end{prop}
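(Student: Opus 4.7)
The plan is to mimic the structure of \cref{thm:generous-lindenbaum-iteration}: first use the descending distributivity of the second factor to fix a good $\bbQ$-coordinate, then use $\kappa \geq \aleph^\ast(\bbP)$ together with the regularity of $\kappa$ to find a matching $\bbP$-coordinate via a Hartogs-number argument. Fix a decreasing sequence $\calD = \tup{D_\alpha \mid \alpha < \kappa}$ of open dense subsets of $\bbP \times \bbQ$ and a condition $\tup{p_\ast, q_\ast}$; I must produce $\tup{p,q} \leq \tup{p_\ast,q_\ast}$ lying in every $D_\alpha$.

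First I would define, for each $\alpha < \kappa$, the set
\begin{equation*}
E_\alpha = \Set{ q \leq q_\ast \mid (\exists p \leq p_\ast)\; \tup{p,q} \in D_\alpha }.
\end{equation*}
Openness of $E_\alpha$ below $q_\ast$ is immediate from openness of $D_\alpha$, and density below $q_\ast$ follows from density of $D_\alpha$ applied to $\tup{p_\ast, q}$ for $q \leq q_\ast$. The sequence $\tup{E_\alpha \mid \alpha < \kappa}$ is decreasing because $\calD$ is. Applying the $\kappa$-descending distributivity of $\bbQ$ (restricted below $q_\ast$), I find $q \leq q_\ast$ with $q \in \bigcap_{\alpha < \kappa} E_\alpha$; equivalently, for each $\alpha < \kappa$ there is some $p_\alpha \leq p_\ast$ with $\tup{p_\alpha, q} \in D_\alpha$.

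The remaining task is to upgrade this to a \emph{single} $p \leq p_\ast$ working simultaneously for all $\alpha$, and this is where $\aleph^\ast(\bbP) \leq \kappa$ enters. For each $p \leq p_\ast$ set
\begin{equation*}
\rnk(p) = \sup\Set{ \alpha \leq \kappa \mid (\forall \beta < \alpha)\; \tup{p,q} \in D_\beta } \in \kappa + 1.
\end{equation*}
The existence of the $p_\alpha$ gives $\rnk(p_\alpha) > \alpha$, so $\sup\Set{\rnk(p) \mid p \leq p_\ast} = \kappa$. Suppose for contradiction that no $p \leq p_\ast$ has $\rnk(p) = \kappa$; then $\rnk$ maps $\Set{p \leq p_\ast}$ onto a cofinal subset of $\kappa$, which by the regularity of $\kappa$ has order type $\kappa$ and hence is in bijection with $\kappa$. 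Composing, and then extending by a constant on $\bbP \setminus \Set{p \leq p_\ast}$, yields a surjection $\bbP \to \kappa$, contradicting $\aleph^\ast(\bbP) \leq \kappa$. Thus some $p \leq p_\ast$ has $\rnk(p) = \kappa$, and $\tup{p,q} \leq \tup{p_\ast, q_\ast}$ lies in every $D_\alpha$, as required.

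The main (very mild) obstacle is purely bookkeeping: checking in the $\ZF$ setting that the ``cofinal image has order type $\kappa$'' step really produces a bona fide surjection $\bbP \to \kappa$ rather than merely $\kappa$'s worth of information scattered through $\bbP$. Because $\kappa$ is a regular cardinal, a cofinal subset of $\kappa$ has order type exactly $\kappa$, so its unique order-isomorphism with $\kappa$ is a definable bijection and no choice is needed. Everything else is the same kind of ``rank-and-Lindenbaum'' manoeuvre already used in the proof of \cref{thm:generous-lindenbaum-iteration}.
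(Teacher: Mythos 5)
Your argument is correct and follows essentially the same route as the paper's proof: first use the $\kappa$-descending distributivity of $\bbQ$ on the projected sets $E_\alpha$ to fix a single $q$, then use $\kappa \geq \aleph^\ast(\bbP)$ together with regularity of $\kappa$ to find one $p \leq p_\ast$ of full rank. The only differences are cosmetic (the paper works with open dense subsets of all of $\bbQ$ rather than sets dense below $q_\ast$, and leaves the no-surjection-onto-a-cofinal-subset step implicit, which you spell out correctly).
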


\begin{proof}
Let \(\calD = \tup{D_\alpha \mid \alpha < \kappa}\) be a collection of open dense subsets of \(\bbP \times \bbQ\) and let \(\tup{p_0,q_0} \in \bbP\times\bbQ\). For \(\alpha < \kappa\), let \(D_\alpha^0 = \Set{q \in \bbQ \mid (\exists p \leq p_0) \tup{p,q} \in D_\alpha}\), noting that \(D_\alpha^0\) is open dense. Let \(q \leq q_0\) be such that \(\rnk_{(D_\alpha^0)_\alpha}(q) = \kappa\). Then \(\sup\Set{\rnk_\calD(\tup{p,q}) \mid p \leq p_0} = \kappa\), so since \(\kappa \geq \aleph^\ast(\bbP)\) is regular, there must be \(p \leq p_0\) such that \(\rnk_\calD(\tup{p,q}) = \kappa\). Since \(\tup{p,q} \leq \tup{p_0,q_0}\) we are done.
\end{proof}

\begin{qn}\label{qn:product-of-generous}
If \(\bbP\) and \(\bbQ\) are both \(\kappa\)-descending distributive, must \(\bbP \times \bbQ\) also be?
\end{qn}

Preservation is much easier for filters.

\begin{prop}\label{thm:generous-filter-product}
If \(\sF_0\) and \(\sF_1\) are \(\kappa\)-descending distributive filters then so is \(\sF_0 \times \sF_1\).
\end{prop}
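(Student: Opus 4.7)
The plan is to apply $\kappa$-descending distributivity to each factor in turn. Given a sequence $\langle (a_\alpha, b_\alpha) \mid \alpha < \kappa \rangle$ of elements of $\sF_0 \times \sF_1$, I would first feed the first coordinates into the $\kappa$-descending distributivity of $\sF_0$ to obtain some $a \in \sF_0$ and an unbounded $I \subseteq \kappa$ with $a \leq a_\alpha$ for all $\alpha \in I$. I would then use $\kappa$-descending distributivity of $\sF_1$ on the restricted sequence $\langle b_\alpha \mid \alpha \in I \rangle$ to extract $b \in \sF_1$ lying below $b_\alpha$ for a cofinal set $K$ of indices $\alpha \in I$. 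The pair $(a,b)$ then witnesses that $\sF_0 \times \sF_1$ is $\kappa$-descending distributive, since $(a,b) \leq (a_\alpha, b_\alpha)$ for every $\alpha \in K$.

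The only technicality arises when $\kappa$ is singular, since then $I$ may have order type strictly less than $\kappa$, meaning I cannot feed $\langle b_\alpha \mid \alpha \in I \rangle$ directly into the $\kappa$-descending distributivity of $\sF_1$. To handle this, I would invoke \cref{prop:generosity-is-cofinal-filters} to replace $\kappa$-descending distributivity of $\sF_1$ by the equivalent $\cf(\kappa)$-descending distributivity, and pass to a cofinal subset $J \subseteq I$ of order type $\cf(\kappa)$. An unbounded subset of $\cf(\kappa)$ in the enumeration of $J$ translates back to a cofinal subset of $\kappa$, delivering the cofinal set of indices required.

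I do not anticipate a serious obstacle beyond this cofinality bookkeeping. The argument works precisely because in a filter any two elements have a common lower bound, so the two applications of descending distributivity decouple cleanly; there is no need to intersect or compose the coordinates in any non-trivial way. This decoupling is exactly what is missing in the forcing setting (cf.\ \cref{qn:product-of-generous}), where one must also ensure density below a prescribed condition in the product and cannot simply choose the two coordinates independently.
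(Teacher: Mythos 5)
Your proof is correct and follows essentially the same route as the paper's: apply descending distributivity of \(\sF_0\) to the first coordinates, then of \(\sF_1\) to the subsequence indexed by the resulting unbounded set, and take the product of the two witnesses. The only cosmetic difference is that the paper invokes \cref{prop:generosity-is-cofinal-filters} once at the start to assume \(\kappa\) regular, whereas you defer the same reduction to the second application; both handle the singular-\(\kappa\) bookkeeping equally well.
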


\begin{proof}
By \cref{prop:generosity-is-cofinal-filters}, we may assume that \(\kappa\) is regular without loss of generality. Let \(\tup{\tup{a_\alpha, b_\alpha} \mid \alpha < \kappa}\) be a sequence of elements of \(\sF_0 \times \sF_1\). By \(\kappa\)-descending distributivity, there is \(a \in \sF_0\) such that \(a \leq a_\alpha\) for all \(\alpha \in I\), where \(I \subseteq \kappa\) is unbounded. By the \(\kappa\)-descending distributivity of \(\sF_1\), there is \(b \in \sF_1\) and cofinal \(J \subseteq I\) such that, for all \(\alpha \in J\), \(b \leq b_\alpha\). Thus, for all \(\alpha \in J\), \(a_\alpha \times b_\alpha \geq a \times b \in \sF_0 \times \sF_1\).
\end{proof}

In fact, one should really regard \cref{thm:generous-filter-product} as the fact that a product of strongly \(\kappa\)-descending distributive forcings is strongly \(\kappa\)-descending distributive.

\section{Extendable choice}\label{s:levys-axiom}

\begin{defn}
A function \(F\) is \emph{approachable} if its domain is an ordinal \(\alpha\), \(\emptyset\notin F``\alpha\) and, for all \(\beta<\alpha\), \(F\res\beta\) has a selector. That is, for all \(\beta < \alpha\) there is a function \(f\) with domain \(\beta\) such that, for all \(\gamma < \beta\), \(f(\gamma) \in F(\gamma)\). We shall say that \(F\) is \emph{\(\alpha\)-approachable} to further specify that the domain of \(F\) is \(\alpha\).
\end{defn}

In \cite{levy_interdependence_1964} Levy introduces the following axiom that we call the axiom of \emph{extendable choice} (for an ordinal \(\alpha\)).\footnote{In \cite{levy_interdependence_1964}, Levy does not name this axiom and denotes it `\(C(\alpha)\)'. We shall be using our new denotion to avoid confusion with choice principles. Howard--Paul \cite{howard_consequences_1998} notably uses `\(C\)' to denote variants of the axiom of choice.}

\begin{defn}
\emph{Extendable choice} (for \(\alpha\)), denoted \(\AL_\alpha\), is the assertion that every \(\alpha\)-approachable function has a selector.

Given \(X\), we shall write \(\AL_\alpha(X)\) to mean that every approachable function \(F\colon\alpha\to\power(X)\) has a selector.
\end{defn}

\begin{thm}[{\cite[Theorems~1 and 2]{levy_interdependence_1964}}]\label{thm:levy-cc-deductions}\label{thm:levy-al-deductions}
\(\AL_{0}\) holds and, for all ordinals \(\gamma\), \(\AL_{\gamma+1}\) holds. Furthermore, if \(\cf(\gamma)=\cf(\beta)\) then \(\AL_{\gamma}\lra\AL_{\beta}\) holds.
\end{thm}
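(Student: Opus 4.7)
The plan is to address the three assertions in turn. The first, $\AL_{0}$, is immediate: the only $0$-approachable function is the empty function, which is vacuously its own selector. For $\AL_{\gamma+1}$, given a $(\gamma+1)$-approachable $F$, approachability at $\gamma$ yields a selector $f$ of $F\res\gamma$; since $F(\gamma)\neq\emptyset$, a single existential step (legal in $\ZF$) gives some $x\in F(\gamma)$, and extending $f$ by $f(\gamma)=x$ produces a selector of $F$.

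For the third clause, by symmetry of the hypothesis it suffices to prove $\AL_{\gamma}\implies\AL_{\beta}$. The cases $\cf(\gamma)\in\Set{0,1}$ reduce to the previous clauses, since then $\gamma$ and $\beta$ are both either $0$ or both successors, so I would assume $\kappa=\cf(\gamma)=\cf(\beta)\geq\omega$ and fix strictly increasing continuous cofinal sequences $\tup{\gamma_\alpha\mid\alpha<\kappa}$ in $\gamma$ and $\tup{\beta_\alpha\mid\alpha<\kappa}$ in $\beta$, with $\gamma_0=\beta_0=0$ (so that the intervals $[\beta_\alpha,\beta_{\alpha+1})$ partition $\beta$, and similarly for $\gamma$). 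Given a $\beta$-approachable $F$, I would encode it as a function $G\colon\gamma\to V$: at each landmark $\gamma_\alpha$ set $G(\gamma_\alpha)$ to be the set of all selectors of $F\res[\beta_\alpha,\beta_{\alpha+1})$, and on each filler interval $(\gamma_\alpha,\gamma_{\alpha+1})$ set $G$ constantly equal to $\Set{\emptyset}$. Each $G(\gamma_\alpha)$ is non-empty because approachability of $F$ at $\beta_{\alpha+1}$ yields a selector of $F\res\beta_{\alpha+1}$, whose restriction to $[\beta_\alpha,\beta_{\alpha+1})$ lies in $G(\gamma_\alpha)$.

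To verify that $G$ is $\gamma$-approachable, fix $\delta<\gamma$ and pick $\alpha<\kappa$ with $\delta\leq\gamma_{\alpha+1}$; the selector $h$ of $F\res\beta_{\alpha+1}$ supplied by approachability of $F$ decomposes into pieces $h\res[\beta_{\alpha'},\beta_{\alpha'+1})$ for $\alpha'\leq\alpha$, which together with $\emptyset$ on the filler ordinals assemble into a selector of $G\res\delta$. Applying $\AL_{\gamma}$ to $G$ then produces a selector $g$, and concatenating $g(\gamma_\alpha)$ for $\alpha<\kappa$ gives a selector of $F$, as $\bigcup_{\alpha<\kappa}[\beta_\alpha,\beta_{\alpha+1})=\beta$ by continuity. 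The main obstacle is the absence of choice over the $\kappa$-many non-empty sets of selectors of $F\res\beta_{\alpha+1}$: these per-$\alpha$ selectors cannot in general be chosen simultaneously in $\ZF$. The point of the encoding is that verifying $\gamma$-approachability of $G$ requires only a single selector at a time (one $h$ per $\delta$), while the genuinely simultaneous choice across the $\gamma_\alpha$ is deferred to the single application of $\AL_{\gamma}$ at the end.
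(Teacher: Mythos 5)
Your proof is correct. The paper itself gives no argument for this theorem --- it is quoted directly from Levy's \emph{The interdependence of certain consequences of the axiom of choice} (Theorems 1 and 2 there) --- and your treatment of the first two clauses and your re-indexing argument for the cofinality clause (encoding a \(\beta\)-approachable \(F\) as a \(\gamma\)-approachable \(G\) via continuous cofinal sequences, with the sets of partial selectors at landmark ordinals and dummy values \(\Set{\emptyset}\) elsewhere, so that the only genuinely simultaneous choice is the single application of \(\AL_\gamma\)) is exactly the standard proof one would extract from Levy.
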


Levy goes on to demonstrate the optimality of this theorem. He constructs, for all non-zero limit ordinals \(\alpha\) and \(\beta\) of differing cofinality, a \(\ZFA\) model of \(\AL_{\alpha}\land\lnot\AL_{\beta}\). That is, no additional deductions about the truth of \(\AL_{\alpha}\) can be made from the truth of a single other \(\AL_\beta\) except using \cref{thm:levy-cc-deductions}. See \cref{s:levys-axiom;ss:violating-al} for a demonstration of this fact in \(\ZF\). On the other hand, certain large cardinals do allow one to deduce more about the \(\AL\) structure of the universe (see \cref{thm:supercompact-massive-failure}).

\(\AL\) is intimately related to the notion of eccentricity. For if \(\kappa\) is a limit cardinal and \(\AL_{\kappa}\) holds, then there is no set \(X\) such that \(\aleph(X)=\kappa\). Not only this, but since \(\AL_{\kappa}\) is equivalent to \(\AL_{\cf(\kappa)}\), we in fact deduce that for all limits \(\mu\) with \(\cf(\kappa)=\cf(\mu)\) there is no set \(X\) with \(\aleph(X)=\mu\). However, by \cref{thm:going-up-singular-or-limit} (essentially \cite[Lemma~4.18]{ryan-smith_acwo_2024}), if there is \emph{any} limit \(\mu\) with \(\cf(\mu)=\cf(\kappa)\) and a set \(X\) with \(\aleph(X)=\mu\), then there is \(\mu^\ast\) such that for all limits \(\eta\geq\mu^\ast\) with \(\cf(\eta)=\cf(\kappa)\) there is a set \(Y\) with \(\aleph(Y)=\eta\). Combined with \cref{lem:not-cc-gives-eccentricity} (or \cite[Theorem~16]{levy_interdependence_1964}), we obtain the following.

\begin{cor}\label{cor:cc-equivalents}\label{cor:al-equivalents}
Let \(\kappa\) be an infinite cardinal and \(\calC\) be the class of cardinals \(\mu\) with \(\cf(\mu)=\cf(\kappa)\). The following are equivalent:
\begin{enumerate}
\item \(\AL_{\kappa}\);
\item for all \(\mu\in\calC\), \(\AL_{\mu}\);
\item for some \(\mu\in\calC\), \(\AL_{\mu}\);
\item there is a limit \(\mu\in\calC\) and a set \(X\) such that \(\aleph(X)=\mu\);
\item there is \(\mu\in\calC\) and a set \(X\) such that \(\aleph(X)=\mu<\aleph^\ast(X)\); and
\item there is \(\mu^\ast\in\calC\) such that, for all limit cardinals \(\mu\in\calC\setminus\mu^\ast\), there is a set \(X\) such that \(\aleph(X)=\mu\).
\end{enumerate}
\end{cor}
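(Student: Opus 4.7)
The plan is to organise the proof as two internal cycles bridged by the key lemma. First, (1)--(3) collapse into a single statement via Levy's theorem; second, (4)--(6) are tied together via the going-up construction; third, the two clusters are bridged by \cref{lem:not-cc-gives-eccentricity}.

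For (1)\(\iff\)(2)\(\iff\)(3), every \(\mu\in\calC\) has \(\cf(\mu)=\cf(\kappa)\), so \cref{thm:levy-al-deductions} gives \(\AL_\mu\iff\AL_\kappa\) for each such \(\mu\), and the three statements are identical in disguise.

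For the eccentricity cluster: (6)\(\implies\)(4) is trivial. For (5)\(\implies\)(6), the going-up construction \cref{thm:going-up-singular-or-limit} takes the eccentric witness \(X\) (whose Hartogs number lies in \(\calC\), hence is limit or singular since it shares cofinality with \(\kappa\)) and manufactures eccentric sets \(Y\) with \(\aleph(Y)=\eta\) for every limit \(\eta\in\calC\) above the threshold \(\mu^\ast\) given by that theorem, supplying (6) once one checks that \(\mu^\ast\) can be chosen in \(\calC\). For (4)\(\implies\)(5), the key observation is that whenever \(\aleph(X)\) is a limit cardinal \(\mu\), the Lindenbaum number must be strictly larger: equality \(\aleph^\ast(X)=\aleph(X)=\mu\) would, via the weak productivity \cref{item:weak-productivity} and the cofinal structure of \(\mu\), produce the forbidden injection \(\mu\to X\).

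The bridge between the two clusters is the content of \cref{lem:not-cc-gives-eccentricity}. The dictionary it provides translates, in one direction, a \(\kappa\)-approachable function \(F\) into an eccentric set by gluing the fibres, typically \(X=\bigcup_{\beta<\kappa}F(\beta)\times\Set{\beta}\), so that surjections onto initial segments are transparent while the behaviour of injections from \(\kappa\) is controlled by the selector structure of \(F\); in the other direction, from an eccentric \(X\) with surjection \(\pi\colon X\to\mu=\aleph(X)\) one produces a \(\mu\)-approachable function along the lines of \(F(\beta)=\Inj{\beta}{X}\), whose approachability is witnessed by restricting single injections \(\gamma\to X\) to their initial segments. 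Once both translations land in the right cofinality class, Stages one and two combine with \cref{lem:not-cc-gives-eccentricity} to close the six-way equivalence.

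The main obstacle is the polarity and cofinality bookkeeping inside \cref{lem:not-cc-gives-eccentricity}: one must ensure that the eccentric set built from the approachable witness genuinely has Hartogs number in \(\calC\) (not merely below \(\kappa\)) and, in the reverse direction, that the approachable function built from an eccentric \(X\) is not derailed by some unforeseen selector. The naive construction \(F(\beta)=\Inj{\beta}{X}\) may admit selectors that do not compose with \(\pi\) into an injection \(\mu\to X\), and a more delicate approachable function, threading the structure of the surjection \(\pi\) through the successive fibres, is typically required. This careful calibration is the technical core of the corollary, and everything else is assembly.
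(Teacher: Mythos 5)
Your skeleton matches the paper's: \cref{thm:levy-al-deductions} collapses (1)--(3), the going-up theorems tie (4)--(6) together, and \cref{lem:not-cc-gives-eccentricity} is the bridge. The fatal problem is the polarity of that bridge. \cref{lem:not-cc-gives-eccentricity} converts a witness to the \emph{failure} of \(\AL_\delta\) --- an approachable function with \emph{no} selector --- into a set with limit Hartogs number, and your reverse dictionary (\(F(\beta)=\Inj{\beta}{X}\) built from a set with \(\aleph(X)=\mu\) a limit) likewise produces an approachable function with no selector. What these two translations establish is \(\lnot\AL_\kappa\lra(4)\): the two clusters are equivalent to each other's \emph{negations}, and they cannot be literally equivalent, since in \(\ZFC\) item (1) holds while item (4) fails (every Hartogs number is a successor there). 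This is exactly how the paper's own discussion assembles the result --- \(\AL_\kappa\) implies there is no \(X\) with \(\aleph(X)=\mu\) for limit \(\mu\in\calC\), and conversely \(\lnot\AL_\kappa\) yields such an \(X\) via the lemma --- so the corollary must be read with one of the two clusters negated. Your proposal instead claims the bridge ``closes the six-way equivalence'' as printed; you flag ``polarity bookkeeping'' as the main obstacle but never resolve it, and no amount of bookkeeping can make the two clusters equivalent with the stated signs.

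Two interior steps also need repair. In the step from (5) to (6) you assert that the witness \(\mu\) ``is limit or singular since it shares cofinality with \(\kappa\)''; this is false (\(\mu=\kappa=\aleph_1\) is a regular successor with \(\cf(\mu)=\cf(\kappa)\)), and in that case \cref{thm:going-up-singular-or-limit} does not apply. You need \cref{thm:going-up-regular} instead: from \(\aleph(X)=\mu=\cf(\lambda)<\aleph^\ast(X)\) with \(\mu\) regular it produces \(Y\) with \(\aleph(Y)=\lambda\) for every \(\lambda\) of cofinality \(\cf(\kappa)\), which yields (6). In the step from (4) to (5) you claim that \(\aleph(X)\) being a limit forces \(\aleph^\ast(X)>\aleph(X)\) ``via weak productivity''; no such deduction follows from \cref{item:weak-productivity}, and it is not needed. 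Feed the limit witness of (4) into \cref{thm:going-up-singular-or-limit}: the output \(Y\) satisfies \(\aleph(Y)=\lambda<\lambda^+\leq\aleph^\ast(\Inj{{<}\mu}{X})\times\lambda^+=\aleph^\ast(Y)\) and \(\cf(\lambda)=\cf(\kappa)\), so \(Y\) itself witnesses (5) without your having to decide whether the original \(X\) is eccentric.
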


\begin{prop}\label{lem:not-cc-gives-eccentricity}
If \(\AL_{\delta}\) fails then there is a limit \(\kappa\) such that \(\cf(\delta)=\cf(\kappa)\) and a set \(D\) such that \(\aleph(D)=\kappa\).
\end{prop}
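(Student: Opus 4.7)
The plan is to reduce to the regular case and construct \(D\) as the tree of partial selectors of a witness to \(\lnot\AL_\delta\). By the equivalence \(\AL_\delta \lra \AL_{\cf(\delta)}\) from \cref{thm:levy-al-deductions}, I may assume \(\delta\) is a regular cardinal, so \(\delta = \cf(\delta)\). Fix an approachable \(F \colon \delta \to V\) with no selector, and write \(S_\beta\) for the (non-empty) set of selectors of \(F\res\beta\). Let \(T \defeq \bigcup_{\beta<\delta} S_\beta\), ordered by inclusion. I aim to show \(\aleph(T) = \delta\); this gives the conclusion with \(D = T\) and \(\kappa = \delta\), since \(\delta\) is a limit ordinal with \(\cf(\kappa) = \cf(\delta)\) automatic.

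The inequality \(\aleph(T) \geq \delta\) is easy: for each \(\beta < \delta\), approachability provides some \(s \in S_\beta\), and then \(\gamma \mapsto s\res\gamma\) injects \(\beta+1\) into \(T\). For the reverse, the main tool is a \emph{selector-extraction trick}: given any injection \(g \colon \delta \to T\), define \(i(\alpha) \defeq \min\Set{i < \delta \mid \dom(g(i)) > \alpha}\) whenever the set is non-empty. If \(\dom\circ g\) is unbounded in \(\delta\), then \(i(\alpha)\) is defined for every \(\alpha < \delta\) and \(s(\alpha) \defeq g(i(\alpha))(\alpha) \in F(\alpha)\) is a selector of \(F\), contradicting our choice of \(F\). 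Otherwise all \(g(i)\) have \(\dom(g(i)) \leq \beta^*\) for some fixed \(\beta^* < \delta\); then regularity of \(\delta\) combined with a pigeonhole over the \({\leq}\beta^*+1 < \delta\) levels forces the injection to concentrate on a single level, yielding an injection \(g_0 \colon \delta \to S_{\beta_0}\) for some \(\beta_0 \leq \beta^*\).

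The hardest step will be finishing this bounded subcase, where the simple selector-extraction no longer suffices. My plan is a second pigeonhole over the \(\beta_0 < \delta\) coordinates: for each \(\gamma < \beta_0\) the range \(R_\gamma \defeq \Set{g_0(i)(\gamma) \mid i < \delta} \subseteq F(\gamma)\) is well-orderable as a surjective image of \(\delta\), and either \(\abs{R_\gamma} = \delta\) for some \(\gamma\) (after which \cref{item:hartlin-b} lets one reduce the problem to a shorter tree or to \(F(\gamma)\) itself) or \(\abs{R_\gamma} < \delta\) for every \(\gamma\), in which case regularity yields a canonical common value \(x_\gamma \in R_\gamma\) shared by \(\delta\)-many indices, assembling into a partial selector \(s^\ast \colon \beta_0 \to V\) of \(F\res\beta_0\); the delicate final step is extending \(s^\ast\) upward to a full selector of \(F\) using only approachability and the data of \(g_0\). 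Once this is resolved, \(\aleph(T) = \delta\) and the proof concludes with \(D = T\) and \(\kappa = \delta\).
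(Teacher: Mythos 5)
There is a genuine gap, and it is fatal to the strategy rather than confined to the ``hardest step'' you flagged. Your plan hinges on proving \(\aleph(T)\leq\delta\) for \(T=\bigcup_{\beta<\delta}S_\beta\), i.e.\ on ruling out \emph{every} injection \(\delta\to T\). The selector-extraction trick correctly kills injections whose images meet unboundedly many levels, but an injection concentrated on a single level \(S_{\beta_0}\) is not contradictory and in general cannot be eliminated, because the levels of the tree of partial selectors can simply be too large. Concretely, replace \(F(0)\) by any cardinal \(\theta>\delta\): this preserves approachability (redefine any partial selector to take the value \(0\) at coordinate \(0\)) and preserves the non-existence of a selector (a selector for the modified \(F\) converts into one for the original by substituting a single fixed element of the original \(F(0)\)). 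Then \(S_1\) is in bijection with \(\theta\), so \(\delta\) injects into \(S_1\subseteq T\) and indeed \(\aleph(T)\geq\theta^+\), which need not be a limit nor of cofinality \(\cf(\delta)\). Your subcase analysis cannot repair this: in the subcase where every \(R_\gamma\) has size \(<\delta\) you only recover a partial selector \(s^\ast\) of \(F\res\beta_0\), which approachability already guarantees and which carries no information about extending past \(\beta_0\) (there is nothing to extend it \emph{with}); and in the subcase where some \(\abs{R_\gamma}=\delta\), the conclusion that \(\delta\) injects into \(F(\gamma)\) is perfectly consistent with \(\lnot\AL_\delta\) and yields no contradiction. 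Note also that \cref{item:hartlin-b} concerns finite products and gives no control over objects like \(\Inj{\gamma}{X}\) for infinite \(\gamma\).

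The paper's own proof is a one-line appeal to \cite[Proposition~3.3]{ryan-smith_acwo_2024} applied to \(X=\Set{F(\alpha)\mid\alpha<\delta}\), and the construction behind it (visible in the analogous arguments of \cref{thm:going-up-regular,thm:going-up-singular-or-limit}) differs from yours in exactly the respect your proof needs: the witnessing set is assembled from levels scaled by ordinal factors, and the target cardinal \(\kappa\) is taken large enough to dominate the Hartogs numbers of all individual levels (and of their initial unions), so that any injection \(\kappa\to D\) is \emph{forced} to spread across unboundedly many levels---at which point an extraction argument like yours does apply and produces the forbidden selector. This is precisely why the proposition promises only \emph{some} limit \(\kappa\) with \(\cf(\kappa)=\cf(\delta)\) rather than \(\kappa=\cf(\delta)\); by taking \(\kappa=\delta\) you are attempting to prove a strictly stronger statement, and that stronger statement is false.
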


\begin{proof}
Given \(F\) witnessing the failure of \(\AL_{\delta}\), apply \cite[Proposition~3.3]{ryan-smith_acwo_2024} to the set \(X=\Set{F(\alpha)\mid\alpha<\delta}\), noting that the resulting set \(D\) must have \(\aleph(D)=\kappa\) because \(X\) has no choice function and that \(\cf(\kappa)=\cf(\delta)\) by construction.
\end{proof}

\subsection{Preservation of \(\AL_{\kappa}\)}

Let us highlight some ways in which \(\AL_{\kappa}\) may be preserved in a symmetric extension.

\begin{thm}\label{thm:al-preservation}
Assume \(\AL_\kappa\) and let \(\sS\) be a \(\kappa\)-descending distributive symmetric system. Then \(\1 \forces_\sS \AL_{\check{\kappa}}\).
\end{thm}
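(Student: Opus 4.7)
Fix $p \in \bbP$ and an $\sS$-name $\ddF$ such that $p$ forces $\ddF$ to be a $\check\kappa$-approachable function; the goal is to produce $q \leq p$ and an $\sS$-name $\ddg$ such that $q$ forces $\ddg$ to be a selector for $\ddF$. The plan invokes the three hypotheses---$\kappa$-descending distributivity of $\bbP$, of $\sF$, and $\AL_\kappa$ in $V$---in turn. For each $\beta < \kappa$ let $A_\beta$ be the set of $r \in \bbP$ that are either incompatible with $p$ or satisfy $r \leq p$ and force some $\sS$-name to be a selector for $\ddF \res \check\beta$. Each $A_\beta$ is open dense by approachability, and given a witness $\ddg$ for $A_\beta$, applying symmetric mixing componentwise yields an $\sS$-name for its restriction $\ddg \res \check\gamma$ witnessing $A_\gamma$ for each $\gamma < \beta$, so the $A_\beta$ form a decreasing family. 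By $\kappa$-descending distributivity of $\bbP$, I fix $q \leq p$ in $\bigcap_{\beta < \kappa} A_\beta$.

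Next I apply $\AL_\kappa$ in $V$ to the function
\begin{equation*}
F^\ast(\beta) := \Set{\ddg \in V^\sS \cap V_\eta \mid q \text{ forces } \ddg \text{ to be a selector for } \ddF \res \check\beta},
\end{equation*}
where $\eta$ is a rank bound chosen in $V$---by Replacement, as the supremum over $\beta < \kappa$ of the least rank of an $\sS$-name witnessing $q \in A_\beta$, padded sufficiently to absorb the rank overhead of symmetric mixing. Each $F^\ast(\beta)$ is non-empty by the choice of $\eta$, and for any single $\ddg \in F^\ast(\gamma)$ the map $\beta \mapsto \ddg \res \check\beta$---via symmetric-mixing restriction, whose values lie in $V_\eta$ by design of $\eta$---is a selector for $F^\ast \res \gamma$ in $V$. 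Since picking one such $\ddg$ is a single non-choice operation, $F^\ast$ is $\kappa$-approachable in $V$; invoking $\AL_\kappa$ thus yields a selector $\beta \mapsto \ddg_\beta$ of $F^\ast$.

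Finally, I patch the $\ddg_\beta$ together using $\kappa$-descending distributivity of $\sF$, applied to the sequence $\tup{\sym(\ddg_\beta) \mid \beta < \kappa}$ of subgroups from $\sF$: this provides an unbounded $I \subseteq \kappa$ with $K := \bigcap_{\beta \in I} \sym(\ddg_\beta) \in \sF$. For each $\alpha < \kappa$, set $\beta(\alpha) := \min\Set{\beta \in I \mid \beta > \alpha}$; symmetric mixing on the formula $y = \ddg_{\beta(\alpha)}(\check\alpha)$ yields an $\sS$-name $\ddc_\alpha$ with $\sym(\ddc_\alpha) \geq \sym(\ddg_{\beta(\alpha)}) \geq K$ and with $q$ forcing $\ddc_\alpha \in \ddF(\check\alpha)$. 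Then $\ddg := \Set{\tup{\check\alpha, \ddc_\alpha}^\bullet \mid \alpha < \kappa}^\bullet$ satisfies $\sym(\ddg) \supseteq K \in \sF$, so $\ddg \in V^\sS$, and $q$ forces $\ddg$ to be a selector for $\ddF$. The main obstacle I anticipate is the rank-bookkeeping in the middle stage: one must verify that $F^\ast$ is a genuine set function \emph{and} that the symmetric-mixing restriction preserves the rank bound, so that $\AL_\kappa$ can be invoked as written; this is precisely what motivates the padding of $\eta$ by enough to absorb mixing overhead.
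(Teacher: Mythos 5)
Your proposal is correct and follows essentially the same three-stage route as the paper's proof: use $\kappa$-descending distributivity of $\bbP$ to find a single $q\leq p$ admitting names for selectors of every initial segment, apply $\AL_\kappa$ in $V$ to the resulting (set-sized, approachable) function of name-sets, and then use $\kappa$-descending distributivity of $\sF$ to glue the chosen names into a single hereditarily symmetric selector name. The only differences are bookkeeping: the paper packages the selector-names into mixed names $\ddC_\alpha$ with a prescribed syntactic form (rather than bounding ranks by Replacement as you do) and reduces to regular $\kappa$ at the outset, neither of which changes the substance.
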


\begin{proof}
By \cref{cor:generosity-is-cofinal,thm:levy-al-deductions}, we may assume that \(\kappa\) is regular. Let \(p \forces_\sS `` \ddF\) is \(\check{\kappa}\)-approachable.'' By symmetric mixing, for all \(\alpha<\kappa\) there is an \(\sS\)-name \(\ddC_\alpha\) such that \(\sym(\ddC_\alpha)\geq\sym(\ddF)\) and \(p\forces_\sS``\ddC_\alpha\) is the set of selectors for \(\ddF\res\check{\alpha}\).'' Assume also that \(\ddC_\alpha\) is open, so if \(\tup{q,\ddc} \in \ddC_\alpha\) then for all \(r \leq q\), \(\tup{r,\ddc} \in \ddC_\alpha\). By further symmetric mixing, we may assume that for all \(\tup{q,\ddc_\alpha}\in\ddC_\alpha\), \(\ddc_\alpha\) is of the form \(\Set{\tup{\check{\beta},\ddx_{\alpha,\beta}}^\bullet\mid\beta<\alpha}^\bullet\) for various \(\sS\)-names \(\ddx_{\alpha,\beta}\). Let
\begin{equation*}
D_\alpha = \Set{q \in \bbP \mid q \perp p}\cup\Set{q \leq p \mid (\exists\ddc) \tup{q,\ddc} \in \ddC_\alpha},
\end{equation*}
noting that \(D_\alpha\) is open dense. By \(\kappa\)-descending distributivity, let \(q \leq p\) be such that \(\rnk_{(D_\alpha)_\alpha}(q) = \kappa\) and let \(A=\Set{\alpha<\kappa\mid(\exists\ddc)\tup{q,\ddc}\in\ddC_\alpha}\). Since we may restrict any name to an initial segment, we must in fact have that \(A=\kappa\). Furthermore, by this restriction, the function \(\alpha\mapsto\Set{\ddc\mid\tup{q,\ddc}\in\ddC_\alpha}\) is \(\kappa\)-approachable. Therefore, we may select a sequence \(\tup{\ddc_\alpha\mid\alpha<\kappa}\) such that, for all \(\alpha<\kappa\), \(\tup{q,\ddc_\alpha}\in\ddC_\alpha\). Let \(H_\alpha=\sym(\ddc_\alpha)\in\sF\). By the \(\kappa\)-descending distributivity of \(\sF\), there is \(H \in \sF\) such that, for unboundedly many \(\alpha\), \(H \leq H_\alpha\). Thus, \(\Set{\tup{\check{\alpha},\ddc_\alpha}^\bullet\mid H \leq H_\alpha}^\bullet\) is an \(\sS\)-name. Since a cofinal sequence of selectors for \(\ddF\) exists in the symmetric extension, so does a selector for \(\ddF\) itself as required.
\end{proof}

We should remark that descending distributivity will also produce a preservation of the \emph{failure} of \(\AL\).

\begin{thm}\label{thm:al-non-preservation}
Assume \(\lnot\AL_\kappa\) and let \(\bbP\) be \(\kappa\)-descending distributive. Then \(\bbP\) does not add any selectors for ground-model witnesses of \(\lnot \AL_\kappa\). In particular, any symmetric system forcing with \(\bbP\) will also preserve \(\lnot \AL_\kappa\).
\end{thm}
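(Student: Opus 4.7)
The plan is to argue by contrapositive: suppose $F \in V$ is a ground-model witness to $\lnot \AL_\kappa$ and some $p \in \bbP$ forces a $\bbP$-name $\ddf$ to be a selector for $\check{F}$; I aim to produce a selector for $F$ lying in $V$, contradicting the witness status of $F$. By \cref{cor:generosity-is-cofinal} and \cref{thm:levy-al-deductions} I may assume throughout that $\kappa$ is regular.

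The first step is to exploit $\kappa$-descending distributivity to pin down $\ddf$ on an unbounded subset of $\kappa$. For each $\alpha < \kappa$ the set
\[
E_\alpha = \Set{q \leq p \mid q \text{ decides } \ddf(\check\alpha)} \cup \Set{q \mid q \perp p}
\]
is open dense in $\bbP$, since $p$ forces $\ddf(\check\alpha) \in \check{F(\alpha)}$ and $F(\alpha) \in V$. Applying the rank-function characterisation in \cref{thm:generous-equivalence-decreasing-open-dense} to $\tup{E_\alpha \mid \alpha < \kappa}$ produces an open dense set of conditions $q$ with $\rnk_{\tup{E_\alpha \mid \alpha < \kappa}}(q) = \kappa$. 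Fix any such $q \leq p$, so $I_q = \Set{\alpha < \kappa \mid q \in E_\alpha}$ is unbounded in $\kappa$, and for $\alpha \in I_q$ let $x_\alpha$ be determined by $q \forces \ddf(\check\alpha) = \check{x_\alpha}$. This yields a partial $V$-selector $h = \tup{x_\alpha \mid \alpha \in I_q}$ for $F$.

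The second step is to promote $h$ to a total selector. Define $F^* \in V$ by $F^*(\alpha) = \Set{x_\alpha}$ for $\alpha \in I_q$ and $F^*(\alpha) = F(\alpha)$ otherwise, so that any selector of $F^*$ is automatically a selector of $F$. Approachability of $F$ transfers to $F^*$: for each $\gamma < \kappa$, pick a $V$-selector $g$ for $F\res\gamma$ (available by $\kappa$-approachability of $F$) and overwrite $g(\alpha)$ by $x_\alpha$ on $I_q \cap \gamma$ to obtain a $V$-selector of $F^*\res\gamma$.

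I expect the main obstacle to be extracting an actual $V$-selector for $F^*$ (equivalently, for $F$). The name $\ddf$ together with $q \in G$ already provides such a selector in $V[G]$, so the task reduces to forcing this selector back into the ground model. My plan is to combine the above with \cref{thm:generous-implies-no-fresh}: $\bbP$ adds no fresh functions on $\kappa$, so if $q$ can be strengthened to force $\ddf\res\check\alpha \in \check V$ for every $\alpha < \kappa$, then $\ddf^G$ itself must lie in $V$, yielding the required selector of $F$. The delicate part is engineering this strengthening—descending distributivity does not a priori make the sets $\Set{q' \leq q \mid q' \forces \ddf\res\check\alpha \in \check V}$ dense—so I anticipate needing either an iteration of the descending-distributivity argument, appealing to the Boolean characterisation of \cref{prop:generous-boolean-algebra-characterisation} at limit stages, or a replacement of the sequence $\tup{E_\alpha \mid \alpha < \kappa}$ by a finer one whose members decide whole initial segments of $\ddf$ rather than single values.
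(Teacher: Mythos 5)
Your first half is exactly the paper's: reduce to regular \(\kappa\), and use the rank-function form of descending distributivity (\cref{thm:generous-equivalence-decreasing-open-dense}) on the single-value decision sets to find \(q \leq p\) deciding \(\ddf(\check{\alpha}) = \check{x}_\alpha\) for all \(\alpha\) in an unbounded \(I \subseteq \kappa\). The second half, however, has a genuine gap, and you have correctly located it without closing it. A partial selector defined on an unbounded \(I\) cannot in general be completed in \(V\) to a total selector of \(F\): filling in the values at the ordinals between consecutive elements of \(I\) requires choosing, for each of \(\kappa\)-many blocks, one of the possibly many ground-model selectors of the corresponding initial segment, and making those choices uniformly is essentially an instance of \(\AL_\kappa\) itself --- precisely what fails. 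Your \(F^*\) manoeuvre only re-establishes approachability, which buys nothing. The fallback via \cref{thm:generous-implies-no-fresh} also fails: a selector added by \(\bbP\) need not have all of its proper initial segments in \(V\), so \(\ddf^G\) need not be fresh-or-ground-model, the sets \(\Set{q' \leq q \mid q' \forces_\bbP \ddf\res\check{\alpha} \in \check{V}}\) are indeed not dense in general, and descending distributivity does not let you build a \(\kappa\)-length decreasing chain of conditions deciding longer and longer initial segments (it is far weaker than \(\kappa\)-distributivity --- it already holds whenever \(\kappa \geq \aleph^\ast(\bbP)\)).

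The missing idea is to pre-process the \emph{witness} rather than the \emph{name}. Given a witness \(F_0\), the paper replaces it by \(F\) with \(F(\alpha)\) the set of \emph{ground-model} selectors of \(F_0\res\alpha\); this \(F\) is still \(\kappa\)-approachable (send \(\alpha\) to \(h\res\alpha\) for a fixed \(V\)-selector \(h\) of \(F_0\res\beta\)) and still has no selector in \(V\), since any selector for \(F\) diagonalises to one for \(F_0\). Now a condition deciding the \emph{single} value \(\ddf(\check{\beta}) = \check{c}_\beta\) already hands you an entire \(V\)-selector \(c_\beta\) of \(F_0\res\beta\), so from \(q\) and the unbounded set \(I\) one reads off in \(V\) the total selector \(c(\alpha) = c_{\min(I\setminus(\alpha+1))}(\alpha)\) for \(F_0\) --- a contradiction. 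This is close in spirit to your closing suggestion of ``deciding whole initial segments,'' but it is the target function, not the name, that gets refined, so no density of initial-segment-deciding conditions is ever needed.
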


\begin{proof}
By \cref{cor:generosity-is-cofinal,thm:levy-al-deductions}, we may assume that \(\kappa\) is regular. Let \(F_0\colon\kappa\to X\) be approachable, witnessing the failure of \(\AL_\kappa\). Since \(\kappa\) is regular, replace \(F_0\) by \(F\) such that \(F(\alpha)\) is the set of (ground model) selectors for \(F_0\res\alpha\), noting that \(F\) is also \(\kappa\)-approachable. Suppose, for a contradiction, that for some \(\sS\)-name \(\ddf\) and \(p\in\bbP\) we have that \(p\forces_\sS``\ddf\) is a selector for \(\check{F}\).'' Define the open dense set \(D_\alpha = \Set{q \in \bbP \mid q \perp p}\cup\Set{q \leq p \mid (\exists c) q\forces_\sS \ddf(\check{\alpha}) = \check{c}}\). By \(\kappa\)-descending distributivity, let \(I \subseteq \kappa\) be unbounded and \(q \leq p\) be such that \(q\) decides \(\ddf(\check{\alpha}) = \check{c}_\alpha\) for all \(\alpha \in I\). Then \(c \colon \alpha \mapsto c_\beta(\alpha)\), where \(\beta = \min I\setminus(\alpha+1)\), is a selector for \(F_0\), contradicting our assumption.
\end{proof}

\subsection{Violating extendable choice}\label{s:levys-axiom;ss:violating-al}

In \cite{levy_interdependence_1964}, Levy constructs Fraenkel--Mostowski models \(\frakJ_\beta\), where \(\beta\) is an ordinal, to carefully violate various \(\AL_\kappa\).

\begin{prop}[{Levy, \cite[p.~151]{levy_interdependence_1964}}]
If \(\cf(\kappa)\neq\cf(\aleph_\beta)\), then \(\frakJ_\beta\models\AL_\kappa\). If \(\beta\) is a successor then \(\frakJ_\beta\models\AL_{\aleph_\beta}\).
\end{prop}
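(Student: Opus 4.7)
The model $\frakJ_\beta$ from \cite{levy_interdependence_1964} organises its atoms into $\aleph_\beta$-many blocks $\{A_\alpha \mid \alpha < \aleph_\beta\}$, with $\sG$ the group of block-respecting permutations and $\sF$ the filter of subgroups generated by pointwise stabilisers of sets of atoms bounded in the indexing by $\aleph_\beta$. For limit $\beta$, the approachable function witnessing $\lnot\AL_{\aleph_\beta}$ sends $\alpha \mapsto A_\alpha$: each initial segment admits a selector via a support that enumerates finitely many blocks, but no full selector can be hereditarily symmetric.

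For the first claim, fix $\kappa$ with $\cf(\kappa) \neq \cf(\aleph_\beta)$ and a $\kappa$-approachable $F \in \frakJ_\beta$ with support $E_F$. In the background $\ZFA + \AC$ universe, for each $\alpha < \kappa$ select a selector $s_\alpha \in \frakJ_\beta$ for $F \res \alpha$ with support $E_\alpha$ bounded in $\aleph_\beta$, say by $\eta_\alpha < \aleph_\beta$. Since $\cf(\kappa) \neq \cf(\aleph_\beta)$, the function $\alpha \mapsto \eta_\alpha$ is not cofinal in $\aleph_\beta$, so there exist $\eta^\ast < \aleph_\beta$ and cofinal $I \subseteq \kappa$ with $\eta_\alpha < \eta^\ast$ for all $\alpha \in I$. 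Define $s \colon \kappa \to \bigcup F``\kappa$ by $s(\gamma) = s_{\alpha_\gamma}(\gamma)$, where $\alpha_\gamma = \min(I \setminus (\gamma+1))$. Any $\pi \in \sG$ fixing $E_F \cup \bigcup_{\delta < \eta^\ast} A_\delta$ pointwise fixes every $s_{\alpha_\gamma}$ (since $E_{\alpha_\gamma} \subseteq \bigcup_{\delta < \eta^\ast} A_\delta$), hence fixes $s$. Thus $s$ is $\sF$-symmetric, and as its values lie in $\frakJ_\beta$, $s \in \frakJ_\beta$ as a selector for $F$.

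For the second claim ($\beta$ a successor), $\aleph_\beta$ is regular, so any function $\aleph_\beta \to \aleph_\beta$ is bounded on a cofinal subset. Taking $\kappa = \aleph_\beta$, the same construction of $\eta^\ast$ and cofinal $I$ is available by regularity in place of the cofinality mismatch, and the same gluing yields a selector in $\frakJ_\beta$.

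The main obstacle is step~two: choosing the $s_\alpha$ with uniformly tractable supports. The background $\AC$ furnishes some selector, but we need their minimal supports to be bounded in $\aleph_\beta$ so that $\eta_\alpha$ is well-defined and amenable to cofinality analysis; this should be a structural consequence of how symmetric names in $\frakJ_\beta$ are built. A secondary point is that the argument hinges on "bounded in $\aleph_\beta$" being the correct notion of smallness in $\sF$. If instead $\sF$ is generated by pointwise stabilisers of sets of bounded \emph{cardinality} below $\cf(\aleph_\beta)$, the argument adapts by replacing suprema with cardinal sizes, with the cofinality mismatch (or the regularity of $\aleph_\beta$) again ensuring the required bounded cofinal subset.
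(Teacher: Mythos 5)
The paper does not actually prove this proposition; it is quoted directly from Levy \cite[p.~151]{levy_interdependence_1964}, so there is no in-paper argument to compare against. Judged on its own terms, your reconstruction of the first claim is broadly the right shape (it is the same cofinality-mismatch idea that drives \cref{thm:al-preservation}): if \(\cf(\kappa)\neq\cf(\aleph_\beta)\) then any assignment \(\alpha\mapsto\eta_\alpha\in\aleph_\beta\) is indeed bounded by some \(\eta^\ast<\aleph_\beta\) on a cofinal subset of \(\kappa\), and gluing the selectors along that subset works. (Do note, though, that your stated reason --- ``the function is not cofinal in \(\aleph_\beta\)'' --- is not quite right: its range \emph{can} be cofinal in \(\aleph_\beta\); what one actually shows is that if it were unbounded on every cofinal subset of the domain then \(\cf(\kappa)=\cf(\aleph_\beta)\) would follow, and that two-sided computation is the real content.)

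The genuine gap is the second claim. The assertion ``\(\aleph_\beta\) is regular, so any function \(\aleph_\beta\to\aleph_\beta\) is bounded on a cofinal subset'' is false --- the identity function is a counterexample. Regularity bounds functions from \emph{smaller} ordinals into \(\aleph_\beta\), not functions from \(\aleph_\beta\) itself, and when \(\kappa=\aleph_\beta\) there is no cofinality mismatch left to exploit: the supports \(E_\alpha\) of the selectors for \(F\res\alpha\) may genuinely grow cofinally with \(\alpha\). Worse, under the structure you posit for \(\frakJ_\beta\) (blocks \(A_\alpha\) with supports bounded in the indexing), the very witness you describe for \(\lnot\AL_{\aleph_\beta}\) at limit \(\beta\) --- namely \(\alpha\mapsto A_\alpha\) --- would equally witness \(\lnot\AL_{\aleph_\beta}\) at successor \(\beta\), contradicting the second claim; so the model cannot be as you describe. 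The successor case needs a different idea entirely, resting on the homogeneity of the atoms: since \(\beta=\gamma+1\), every support has cardinality at most \(\aleph_\gamma\) and can be moved, by a permutation fixing the support of \(F\) pointwise, into one pre-chosen set \(E^\ast\) of size \(\aleph_\gamma\); one then chooses selectors uniformly for all \(\alpha<\aleph_{\beta}\) from the collection of sets supported by \(E^\ast\), which the model can well-order. Your closing paragraph correctly flags that the choice of support notion is load-bearing, but the two halves of the proposition in fact require \emph{different} mechanisms, and the one you supply only covers the first.
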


That is, if one wants to violate \(\AL_\kappa\) only when \(\cf(\kappa)=\cf(\mu)\) for a cardinal \(\mu\), one could take, for example, \(\frakJ_\mu\) as their model. We shall extend this result by instead producing symmetric extensions to achieve this same goal in \(\ZF\), using a modification of the construction from \cite[Theorem~3.1]{karagila_which_2024}.

In the following we make use of \emph{wreath products} of groups. If \(G\) and \(H\) are groups acting faithfully on sets \(X\) and \(Y\) respectively, then \(G \wr H\) is the group of permutations \(\pi\) of \(X \times Y\) such that, for some \(g \in G\) and sequence \(\tup{h_x \mid x \in X} \in H^X\), the action of \(\pi\) is given by \(\pi(x,y) = \tup{g(x), h_x(y)}\). Note that \(g\) and \(\tup{h_x \mid x \in X}\) are uniquely determined by \(\pi\), so we use \(\pi^\ast\) to refer to \(g\) and \(\pi_x\) to refer to \(h_x\). See \cite[Section~2.2]{ryan-smith_upwards_2024} for a more in-depth introduction.

\begin{thm}\label{thm:eliminate-one-al}
Let \(\kappa\) be such that \(\kappa^{{<}\kappa}=\kappa\) is infinite (in particular, \(\kappa\) is regular and \(\kappa^{{<}\kappa}\) is well-orderable). Then there is a cofinality-preserving symmetric extension in which \(\AL_\kappa\) fails. Furthermore, the symmetric extension preserves the truth value of \(\AL_\mu\) for all \(\mu\) not cofinal with \(\kappa\).
\end{thm}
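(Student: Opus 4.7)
The plan is to construct a symmetric system $\sS = \tup{\bbP, \sG, \sF}$ that explicitly adds a $\kappa$-approachable function with no selector (violating $\AL_\kappa$) and which is $\mu$-descending distributive for every $\mu$ with $\cf(\mu) \neq \kappa$. By \cref{thm:al-preservation,thm:al-non-preservation}, the descending distributivity gives preservation of the truth value of $\AL_\mu$ for all $\mu$ not cofinal with $\kappa$, while \cref{prop:generous-doesnt-change-cf} delivers cofinality preservation at every regular $\mu \neq \kappa$, with the $\kappa$-closure of $\bbP$ covering $\kappa$ itself.

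Adapting \cite[Theorem~3.1]{karagila_which_2024}, I take $\bbP$ to be the bounded-support product $\prod_{\alpha < \kappa} \bbP_\alpha$, where each $\bbP_\alpha$ generically adds a nonempty set $A_\alpha$ of mutually indistinguishable objects---for instance, $\kappa$ pairwise distinct Cohen subsets of $\kappa$, with $\kappa^{{<}\kappa} = \kappa$ ensuring $\abs{\bbP_\alpha} = \kappa$, $\kappa$-closure, and $\abs{\bbP} = \kappa$. The group $\sG$ is the bounded-support product of wreath products $\sG_\alpha$ at each level, the outer component at level $\alpha$ permuting the elements of $A_\alpha$ and the inner component acting on Cohen coordinates. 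The filter $\sF$ is generated by the subgroups fixing, pointwise, every element of every $A_\alpha$ for $\alpha$ in some bounded $E \subseteq \kappa$; regularity of $\kappa$ makes $\sF$ a $\kappa$-complete normal filter. The canonical $\sS$-name $\ddF$ sending $\alpha$ to the name for $A_\alpha$ is then forced to be $\kappa$-approachable: for each $\beta < \kappa$, the subgroup fixing everything at levels $<\beta$ lies in $\sF$ by boundedness, and symmetric mixing yields an $\sS$-name for a selector on $\check{\beta}$.

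The hard parts are the no-global-selector property and the descending distributivity calculations. The former is a standard moving argument: any hypothetical $\sS$-name $\ddf$ for a selector has support bounded in $\kappa$, so for any $\alpha$ above that support the outer-wreath component at level $\alpha$ moves $\ddf(\check{\alpha})$ to a different element of $A_\alpha$ while fixing $\ddf$, contradicting the selector property. For $\cf(\mu) > \kappa$, $\abs{\bbP} = \kappa$ gives $\cf(\mu) \geq \aleph^\ast(\bbP)$---so $\bbP$ is $\mu$-descending distributive by \cref{prop:common-generosity}---and a pigeonhole on the $\kappa$ many bounded subsets of $\kappa$ extracts from any $\cf(\mu)$-sequence in $\sF$ an unbounded subsequence with common support, giving $\mu$-descending distributivity of $\sF$. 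For $\cf(\mu) < \kappa$, the $\kappa$-closure of $\bbP$ together with $\DC_{{<}\kappa}$ in the ground model yields $\kappa$-distributivity via \cref{thm:closed-implies-distributive-equivalence}---hence $\mu^+$-distributivity and therefore $\mu$-descending distributivity by \cref{prop:common-generosity}---while the $\kappa$-completeness of $\sF$ gives its half directly. \cref{cor:generosity-is-cofinal,prop:generosity-is-cofinal-filters} then lift these results to singular $\mu$ of the relevant cofinalities.

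The main obstacle is calibrating the support structure of $\sS$ so that it is tight enough to kill the global selector while loose enough to leave selectors on every proper initial segment intact and to supply $\mu$-descending distributivity at every other cofinality. This is the balancing act carried out in \cite{karagila_which_2024}, and the principal modification required here is replacing their support structure with bounded-in-$\kappa$ supports, synchronised with the indexing of the $A_\alpha$ by levels $\alpha < \kappa$.
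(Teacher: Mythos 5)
Your proposal is correct and follows essentially the same route as the paper: the same \({<}\kappa\)-support Cohen-style product with wreath-product symmetries and a bounded-support filter, the same moving argument (with inner scrambling to secure \(\pi q \comp q\)) for the failure at \(\kappa\), and the same case split for preservation---\(\cf(\mu)>\kappa\) via \(\aleph^\ast(\bbP\cup\sB)\leq\kappa^+\) plus pigeonhole on the size-\(\kappa\) filter basis, \(\cf(\mu)<\kappa\) via \(\kappa\)-distributivity and \(\kappa\)-completeness---feeding into \cref{thm:al-preservation,thm:al-non-preservation} and \cref{cor:generosity-is-cofinal}. The only real difference is cosmetic: you violate \(\AL_\kappa\) directly with a \(\kappa\)-approachable function without a selector, whereas the paper constructs a set \(X\) with \(\aleph(X)=\kappa<\aleph^\ast(X)=\kappa^+\) and invokes the equivalence with eccentricity; the paper's packaging additionally pins down the Hartogs and Lindenbaum numbers exactly, which is what gets reused in \cref{thm:new-models}.
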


\begin{proof}
We define the symmetric system \(\sS=\tup{\bbP,\sG,\sF}\) as follows:
\begin{enumerate}
\item \(\bbP=\Add(\kappa,\kappa\times\kappa\times\kappa)\);\footnote{That is, the conditions of \(\bbP\) are partial functions \(\kappa^4\to2\) with domain of cardinality less than \(\kappa\), where \(q\leq p\) if \(q\supseteq p\).}
\item \(\sG\leq(\Set{\id}\wr S_\kappa)\wr S_\kappa\) consists of those \(\pi\) such that \(\abs{\supp(\pi^\ast)}<\kappa\), with \(\sG\) acting on \(\bbP\) via \(\pi p(\pi(\alpha,\beta,\gamma),\delta)=p(\alpha,\beta,\gamma,\delta)\);
\item \(\sF\) is generated as a filter by all groups of the form \(H_{J,K}\) for \(J\in[\kappa\times\kappa]^{{<}\kappa}\) and \(K\in[J\times\kappa]^{{<}\kappa}\), where
\begin{equation*}
H_{J,K}=\Set*{\pi\in\sG\mid\pi^\ast\res J=\id,\,\pi\res K=\id}.
\end{equation*}
\end{enumerate}
If \(\pi\in\sG\) and \(H_{J,K}\in\sF\) then \(\pi H_{J,K}\pi^{-1}\geq H_{J',K'}\), where we define \(K'=\pi``K\) and \(J'=(\pi^\ast)``J\cup J\cup\supp(\pi^\ast)\) (the proof is essentially identical to \cite[Claim~3.1.1]{karagila_which_2024}). Thus, \(\sF\) is normal and \(\sS\) is a symmetric system.

We would be remiss to not make the following immediate remarks:
\begin{enumerate}[label=(\roman*)]
\item Since \(\kappa^{{<}\kappa}=\kappa\), \(\bbP\) is well-orderable. Thus, by the usual \(\ZFC\) proof, \(\bbP\) does not change the cofinalities of ordinals.
\item For all \(\eta\), \(\DC_\eta\) holds for all subtrees of \(\bbP^{{<}\eta}\) (again by the well-orderability of \(\bbP\)).
\item \(\sF\) has a basis \(\sB\) of subgroups indexed by \([\kappa\times\kappa]^{{<}\kappa}\times[\kappa\times\kappa]^{{<}\kappa}\), so in particular of cardinality \(\kappa\). Furthermore, \(\sB\) is well-orderable and so we may construct a uniform function \(\sF \to \sB\) sending \(H\) to \(H' \leq H\) with \(H' \in \sB\).
\item \(\bbP\) is \(\kappa\)-distributive and \(\sF\) is \(\kappa\)-complete.
\end{enumerate}
Let \(\mu\neq\kappa\) be regular. If \(\mu>\kappa\) then \(\mu\geq\aleph^\ast(\bbP\cup\sB)=\kappa^+\), so \(\sS\) is \(\mu\)-descending distributive. On the other hand, if \(\mu < \kappa\) then \(\sS\) is \(\mu\)-distributive and so, again, \(\mu\)-descending distributive (recall \cref{prop:common-generosity}). Thus, by \cref{thm:al-preservation,thm:al-non-preservation}, \(\sS\) does not change the truth value of \(\AL_\mu\). Since \(\sS\) is cofinality-preserving, this covers all cases.

The remainder of the proof shall be spent showing that \(\1\forces_\sS\lnot\AL_{\check{\kappa}}\).

For \(\alpha,\beta,\gamma\in\kappa\), let \(\ddy_{\alpha,\beta,\gamma}=\Set{\tup{p,\check{\delta}}\mid p(\alpha,\beta,\gamma,\delta)}\), \(\ddx_{\alpha,\beta}=\Set{\ddy_{\alpha,\beta,\gamma}\mid\gamma<\kappa}^\bullet\) and \(\ddX=\Set{\ddx_{\alpha,\beta}\mid\alpha,\beta<\kappa}^\bullet\). By standard calculations, we see that \(\pi\ddy_{\alpha,\beta,\gamma}=\ddy_{\pi(\alpha,\beta,\gamma)}\), \(\pi\ddx_{\alpha,\beta}=\ddx_{\pi^\ast(\alpha,\beta)}\) and \(\pi\ddX=\ddX\). Thus all of these are \(\sS\)-names. We claim that \(\1\forces_\sS\aleph(\ddX)=\check{\kappa}\land\aleph^\ast(\ddX)=\check{\kappa}^+\).

Note that if \(G\) is \(\sS\)-generic for \(V\), then \(V[G]\models\abs{\ddX^G}=\kappa\). Therefore there are no surjections \(X \to \kappa^+\) in \(V[G]\) and hence \(\aleph(X)^{V[G]_\sS}\leq\aleph^\ast(X)^{V[G]_\sS}\leq\kappa^+\).

Let \(\ddf=\Set{\tup{\ddx_{\alpha,\beta},\check{\alpha}}^\bullet\mid\alpha,\beta<\kappa}^\bullet\). For \(\pi \in \sG\),
\begin{align*}
\pi\ddf&=\Set*{\tup{\pi\ddx_{\alpha,\beta},\pi\check{\alpha}}^\bullet\mid\alpha,\beta<\kappa}^\bullet\\
&=\Set*{\tup{\ddx_{\pi^\ast(\alpha,\beta)},\check{\alpha}}^\bullet\mid\alpha,\beta<\kappa}^\bullet\\
&=\Set*{\tup{\ddx_{\alpha,\pi^\ast_\alpha(\beta)},\check{\alpha}}^\bullet\mid\alpha,\beta<\kappa}^\bullet\\
&=\ddf
\end{align*}
and so \(\ddf\) is an \(\sS\)-name for a surjection \(\ddX\to\check{\kappa}\). Hence \(\1\forces_\sS\check{\kappa}<\aleph^\ast(\ddX)\). Combined with our earlier argument, we see that \(\1\forces_\sS\aleph^\ast(\ddX)=\check{\kappa}^+\).

For \(\alpha<\kappa\), let \(\ddg_\alpha=\Set{\tup{\check{\beta},\ddx_{0,\beta}}^\bullet\mid\beta<\alpha}^\bullet\). Then \(\sym(\ddg_\alpha)\geq H_{\Set{0}\times\alpha,\emptyset}\) and so is an \(\sS\)-name for an injection \(\check{\alpha}\to\ddX\). Hence \(\1\forces_\sS\aleph(\ddX)\geq\check{\kappa}\). It remains to show that \(\1\forces_\sS\aleph(\ddX)\leq\check{\kappa}\).

Suppose that for some \(p\in\bbP\) and \(\sS\)-name \(\ddh\), \(p\forces_\sS\ddh\colon\check{\kappa}\to\ddX\). Let \(J\) and \(K\) be such that \(\sym(\ddh)\geq H_{J,K}\). We shall show that \(p\forces_\sS\ddh``\check{\kappa}\subseteq\Set{\ddx_{\alpha,\beta}\mid\tup{\alpha,\beta}\in J}^\bullet\). If not then for some \(q\leq p\), \(\tup{\alpha,\beta}\notin J\) and \(\eta<\kappa\) we have \(q\forces_\sS\ddh(\check{\eta})=\ddx_{\alpha,\beta}\). Let \(\beta'<\kappa\) be such that \(\tup{\alpha,\beta'}\notin J\).

We shall produce \(\pi\in H_{J,K}\) such that \(\pi^\ast(\alpha,\beta)=\tup{\alpha,\beta'}\) and \(\pi q\comp q\).\footnote{This is essentially \cite[Claim~3.1.2]{karagila_which_2024}.} Let \(A=\Set{\gamma<\kappa\mid\tup{\alpha,\beta,\gamma}\in\supp(q)}\) and \(B=\Set{\gamma<\kappa\mid\tup{\alpha,\beta',\gamma}\in\supp(q)}\). Since \(\abs{A},\abs{B}<\kappa\), there is \(\sigma\in S_\kappa\) such that \(\sigma``A\cap B=\emptyset\) and \(A\cap\sigma``B=\emptyset\). Define \(\pi\in\sG\) by setting \(\pi^\ast\) to be the transposition \((\tup{\alpha,\beta}\ \tup{\alpha,\beta'})\), setting \(\pi_{\alpha,\beta}=\pi_{\alpha,\beta'}=\sigma\) and taking \(\pi_{\alpha'',\beta''}=\id\) for all other \(\tup{\alpha'',\beta''} \in \kappa \times \kappa\). Then \(\pi q\comp q\), \(\pi\in H_{I,J}\) and \(\pi^\ast(\alpha,\beta)=\tup{\alpha,\beta'}\) as required.

Note that \(\pi q\forces_\sS\ddh(\check{\eta})=\ddx_{\alpha,\beta'}\), so \(q\cup\pi q\forces_\sS\ddh(\check{\eta})=\ddx_{\alpha,\beta'}=\ddx_{\alpha,\beta}\), a contradiction. Hence \(\ddh\) cannot be forced to be an injection and \(\1\forces_\sS\aleph(\ddX)\leq\check{\kappa}\) as required.
\end{proof}

By taking products of these symmetric extensions one can tailor exactly for which \(\kappa\) we have \(\AL_\kappa\) holding, up to the requirements of \cref{thm:levy-al-deductions}. In the following, we do not put much thought into what constitutes a `class'. For our purposes, it is sufficient that the class \(\calC\subseteq\Ord\) satisfies that for all \(\alpha\in\Ord\), \(\calC\cap\alpha\in V\) and that forcing with the Easton-support product of \(\Add(\alpha,1)\) for all \(\alpha\in\calC\) preserves \(\ZFC\).\footnote{We cannot have \(\calC\) be a cofinal sequence of ordinals of order type \(\omega\), for example.} Demanding that \(\calC\) is definable is sufficient.

\begin{thm}\label{thm:new-models}
Assume \(\GCH\) and let \(\calC\) be a class of infinite regular cardinals such that, whenever \(\kappa\) is inaccessible and \(\kappa \cap \calC\) is unbounded below \(\kappa\), we have \(\kappa \in \calC\).\footnote{We shall show in \cref{lem:weirdness-required} that our construction does in fact require that \(\calC\) is closed under regular limit points. However, we do not know if `\(\kappa\) is weakly inaccessible and for all regular, non-zero \(\alpha < \kappa\), \(\lnot\AL_\alpha\)' implies \(\lnot\AL_\kappa\) (see \cref{qn:do-we-need-weirdness})} Then there is a symmetric extension \(M\) such that
\begin{equation*}
\Spec_\aleph(M)=\Set*{\tup{\kappa^+,\kappa^+}\mid\kappa\in\Card}\cup\Set*{\tup{\kappa,\kappa^+}\mid\cf(\kappa)\in\calC}.
\end{equation*}
In particular, \(M\models(\forall\kappa)\AL_{\kappa}\lra\cf(\kappa)\notin\calC\).
\end{thm}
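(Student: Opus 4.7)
The plan is to let $\sS$ be the Easton-support product of the symmetric systems $\sS_\kappa$ constructed in \cref{thm:eliminate-one-al}, indexed by $\kappa \in \calC$. Under $\GCH$ every infinite regular $\kappa$ satisfies $\kappa^{{<}\kappa} = \kappa$, so each factor is well-defined. The resulting symmetric extension $M = V[G]_\sS$ is a transitive inner model of $\ZF$; almost-universality and closure under G\"odel operations (\cref{thm:inner-model}) follow from the set-sized Easton decomposition of $\sS$ together with the fact that each coordinate is individually a symmetric system over $V$.

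For the eccentric part of the spectrum, I would show that for each $\kappa \in \calC$ the canonical name $\ddX_\kappa$ arising from the $\kappa$-th factor lifts to an $\sS$-name with $\aleph(\ddX_\kappa)^M = \kappa$ and $\aleph^\ast(\ddX_\kappa)^M = \kappa^+$. The surjection onto $\kappa$ and the failure of any injection $\kappa \to \ddX_\kappa$ are produced exactly as in the proof of \cref{thm:eliminate-one-al}: all automorphism and compatibility arguments are localised to the $\kappa$-th coordinate, so the other coordinates cannot supply compatible conditions that would obstruct them. The upper bound $\aleph^\ast(\ddX_\kappa) \leq \kappa^+$ follows from the fact that, in $V[G]$, $\abs{\ddX_\kappa^G} = \kappa$.

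The crux is to show that for every regular $\mu \notin \calC$, $M \models \AL_\mu$. Write $\sS = \sS_{<\mu} \times \sS_{\geq\mu}$ (head times tail of the Easton product). The tail is $\mu$-closed, hence (by \cref{thm:closed-implies-distributive-equivalence} in $V \models \ZFC$) $\mu^+$-distributive, hence $\mu$-descending distributive by \cref{prop:common-generosity}. For the head I use the closure hypothesis on $\calC$: if $\mu$ is weakly inaccessible, then $\mu \notin \calC$ forces $\calC \cap \mu$ to be bounded below $\mu$, so $\abs{\bbP_{<\mu}} < \mu$; if $\mu = \nu^+$, the Easton calculation under $\GCH$ gives $\abs{\bbP_{<\mu}} \leq \nu < \mu$. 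In either case $\aleph^\ast(\bbP_{<\mu}) \leq \mu = \cf(\mu)$, so \cref{prop:common-generosity} again yields $\mu$-descending distributivity. Combining via \cref{thm:generous-lindenbaum-product} (for the forcings) and \cref{thm:generous-filter-product} (for the filters of subgroups) shows $\sS$ is $\mu$-descending distributive, so \cref{thm:al-preservation} gives $M \models \AL_\mu$. Via \cref{cor:al-equivalents}, this restricts eccentricity in $M$ to cofinalities in $\calC$.

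The main obstacle is the full bounding of the Hartogs--Lindenbaum spectrum of $M$: having produced the $\langle \kappa, \kappa^+\rangle$ pairs for $\cf(\kappa) \in \calC$ and excluded new eccentric sets at forbidden cofinalities, we must still exclude any other shape of pair. Any $Y \in M$ has an $\sS$-name supported at set-many coordinates below some ordinal $\lambda$, and beyond this the tail of the product is closed enough to preserve both $\aleph(Y)$ and $\aleph^\ast(Y)$. This reduces the problem to a set-sized SVC-style analysis inside an intermediate symmetric extension, where (by the argument of \cref{cor:al-equivalents}) eccentric sets must have cofinality in $\calC$ and their Hartogs--Lindenbaum pairs must be of the form $\langle \kappa, \kappa^+\rangle$. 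The non-eccentric pairs $\langle \kappa^+, \kappa^+\rangle$ then come automatically from cardinal preservation, guaranteed by \cref{prop:generous-doesnt-change-cf}.
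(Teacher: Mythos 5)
Your setup, the verification that \(M\models\ZF\), the construction of the eccentric sets \(\ddX_\kappa\) for \(\kappa\in\calC\), and the preservation of \(\AL_\mu\) for regular \(\mu\notin\calC\) via the head/tail decomposition, \cref{thm:generous-lindenbaum-product,thm:generous-filter-product} and \cref{thm:al-preservation} all match the paper's argument. (One small slip there: the tail is not merely \(\mu\)-closed but \(\mu^+\)-closed, since \(\mu\notin\calC\) means every nontrivial coordinate of the tail is strictly above \(\mu\); \(\mu\)-closure alone would only give \(\mu\)-distributivity, which is not enough for \(\mu\)-descending distributivity.)

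The genuine gap is in what you correctly identify as the main obstacle: ruling out pairs \(\tup{\lambda,\nu}\) with \(\nu>\lambda^+\). Your proposed reduction to ``a set-sized SVC-style analysis inside an intermediate symmetric extension'' does not close this. The relevant SVC bound, \cref{cor:svc-bounds-on-eccentricity}, has an escape clause: it permits \(\aleph(X)\leq\lambda<\aleph^\ast(X)\) whenever \(\aleph^\ast(X)\leq\aleph^\ast(S)\), with no constraint tying \(\aleph^\ast(X)\) to \(\aleph(X)^+\). In the intermediate model \(M_\alpha\) the seed \(S\) must encode the whole symmetric system up to \(\alpha\), so \(\aleph^\ast(S)\) grows with \(\alpha\) and the bound says nothing about sets \(X\) whose Hartogs number is small relative to \(\alpha\). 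Likewise \cref{cor:al-equivalents} only controls \emph{which cofinalities} of \(\aleph(X)\) admit eccentric sets; it never bounds \(\aleph^\ast(X)\) above. The paper handles this with a separate argument (its Step 4, later extracted as \cref{prop:preserve-oblateness-mixed}): to show \(\aleph^\ast(X)>\kappa^+\implies\aleph(X)>\kappa\), split \(\sS\res\alpha\) into \emph{three} pieces --- the head below \(\kappa^+\) (of size at most \(\kappa\)), the single factor at \(\kappa^+\) (which is \(\kappa^+\)-closed with \(\kappa^+\)-complete filter), and the \(\kappa^{++}\)-closed tail. Given a name for a surjection \(\ddX\to\check{\kappa}^+\), one uses the closure of the middle and tail to build a \(\kappa^+\)-chain of conditions deciding \(\ddf(\ddx_\gamma)=\check{\gamma}\), then pigeonholes on the small head (for both the condition \(q_0\) and the symmetry group \(H_0\)) to stabilise \(\kappa\)-many of the names \(\ddx_\gamma\) under a single group in \(\sF\), assembling them into an injection \(\check{\kappa}\to\ddX\). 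This step is not a corollary of \(\AL\)-preservation or of the SVC spectrum bounds and has to be supplied explicitly; without it you have not excluded, say, a set with \(\aleph(X)=\aleph_1\) and \(\aleph^\ast(X)=\aleph_5\) in \(M\).
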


\begin{proof}
For \(\alpha\in\Ord\), if \(\alpha\notin\calC\) then let \(\sS_\alpha\) be the trivial symmetric system \(\tup{\Set{\1},\Set{e},\Set{\Set{e}}}\). If \(\alpha\in\calC\), then let \(\sS_\alpha\) be the symmetric system given by \cref{thm:eliminate-one-al} and let \(\sS\) be the Easton-support product of all these \(\sS_\alpha\). Let \(G\) be \(\sS\)-generic and \(M=V[G]_\sS\). For \(\alpha\in\Ord\), let \(G\res\alpha\) be the restriction of \(G\) to the co-ordinates below \(\alpha\) and \(M_\alpha=V[G\res\alpha]_\sS=V[G\res\alpha]_{\sS\res\alpha}\). Note that \(M=\bigcup\Set{M_\alpha \mid \alpha \in \Ord}\). For \(A\) a class of ordinals we shall similarly use \(\sS \res A\) to denote the restriction of \(\sS\) (or \(\bbP\), \(\sG\), \(\sF\)) to the co-ordinates in \(A\). We denote by \(\bbP^{{\geq}\alpha}\) the restriction \(\bbP\res\Ord\setminus\alpha\). We shall complete the proof by demonstrating the following:
\begin{enumerate}
\item \(M\models\ZF\);
\item for all regular \(\kappa\notin\calC\), \(M\models\AL_\kappa\);
\item for all \(\kappa \in \calC\) there is \(X\in M\) such that \(M\models\aleph(X)=\kappa\land\aleph^\ast(X)=\kappa^+\);
\item \(M\models(\forall X)\aleph^\ast(X)\leq\aleph(X)^+\).
\end{enumerate}
Between these three steps, \cref{cor:al-equivalents,thm:going-up-regular}, we obtain the result.

\textit{Step}\enspace{}1.\quad{}We shall firstly remark that \(V[G]\models\ZFC\) since, for all regular cardinals \(\kappa\), \(\bbP=\bbP\res\kappa\times\bbP^{{\geq}\kappa}\), where \(\bbP\res\kappa\) is a set with the \(\kappa^{++}\)-chain condition and \(\bbP^{{\geq}\kappa}\) is \(\kappa\)-closed.\footnote{See \cite[Theorem~15.18]{jech_set_2003} for details. Note that \(\bbP\res\kappa\) will usually have the \(\kappa^+\)-chain condition, but regular limit \(\kappa\notin\calC\) will fail this.} By the construction of symmetric extensions, \(M\) is a transitive class between \(V\) and \(V[G]\), so per \cref{thm:inner-model} it is sufficient to prove that it is almost universal and closed under G\"odel operations (similar to \cite[Theorems~5.6 or 9.4]{karagila_iterating_2019}).

\textit{Almost universality}.\quad{}Recall that \(M\) is \emph{almost universal} if for all sets \(x\subseteq M\) there is \(y\in M\) with \(y\supseteq x\). We shall do this by showing that for all \(\eta\) there is \(F(\eta)\in\Ord\) such that, for all \(\beta\geq F(\eta)\), no sets of von Neumann rank below \(\eta\) are added by \(\bbP^{{\geq}\beta}\). However, taking \(F(\eta)=\abs{\eta}^+\) is sufficient, since \(\bbP^{{\geq}\abs{\eta}^+}\) is \(\abs{\eta}^+\)-closed. Hence, the \emph{a priori} class of \(\sS\)-names of von Neumann rank below \(\eta\) is in fact a set. Taking \(\alpha\) large enough that \(\bbP \res \beta \in V_\alpha\), we then have that \(\ddX_\eta=\Set{\ddx\mid \ddx\in V_{\alpha+\eta}\cap V^\sS}^\bullet\) is fixed by all permutations of \(\bbP\), so \(\ddX_\eta^G\supseteq V_\eta\cap M\) and \(\ddX_\eta^G\in M\).

\textit{G\"odel operations}.\quad{}The constructions of the G\"odel operations is highly standard and we shall prove only one, \(G_4(X,Y)=X\setminus Y\), to demonstrate the method. If \(X,Y\in M\) then there are \(\sS\)-names \(\ddX,\ddY\) such that \(\ddX^G=X\) and \(\ddY^G=Y\). Then let \(\ddZ=\Set{\tup{p,\ddu}\mid p\in\bbP,\ddu\in V^\sS,p\forces_\sS\ddu\in\ddX\land\ddu\notin\ddY}\). By the symmetry lemma, if \(\pi\in\sym(\ddX)\cap\sym(\ddY)\) and \(\tup{p,\ddu}\in\ddZ\) then \(\pi p\forces_\sS\pi\ddu\in\ddX\land\pi\ddu\notin\ddY\), so \(\tup{\pi p,\pi\ddu}\in\ddZ\). By restricting the \(\sS\)-names \(\ddu\) to only those of von Neumann rank below that of \(X\cup Y\), we obtain a set \(\ddZ_\ast\) that is still an \(\sS\)-name and still satisfies \(\ddZ^G=X\setminus Y\) as required.

At this point it behoves us to point out that \(M\) has the same cofinalities and cardinalities on ordinals as \(V\) does. The standard proof of Easton's theorem\footnote{From \cite{easton_powers_1970}, though \cite[Theorem~15.18]{jech_set_2003} also provides a good treatment.} shows that every regular \(\kappa\) in \(V\) will still be regular in \(V[G]\).

\textit{Step}\enspace{}2.\quad{}Note that since \(M=\bigcup\Set{M_\alpha \mid \alpha\in\Ord}\), it is sufficient to prove that, for all \(\alpha\in\Ord\), \(M_\alpha\models\AL_\kappa\). For notational convenience we shall denote by \(\sT_0\) the symmetric system \(\sS\res\kappa\) and by \(\sT_1\) the symmetric system \(\sS\res\Set{\beta \mid \kappa \leq \beta < \alpha}\). We shall extend this notation so that \(\sT_0 = \tup{\bbP_0,\sG_0,\sF_0}\) and \(\sT_1 = \tup{\bbP_1,\sG_1,\sF_1}\).

If \(\kappa = \eta^+\) then \(\abs{\bbP_0},\abs{\sF_0} \leq \eta < \kappa\). On the other hand, if \(\kappa\) is a limit then by assumption \(\kappa \cap \calC\) is bounded below \(\kappa\) and so again \(\abs{\bbP_0},\abs{\sF_0} < \kappa\). In particular, we have \(\kappa \geq \aleph^\ast(\bbP_0 \cup \sF_0)\). Note also that \(\bbP_1\) is \(\kappa^+\)-closed (and hence \(\kappa^+\)-distributive) and that \(\sF_1\) is \(\kappa^+\)-complete. Thus, by \cref{thm:generous-lindenbaum-product,thm:generous-filter-product}, \(\bbP_0 \times \bbP_1\) and \(\sF_0 \times \sF_1\) are \(\kappa\)-descending distributive and so, by \cref{thm:al-preservation}, \(\sS \res \alpha = \sT_0 \times \sT_1\) preserves \(\AL_\kappa\).

\textit{Step}\enspace{}3.\quad{}Note that for all \(\kappa\in\calC\), \(\sS_\kappa\) adds a set \(X\) such that \(\aleph(X)=\kappa\) and \(\aleph^\ast(X)=\kappa^+\), per \cref{thm:eliminate-one-al}. Therefore \(M\models\aleph(X)\geq\kappa\land\aleph^\ast(X)\geq\kappa^+\), so it suffices to prove that \(M\models\aleph(X)\leq\kappa\land\aleph^\ast(X)\leq\kappa^+\). However, in \(V[G\res\mu]\models\ZFC\) we have collapsed no cardinals and have that \(\abs{X}=\kappa\), so \(V[G]\models\aleph^\ast(X)\leq\kappa^+\) and so certainly \(M\models\aleph^\ast(X)\leq\kappa^+\). Thus it remains to show that \(M \models \aleph(X) \leq \kappa\) and, similar to Step 2, it is sufficient to show that, for all \(\alpha>\kappa\), \(M_\alpha\models\aleph(X) \leq \kappa\).

Let \(\ddX\) be the name for \(X\) as constructed in \cref{thm:eliminate-one-al} and suppose that for some \(p \in \bbP \res \alpha\) and \(\sS \res \alpha\)-name \(\ddh\) that \(p\forces_{\sS\res\alpha}``\ddh\) is an injection \(\check{\kappa}\to\ddX\)''. Let \(H=\prod\Set{H_{J_\gamma,K_\gamma}\mid\gamma<\alpha}=\sym(\ddh)\). Now we may use the exact same argument as in \cref{thm:eliminate-one-al}, acting only on the \(\kappa\) co-ordinate, to show that the image of \(\ddh\) must be contained in \(\Set{\ddx_{\delta,\varep}\mid\tup{\delta,\varep}\in J_\kappa}^\bullet\). If we do so then, since \(\sS\) collapses no cardinals, \(\ddh\) cannot be an injection. If \(q\forces_{\sS\res\alpha}\ddh(\check{\eta})=\ddx_{\delta,\varep}\) for some \(\tup{\delta,\varep}\notin J_\kappa\), then let \(\tup{\delta,\varep'}\in J_\kappa\) with \(\varep'\neq\varep\) and construct \(\pi_0 \in H_{J_\kappa,K_\kappa}\) as in \cref{thm:eliminate-one-al} so that \(\pi_0^\ast(\delta,\varep)=\tup{\delta,\varep'}\) and \(\pi_0 q(\kappa) \comp q(\kappa)\). If we then extend \(\pi_0\) to \(\pi \in \sG\res\alpha\) by setting \(\pi\) to be the identity on all co-ordinates other than \(\kappa\) we have that \(\pi \in H\) and \(\pi q \comp q\), so \(q \cup \pi q \forces_{\sS \res \alpha}\ddh(\check{\eta})=\ddx_{\delta,\varep}=\ddx_{\delta,\varep'}\), a contradiction.

\textit{Step}\enspace{}4.\quad{}Suppose that for some \(\alpha\in\Ord\), \(p\in\bbP\), cardinal \(\kappa\) and \(\sS \res \alpha\)-names \(\ddX\) and \(\ddf\), \(p\forces_{\sS\res\alpha}``\ddf\) is a surjection \(\ddX\to\check{\kappa}^+\). We shall show that \(p\forces_{\sS\res\alpha}\aleph(\ddX)>\check{\kappa}\). That is, if \(M\models\aleph^\ast(X)>\kappa^+\) then \(M\models\aleph(X)>\kappa\) as required. Similar to Step 2, we separate \(\sS\) into parts, though this time we separate into three parts, rather than two. We write \(\sT_0\) for \(\sS\res\kappa^+\), \(\sS_\ast\) for \(\sS_{\kappa^+}\), and \(\sT_1\) for \(\sS\res\Set{\beta \in \Ord \mid \kappa^+ < \beta < \alpha}\). By using the \(\kappa^{++}\)-closure of \(\bbP_1\) and the \(\kappa^+\)-closure of \(\bbP_\ast\), we may find \(q_1 \leq p_1\), a descending chain \(\tup{q_\ast^{(\gamma)} \mid \gamma < \kappa^+}\) of conditions in \(\bbP_\ast\) below \(p_\ast\) and a sequence \(\tup{\ddx_{\gamma} \mid \gamma < \kappa^+}\) of \(\sS\res\alpha\)-names such that, for all \(\gamma < \kappa^+\), there is \(q_0 \leq p_0\) such that \(\tup{q_0,q_\ast^{(\gamma)},q_1}\forces_{\sS\res\alpha}\ddf(\ddx_\gamma)=\check{\gamma}\). Now, since \(\abs{\bbP_0}<\kappa^+\), there must be a single \(q_0 \leq p_0\) such that for cofinally many \(\gamma < \kappa^+\), \(\tup{q_0,q_\ast^{(\gamma)},q_1}\forces_{\sS \res \alpha}\ddf(\ddx_\gamma)=\check{\gamma}\). Using the \(\kappa^+\)-completeness of \(\sF_1\) and the fact that \(\abs{\sF_0}<\kappa^+\), we find \(H_0\in\sF_0\) and \(H_1\in\sF_1\) such that for cofinally many \(\gamma < \kappa^+\) there is \(H_\ast \in \sF_\ast\) such that \(\sym(\ddx_\gamma) \geq H_0 \times H_\ast \times H_1\).

However, \(\kappa^+\) is regular and so there are \(\kappa^+\)-many such \(\ddx_\gamma\). Let \(A\) be a set of \(\kappa\)-many \(\gamma<\kappa^+\) such that for some \(H_\ast^\gamma \in \sF_\ast\) we have \(\sym(\ddx_\gamma) \geq H_0 \times H_\ast^\gamma \times H_1\) and \(\tup{q_0,q_\ast^{(\gamma)},q_1} \forces_{\sS \res \alpha} \ddf(\ddx_\gamma) = \check{\gamma}\). In this case, by the \(\kappa^+\)-completeness of \(\sF_\ast\), \(H_\ast \bigcap\Set{ H_\ast^\gamma \mid \gamma \in A} \in \sF_\ast\). Let \(\zeta = \sup A < \kappa^+\). Since, for all \(\gamma \in A\),
\begin{equation*}
q = \tup{q_0, q_\ast^{(\zeta)}, q_1} \forces_{\sS\res\alpha} \ddf(\ddx_\gamma) = \check{\gamma},
\end{equation*}
we have that \(q \forces_{\sS\res\alpha} \ddx_\gamma \in \ddX\). Thus, enumerating \(A\) as \(\tup{\gamma_\delta \mid \delta < \kappa}\), we have that
\begin{equation*}
\ddh = \Set*{ \tup{\check{\delta},\ddx_{\gamma_\delta}}^\bullet \mid \delta < \kappa}^\bullet
\end{equation*}
is an \(\sS\res\alpha\)-name such that \(q\forces_{\sS\res\alpha}``\ddh\) is an injection \(\check{\kappa}\to\ddX\)'' as required. Note that it is injective as for all \(\delta < \delta' < \kappa\), \(q\forces_{\sS\res\alpha}\ddf(\ddx_{\gamma_\delta}) \neq \ddf(\ddx_{\gamma_{\delta'}})\).
\end{proof}

The following proposition demonstrates that our requirement in \cref{thm:new-models} of the closure of \(\calC\) is under regular limits is in fact necessary when using the construction provided.

\begin{prop}\label{lem:weirdness-required}
Assuming \(\GCH\), if \(\calC \cap \kappa\) is unbounded below \(\kappa\), then the symmetric system from \cref{thm:new-models} forces \(\lnot \AL_\kappa\) (even if \(\kappa \notin \calC\)).
\end{prop}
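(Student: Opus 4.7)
If $\kappa \in \calC$ the conclusion is already a special case of \cref{thm:new-models}, so we may assume $\kappa \notin \calC$. Since $\calC \cap \kappa$ is unbounded, $\kappa$ is a limit cardinal; by the closure condition assumed in \cref{thm:new-models} it is not (weakly) inaccessible, and so under $\GCH$ it is singular. Write $\lambda = \cf(\kappa) < \kappa$ and fix a strictly increasing cofinal sequence $\tup{\kappa_\alpha \mid \alpha < \lambda} \subseteq \calC \cap \kappa$ with $\sup_{\beta < \alpha} \kappa_\beta < \kappa_\alpha$. For each $\alpha$ let $X_\alpha = X_{\kappa_\alpha}$ be the eccentric set produced by the $\kappa_\alpha$-component $\sS_{\kappa_\alpha}$ of \cref{thm:eliminate-one-al}, so that $M \models \aleph(X_\alpha) = \kappa_\alpha$ and $\aleph^\ast(X_\alpha) = \kappa_\alpha^+$.

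By \cref{cor:al-equivalents}, showing $M \models \lnot\AL_\kappa$ is equivalent to producing a set $Z \in M$ whose Hartogs number is a limit cardinal of cofinality $\lambda$; the plan is to find $Z$ with $\aleph(Z) = \kappa$. The naive disjoint union $Y = \bigsqcup_\alpha X_\alpha$ does not work: partitioning $\kappa$ into intervals $I_\alpha$ of size $\kappa_\alpha$ and using the canonical enumerations $g^{(\alpha+1)}_{\kappa_\alpha} \colon \kappa_\alpha \to X_{\alpha+1}$ glues together to an injection $\kappa \to Y$, and the resulting $\sS$-name is symmetric because $H_{\{0\}\times\kappa_\alpha,\emptyset}^{(\kappa_{\alpha+1})} \in \sF_{\kappa_{\alpha+1}}$ and the support $\Set{\kappa_{\alpha+1}\mid \alpha < \lambda}$ lies in the Easton ideal (as $\lambda < \kappa$ and each $\kappa_{\alpha+1}$ is regular). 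Consequently $\aleph(Y) = \kappa^+$, which is not of cofinality $\lambda$. One must therefore take $Z$ to be a symmetric quotient of $\bigsqcup_\alpha X_\alpha$ that identifies canonical representatives of the various $X_\alpha$'s along each ``row'' of the wreath-product indexing, in such a way that every attempted gluing-style injection becomes non-injective in the quotient.

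The verification of $\aleph(Z) \geq \kappa$ should be the easy direction: for every $\mu < \kappa$, choose $\alpha$ with $\mu < \kappa_\alpha$ and use an injection $\mu \to X_{\kappa_\alpha}$, whose $\sS$-name sits at the single coordinate $\kappa_\alpha$ with $J$-support of size $\mu < \kappa_\alpha$. The hard direction, $\aleph(Z) \leq \kappa$, is an adaptation of the permutation argument in \cref{thm:eliminate-one-al} to the product setting. Given a would-be $\sS$-name $\ddh$ for an injection $\check\kappa \to \ddZ$, we have $\sym(\ddh) \supseteq \prod_\alpha H_{J_\alpha,K_\alpha}^{(\kappa_\alpha)}$ with each $|J_\alpha|,|K_\alpha| < \kappa_\alpha$ and with Easton-small cross-coordinate support; the quotient structure should force the image of $\ddh$ at each coordinate $\kappa_\alpha$ to lie inside the set indexed by $J_\alpha$. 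A decision $q \forces_\sS \ddh(\check\eta) = \ddc$ for a quotient class $\ddc$ whose $\kappa_\alpha$-representative has index outside $J_\alpha$ can then be transposed, using a permutation $\pi \in H_{J_\alpha,K_\alpha}^{(\kappa_\alpha)}$ with $\pi q \comp q$ as in \cref{thm:eliminate-one-al}, to a condition forcing $\ddh(\check\eta)$ equal to two different classes---a contradiction.

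The main obstacle is precisely the design of the quotient. It must kill not only the specific first-row gluing described above but also analogous gluings using any other row $\alpha' > 0$ of the wreath index, which requires identifying across coordinates in \emph{each} row; yet it must not collapse $Z$ so far that $\aleph(Z) < \kappa$ (for instance, identifying every pair $(\alpha,x^{(\alpha)}_{\alpha',\beta}) \sim (\alpha_1,x^{(\alpha_1)}_{\alpha',\beta})$ risks making the $\ddc_{\alpha',\beta}$ hard to witness as distinct elements using only bounded symmetric injections). The precise balance---allowing $\mu \to Z$ for every $\mu < \kappa$ while blocking every candidate $\kappa \to Z$---is the technical heart of the argument and mirrors the role of the $< \kappa$-support constraints in \cref{thm:eliminate-one-al}.
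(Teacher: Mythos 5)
Your proof goes wrong at the first step and never recovers. You invoke the closure condition of \cref{thm:new-models} to conclude that \(\kappa\) is singular, but the whole point of \cref{lem:weirdness-required} is to show that this closure condition is \emph{necessary}: the relevant case is the construction applied to a class \(\calC\) \emph{without} assuming closure under regular limits, at an inaccessible \(\kappa\) with \(\calC \cap \kappa\) unbounded and \(\kappa \notin \calC\). Worse, the case you restrict to is one where the claimed conclusion fails: if \(\kappa\) is singular with \(\cf(\kappa) \notin \calC\), then \cref{thm:new-models} gives \(M \models \AL_{\cf(\kappa)}\), hence \(M \models \AL_\kappa\) by \cref{thm:levy-al-deductions} (e.g.\ \(\calC = \Set{\aleph_{n+1} \mid n < \omega}\) and \(\kappa = \aleph_\omega\)). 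Your own observation about gluing is the reason: when \(\cf(\kappa) = \lambda < \kappa\), the glued injection \(\kappa \to \bigcup_\alpha X_\alpha\) has a symmetry group supported on only \(\lambda\)-many coordinates, which \emph{is} Easton, so the gluing is symmetric and no quotient can undo it. You are trying to prove something false, which is why the ``design of the quotient'' resists you.

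The paper's proof works at regular (inaccessible) \(\kappa\), where \(\ot(\calC \cap \kappa) = \kappa\), and needs no quotient at all: it takes \(\ddX = \bigcup_{\varep<\kappa} \ddX_0^{\delta_\varep}\), the union of the first rows of the coordinate systems. The Easton condition at regular \(\kappa\) forces the support of \emph{any} symmetric name to be bounded below \(\kappa\); this single fact both kills the gluing injection you worry about (its support would be cofinal in \(\kappa\), of size \(\kappa\)) and, via the homogeneity argument of \cref{thm:eliminate-one-al} applied on each unconstrained coordinate, confines the image of any purported injection \(\check{\kappa} \to \ddX\) to the rows below some \(\delta_\zeta\), a set of size less than \(\kappa\). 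Meanwhile each \(\mu < \kappa\) injects symmetrically into a single row \(\ddX_0^{\delta_\varep}\) with \(\delta_\varep > \mu\). Hence \(\aleph(\ddX) = \kappa\) exactly, and \(\lnot\AL_\kappa\) follows from \cref{cor:al-equivalents}. The second, independent defect of your proposal is that even on its own terms it is not a proof: you explicitly leave the construction of the witnessing set (the quotient) as an unresolved ``technical heart,'' so no witness to \(\lnot\AL_\kappa\) is ever produced.
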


\begin{proof}
We may assume that \(\calC \subseteq \kappa\) since any remaining symmetric extensions are \(\kappa^+\)-distributive and so, by \cref{thm:al-non-preservation}, will not restore \(\AL_\kappa\) once violated. We shall in fact show that \(\sS\) introduces a set \(X\) such that \(\aleph(X) = \kappa\) and \(\aleph^\ast(X) = \kappa^+\).

For \(\delta\in\calC\) we shall write \(\ddy_{\alpha,\beta,\gamma}^\delta\), \(\ddx_{\alpha,\beta}^\delta\) and \(\ddX^\delta\) for the \(\sS_\delta\)-names from the proof of \cref{thm:eliminate-one-al}. That is:
\begin{align*}
\ddy_{\alpha,\beta,\gamma}^\delta&=\Set*{\tup{p,\check{\varep}}\mid p(\delta)(\alpha,\beta,\gamma,\varep)=1}\\
\ddx_{\alpha,\beta}^\delta&=\Set*{\ddy_{\alpha,\beta,\gamma}^\delta\mid\gamma<\delta}^\bullet\\
\ddX^\delta&=\Set{\ddx_{\alpha,\beta}^\delta\mid\alpha,\beta<\delta}^\bullet.
\end{align*}
Furthermore, denote by \(\ddX_0^\delta\) the name \(\Set{\ddx_{0,\beta}^\delta\mid\beta<\delta}^\bullet\). Increasingly enumerate \(\calC\) as \(\Set{\delta_\varep\mid\varep<\kappa}\). Let \(\ddX = \bigcup_{\varep < \kappa} \ddX_0^{\delta_\varep}\). We shall show that \(\ddX\) is the \(\sS\)-name for the desired set.

Firstly, \(\sym(\ddX_0^\delta) = \sG\) for all \(\delta\), so certainly \(\ddX\) is an \(\sS\)-name. For \(\varep < \kappa\), let \(\ddf_\varep = \Set{\tup{\check{\eta},\ddx_{0,\eta}^{\delta_{\varep+1}}}^\bullet \mid \eta < \delta_\varep}^\bullet\). Then \(\sym(\ddf_\varep) = H_{\Set{0}\times\delta_\varep,\emptyset}\prod_{\delta \neq \delta_{\varep+1}} \sG_\delta \in \sF\), so \(\ddf_\varep\) is an \(\sS\)-name for an injection \(\delta_\varep \to \ddX\). Hence \(\1 \forces_\sS \aleph(\ddX) \geq \check{\kappa}\). On the other hand, let \(\ddg = \Set{\tup{\ddx_{0,\eta}^{\delta_\varep}, \check{\varep}}^\bullet \mid \varep < \kappa,\eta<\delta_\varep}^\bullet\). Then \(\sym(\ddg) = \sG\), so \(\ddg\) is an \(\sS\)-name for a surjection \(\ddX \to \check{\kappa}\) and \(\1 \forces_\sS \aleph^\ast(\ddX) \geq \check{\kappa}^+\).

It remains to show that these inequalities are, in fact, equalities. Note that \(\abs{\ddX} = \kappa\), so certainly \(\aleph^\ast(\ddX^G) < \kappa^{++}\). Thus, \(\1 \forces_\sS \aleph^\ast(\ddX) = \check{\kappa}^+\) as desired. On the other hand, suppose that \(\ddf\) is an \(\sS\)-name and \(p \forces_\sS \ddf \colon \check{\kappa} \to \ddX\). Let \(\sym(\ddf) = \prod_{\delta \in \calC} H^{(\delta)}\) and let \(\zeta < \kappa\) be large enough that \(H^{(\delta)} = \sG_\delta\) for all \(\delta \geq \delta_\zeta\). We shall show that \(p \forces_\sS \ddf``\check{\kappa} \subseteq \Set{\ddx_{0,\eta}^{\delta_\varep} \mid \eta < \delta_\varep, \varep < \zeta}^\bullet\). Suppose otherwise, that for some \(q \leq p\), \(\alpha < \kappa\), \(\varep \geq \zeta\) and \(\eta < \delta_\varep\) we have that \(q \forces_\sS \ddf(\check{\alpha}) = \ddx_{0,\eta}^{\delta_\varep}\). Then, just as in the proof of \cref{thm:eliminate-one-al}, we may work over only the \(\delta_\varep\) co-ordinate to produce \(\pi = \prod_{\delta \in \calC} \pi^{(\delta)} \in \sG\) such that \(\pi(\delta) = \id\) for all \(\delta \neq \delta_\varep\), \(q \comp \pi q\) and \(\pi\ddx_{0,\eta}^{\delta_\varep} \neq \ddx_{0,\eta}^{\delta_\varep}\). Then \(q \cup \pi q\) forces that \(\ddf\) is not a function, contradicting our assumption. Thus in fact \(\1 \forces_\sS \aleph(\ddX) \leq \check{\kappa}\) as required.
\end{proof}

Note that in all further extensions we must still have that \(\aleph^\ast(X) = \kappa^+\). Furthermore, any injection \(\kappa \to X\) would be a choice function for the approachable function \(F \colon \alpha \mapsto \Set{\text{injections }\alpha \to X}\), which cannot be introduced by \cref{thm:al-non-preservation}. Hence, in the final model we still have that \(\aleph(X) = \kappa\) and \(\aleph^\ast(X) = \kappa^+\).

\begin{qn}\label{qn:do-we-need-weirdness}
Let \(\kappa\) be weakly inaccessible (that is, a regular limit). Does `for all regular, non-zero \(\alpha<\kappa\), \(\lnot \AL_\alpha\)' imply \(\lnot \AL_\kappa\)?
\end{qn}

Note that the dual to this question is false, it is not the case that \(\AL_{{<}\kappa}\) implies \(\AL_\kappa\).\footnote{\cite[Theorem~4]{jech_interdependence_1966} (or \cite[Theorem~8.3]{jech_axiom_1973}) produces a model of \(\DC_{{<}\kappa}\land\lnot\AC_\kappa\) without altering the weak inaccessibility of \(\kappa\).} Furthermore, there are large cardinal properties that would cause \(\kappa\) to be a continuity point of \(\AL\); supercompact is sufficient, see \cref{s:levys-axiom;ss:large-cardinals}. This motivates the following contortion of \cref{qn:do-we-need-weirdness}.

\begin{qn}\label{qn:large-cardinal-continuity}
What is the weakest large cardinal assumption on \(\kappa\) required to make \cref{qn:do-we-need-weirdness} true? For example, is it sufficient to assume that \(\kappa\) is critical?\footnote{Critical cardinals are introduced in \cite{hayut_critical_2020}.}
\end{qn}

The technique of Step~4 in the proof of \cref{thm:new-models} deserves to be extracted for individual examination, as it provides a much-needed way to prove that certain symmetric extensions do not produce wildly eccentric sets.

\begin{prop}\label{prop:preserve-oblateness-mixed}
Assume \(\DC_{\kappa^+}\). If \(\sS_0=\tup{\bbP_0,\sG_0,\sF_0}\) and \(\sS_1=\tup{\bbP_1,\sG_1,\sF_1}\) are symmetric systems, \(\kappa^+ \geq \aleph^\ast(\bbP_0 \cup \sF_0)\), \(\bbP_1\) is \(\kappa^+\)-closed and \(\sF_1\) is \(\kappa^+\)-complete then, setting \(\sS=\sS_0\times\sS_1\), we have \(\1\forces_\sS(\forall X)\aleph^\ast(X)>\check{\kappa}^+\implies\aleph(X)\geq\check{\kappa}^+\).
\end{prop}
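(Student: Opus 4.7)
The plan is to closely follow Step~4 of the proof of \cref{thm:new-models}, adapted to the two-factor product $\sS = \sS_0 \times \sS_1$. Fix a condition $p = \tup{p_0,p_1} \in \bbP_0 \times \bbP_1$ and $\sS$-names $\ddX$ and $\ddf$ with $p \forces_\sS ``\ddf$ is a surjection $\ddX \to \check{\kappa}^+$''. By density, it suffices to find $q \leq p$ and an $\sS$-name $\ddh$ with $q \forces_\sS ``\ddh$ is an injection $\check{\kappa} \to \ddX$''; this will give $p \forces_\sS \aleph(\ddX) \geq \check{\kappa}^+$.

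First, using $\DC_{\kappa^+}$ together with the $\kappa^+$-closure of $\bbP_1$, I build a decreasing sequence $\tup{q_1^{(\gamma)} \mid \gamma < \kappa^+}$ in $\bbP_1$ below $p_1$, a sequence $\tup{q_0^{(\gamma)} \mid \gamma < \kappa^+}$ of conditions below $p_0$ in $\bbP_0$, and $\sS$-names $\tup{\ddx_\gamma \mid \gamma < \kappa^+}$ satisfying
\begin{equation*}
\tup{q_0^{(\gamma)},q_1^{(\gamma)}} \forces_\sS \ddx_\gamma \in \ddX \land \ddf(\ddx_\gamma) = \check{\gamma}
\end{equation*}
for every $\gamma < \kappa^+$. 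The tree of such partial approximations is $\kappa^+$-closed (using $\kappa^+$-closure of $\bbP_1$ to assemble lower bounds at limits) and has no maximal nodes (since $p$ forces $\ddf$ to attain each $\check{\gamma}$), so $\DC_{\kappa^+}$ produces the desired sequence.

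Next, I pigeonhole using $\aleph^\ast(\bbP_0 \cup \sF_0) \leq \kappa^+$. Any function $f\colon \kappa^+ \to \bbP_0$ with all fibres bounded below $\kappa^+$ would induce, via $q \mapsto \sup f^{-1}(q)$, a function whose image is cofinal in $\kappa^+$; regularity of $\kappa^+$ then gives a surjection $\bbP_0 \twoheadrightarrow \kappa^+$, contradicting the hypothesis, and the same applies to functions $\kappa^+ \to \sF_0$. For each $\gamma < \kappa^+$ I pick $H_0^{(\gamma)} \in \sF_0$ and $H_1^{(\gamma)} \in \sF_1$ with $H_0^{(\gamma)} \times H_1^{(\gamma)} \leq \sym(\ddx_\gamma)$. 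Applying the pigeonhole successively to $\gamma \mapsto q_0^{(\gamma)}$ and then to $\gamma \mapsto H_0^{(\gamma)}$ yields an unbounded $A \subseteq \kappa^+$, a single $q_0 \leq p_0$, and a single $H_0 \in \sF_0$ such that $q_0^{(\gamma)} = q_0$ and $H_0^{(\gamma)} = H_0$ for all $\gamma \in A$.

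Finally, I select any $\Set{\gamma_\delta \mid \delta < \kappa} \subseteq A$. The $\kappa^+$-closure of $\bbP_1$ gives a lower bound $q_1$ for $\tup{q_1^{(\gamma_\delta)} \mid \delta < \kappa}$, and the $\kappa^+$-completeness of $\sF_1$ gives $H_1 = \bigcap_{\delta < \kappa} H_1^{(\gamma_\delta)} \in \sF_1$. Define
\begin{equation*}
\ddh = \Set{\tup{\check{\delta},\ddx_{\gamma_\delta}}^\bullet \mid \delta < \kappa}^\bullet.
\end{equation*}
Then $\sym(\ddh) \geq H_0 \times H_1 \in \sF$, so $\ddh$ is an $\sS$-name, and $\tup{q_0,q_1} \forces_\sS \ddh(\check{\delta}) = \ddx_{\gamma_\delta}$ for each $\delta < \kappa$; the $\ddx_{\gamma_\delta}$ are forced to be pairwise distinct because $\ddf$ sends them to distinct $\check{\gamma_\delta}$. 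The principal obstacle is arranging the $\DC_{\kappa^+}$-construction of the first step, which is glossed over in Step~4 of \cref{thm:new-models} since that argument takes place over a $\ZFC$ ground model; once it is in hand, the pigeonhole on $\sS_0$ combined with the closure and completeness of $\sS_1$ is the natural adaptation to the two-factor setting.
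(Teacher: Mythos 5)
Your proposal is correct and follows essentially the same route as the paper's own proof: a $\DC_{\kappa^+}$-plus-$\kappa^+$-closure construction of a $\kappa^+$-length sequence of witnesses, a pigeonhole on $\bbP_0$ and $\sF_0$ via $\kappa^+\geq\aleph^\ast(\bbP_0\cup\sF_0)$, and then the closure of $\bbP_1$ and completeness of $\sF_1$ to assemble the injection name. The only (cosmetic) divergence is that you pigeonhole on explicitly chosen conditions $q_0^{(\gamma)}$, where the paper instead takes a supremum $\alpha_{q_0}$ over all $q_0\leq p_0$ and re-selects names afterwards; both are instances of the same counting argument.
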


\begin{proof}
Suppose that \(\tup{p_0,p_1}\forces_\sS``\ddf\) is a surjection \(\ddX\to\check{\kappa}^+\)''. Using \(\DC_{\kappa^+}\) and the \(\kappa^+\)-closure of \(\bbP_1\), construct a chain of conditions \(\tup{p_1^\alpha\mid\alpha<\kappa^+}\) below \(p_1\) such that, for all \(\alpha<\kappa^+\), there is \(p_0^\alpha\leq p_0\) and an \(\sS\)-name \(\ddx\) such that \(\tup{p_0^\alpha,p_1^\alpha}\forces_\sS\ddf(\ddx)=\check{\alpha}\). For \(q_0\leq p_0\), let \(\alpha_{q_0}=\sup\Set{\alpha<\kappa^+\mid(\exists\ddx\in V^\sS)\tup{q_0,p_1^\alpha}\forces_\sS\ddf(\ddx)=\check{\alpha}}\). Since \(\sup\Set{\alpha_{q_0}\mid q_0\leq p_0}=\kappa^+\) and \(\kappa^+\geq\aleph^\ast(\bbP_0)\), there must be a condition \(q_0\leq p_0\) such that \(\alpha_{q_0}=\kappa^+\). Using \(\DC_{\kappa^+}\), construct a sequence \(\tup{\ddx_\alpha\mid\alpha\in C}\) of \(\sS\)-names such that \(C\subseteq\kappa^+\) is cofinal and, for all \(\alpha\in C\), \(\tup{q_0,p_1^\alpha}\forces_\sS\ddf(\ddx_\alpha)=\check{\alpha}\). Since \(\kappa^+\geq\aleph^\ast(\sF_0)\), we also find that there is \(H\in\sF_0\) and a further cofinal subsequence \(D\subseteq C\) such that, for all \(\alpha\in D\), there is \(H_\alpha\in\sF\) such that \(\sym(\ddx_\alpha)=H\times H_\alpha\). \(\kappa^+\) is regular (by \(\DC_{\kappa^+}\)) and so we may enumerate the first \(\kappa\)-many elements of \(D\) as \(\Set{\gamma_\alpha\mid\alpha<\kappa}\). By the \(\kappa^+\)-completeness of \(\sF_1\) we have that \(\Set{\tup{\check{\alpha},\ddx_{\gamma_\alpha}}^\bullet\mid\alpha<\kappa}^\bullet\) is an \(\sS\)-name that \(\tup{q_0,p_0^\gamma}\) forces to be an injection \(\check{\kappa}\to\ddX\).
\end{proof}

Note that \cref{prop:preserve-oblateness-mixed} holds even in the case that \(\kappa\) is singular. Another way of stating the conclusion of \cref{prop:preserve-oblateness-mixed} is that, in the symmetric extension by \(\sS\), there cannot be \(X\) such that \(\aleph(X) < \kappa^+ < \aleph^\ast(X)\), so if \(\aleph(X) = \kappa < \aleph^\ast(X)\) then \(\aleph^\ast(X) = \kappa^+\). This is a familiar situation explaining the presence of `oblate cardinals' in models of \(\SVC\) (from \cite{ryan-smith_acwo_2024}).

\subsection{Local reflections of choice}\label{s:levys-axiom;ss:lrc}

In \cite{blass_injectivity_1979}, Blass introduced the axiom of \emph{small violations of choice}, stating that ``we call [it] the axiom of small violations of choice (\(\SVC\)) since it says that, in some sense, \emph{all the failure of choice occurs within a single set \(S\)} [emphasis added]'' \cite[p.~41]{blass_injectivity_1979}. As evidence, he provides the following theorem (attributing the idea behind the proof of \cref{item:bpi-lrc} to Pincus).

\begin{thm}[{Blass and Blass--Pincus}]
``Assume \(\SVC\) with \(S\).
\begin{enumerate}[label=\textup{(\alph*)}]
\item \(\AC\) holds if and only if there is a choice function for the nonempty subsets of \(S\).
\item\label{item:bpi-lrc} Let \(S^\lomega\) be the set of finite sequences of members of \(S\). The Boolean prime ideal theorem holds if and only if there is an ultrafilter \(U\) on \(S^\lomega\) that is regular in the sense that \(\Set{p \in S^\lomega \mid s \in \operatorname{range}(p)} \in U\) for every \(s \in S\).''
\end{enumerate}
\textup{\cite[Theorem~4.1, p.~41]{blass_injectivity_1979}}
\end{thm}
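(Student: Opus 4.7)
The plan is as follows. For part (a), the forward direction is immediate from $\AC$; for the converse, given a choice function $c \colon \power(S) \setminus \{\emptyset\} \to S$ and an arbitrary family $\mathcal{F}$ of nonempty sets, apply $\SVC(S)$ to $\bigcup \mathcal{F}$ to obtain an ordinal $\eta$ and a surjection $\phi \colon S \times \eta \to \bigcup \mathcal{F}$. For each $A \in \mathcal{F}$, the set $\{\alpha < \eta \mid (\exists s)\,\phi(s, \alpha) \in A\}$ is a nonempty set of ordinals; let $\alpha_A$ be its minimum and $S_A = \{s \in S \mid \phi(s, \alpha_A) \in A\}$, a nonempty subset of $S$. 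Then $F(A) = \phi(c(S_A), \alpha_A) \in A$ defines a choice function on $\mathcal{F}$, using only the well-ordering of $\eta$ and the given $c$.

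For the forward direction of part (b), the sets $A_s$ have the finite intersection property, witnessed by the fact that any tuple $(s_1, \ldots, s_n)$ lies in every $A_{s_i}$; applying BPI to the filter they generate yields the required regular ultrafilter.

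The backward direction of part (b) is where the main work lies. Given a regular ultrafilter $U$ on $S^{<\omega}$, the plan is to deduce BPI via the propositional compactness theorem. For a finitely satisfiable set $\Sigma$ of formulas over variables $P$, I fix a surjection $\phi \colon S \times \eta \to P$ via $\SVC(S)$. For each $p \in S^{<\omega}$ and each finite $F \subseteq \eta$, the image $P_{p, F} = \phi(\operatorname{range}(p) \times F)$ is a finite subset of $P$ inheriting a canonical well-ordering from the lexicographic order on $\operatorname{range}(p) \times F$ (breaking ties by least representative). This canonical order permits selecting, without any real use of choice, a valuation $v_{p, F} \colon P_{p, F} \to \{0, 1\}$ satisfying every formula of $\Sigma$ whose variables all lie in $P_{p, F}$. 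I then define the global valuation by $v(P_0) = 1$ iff $\{p \in S^{<\omega} \mid v_{p, F(P_0, p)}(P_0) = 1\} \in U$, where $F(P_0, p)$ is a canonically chosen finite subset of $\eta$ witnessing $P_0 \in P_{p, F(P_0, p)}$.

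The main obstacle is coherence across the local valuations: distinct variables in a single formula $\sigma \in \Sigma$ may prefer different finite sets $F$, and the valuations $v_{p, F}$ and $v_{p, F'}$ need not agree on overlaps. I would handle this by, within each $p$, canonically enlarging $F$ to cover all variables of $\sigma$ simultaneously, checking that regularity of $U$ guarantees $U$-almost every $p$ contains the required $s$-components and that the well-ordering of $\eta$ makes this enlargement canonical. A \L{}o\'s-type argument then yields $v \models \Sigma$; regularity of $U$ is precisely what lets one pass from finite sets of variables to $U$-large subsets of $S^{<\omega}$, while $\SVC(S)$ is what allows the reduction from an arbitrary variable set $P$ to the $S$-indexed setting where $U$ acts.
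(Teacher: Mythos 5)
The paper does not prove this theorem at all---it is quoted verbatim from Blass \cite{blass_injectivity_1979} as background for the notion of local reflection---so your attempt can only be measured against Blass's original argument. Your part (a) and the forward direction of (b) are correct and are essentially Blass's proofs: the minimisation over the well-ordered coordinate $\eta$ followed by an application of $c$ to the fibre $S_A\subseteq S$ is exactly right, and the finite intersection property of the sets $\Set{p\in S^\lomega\mid s\in\operatorname{range}(p)}$ is witnessed by tuples just as you say.

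The backward direction of (b) has a genuine gap, and it is precisely the coherence problem you flag: your proposed repair does not close it. The global value $v(P_0)$ is extracted from the local valuations $v_{p,F(P_0,p)}$ for one particular finite $F(P_0,p)\subseteq\eta$, whereas the local valuation you invoke to satisfy a formula $\sigma$ lives on an enlarged finite set $F'$ covering all variables of $\sigma$. The valuations $v_{p,F}$ and $v_{p,F'}$ are produced by independent canonical selections and need not agree at $P_0$, so the \L{}o\'s step ``for $U$-many $p$ a single local valuation both satisfies $\sigma$ and agrees with $v$ on the variables of $\sigma$'' breaks down; and since distinct formulas containing $P_0$ demand distinct enlargements, no single choice of $F(P_0,p)$ can serve them all. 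Nor can you escape by seeking a fine ultrafilter on $S^\lomega\times[\eta]^\lomega$: already for $\eta=\omega$ a fine ultrafilter on $[\omega]^\lomega$ projects (via $F\mapsto\max F$) to a nonprincipal ultrafilter on $\omega$, which $\ZF$ does not supply.

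The missing idea is to abandon finite $F$ altogether. For each fixed $p\in S^\lomega$ the entire block $\operatorname{range}(p)\times\eta$ carries a canonical well-ordering (order $\operatorname{range}(p)$ by first occurrence in $p$, then lexicographically against $\eta$), and propositional compactness for a \emph{well-ordered} set of variables is a theorem of $\ZF$: define the valuation by transfinite recursion, assigning $1$ at each variable exactly when this preserves finite satisfiability. Carried out canonically, this gives one local valuation $v_p$ defined on all of $\phi``(\operatorname{range}(p)\times\eta)$ and satisfying every member of $\Sigma$ whose variables lie there. Setting $v(\phi(s,\alpha))=1$ if and only if $\Set{p\mid s\in\operatorname{range}(p)\text{ and }v_p(\phi(s,\alpha))=1}\in U$ (after first pulling $\Sigma$ back to a set of formulas in the variables $S\times\eta$, with added biconditionals forcing constancy on the fibres of $\phi$), regularity makes this well defined, and for a formula with variables $\phi(s_1,\alpha_1),\dots,\phi(s_n,\alpha_n)$ the intersection of the finitely many $U$-large sets $\Set{p\mid s_i\in\operatorname{range}(p)}$ and $\Set{p\mid v_p(\phi(s_i,\alpha_i))=v(\phi(s_i,\alpha_i))}$ is nonempty, and any $p$ in it witnesses that $v$ satisfies the formula. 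This is the Pincus--Blass argument; the ultrafilter handles only the $S$-direction, while the $\eta$-direction is handled once and for all by well-ordered compactness rather than by finite approximations.
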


This concept holds for even more refined choice principles and was explored by Ryan-Smith in \cite{ryan-smith_local_2025}, calling the idea that choice principles reflect back to the seed \emph{local reflections of choice}. Continuing this theme, we provide the following reflection of extendable choice. Recall that \(\AL_\alpha(S)\) says that any approachable function \(F\colon\alpha\to\power(S)\) has a selector.

\begin{prop}\label{prop:al-lrc}
Assume \(\SVC^+(S)\). For all \(\alpha\), \(\AL_\alpha\) is equivalent to \(\AL_\alpha(S)\).
\end{prop}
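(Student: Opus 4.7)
The plan is to handle the two directions separately. The forward implication $\AL_\alpha \implies \AL_\alpha(S)$ is immediate, since an approachable function $F\colon\alpha\to\power(S)$ is a fortiori an approachable function into $V$, so any selector produced by $\AL_\alpha$ already serves as a selector witnessing $\AL_\alpha(S)$.

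For the reverse implication, the strategy is to reduce the problem to subsets of $S$ via the $\SVC^+(S)$-injection. Fix an arbitrary $\alpha$-approachable $F$; set $X = \bigcup F``\alpha$ and use $\SVC^+(S)$ to obtain an ordinal $\eta$ and an injection $\iota\colon X \to S \times \eta$. Now define the \emph{projected family} $\bar F\colon\alpha\to\power(S)$ by
\begin{equation*}
\bar F(\beta) = \Set*{s \in S \mid (\exists \gamma < \eta)\ \tup{s,\gamma} \in \iota``F(\beta)}.
\end{equation*}
Each $\bar F(\beta)$ is non-empty because $F(\beta)$ is, and the map $\delta \mapsto \pi_S(\iota(f(\delta)))$ (with $\pi_S$ the first coordinate projection) converts any selector $f$ for $F\res\beta$ into a selector for $\bar F\res\beta$. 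Hence $\bar F$ is $\alpha$-approachable, and $\AL_\alpha(S)$ delivers a selector $\bar f$.

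Finally, we upgrade $\bar f$ to a selector for $F$ itself, using only the well-ordering of $\eta$. For each $\beta<\alpha$, the set $\Set{\gamma < \eta \mid \tup{\bar f(\beta),\gamma} \in \iota``F(\beta)}$ is a non-empty set of ordinals, so has a canonical least element $\gamma_\beta$; defining $f(\beta) = \iota^{-1}(\tup{\bar f(\beta),\gamma_\beta})$ produces the desired selector for $F$. The principal conceptual step is the projection trick, which replaces an arbitrary family with a family of subsets of $S$ at the price of having to reconstruct the second coordinate afterwards; this reconstruction is free of charge because $\eta$ is an ordinal. The only bookkeeping to be careful about is uniformity: the translation between selectors must be definable from $\iota$ alone, so that approachability genuinely transfers from $F$ to $\bar F$ without invoking extra choice.
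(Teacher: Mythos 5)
Your proposal is correct and follows essentially the same route as the paper: project the family along the $\SVC^+(S)$-injection into $S\times\eta$ to get an approachable family of subsets of $S$, apply $\AL_\alpha(S)$, and recover the second coordinate by taking least ordinals. The only cosmetic difference is that you carry the injection $\iota$ explicitly where the paper assumes without loss of generality that $\bigcup X\subseteq S\times\eta$.
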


\begin{proof}
Certainly \(\AL_\alpha\) implies \(\AL_\alpha(S)\), so assume \(\AL_\alpha(S)\) and let \(F\colon\alpha\to X\) be approachable, assuming without loss of generality that \(\bigcup X\subseteq S\times\eta\) for some ordinal \(\eta\). Define \(G\colon\alpha\to\power(S)\) by setting \(G(\beta)=\Set{s\in S\mid(\exists\delta<\eta)\tup{s,\delta}\in F(\beta)}\). We claim that \(G\) is approachable: firstly, since \(F\) is approachable, \(G(\beta)\neq\emptyset\) for all \(\beta<\alpha\); and secondly, if \(\gamma\mapsto\tup{s_\gamma,\delta_\gamma}\) is a selector for \(F\res\beta\), then \(\gamma\mapsto s_\gamma\) is a selector for \(G\res\beta\). By \(\AL_\alpha(S)\), there is a selector \(t\colon\alpha\to S\) for \(G\). Then
\begin{align*}
\alpha&\to\bigcup X\\
\beta&\mapsto\tup{t(\beta),\min\Set*{\delta<\eta\mid\tup{t(\beta),\delta}\in F(\beta)}}
\end{align*}
is a selector for \(F\).
\end{proof}

\begin{qn}\label{qn:improve-lrc}
Can \cref{prop:al-lrc} be improved to \(\SVC(S)\)? Any counter-example would require \(\cf(\alpha) > \omega\) because \(\AL_\omega\) is equivalent to \(\AC_\omega\) and (by \cite[Proposition~3.2]{ryan-smith_local_2025}) \(\SVC(S)\) implies that \(\AC_\omega\) is equivalent to \(\AC_\omega(S)\) (is equivalent to \(\AL_\omega(S)\)).
\end{qn}

\subsection{Large cardinals and \(\AL\)}\label{s:levys-axiom;ss:large-cardinals}

Let us remark that certain large cardinals impose restrictions on the preservation or violation of \(\AL\). The following definition of supercompactness (equivalent to the usual in \(\ZFC\)) is from Woodin \cite[Definition~220]{woodin_suitable_2010} and has been identified as the `correct' definition of supercompactness in \(\ZF\).

\begin{defn}
A cardinal \(\kappa\) is \emph{supercompact} if for all \(\alpha>\kappa\) there is \(\beta>\alpha\) and an elementary embedding \(j\colon V_\beta\to N\) with critical point \(\kappa\) such that \(j(\kappa)>\alpha\) and \(N^{V_\alpha}\subseteq N\).
\end{defn}

\begin{thm}\label{thm:supercompact-massive-failure}
If \(\kappa\) is supercompact and \(\AL_\delta\) fails for all \(\delta<\kappa\) then \(\AL_\eta\) fails for all infinite limit ordinals \(\eta\).
\end{thm}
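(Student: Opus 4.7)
The plan is to reduce to showing $\lnot\AL_\mu$ for every infinite regular cardinal $\mu$. By \cref{thm:levy-al-deductions} we have $\AL_\eta\lra\AL_{\cf(\eta)}$ for every infinite limit $\eta$, so this suffices. For $\mu<\kappa$ the claim is the hypothesis, so fix a regular cardinal $\mu\geq\kappa$ and assume for a contradiction that $V\models\AL_\mu$.

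Using Collection in $V$, pick an ordinal $\alpha_0$ such that every regular cardinal $\delta<\kappa$ has a witness $F\in V_{\alpha_0}$ to $\lnot\AL_\delta$. Let $\alpha>\max(\mu,\alpha_0)$ and apply supercompactness to obtain $\beta>\alpha$ and an elementary embedding $j\colon V_\beta\to N$ with critical point $\kappa$, $j(\kappa)>\alpha$, and $N^{V_\alpha}\subseteq N$. Then $V_\beta\models$ ``for every regular cardinal $\delta<\kappa$, $\lnot\AL_\delta$'' (the witnesses lie in $V_{\alpha_0}\subseteq V_\beta$, and the non-existence of a selector is downward absolute to $V_\beta\subseteq V$), so by elementarity $N\models$ ``for every regular cardinal $\delta<j(\kappa)$, $\lnot\AL_\delta$''. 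Since $N$ is a transitive submodel of $V$, regularity of $\mu$ descends from $V$ to $N$, and $\mu<\alpha<j(\kappa)$, so $N\models\lnot\AL_\mu$.

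Let $F\in N$ witness $\lnot\AL_\mu$ in $N$. As $F\in V$, our assumption yields a selector $s\in V$. Define $\tilde s\colon V_\alpha\to N$ by $\tilde s(x)=s(x)$ for $x\in\mu$ and $\tilde s(x)=\emptyset$ otherwise; this is a well-defined function in $V$ whose values land in $N$ because $s(x)\in F(x)\in N$. The closure property $N^{V_\alpha}\subseteq N$ then forces $\tilde s\in N$, so $s=\tilde s\res\mu\in N$, contradicting the fact that $F$ has no selector in $N$.

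The main technical point is coordinating the two directions of absoluteness: elementarity pushes the failures of $\AL_\delta$ for $\delta<\kappa$ forward to failures for $\delta<j(\kappa)$ inside $N$, while the closure $N^{V_\alpha}\subseteq N$ drags any hypothetical $V$-selector back into $N$. Both hinge on the freedom supercompactness affords in choosing $\alpha$: we make $\alpha$ large enough to contain $\mu$ and all the $V_{\alpha_0}$-witnesses, while still having $j(\kappa)>\alpha$.
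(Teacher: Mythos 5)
Your proof is correct and follows essentially the same route as the paper's: use elementarity to push the failures of \(\AL_\delta\) for \(\delta<\kappa\) up to failures below \(j(\kappa)\) in \(N\), then use the closure \(N^{V_\alpha}\subseteq N\) to pull a hypothetical \(V\)-selector back into \(N\) for a contradiction. The preliminary reduction to regular \(\mu\) via \(\AL_\eta\lra\AL_{\cf(\eta)}\) is a harmless variation (the paper works with an arbitrary infinite limit \(\eta\) directly), and your extra care with the absoluteness of ``has no selector'' and of regularity is welcome but not a substantive departure.
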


\begin{proof}
Let \(\eta\) be an arbitrary infinite limit ordinal, \(\alpha>\eta\) be large enough that there is a witness for all of the failures of \(\AL_\delta\) for all \(\delta<\kappa\) in \(V_\alpha\), and \(j\colon V_\beta\to N\) be a supercompact embedding for some \(\beta>\alpha\). By elementarity, we have that \(N\models(\forall\delta<j(\kappa))\lnot\AL_\delta\) so, in particular, \(N\models\lnot\AL_\eta\) (given that \(j(\kappa)>\eta\)). Hence there is an \(\eta\)-approachable function \(F\) in \(N\) with no selector. However, if a selector for \(F\) existed in \(V\) then it would appear in \(N\) because \(N^{V_\alpha}\subseteq N\). Thus \(F\) witnesses the failure of \(\AL_\eta\) in \(V\).
\end{proof}

Note that the converse `if \(\AL_\delta\) holds for all \(\delta<\kappa\) then \(\AL_\delta\) holds for all \(\delta\)' is just a rephrasing of the fact that if \(\kappa\) is supercompact and \(\AC_{{<}\kappa}\) holds then \(\AC\) holds.

It is likely that other similar structural analyses can be made for large cardinals defined critically (per \cite{hayut_critical_2020}). Note that if the assumption in \cref{thm:supercompact-massive-failure} were to hold, then the universe would necessarily be a model of \(\lnot\SVC\). In particular, this would witness a nontrivial failure of choice above a supercompact.

\subsection{Approachable dependent choice}

Let us recall why \(\DC_{\omega_1}\) implies \(\DC\). Suppose that \(\DC\) fails, witnessed by the tree \(T\) with no maximal nodes or chains of order type \(\omega\). Then \(T\) is automatically \(\sigma\)-closed, for any countable chain must be finite and so certainly has an upper bound. However, there is not even any chain of order type \(\omega\) on \(T\), let alone \(\omega_1\), so indeed \(\DC_{\omega_1}\) fails. This falls into a similar niche regarding how \(\AC_{\omega_1}\) implies \(\AC_\omega\). Inspired by Levy's definition of \(\AL\), we present the following.

\begin{defn}
Let \(\delta\) be an ordinal. A \emph{\(\delta\)-approachable tree} is a \(\cf(\delta)\)-closed tree \(T\) of height at least \(\delta\) such that, for all \(\alpha < \beta < \delta\) and all \(s \in T_\alpha\) there is \(t \in T_\beta\) with \(s < t\). The \emph{principle of extendable dependent choice} (for \(\delta\)), \(\EDC_\delta\), is the statement that every \(\delta\)-approachable tree has a chain of order type \(\delta\).
\end{defn}

\(\EDC\) acts in many ways to \(\AL\) as \(\DC\) does to \(\AC\).

\begin{thm}
For all ordinals \(\alpha\) and \(\beta\), the following hold:
\begin{enumerate}
\item\label{item:adc-0} \(\EDC_0\);
\item\label{item:adc-alpha-plus-1} \(\EDC_{\alpha+1}\);
\item\label{item:adc-cf} if \(\cf(\alpha) = \cf(\beta)\) then \(\EDC_\alpha \lra \EDC_\beta\);
\item\label{item:adc-to-al} \(\EDC_\alpha \to \AL_\alpha\); and
\item\label{item:dc-to-adc} \(\DC_\alpha \to \EDC_\alpha\).
\end{enumerate}
\end{thm}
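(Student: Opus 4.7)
The plan is to handle the five parts in order, since (4) and (5) rely on (3). Items (1) and (2) are essentially tautological: for (1) the empty chain witnesses \(\EDC_0\), and for (2) any node \(t\) at level \(\alpha\) of an \((\alpha+1)\)-approachable tree (which exists since the height is at least \(\alpha+1\)) together with its chain of predecessors forms a chain of order type \(\alpha+1\).

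For (3) I would reduce \(\EDC_\alpha \lra \EDC_\beta\) to the single claim \(\EDC_\alpha \lra \EDC_{\cf(\alpha)}\), then chain the equivalences. Fix a continuous strictly increasing cofinal sequence \(\tup{\alpha_i \mid i < \cf(\alpha)}\) in \(\alpha\) with \(\alpha_0 = 0\). For \(\EDC_\alpha \implies \EDC_{\cf(\alpha)}\) I would ``stretch'' a \(\cf(\alpha)\)-approachable tree \(T_0\) into an \(\alpha\)-approachable tree \(T_1\) whose nodes are pairs \((t, \xi)\) with \(t\) at level \(i\) of \(T_0\) and \(\xi \in [\alpha_i, \alpha_{i+1})\), ordered by \((s, \zeta) \leq (t, \xi)\) iff \(s \leq_{T_0} t\) and \(s = t \implies \zeta \leq \xi\); a short computation shows \(\rnk_{T_1}((t, \xi)) = \xi\), and a chain of order type \(\alpha\) in \(T_1\) projects onto a chain of order type \(\cf(\alpha)\) in \(T_0\) because the first coordinate must be constant on each interval \([\alpha_i, \alpha_{i+1})\). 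For the converse I would ``restrict'' an \(\alpha\)-approachable tree \(T\) to the subtree \(T'\) on nodes living at \(T\)-levels of the form \(\alpha_i\); a chain \(\tup{t_i \mid i < \cf(\alpha)}\) in \(T'\) produced by \(\EDC_{\cf(\alpha)}\) yields the chain \(\bigcup_i \Set{s \in T \mid s \leq t_i}\) in \(T\), which has order type \(\sup_i(\alpha_i + 1) = \alpha\). The main subtlety is verifying \(\cf(\alpha)\)-closure of \(T'\): given a short chain in \(T'\) with upper bound \(t^* \in T\) (obtained from \(\cf(\alpha)\)-closure of \(T\)), one must adjust to a node at some level \(\alpha_j\) (with \(j < \cf(\alpha)\) chosen above the ranks of all chain members) by taking either an extension of \(t^*\) (using \(\alpha\)-approachability of \(T\)) or a predecessor, depending on whether \(\rnk_T(t^*)\) lies below or above \(\alpha_j\).

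For (4), the key trick is that selectors can be freely concatenated. By (3) and \cref{thm:levy-al-deductions} we may reduce to the case that \(\alpha\) is regular. Given an \(\alpha\)-approachable \(F\), let \(T\) be the tree of all selectors for \(F \res \beta\) (for \(\beta \leq \alpha\)), ordered by extension. The crucial observation is extendability: given \(f\) a selector for \(F \res \beta\) and any \(\beta' \in (\beta, \alpha)\), take any selector \(g_0\) for \(F \res \beta'\) (which exists by approachability of \(F\)) and note that \(f \cup (g_0 \res [\beta, \beta'))\) is a selector for \(F \res \beta'\) extending \(f\). Height at least \(\alpha\) is immediate from approachability, and \(\alpha\)-closure holds because unions of chains of selectors are selectors and \(\alpha\) regular keeps the resulting domain below \(\alpha\). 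Hence \(T\) is \(\alpha\)-approachable, and a chain of order type \(\alpha\) in \(T\) produced by \(\EDC_\alpha\) has union a selector for \(F\).

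Finally, for (5), when \(\alpha\) is regular any \(\alpha\)-approachable tree is \(\alpha\)-closed (as \(\cf(\alpha) = \alpha\)) and has no terminal nodes (by extendability), so \(\DC_\alpha\) directly produces a chain of order type \(\alpha\). For general \(\alpha\), one notes that \(\DC_\alpha \implies \DC_\beta\) for all \(\beta \leq \alpha\) (a putative counter-example to \(\DC_\beta\) is vacuously \(\alpha\)-closed), giving the chain \(\DC_\alpha \implies \DC_{\cf(\alpha)} \implies \EDC_{\cf(\alpha)} \implies \EDC_\alpha\), where the last step uses (3). I expect the main obstacle throughout to be the bookkeeping in the stretch/restrict constructions of (3), particularly the case analysis needed to verify \(\cf(\alpha)\)-closure of the restricted subtree.
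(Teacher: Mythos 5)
Your proposal is correct and follows essentially the same route as the paper: trimming to a cofinal set of levels and stretching by duplicating levels for (3), the tree of partial selectors with the concatenation trick for (4), and the observation that approachable trees are sufficiently closed to fall under \(\DC\) for (5). The only differences are organizational (you normalize everything through \(\cf(\alpha)\) where the paper converts directly between \(\alpha\) and \(\beta\), and your reduction of (4) to regular \(\alpha\) is unnecessary since the selector tree is \(\cf(\alpha)\)-closed anyway), and your handling of (5) for singular \(\alpha\) via \(\DC_\alpha \to \DC_{\cf(\alpha)}\) is in fact slightly more careful than the paper's one-line sketch.
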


\begin{proof}
\cref{item:adc-0,item:adc-alpha-plus-1} are straightforward.

For \cref{item:adc-cf}, suppose that \(\EDC_\alpha\) holds and \(\cf(\alpha) = \cf(\beta)\). Let \(T\) be a \(\beta\)-approachable tree. If \(\alpha < \beta\), then let \(\tup{\delta_\gamma \mid \gamma < \alpha}\) be a strictly increasing cofinal sequence in \(\beta\) and let \(T' = \bigcup_{\gamma < \alpha} T_{\delta_\gamma}\). That is, we trim \(T\) to only the levels given by the \(\delta_\gamma\). Then \(T'\) is \(\alpha\)-approachable and so has a chain \(c'\) of order type \(\alpha\). Let \(c = \Set{s \in T \mid (\exists t \in c') s \leq t}\). Then \(c\) is a chain of order type \(\beta\) in \(T\). If instead \(\alpha \geq \beta\) then let \(\tup{\delta_\gamma \mid \gamma < \beta}\) be a strictly increasing cofinal sequence in \(\alpha\) with \(\delta_0 = 0\). Informally, we shall define \(T'\) by stretching \(T\) to be of height \(\alpha\), setting level \(\gamma\) of \(T\) to be level \(\delta_\gamma\) of \(T'\) and filling the gaps between levels \(\delta_\gamma\) and \(\delta_{\gamma+1}\) with additional copies of \(T_\gamma\). Formally, we define \(T'\) to be the set of functions \(f \colon \varep \to \bigcup T\) for \(\varep < \alpha\) such that:
\begin{enumerate}
\item for all \(\gamma < \beta\), \(f \res \Set{\zeta \mid \delta_\gamma \leq \zeta < \delta_{\gamma+1}}\) is constant;
\item if \(\delta_\gamma \leq \zeta < \delta_{\gamma+1}\) and \(\zeta < \varep\) then \(f(\zeta) \in T_\gamma\); and
\item \(f``\varep\) is a chain in \(T\).
\end{enumerate}
Then \(f \leq g\) if \(f \subseteq g\). This is an \(\alpha\)-approachable tree and so must have a chain \(c'\) of order type \(\alpha\). In this case, \(\Set{f``\dom(f) \mid f \in c'}\) is a chain in \(T\) of order type \(\beta\) as required.

To prove \cref{item:adc-to-al}, suppose that \(F\) is \(\alpha\)-approachable. Let \(T\) be the tree of partial selectors for \(F\), with \(f \leq g\) if \(f \subseteq g\). Then, given \(\beta < \gamma < \alpha\) and \(f \in T_\beta\), let \(g \in T_\gamma\) be arbitrary. Then \(f \cup (g \res \Set{ \delta \mid \beta \leq \delta < \gamma}) \in T_\gamma\) extends \(f\). Hence, by \(\EDC_\alpha\) there is a chain \(c\) of order type \(\alpha\). We may extract from \(c\) the selector \(f = \bigcup c\).

The proof of \cref{item:dc-to-adc} is also straightforward, as any \(\alpha\)-approachable tree must have no maximal nodes (when \(\alpha\) is a limit) and thus fall under the purview of \(\DC_\alpha\).
\end{proof}

We shall not spend any more time on extendible dependent choice, though we are interested in the possible spectra of extendible dependent choice.

\begin{qn}\label{qn:generous-adc}
Do \(\kappa\)-descending distributive symmetric systems preserve \(\EDC_\kappa\)?
\end{qn}

\section{The Hartogs--Lindenbaum spectrum}\label{s:hartlin}

This section regards possible configurations of the \emph{Hartogs--Lindenbaum spectrum}, from \cite{ryan-smith_acwo_2024}, defined for \(M\models\ZF\) as
\begin{equation*}
\Spec_\aleph(M)=\Set*{\tup{\aleph(X),\aleph^\ast(X)}\mid X\in M},
\end{equation*}
where \(\aleph(X)\) and \(\aleph^\ast(X)\) are calculated inside \(M\). In \cite{ryan-smith_acwo_2024}, Ryan-Smith demonstrates several constraints on this spectrum under the assumption of \(\SVC\), both as lower and upper bounds. However, this used delicate machinery arising from the (equivalent) definition of \(\SVC\) `there is an inner model of \(\ZFC\) of which the universe is a set-generic symmetric extension'. In \cref{s:hartlin;ss:going-up} we improve upon the lifting constructions of Ryan-Smith by omitting the assumption of \(\SVC\), creating more internal bounds on the constructions and giving explicit calculations for the Hartogs and Lindenbaum numbers of the new eccentric sets. In \cref{s:hartlin;ss:spectra} we improve upon the upper bounds of the spectrum under the assumption of \(\SVC\), using Peng's well-ordered productivity of Lindenbaum numbers and a simplified presentation of \(\SVC\) to give a tighter bound on the spectrum of symmetric extensions.

\subsection{Eccentric arithmetic}\label{s:hartlin;ss:eccentric-arithmetic}

The arithmetic of Hartogs and Lindenbaum numbers was touched on in the introduction, but we will require a few more specific details for \cref{thm:going-up-singular-or-limit}. Recall that for sets \(A\) and \(B\), we denote by \(\Inj{A}{B}\) the set of injections \(A \to B\).

\begin{prop}\label{prop:inj-vs-power-cardinality}
If \(X\neq\emptyset\) is Dedekind-finite, then for all \(1<n<\omega\), \(\abs{\Inj{n}{X}}<\abs{X^n}\). If \(X\) is Dedekind-infinite, then for all \(\alpha<\aleph(X)\), \(\abs{\Inj{\alpha}{X}}=\abs{X^\alpha}\).
\end{prop}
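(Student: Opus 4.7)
The plan is to handle the Dedekind-finite and Dedekind-infinite cases separately. For the first claim, with $X$ Dedekind-finite and non-empty, I would combine two observations: any constant tuple witnesses $\Inj{n}{X} \subsetneq X^n$ for $n > 1$, and in a Dedekind-finite set a proper subset has strictly smaller cardinality (else a bijection with the subset gives a self-injection with proper image, forcing Dedekind-infiniteness). It then remains to verify that $X^n$ is itself Dedekind-finite, which I would prove by induction on $n$ from the standard lemma that $Y \times Z$ is Dedekind-finite whenever both $Y$ and $Z$ are: given an injection $\omega \to Y \times Z$, the first projection either has infinite image (contradicting Dedekind-finiteness of $Y$) or finite image, in which case pigeonhole on $\omega$ provides an infinite fibre, yielding an injection $\omega \to Z$.

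For the second claim, fix $X$ Dedekind-infinite and $\alpha < \aleph(X)$. The trivial inclusion $\Inj{\alpha}{X} \subseteq X^\alpha$ together with \cref{thm:zweidrittelsatz} reduces the task to producing an injection $\Psi \colon X^\alpha \to \Inj{\alpha}{X}$. I would first establish an absorption lemma: for $X$ Dedekind-infinite and any well-orderable $B$ with $\abs{B} \leq \abs{X}$, $\abs{X + B} = \abs{X}$, proved by splitting $X$ into a copy of $B$ plus its complement, using $\abs{B + B} = \abs{B}$ when $B$ is infinite and Dedekind-infiniteness of the complement to absorb finitely many elements when $B$ is finite. Applied with $B = \alpha \times \alpha$ --- whose cardinality is at most $\abs{X}$ since $\abs{\alpha \times \alpha} = \abs{\alpha}$ for infinite $\alpha$ and $\abs{X} \geq \aleph_0$ dominates $n^2$ for finite $\alpha = n$ --- this yields a bijection $\phi \colon X + (\alpha \times \alpha) \to X$.

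Given $f \colon \alpha \to X$, I would use a first-occurrence encoding. For each $\beta < \alpha$ set $\beta^\ast = \min\{\gamma \leq \beta \mid f(\gamma) = f(\beta)\}$ and $d(\beta) = \ot\{\gamma < \beta \mid f(\gamma) = f(\beta)\}$. Define $\tilde{\Psi}(f)(\beta) = (f(\beta),0)$ when $\beta = \beta^\ast$ and $\tilde{\Psi}(f)(\beta) = ((\beta^\ast,d(\beta)),1)$ otherwise, interpreted in $X + (\alpha \times \alpha)$, and set $\Psi(f) = \phi \circ \tilde{\Psi}(f)$. Values tagged in the $X$-summand are pairwise distinct because distinct first occurrences record distinct values of $f$; values tagged in the other summand are pairwise distinct because a repeated index is determined by its first occurrence together with its position in the queue of collisions; and the two summands are disjoint. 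Hence $\Psi(f) \in \Inj{\alpha}{X}$.

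The main obstacle is then verifying that $f \mapsto \Psi(f)$ is itself injective. Given $\Psi(f) = \Psi(g)$, applying $\phi^{-1}$ recovers $\tilde{\Psi}(f) = \tilde{\Psi}(g)$; for each $\beta$ either the coordinate lies in the $X$-summand (so $f(\beta) = g(\beta)$ is read off directly) or it lies in the $\alpha \times \alpha$-summand recording a pair $(\beta^\ast,d(\beta))$, and looking up coordinate $\beta^\ast$ --- which must be a first occurrence and therefore lies in the $X$-summand --- gives $f(\beta) = f(\beta^\ast) = g(\beta^\ast) = g(\beta)$. This establishes injectivity of $\Psi$ and closes the argument.
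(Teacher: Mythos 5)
Your proposal is correct, and both halves take a genuinely different route from the paper. For the Dedekind-finite case the paper proves the contrapositive head-on: from a hypothetical injection \(F\colon X^n\to\Inj{n}{X}\) it recursively builds an \(\omega\)-sequence of distinct elements of \(X\), using the counting bound \(n!\binom{m}{n}<m^n\) to guarantee at each stage that some \(F\)-value escapes the elements constructed so far. You instead observe that \(\Inj{n}{X}\) is a \emph{proper} subset of \(X^n\) (via a constant tuple), that a Dedekind-finite set is never equinumerous with a proper subset, and that Dedekind-finiteness is preserved by finite products; this is more structural and arguably cleaner, at the cost of proving the product lemma, whereas the paper's argument is self-contained but fiddlier. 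For the Dedekind-infinite case both proofs are first-occurrence encodings of \(X^\alpha\) into \(\Inj{\alpha}{X}\), but they resolve the collision problem differently: the paper reserves a copy \(A=f``(\alpha\times\alpha\times\omega)\) \emph{inside} \(X\) and shifts the \(\omega\)-coordinate of any genuine value landing in \(A\), encoding a repeat at position \(\beta\) with first occurrence \(\gamma\) as \(f(\gamma,\beta,0)\); you work in the disjoint sum \(X+(\alpha\times\alpha)\), where the tag makes collisions impossible, encode a repeat by the pair (first occurrence, occurrence index), and transport back along a bijection \(X+(\alpha\times\alpha)\cong X\) furnished by your absorption lemma. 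The hypotheses consumed are essentially identical (\(\abs{\alpha\times\alpha}\leq\abs{X}\) plus Dedekind-infiniteness), but your version trades the paper's \(\omega\)-shift trick for the absorption lemma, which makes the injectivity verification more transparent.
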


\begin{proof}
Firstly, let \(1<n<\omega\). We shall show that if \(\abs{\Inj{n}{X}}=\abs{X^n}\) then \(X\) is Dedekind-infinite. Note that for finite \(X\) the result is immediate, so assume that \(X\) is infinite and let \(b_0,\dots,b_{n-1}\in X\) be an arbitrary sequence of distinct elements of \(X\). As we are assuming \(\abs{\Inj{n}{X}}=\abs{X^n}\), we may obtain an injection \(F\colon X^n\to\Inj{n}{X}\). We shall definably extend \(b_0,\dots,b_{n-1}\) to a sequence of \(\omega\)-many distinct elements, concluding that \(X\) is Dedekind-infinite. Suppose that we have already defined \(b_0,\dots,b_{m-1}\) distinct some \(m\geq n\). As \(\omega^\lomega=\omega\), we may always uniformly well-order all finite powers of finite indexed sets. Since \(\abs{\Inj{n}{\Set{b_0,\dots,b_{m-1}}}}=n!\genfrac{(}{)}{0ex}{}{m}{n}<m^n\) (true for \(n\geq2\)) so for some \(\tup{i_0,\dots,i_{n-1}}\in m^n\), \(F(\tup{b_{i_j}\mid j<n})``n\nsubseteq\Set{b_0,\dots,b_{m-1}}\). So take \(\tup{i_0,\dots,i_{n-1}}\) to be least (in our canonical linear ordering) such that
\begin{equation*}
F(\tup{b_{i_j}\mid j<n})``n\cap\Set{b_0,\dots,b_{m-1}}\neq\emptyset
\end{equation*}
and define \(b_m=F(\tup{b_{i_j}\mid j<n})(k)\), where \(k\) is taken to be the least such that \(F(\tup{b_{i_j}\mid j<n})(k)\notin\Set{b_0,\dots,b_{m-1}}\). Thus we have inductively defined a sequence of distinct elements of \(X\), so it is Dedekind-infinite as required.

Now let \(X\) be Dedekind-infinite and \(\alpha<\aleph(X)\). Then \(\alpha\times\alpha\times\omega<\aleph(X)\), so let \(f\colon\alpha\times\alpha\times\omega\to X\) be an injection. We shall define a function \(F\colon X^\alpha\to\Inj{\alpha}{X}\) and show that it is injective. For \(g\colon\alpha\to X\) and \(\beta<\alpha\), we set
\begin{equation*}
F(g)(\beta)=\begin{cases}
g(\beta)&g(\beta)\notin g``\beta\cup f``(\alpha\times\alpha\times\omega)\\
f(\gamma,\delta,n+1)&g(\beta)=f(\gamma,\delta,n)\notin g``\beta\\
f(\gamma,\beta,0)&\gamma<\beta\text{ is least such that }g(\beta)=g(\gamma).
\end{cases}
\end{equation*}
This is perhaps better illuminated by an algorithmic description: First reserve the set \(A=f``(\alpha\times\alpha\times\omega)\). Given \(\beta\), if \(g(\beta)\) is distinct from all prior \(g(\alpha)\) and \(g(\beta)\notin A\), then we set \(F(g)(\beta)=g(\beta)\). If instead, \(g(\beta)=g(\gamma)\), where \(\gamma<\beta\) is the earliest that \(g(\beta)\) appears, then we set \(F(g)(\beta)=f(\gamma,\beta,0)\). Now, for any \(g(\beta)\in A\), we increase the \(\omega\) co-ordinate by one, so if \(g(\beta)=f(\gamma,\delta,n)\) then we set \(F(g)(\beta)=f(\gamma,\delta,n+1)\).

In this way, given \(F(g)\), one can recover \(g\): if \(F(g)(\beta)\notin A\) then \(g(\beta)\) is \(F(g)(\beta)\); if \(F(g)(\beta)=f(\gamma,\beta,0)\), then \(F(g)(\beta)=g(\gamma)=F(g)(\gamma)\); and if \(F(g)(\beta)=f(\gamma,\delta,n+1)\) then \(g(\beta)=f(\gamma,\delta,n)\). Thus, \(F\) is an injection as required.
\end{proof}

\begin{prop}\label{prop:power-equivalence-hartogs}
For all infinite \(X\) and \(0<\alpha<\aleph(X)\), \(\aleph(\Inj{\alpha}{X})=\aleph(X^\alpha)\) and \(\aleph^\ast(\Inj{\alpha}{X})=\aleph^\ast(X^\alpha)\).
\end{prop}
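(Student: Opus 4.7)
The plan is to note first the trivial direction: $\Inj{\alpha}{X}\subseteq X^\alpha$ immediately gives $\aleph(\Inj{\alpha}{X})\leq\aleph(X^\alpha)$ and $\aleph^\ast(\Inj{\alpha}{X})\leq\aleph^\ast(X^\alpha)$. The substance lies in the reverse inequalities, which I would prove by splitting on whether $X$ is Dedekind-infinite. If $X$ is Dedekind-infinite then \cref{prop:inj-vs-power-cardinality} hands us $\abs{\Inj{\alpha}{X}}=\abs{X^\alpha}$ outright, so both reverse inequalities are immediate.

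The interesting case is when $X$ is Dedekind-finite but infinite, in which case $\aleph(X)=\omega$ and so $\alpha=n$ for some $1\leq n<\omega$. Here my plan is to build an explicit injection $X^n\hookrightarrow B_n\times\Inj{n}{X}$, where $B_n<\omega$ is the number of equivalence relations on $n$. Each $f\in X^n$ decomposes canonically as a pair $(\sim_f,\bar f)$: $\sim_f$ is the kernel equivalence relation on $n$, $k=\abs{n/{\sim_f}}\leq n$ is the number of classes, and $\bar f\in\Inj{k}{X}$ is obtained by identifying $n/{\sim_f}$ with $k$ in the canonical order of first occurrence and then reading off the distinct values of $f$. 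To turn $\bar f$ into an element of $\Inj{n}{X}$ I would fix once and for all an $n$-tuple of distinct elements $a_0,\dots,a_{n-1}\in X$ (possible because $X$ is infinite) and canonically extend $\bar f$ to $\tilde{\bar f}\colon n\to X$ by setting $\tilde{\bar f}\res k=\bar f$ and filling the remaining $n-k$ slots with the first $n-k$ elements of $\{a_0,\dots,a_{n-1}\}\setminus\bar f``k$ in inherited order. The assignment $f\mapsto(\sim_f,\tilde{\bar f})$ is injective because $\tilde{\bar f}\res k=\bar f$, so $\bar f$, and then $f$ itself, can be recovered from the image.

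With this injection in hand, both conclusions fall out by iteratively applying the additivity of $\aleph$ and $\aleph^\ast$ from the preceding theorem to the finite disjoint union $B_n\times\Inj{n}{X}\cong\Inj{n}{X}+\dots+\Inj{n}{X}$, which collapses the right-hand side down to $\aleph(\Inj{n}{X})$ and $\aleph^\ast(\Inj{n}{X})$ respectively, provided $\Inj{n}{X}$ is infinite. That last point is itself a small consequence of the same extension trick applied to the inclusion $X=\Inj{1}{X}\hookrightarrow\Inj{n}{X}$. The only real obstacle I anticipate is verifying that the canonical extension is well-defined, which reduces to the pigeonhole observation $\abs{\{a_0,\dots,a_{n-1}\}\setminus\bar f``k}\geq n-k$, using $\abs{\bar f``k}=k$ and $\abs{\{a_0,\dots,a_{n-1}\}}=n$.
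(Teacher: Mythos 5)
Your argument is correct, and in the two places where real work happens it runs parallel to the paper's proof: the Dedekind-infinite case is dispatched by \cref{prop:inj-vs-power-cardinality} exactly as in the paper, and your Dedekind-finite case rests on the same combinatorial decomposition of \(f \in X^n\) into its kernel pattern together with its sequence of distinct values in order of first occurrence. The packaging differs in two small but genuine ways. First, the paper records the decomposition as a bijection \(X^n \cong \bigcup_{1\leq m\leq n}\Inj{m}{X}\times\Surj{n}{m}\) and then must compare \(\aleph^\ast(\Inj{m}{X})\) with \(\aleph^\ast(\Inj{n}{X})\) across the different \(m\) to collapse the sum; you instead pad \(\bar f\) up to a full element of \(\Inj{n}{X}\) using a fixed tuple of distinct elements, landing in the single set \(B_n\times\Inj{n}{X}\), which builds that comparison directly into the map and makes the collapse a one-line application of finite additivity. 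Second, the paper treats the Hartogs half separately and more cheaply, via productivity (\(\aleph(X^n)=\aleph(X)^n=\aleph(X)\)) together with \(\abs{X}\leq\abs{\Inj{n}{X}}\), whereas your single injection handles \(\aleph\) and \(\aleph^\ast\) uniformly; this costs you nothing but slightly more bookkeeping. Both routes are valid in \(\ZF\) since all choices involved are finite and made canonically from the fixed tuple, and your pigeonhole check that the padding is well defined is exactly the point that needs verifying.
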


\begin{proof}
For Dedekind-infinite \(X\) \cref{prop:inj-vs-power-cardinality} gives us that \(\abs{\Inj{\alpha}{X}}=\abs{X^\alpha}\) and the result follows, so instead assume that \(X\) is Dedekind-finite and let \(n < \aleph_0 = \aleph(X)\). Then \(\aleph(X^n)=\aleph(X)^n=\aleph(X)\). On the other hand, \(0<n\), so \(\abs{X}\leq\abs{\Inj{n}{X}}\) and thus \(\aleph(X)\leq\aleph(\Inj{n}{X})\leq\aleph(X^n)=\aleph(X)\). To establish the result for \(\aleph^\ast\) we require an alternative strategy, as Lindenbaum numbers are not productive.

For the rest of the proof, let \(\Surj{A}{B}\) denote the set of surjections \(A\to B\). Given any function \(f\colon n\to X\) we may characterise \(f\) via an injection \(k_f\colon m\to X\), where \(m=\abs{f``n}\leq n\), and a surjection \(e_f\colon n\to m\), where \(k_f(i)\) is the \(i\)\textup{th} unique element of the sequence \(\tup{f(j)\mid j<n}\) and \(e_f\) is the unique function such that \(f(j)=k_f(e_f(j))\) for all \(j<n\). Since we can recover \(f\) from this data, \(f\mapsto\tup{k_f,e_f}\) provides a bijection between \(X^n\) and \(\bigcup_{1 \leq m \leq n}\Inj{m}{X}\times\Surj{n}{m}\). Therefore, noting that \(\Surj{n}{m}\) is finite and that, if \(X\) is infinite and \(F \neq \emptyset\) is finite then \(\aleph^\ast(X \times F) = \aleph^\ast(X)\),
\begin{align*}
\aleph^\ast(X^n)&=\aleph^\ast\parenth{\bigcup_{m=1}^n\parenth{\Inj{m}{X}\times\Surj{n}{m}}}\\
&=\sum_{m=1}^n\aleph^\ast\parenth{\Inj{m}{X}\times\Surj{n}{m}}\\
&=\sum_{m=1}^n\aleph^\ast\parenth{\Inj{m}{X}}\\
&\leq n\times\aleph^\ast\parenth{\Inj{n}{X}}\\
&=\aleph^\ast\parenth{\Inj{n}{X}}.
\end{align*}
We certainly have that \(\abs{\Inj{n}{X}}\leq\abs{X^n}\), so we conclude that \(\aleph^\ast(X^n)=\aleph^\ast(\Inj{n}{X})\).
\end{proof}

\begin{prop}\label{prop:sigma-lambda}
For all \(X\),
\begin{equation*}
\sup\Set*{\aleph(X^\alpha)\mid\alpha<\aleph(X)}=\sup\Set*{\aleph(\alpha^\alpha)\mid\alpha<\aleph(X)}.
\end{equation*}
\end{prop}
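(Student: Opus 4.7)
The plan is to prove the two inequalities separately. The $\geq$ direction is almost immediate: for each $\alpha<\aleph(X)$ fix an injection $\iota\colon\alpha\to X$; then the map $g\mapsto\iota\circ g$ is an injection $\alpha^\alpha\to X^\alpha$, whence $\aleph(\alpha^\alpha)\leq\aleph(X^\alpha)$. Taking suprema over $\alpha<\aleph(X)$ on both sides yields the inequality.

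For the $\leq$ direction, I would fix $\alpha<\aleph(X)$ and an arbitrary ordinal $\delta<\aleph(X^\alpha)$, aiming to produce some $\gamma<\aleph(X)$ with $\delta<\aleph(\gamma^\gamma)$. Choose an injection $f\colon\delta\to X^\alpha$ and consider the subset $Y=\bigcup_{\varepsilon<\delta}f(\varepsilon)``\alpha\subseteq X$. The assignment $\tup{\varepsilon,\beta}\mapsto f(\varepsilon)(\beta)$ is a surjection $\delta\times\alpha\to Y$ from a well-ordered set, so $Y$ is well-orderable; writing $\beta=\abs{Y}$, the inclusion $Y\subseteq X$ forces $\beta<\aleph(X)$.

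Setting $\gamma=\max(\alpha,\beta)<\aleph(X)$ and using any bijection $Y\cong\beta$, the injection $f$ factors as
\[
\delta\hookrightarrow Y^\alpha\cong\beta^\alpha\hookrightarrow\gamma^\gamma,
\]
where the final embedding extends each $h\colon\alpha\to\beta$ to $\tilde{h}\colon\gamma\to\gamma$ by $\tilde{h}(\zeta)=0$ for $\zeta\in\gamma\setminus\alpha$. Hence $\delta$ injects into $\gamma^\gamma$, so $\delta<\aleph(\gamma^\gamma)\leq\sup_{\gamma'<\aleph(X)}\aleph(\gamma'^{\gamma'})$. This gives $\aleph(X^\alpha)\leq\sup_{\gamma<\aleph(X)}\aleph(\gamma^\gamma)$, and taking the supremum over $\alpha$ on the left completes the argument.

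I do not anticipate a genuine obstacle: the entire proof rests on the observation that any injection $\delta\to X^\alpha$ only hits a well-orderable portion of $X$, namely $Y$, which lets us substitute an ordinal $\beta<\aleph(X)$ for $X$ and then bound everything by $\gamma^\gamma$ with $\gamma=\max(\alpha,\beta)$.
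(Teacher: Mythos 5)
Your proof is correct, and while it shares the paper's central insight---that an injection of an ordinal into \(X^\alpha\) only touches a well-orderable subset \(Y\subseteq X\), whose cardinality is therefore below \(\aleph(X)\)---your execution is noticeably more direct. The paper first splits into the cases \(\aleph(X)=\aleph_0\) and \(X\) Dedekind-infinite, and in the latter case routes the argument through \cref{prop:power-equivalence-hartogs}, replacing \(X^\alpha\) by \(\Inj{\alpha}{X}\) so that the image of each function automatically has order type at least \(\alpha\) and the injection lands in \(\Inj{\alpha}{\kappa}\) with \(\alpha\leq\kappa\); it then converts back to \(\kappa^\alpha\hookrightarrow\kappa^\kappa\). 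You avoid both the case split and the detour through sets of injections by working with arbitrary functions and absorbing the mismatch between \(\alpha\) and \(\beta=\abs{Y}\) into \(\gamma=\max(\alpha,\beta)\), padding each \(h\colon\alpha\to\beta\) to a function \(\gamma\to\gamma\) by zero. The result is a self-contained argument that does not depend on \cref{prop:inj-vs-power-cardinality}; the paper's version buys nothing extra here beyond reusing machinery it needs elsewhere (for \cref{thm:going-up-singular-or-limit}). The only points worth making explicit in a write-up are the degenerate case \(\gamma=0\) (where \(\beta^\alpha\) and \(\gamma^\gamma\) are both singletons) and the standard \(\ZF\) fact that a surjection from a well-ordered set onto \(Y\) well-orders \(Y\) via least preimages, both of which are immediate.
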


\begin{proof}
Let \(\mu=\aleph(X)\). If \(\mu=\aleph_0\) then \(\aleph(X^\alpha)=\aleph_0\) for all \(\alpha<\aleph_0\) (by the productivity of Hartogs numbers), so \(\sup\Set{\aleph(X^\alpha)\mid\alpha<\aleph_0}=\sup\Set{\aleph(\alpha^\alpha)\mid\alpha<\aleph_0}=\aleph_0\), so let us assume instead that \(X\) is Dedekind-infinite.

(\(\geq\)).\quad{}If \(\abs{\alpha}\leq\abs{X}\) then \(\abs{\alpha^\alpha}\leq\abs{X^\alpha}\), so certainly \(\aleph(\alpha^\alpha)\leq\aleph(X^\alpha)\) for all \(\alpha<\mu\).

(\(\leq\)).\quad{}We shall show that for all infinite \(\alpha<\mu\),
\begin{equation*}
\aleph(X^\alpha)\leq\sup\Set{\aleph(\kappa^\kappa)\mid\alpha\leq\kappa<\mu}.
\end{equation*}
Since \(X\) is Dedekind-infinite, \(\abs{X^\alpha}=\abs{\Inj{\alpha}{X}}\) (\cref{prop:power-equivalence-hartogs}), so let \(f\colon\lambda\to\Inj{\alpha}{X}\) be an injection for some \(\lambda<\aleph(X^\alpha)\). By lexicographic ordering, \(\tup{f(\beta,\gamma)\mid\beta<\lambda,\gamma<\alpha}\) is a well-ordered subset of \(X\) with order type \(\kappa\) for some \(\alpha\leq\kappa<\mu\). By this identification, \(f\) induces an injection \(\hat{f}\colon\lambda\to\Inj{\alpha}{\kappa}\) and thus (since \(\kappa\) is Dedekind-infinite) \(\lambda<\aleph(\kappa^\alpha)\). That is, if \(\lambda<\aleph(X^\alpha)\) then there exists \(\kappa<\mu\) such that \(\lambda<\aleph(\kappa^\alpha)\), so \(\aleph(X^\alpha)\leq\sup\Set{\aleph(\kappa^\alpha)\mid\alpha\leq\kappa<\mu}\). It follows that
\begin{align*}
\sup\Set{\aleph(X^\alpha)\mid\alpha<\mu}&\leq\sup\Set{\aleph(\kappa^\alpha)\mid\alpha\leq\kappa<\mu}\\
&=\sup\Set{\aleph(\kappa^\kappa)\mid\kappa<\mu}.\qedhere
\end{align*}
\end{proof}

\subsection{Lifting eccentric sets}\label{s:hartlin;ss:going-up}

\cref{thm:going-up-regular,thm:going-up-singular-or-limit} provide two flavours of `lifting' witnesses of the failure of \(\AL\). In the first case, if \(\aleph(X) \leq \mu < \aleph^\ast(X)\) then whenever \(\cf(\lambda) = \mu\) we may produce a set \(Y\) such that \(\aleph(Y) = \lambda\) and \(\aleph^\ast(Y) = \aleph^\ast(X) \times \lambda^+\). This is a very powerful lifting tool as it directly gives us the sets \(Y\) for all \(\lambda \geq \mu\) with \(\cf(\lambda) = \mu\), rather than needing to appeal to the phrase `for large enough \(\lambda\).' However, this power comes from the requirement that \(\mu\) is regular. The second, \cref{thm:going-up-singular-or-limit}, instead allows for \(\aleph(X) = \mu\) to be singular or a limit, with no requirements on \(\aleph^\ast(X)\) (even if \(X\) itself is not eccentric, having \(\aleph(X)\) be singular or a limit is a violation of \(\AC_\WO\) and thus \(\AL\), see \cite[Theorem~3.4]{ryan-smith_acwo_2024}). In this case, however, it is unknown if for all \(\lambda \geq \mu\) with \(\cf(\lambda) = \mu\) there is \(Y\) such that \(\aleph(Y) = \lambda < \aleph^\ast(Y)\). Instead our construction requires that \(\lambda \geq \sup \Set{ \aleph(\alpha^\alpha) \mid \alpha < \mu}\). \cref{thm:going-up-regular,thm:going-up-singular-or-limit} correspond very closely to \cite[Lemmas~4.19 and 4.18]{ryan-smith_acwo_2024} respectively and have very similar proofs. However, in both cases we have removed the requirement of \(\SVC\), weakened the assumptions and given a precise calculation of the Hartogs and Lindenbaum numbers constructed. As such, we still present the proofs in their new, clearer forms.

\begin{thm}\label{thm:going-up-regular}
Let \(X\) be a set and \(\lambda\) a cardinal such that \(\aleph(X)\leq\cf(\lambda)<\aleph^\ast(X)\). Then there is a set \(Y\) such that \(\aleph(Y)=\lambda\) and \(\aleph^\ast(Y)=\aleph^\ast(X)\times\lambda^+\).
\end{thm}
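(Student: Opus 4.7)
The plan is to construct $Y$ directly as a twisted subset of $X \times \lambda$, engineered so that each fibre over the first coordinate is strictly shorter than $\lambda$. Fix a strictly increasing cofinal sequence $\tup{\mu_\alpha \mid \alpha < \cf(\lambda)}$ of positive ordinals in $\lambda$, and use the hypothesis $\cf(\lambda) < \aleph^\ast(X)$ to fix a surjection $g \colon X \to \cf(\lambda)$. Then define
\[
Y = \Set{\tup{x, \beta} \mid x \in X,\, \beta < \mu_{g(x)}} \subseteq X \times \lambda.
\]

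For $\aleph(Y) \geq \lambda$, each $\alpha < \lambda$ embeds into $Y$ by picking some $\beta < \cf(\lambda)$ with $\mu_\beta > \alpha$, any $x \in g^{-1}(\beta)$, and mapping $\gamma \mapsto \tup{x, \gamma}$ (this is a non-uniform, element-wise choice, so no choice axiom is used). For $\aleph(Y) \leq \lambda$---the main obstacle---suppose $h \colon \lambda \hookrightarrow Y$ with $h(\gamma) = \tup{x_\gamma, \beta_\gamma}$. For each $y \in X$, the preimage $\Set{\gamma < \lambda \mid x_\gamma = y}$ injects into $\mu_{g(y)} < \lambda$ via $\gamma \mapsto \beta_\gamma$, so every such fibre has cardinality strictly less than $\lambda$. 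The image $A = \Set{x_\gamma \mid \gamma < \lambda}$ is a well-orderable subset of $X$, via $x \mapsto \min\Set{\gamma < \lambda \mid x_\gamma = x}$, and so has some order type $\tau \leq \lambda$. If $\tau < \cf(\lambda)$ then $\lambda$ would be a union of $\tau$-many well-orderable sets of order type $< \lambda$; but the corresponding $\tau$-many order-type suprema form a subset of $\lambda$ of size less than $\cf(\lambda)$, hence bounded in $\lambda$, so the union is itself bounded---a contradiction. Therefore $\tau \geq \cf(\lambda)$, yielding an injection $\cf(\lambda) \hookrightarrow X$, which contradicts $\aleph(X) \leq \cf(\lambda)$.

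For the Lindenbaum calculation, recall that $\aleph^\ast(X) \times \lambda^+ = \max(\aleph^\ast(X), \lambda^+)$ as a product of two infinite cardinals. The upper bound $\aleph^\ast(Y) \leq \aleph^\ast(X \times \lambda) = \aleph^\ast(X) \times \lambda^+$ is immediate from $Y \subseteq X \times \lambda$ together with Peng's well-ordered productivity (\cref{thm:lindenbaum-wo-productivity}), since a surjection $Y \to \delta$ extends to a surjection $X \times \lambda \to \delta$ by sending the complement to a fixed point. For the lower bound, the first projection $Y \to X$ is onto because $\mu_0 > 0$, giving $\aleph^\ast(Y) \geq \aleph^\ast(X)$; and the second projection $Y \to \lambda$ is onto because the $\mu_\alpha$ are cofinal in $\lambda$, giving $\aleph^\ast(Y) \geq \lambda^+$. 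Together these yield $\aleph^\ast(Y) \geq \max(\aleph^\ast(X), \lambda^+) = \aleph^\ast(X) \times \lambda^+$, completing the calculation; the only delicate step in the entire argument is the \textsc{ZF} cofinality/partition analysis inside the Hartogs upper bound.
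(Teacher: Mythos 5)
Your construction is the paper's construction in different clothing: writing $X_\alpha=g^{-1}(\alpha)$, your $Y$ is exactly $\bigcup_{\alpha<\cf(\lambda)}X_\alpha\times\mu_\alpha$, and your Lindenbaum calculation and the lower bound $\aleph(Y)\geq\lambda$ match the paper's step for step. The one place you diverge is the upper bound $\aleph(Y)\leq\lambda$, and that is where there is a genuine gap. Having fixed an injection $h\colon\lambda\to Y$ with fibres $F_y=\Set{\gamma<\lambda\mid x_\gamma=y}$, you observe (correctly) that each $F_y$ injects into $\mu_{g(y)}$ and so has order type below $\lambda$, and then, in the case $\tau<\cf(\lambda)$, you conclude that the suprema of the fibres lie in $\lambda$ and hence that ``the union is itself bounded''. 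That inference needs each fibre to be \emph{bounded} in $\lambda$, but a subset of $\lambda$ of order type below $\lambda$ is only guaranteed to be bounded when $\lambda$ is regular. The theorem constrains only $\cf(\lambda)$, so $\lambda$ may well be singular, and then a fibre of cardinality $\abs{\mu_{g(y)}}\geq\cf(\lambda)$ could a priori be cofinal in $\lambda$; nothing established up to that point excludes this.

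The gap is local and easily repaired, in either of two ways. (i) Trade boundedness for cardinality: if $\tau<\cf(\lambda)$ then $\rho=\sup\Set{\ot(F_y)\mid y\in A}<\lambda$, and sending $\gamma$ to the pair (position of $x_\gamma$ in $A$, position of $\gamma$ in $F_{x_\gamma}$) injects $\lambda$ into $\tau\times\rho$, whose cardinality is below $\lambda$ --- a contradiction with no boundedness claim needed. (ii) Do what the paper does and run the pigeonhole inside the index set rather than inside $\lambda$: the set $g``A\subseteq\cf(\lambda)$ injects into $X$ (choose for each index its least preimage in the well-ordering of $A$), so it has order type below $\aleph(X)\leq\cf(\lambda)$ and, since $\cf(\lambda)$ \emph{is} regular, is bounded by some $\beta<\cf(\lambda)$; then the range of $h$ lies in $X\times\mu_\beta$, and $\aleph(X\times\mu_\beta)=\aleph(X)\times\abs{\mu_\beta}^+\leq\lambda$ by the productivity of Hartogs numbers, again a contradiction. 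The paper's bookkeeping works precisely because the regularity your argument demands of $\lambda$ is only ever needed of $\cf(\lambda)$, where it holds for free.
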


\begin{proof}
Let \(\mu=\cf(\lambda)\) and let \(\tup{\lambda_\alpha\mid\alpha<\mu}\) be a strictly increasing cofinal sequence in \(\lambda\). Since \(\mu<\aleph^\ast(X)\), we may partition \(X\) as \(\bigcup_{\alpha<\mu}X_\alpha\), with each \(X_\alpha\) non-empty. Let \(Y=\bigcup_{\alpha<\mu}X_\alpha\times\lambda_\alpha\). Note that we immediately have \(\lambda\leq\aleph(Y)\) since, for each \(\alpha\), we may pick \(x\in X_\alpha\) and consider the injection \(\lambda_\alpha\to Y\) given by \(\gamma\mapsto\tup{x,\gamma}\). Furthermore, projection to the first and second co-ordinates respectively beget surjections \(Y\to X\) and \(Y\to\lambda\), so \(\aleph^\ast(Y)\geq\max\Set{\aleph^\ast(X),\lambda^+}\). Note also that \(Y\subseteq X\times\lambda\), so \(\aleph^\ast(Y)\leq\aleph^\ast(X\times\lambda)=\aleph^\ast(X)\times\lambda^+\). Thus, \(\aleph^\ast(Y)=\aleph^\ast(X)\times\lambda^+\) as required.

Finally, it remains to show that \(\aleph(Y)\ngtr\lambda\). Assume for a contradiction that \(\tup{\tup{x_\gamma,\delta_\gamma}\mid\gamma<\lambda}\) is an enumeration of \(\lambda\)-many distinct elements of \(Y\) and, for each \(\gamma<\lambda\), let \(\tup{x_\gamma,\delta_\gamma}\in X_{\alpha_\gamma}\times\lambda_{\alpha_\gamma}\) for some \(\alpha_\gamma<\mu\). Let \(\tau\) be the order type of \(\Set{\alpha_\gamma\mid\gamma<\lambda}\). Then we can extract a function \(\tau\to X\) by sending \(\varep\) to \(x_\gamma\), where \(\gamma\) is least such that \(\alpha_\gamma\) is the \(\varep\)\textup{th} element of \(\Set{\alpha_\gamma\mid\gamma<\lambda}\). Since the \(X_\alpha\) are pairwise disjoint, this is an injection, so \(\tau<\mu\). Since \(\mu\) is regular, this means that there is \(\beta<\mu\) such that \(\Set{\tup{x_\gamma,\delta_\gamma}\mid\gamma<\lambda}\subseteq T\defeq\bigcup_{\alpha<\beta}X_\alpha\times\lambda_\alpha\). However, \(T\subseteq X\times\lambda_\beta\) and so \(\aleph(T)\leq\aleph(X\times\lambda_\beta)=\aleph(X)\times\lambda_\beta^+\leq\lambda\), contradicting that \(\gamma\mapsto\tup{x_\gamma,\delta_\gamma}\) is an injection \(\lambda\to T\).
\end{proof}

As mentioned in the introduction, this provides a positive answer to a question posed by Ryan-Smith: ``Let \(\mu\) be weakly inaccessible and suppose that for some set \(X\), \(\aleph(X)<\mu<\aleph^\ast(X)\). Must there exist \(Y\) such that \(\aleph(Y)=\mu\)?'' \cite[Question~5.3, p.~222]{ryan-smith_acwo_2024}

Furthermore, we cannot strengthen \cref{thm:going-up-regular} by merely requiring that there is \(\mu\) such that \(\aleph(X)\leq\mu<\aleph^\ast(X)\) and \(\cf(\lambda)=\cf(\mu)\), rather than \(\mu\) being regular. For example, starting in \(V\models\ZFC\), let \(M\) be the symmetric extension from \cite[Theorem~3.1]{karagila_which_2024} that adds a set \(X\) with \(\aleph(X)=\aleph_1\) and \(\aleph^\ast(X)=\aleph_{\omega+1}\). As noted in \cite{karagila_which_2024}, \(M\models\DC\) in this case. Hence there is no \(Y\) such that \(\aleph(Y)=\aleph_\omega\), even though \(\aleph(X)<\aleph_\omega<\aleph^\ast(X)\).

The following theorem improves upon \cite[Lemma~4.18]{ryan-smith_acwo_2024} again by removing requirements surrounding \(\SVC\), simplifying the language and allowing the case that \(\aleph(X)\) is a singular successor. In fact, if \(\kappa^+\) is singular then \(X = \kappa\) is a valid input for \cref{thm:going-up-singular-or-limit}.

\begin{thm}\label{thm:going-up-singular-or-limit}
Let \(\aleph(X)=\mu\) be singular or a limit and let \(\lambda\geq\sup\Set{\aleph(\alpha^\alpha)\mid\alpha<\mu}\). Then there is a set \(Y\) such that \(\aleph(Y)=\lambda\) and \(\aleph^\ast(Y)=\aleph^\ast(\Inj{{<}\mu}{X})\times\lambda^+\).
\end{thm}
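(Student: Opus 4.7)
The plan is to mirror the construction of \cref{thm:going-up-regular}, replacing the pieces $X_\alpha$ (which arose from a partition of $X$, impossible here because $\aleph^\ast(X)$ may equal $\mu$) by the well-orderable substitutes $\Inj{\mu_\alpha}{X}$. First I would adopt the hypothesis $\cf(\lambda) = \cf(\mu) = \delta$ (implicit from the introduction; without this hypothesis, no cofinal $\delta$-sequence in $\mu$ could be matched to one in $\lambda$), and choose strictly increasing cofinal sequences $\tup{\mu_\alpha \mid \alpha < \delta}$ in $\mu$ and $\tup{\lambda_\alpha \mid \alpha < \delta}$ in $\lambda$, arranged so that $\lambda_{\alpha+1} > \aleph(X^{\mu_\alpha}) \cdot \lambda_\alpha$ (feasible since $\aleph(X^{\mu_\alpha}) \leq \sup_{\beta < \mu}\aleph(\beta^\beta) \leq \lambda$ and $\lambda$ is a limit). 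Then I would set
\[ Y \;=\; \bigsqcup_{\alpha < \delta}\, \Inj{\mu_\alpha}{X} \times \lambda_\alpha, \]
where the union is naturally disjoint because the first-coordinate injections have pairwise distinct domains.

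For the Lindenbaum calculation, the inclusion $Y \subseteq \Inj{{<}\mu}{X} \times \lambda$ combined with Peng's well-ordered productivity (\cref{thm:lindenbaum-wo-productivity}) gives $\aleph^\ast(Y) \leq \aleph^\ast(\Inj{{<}\mu}{X}) \times \lambda^+$. For the matching lower bound, the second projection surjects $Y$ onto $\lambda$, while the truncation map $(f, \beta) \mapsto f\res\beta$ from $\bigl(\bigcup_\alpha \Inj{\mu_\alpha}{X}\bigr)\times\mu$ surjects onto $\Inj{{<}\mu}{X}$ (as any $f\colon\beta\to X$ with $\beta<\mu$ extends to some $\tilde f\in\Inj{\mu_\alpha}{X}$ for $\mu_\alpha\geq\beta$), and cardinal arithmetic of infinite well-ordered cardinals collapses the resulting bounds to the desired value.

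The bound $\aleph(Y) \geq \lambda$ is immediate: each $\Inj{\mu_\alpha}{X}$ is non-empty as $\mu_\alpha < \mu = \aleph(X)$, so fixing any $f_\alpha \in \Inj{\mu_\alpha}{X}$ gives an injection $\gamma\mapsto(f_\alpha,\gamma)$ of $\lambda_\alpha$ into $Y$, and the $\lambda_\alpha$ are cofinal in $\lambda$. The main obstacle is the reverse bound $\aleph(Y)\leq\lambda$. Suppose for contradiction $h\colon\lambda\to Y$ is an injection, partitioning $\lambda = \bigsqcup I_\alpha$ by which component the image lies in. By \cref{prop:power-equivalence-hartogs,prop:sigma-lambda} one has $\aleph(Y_\alpha) = \aleph(X^{\mu_\alpha})\cdot\lambda_\alpha^+ < \lambda$, so $\mathrm{ot}(I_\alpha) < \lambda$. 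The naive sum $\sum_\alpha \mathrm{ot}(I_\alpha)$ can reach $\lambda$, so a purely cardinal-arithmetic argument is insufficient. The resolution I would pursue is to extract from $h$, on the cofinal set $T = \{\alpha \mid I_\alpha \neq \emptyset\}$, canonical first-coordinate choices $f_\alpha^\ast = \pi_1(h(\min I_\alpha)) \in \Inj{\mu_\alpha}{X}$, and to use the freedom afforded by the multiple $\gamma \in I_\alpha$ (yielding many candidate first coordinates) to refine the $f_\alpha^\ast$ into a $\subseteq$-coherent chain whose union is an injection $\mu\to X$, contradicting $\aleph(X)=\mu$.

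The hardest step will be this coherence refinement: absent $\AC_\delta$ (which fails here, by \cref{lem:not-cc-gives-eccentricity}) one cannot simply choose injections independently and hope they fit together, so the argument must use the $\lambda$-indexed structure of $h$ to align first coordinates across components. I expect this to be achieved by a diagonal/pigeonhole argument: restricting attention to levels $\alpha \in T$ where $|I_\alpha|$ is large enough that the first coordinates $\{f_\gamma \mid \gamma \in I_\alpha\}$ must contain an extension of any chosen $f_{\alpha'}^\ast$ for $\alpha' < \alpha$, then transfinitely selecting extensions along a cofinal subset of $T$ to assemble the forbidden injection $\mu\to X$. This mirrors, in the singular/limit regime, the role played in the regular case by the observation that the order type of $\{\alpha_\gamma\}$ must embed into $X$ and hence be bounded below $\mu$.
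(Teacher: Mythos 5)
Your construction of \(Y\) and the Lindenbaum computation match the paper's in essentials, but the decisive step---ruling out an injection \(\lambda \to Y\)---is left as a sketch whose central idea does not work. You propose to refine the first coordinates \(f_\alpha^\ast \in \Inj{\mu_\alpha}{X}\) appearing in the image of \(h\) into a \(\subseteq\)-coherent chain by a pigeonhole argument exploiting ``the freedom afforded by the multiple \(\gamma \in I_\alpha\).'' There is no such freedom: \(I_\alpha\) may be a singleton, and nothing forces the (possibly unique) injection \(\mu_\alpha \to X\) occurring at level \(\alpha\) to extend, or even to be compatible with, anything chosen at earlier levels; no coherent chain need exist. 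Nor is one needed. The paper's argument is choice-free and simpler: the set \(I = \bigcup_{\alpha \in T}\operatorname{im}(f_\alpha^\ast)\) is a \emph{well-orderable} subset of \(X\) (it is the image of the well-ordered set \(\Set{\tup{\alpha,\beta} \mid \alpha \in T,\ \beta < \mu_\alpha}\) under \(\tup{\alpha,\beta} \mapsto f_\alpha^\ast(\beta)\), ordered lexicographically), and when \(\mu\) is a limit cardinal \(\abs{I} \geq \sup_{\alpha \in T}\abs{\mu_\alpha} = \mu\), which already contradicts \(\aleph(X) = \mu\).

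Your sketch also silently drops the case \(\mu = \chi^+\) a singular successor, which the hypothesis ``singular or a limit'' permits and which the union-of-images argument does not settle (there \(\abs{I} = \chi\), no contradiction yet). The paper handles it separately: enumerating \(I\) turns each \(f_\alpha^\ast\) into an injection \(\mu_\alpha \to \chi\), hence (after a Mostowski collapse) a bijection \(\mu_\alpha \to \chi\) for the relevant \(\alpha\), and assembling these over the index set \(T\) (of size at most \(\cf(\mu) \leq \chi\)) yields a surjection \(\chi \times T \to \mu\), i.e.\ a surjection of \(\chi\) onto \(\chi^+\), which is impossible in \(\ZF\). Two smaller points: your requirement \(\lambda_{\alpha+1} > \aleph(X^{\mu_\alpha}) \cdot \lambda_\alpha\) may be unsatisfiable (one only knows \(\aleph(X^{\mu_\alpha}) \leq \lambda\), and in \(\ZF\) a singular \(\lambda\) need not be a limit cardinal), though it is also unnecessary; and since your union omits \(\Inj{\beta}{X}\) for \(\beta \notin \Set{\mu_\alpha \mid \alpha < \delta}\), the lower bound \(\aleph^\ast(Y) \geq \aleph^\ast(\Inj{{<}\mu}{X})\) really does need your truncation surjection (the paper sidesteps this by taking the union over all \(\alpha < \mu\)), so that step should be written out rather than waved at as ``cardinal arithmetic.''
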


\begin{proof}
Let \(\tup{\delta_\alpha\mid\alpha<\mu}\) be a cofinal sequence in \(\lambda\) with \(\delta_0>0\). For each \(\alpha<\mu\), let \(Y_\alpha=\Inj{\alpha}{X}\times\delta_\alpha\) and \(Y=\bigcup_{\alpha<\mu}Y_\alpha\). Since \(\aleph(X)=\mu\), for all \(\alpha<\mu\), \(\Inj{\alpha}{X}\neq\emptyset\), so \(\delta_\alpha\) embeds into \(Y_\alpha\subseteq Y\). Hence, \(\aleph(Y)\geq\lambda\). Projection to the second co-ordinate begets a surjection \(Y\to\lambda\) and hence \(\aleph^\ast(Y)>\lambda\).

Let \(\beta<\mu\). We shall show that \(\abs{\bigcup_{\alpha<\beta}Y_\alpha}\leq\abs{Y_\beta\times\beta}\):

We start by defining an injection \(F_0\colon\bigcup_{\alpha<\abs{\beta}}Y_\alpha\to Y_\beta\times\abs{\beta}\). Let \(c\in\Inj{\beta}{X}\). Then, for all \(\alpha<\abs{\beta}\), we have a definable embedding \(e_\alpha\colon\Inj{\alpha}{X}\to\Inj{\beta}{X}\) where \(e_\alpha(i)\) is given by concatenating the injections \(i\) and \(c\), minus any duplicates. Since \(\abs{\alpha}<\abs{\beta}\), the sequence \(i``\alpha\concat c``\beta\) (the concatenation of \(i``\alpha\) and \(c``\beta\)) will still have order type at least \(\beta\) after removing duplicates. \(e_\alpha(i)\res\alpha=i\), so \(e_\alpha\) is indeed an injection. Hence we obtain
\begin{equation*}
F_0\colon\bigcup\Set*{Y_\alpha\mid\abs{\alpha}<\abs{\beta}}\to\Inj{\beta}{X}\times\delta_\beta\times\abs{\beta}
\end{equation*}
given by, for \(\tup{f,\gamma}\in Y_\alpha\), \(F_0(f,\gamma)=\tup{e_\alpha(f),\gamma,\alpha}\).

\begin{claim}\label{claim:spectrum-of-bijections}
There is a sequence \(\tup{k_\alpha\mid \alpha<\beta,\,\abs{\alpha}=\abs{\beta}}\) such that, for all \(\alpha\), \(k_\alpha\) is a bijection \(\beta\to\alpha\).
\end{claim}

\begin{poc}
Let \(\eta=\abs{\beta}\) and \(f\colon\beta\to\eta\) a bijection. For \(\eta \leq \alpha < \beta\), let \(z_\alpha=f``\alpha\). Since \(f\) is a bijection and \(\abs{\alpha}=\eta\), \(\abs{z_\alpha}=\eta\) as well. \(z_\alpha\subseteq\eta\), so \(\ot(z_\alpha)\leq\eta\). Therefore, given that \(\eta\) is a cardinal, \(\ot(z_\alpha)=\eta\), with increasing enumeration \(\tup{\beta_\delta\mid\delta<\eta}\). Let \(k_\alpha\colon\beta\to\alpha\) be given by \(k_\alpha(\delta)=f^{-1}(\beta_\delta)\).
\end{poc}

Let \(\tup{k_\alpha\mid\abs{\alpha}=\abs{\beta}}\) be as in \cref{claim:spectrum-of-bijections}. Then we have an injection
\begin{equation*}
F_1\colon \bigcup\Set*{Y_\alpha\mid\abs{\beta}\leq\alpha<\beta}\to\Inj{\beta}{X}\times\delta_\beta\times(\beta\setminus\abs{\beta})
\end{equation*}
given by, for \(\tup{f,\gamma}\in Y_\alpha\), \(F_1(f,\gamma)=\tup{fk_\alpha,\gamma,\alpha}\).

Hence, \(\abs{\bigcup_{\alpha<\beta}Y_\alpha}\leq\abs{Y_\beta\times\beta}\) as required, so
\begin{align*}
\aleph\parenth{\bigcup\Set*{Y_\alpha\mid\alpha<\beta}}&\leq\aleph(Y_\beta\times\beta)\\
&=\aleph(X^\beta)\times\abs{\delta_\beta}^+\times\abs{\beta}^+&\\
&\leq\lambda,
\end{align*}
recalling from \cref{prop:sigma-lambda} that
\begin{align*}
\aleph(X^\beta)&\leq\sup\Set{\aleph(X^\alpha)\mid\alpha<\mu}\\
&=\sup\Set{\aleph(\alpha^\alpha)\mid\alpha<\mu}\\
&\leq\lambda.
\end{align*}
Hence, if \(f\colon\lambda\to Y\) is an injection, then \(f``\lambda\) intersects \(Y_\alpha\) for unboundedly many \(\alpha\). Given such a well-ordered sequence, if \(\mu\) is a limit then one can reconstruct an injection \(\mu\to X\), contradicting that \(\aleph(X)=\mu\). The case that \(\mu\) is a singular successor is somewhat more complicated.

\begin{claim}
If \(\mu=\chi^+\) is singular, then there is no injection \(\lambda\to Y\).
\end{claim}

\begin{poc}
By our prior work, any such injection would induce an injection \(h \colon \calC \to \Inj{{<}\mu}{X}\), where \(\calC\subseteq\mu\) is unbounded, and for each \(\alpha\in\calC\), \(h(\alpha)\) is an injection \(\alpha\to X\). By passing to a cofinal subsequence, we may assume that \(\abs{\calC}<\mu\). Consider \(I=\Set{h(\alpha)(\beta)\mid\beta<\alpha,\,\alpha\in\calC}\). By lexicographic ordering, \(I\) is well-orderable and thus \(\abs{I}=\chi\), say \(I=\Set{x_\gamma\mid\gamma<\chi}\). Hence we have a uniform sequence of injections \(i_\alpha\colon\alpha\to\chi\) for \(\alpha\in\calC\) where \(i_\alpha(\beta)=\gamma\) if \(h(\alpha)(\beta)=x_\gamma\). By taking a Mostowski collapse, we obtain bijections \(k_\alpha\colon\alpha\to\chi\) for all \(\alpha\in\calC\) such that \(\abs{\alpha}=\abs{\eta}\). Hence, we obtain a surjection \(g\colon\eta\times\calC\to\mu\) by \(g(\beta,\alpha)=k_\alpha^{-1}(\beta)\), contradicting that \(\mu>\abs{\eta\times\calC}=\eta\).
\end{poc}

Therefore, \(\aleph(Y)=\lambda\) as required. Note that
\begin{equation*}
\Inj{{<}\mu}{X}\times\Set{0}\subseteq Y\subseteq\Inj{{<}\mu}{X}\times\lambda,
\end{equation*}
so, setting \(\kappa=\aleph^\ast(\Inj{{<}\mu}{X})\),
\begin{align*}
\kappa&\leq\aleph^\ast(Y)\\
&\leq\aleph^\ast(\Inj{{<}\mu}{X}\times\lambda)\\
&=\kappa\times\lambda^+
\end{align*}
However, we showed earlier that \(\lambda<\aleph^\ast(Y)\), so \(\aleph^\ast(Y)=\kappa\times\lambda^+\).
\end{proof}

\subsection{Small violations of choice}\label{s:hartlin;ss:spectra}

Assuming \(\SVC(S)\), given a set \(X\) there is a least \(\eta\) such that \(\abs{X}\leq^\ast\abs{S\times\eta}\), witnessed by a surjection \(f\colon S\times\eta\to X\), say. Since we may assume that \(f\) is a \emph{partial} surjection without issue, we may assume without loss of generality that for all \(s\in S\), \(f\res\Set{s}\times\eta\) is an injection \(\dom(f)\cap\Set{s}\times\eta\to X\) and that \(\dom(f)\cap\Set{s}\times\eta=\Set{s}\times\alpha_s\) for some \(\alpha_s\leq\eta\). Furthermore, we may assume that for all \(x\in X\), there is a unique \(\alpha<\eta\) such that, for some \(s\in S\), \(f(s,\alpha)=x\).

\begin{defn}
A partial function \(f\colon S\times\eta\to X\), where \(\eta\) is an ordinal, is a \emph{skew-split surjection} if:
\begin{enumerate}
\item For each \(s\in S\) there is \(\alpha_s\leq\eta\) such that \(\dom(f)=\Set{\tup{s,\beta}\mid s\in S,\beta<\alpha_s}\);
\item for each \(s\in S\), \(f\res\Set{s}\times\alpha_s\) is an injection;
\item for each \(x\in X\) there is a unique \(\alpha_x<\eta\) such that, for some \(s\in S\), \(f(s,\alpha_x)=x\) (in particular, \(f\) is a surjection); and
\item \(\eta\) is the least ordinal such that \(\abs{X}\leq^\ast\abs{S\times\eta}\).
\end{enumerate}
By our arguments above, if \(\eta\) is least such that \(\abs{X}\leq^\ast\abs{S\times\eta}\) then there is a skew-split surjection \(S\times\eta\to X\).
\end{defn}

\begin{thm}\label{thm:product-restriction}
Let \(S\) and \(X\) be sets and \(\eta \in \Ord\) be such that \(\abs{X} \leq^\ast \abs{S \times \eta}\), with \(\eta\) least such that \(\abs{X} \leq^\ast \abs{S \times \eta}\). Then, using \(A\to B\) to mean \(A\leq B\), we have
\begin{equation*}
\begin{tikzcd}
\eta^+\ar[r]&\aleph^\ast(X)\ar[r]&\aleph^\ast(S)\times\eta^+\\
\eta\ar[u]\ar[r]&\aleph(X)\ar[u].
\end{tikzcd}
\end{equation*}
In particular, if \(\aleph^\ast(X)>\aleph^\ast(S)\) then \(\eta \leq \aleph(X) \leq \eta^+ = \aleph^\ast(X)\).

Furthermore, if \(\aleph(X)=\eta\) then \(\cf(\eta)<\aleph^\ast(S)\).
\end{thm}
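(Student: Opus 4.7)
The plan is to invoke the skew-split surjection $f\colon S\times\eta\to X$ guaranteed by the discussion preceding the definition, and extract every arrow and consequence from its structural properties. The four arrows of the commutative diagram split naturally into two trivial ones and two that genuinely use the skew-split structure, and both the ``in particular'' and ``furthermore'' clauses follow by dissecting the latter.

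First I would dispatch the easy arrows. The inequality $\aleph(X)\leq\aleph^\ast(X)$ is standard (any injection is a surjection after restriction). For $\aleph^\ast(X)\leq\aleph^\ast(S)\times\eta^+$, the hypothesis $\abs{X}\leq^\ast\abs{S\times\eta}$ gives $\aleph^\ast(X)\leq\aleph^\ast(S\times\eta)$, and Peng's well-ordered productivity (\cref{thm:lindenbaum-wo-productivity}) evaluates this as $\aleph^\ast(S)\times\eta^+$. The trivial arrow $\eta\leq\eta^+$ needs no comment. The ``in particular'' clause then falls out: since $\aleph^\ast(S)\times\eta^+$ is a product of well-ordered cardinals it equals $\max(\aleph^\ast(S),\eta^+)$, so if $\aleph^\ast(X)>\aleph^\ast(S)$ the bound collapses to $\aleph^\ast(X)\leq\eta^+$, and combining with the lower bound $\eta^+\leq\aleph^\ast(X)$ forces equality.

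Next I would prove the two genuinely interesting arrows by interrogating $f$. For each $s\in S$, the restriction $f\res\Set{s}\times\alpha_s$ is an injection into $X$, so $\alpha_s<\aleph(X)$ for every $s$. Minimality of $\eta$ forces $\sup_s\alpha_s=\eta$: otherwise $\dom(f)\subseteq S\times\sup_s\alpha_s$ would witness $\abs{X}\leq^\ast\abs{S\times\eta'}$ for some $\eta'<\eta$, contradicting the choice of $\eta$. Hence $\eta=\sup_s\alpha_s\leq\aleph(X)$. For $\eta^+\leq\aleph^\ast(X)$, the uniqueness clause of the definition gives a well-defined map $X\to\eta$ sending $x$ to its unique $\alpha_x$; given any $\alpha<\eta$, the sup condition yields $s$ with $\alpha<\alpha_s$, and then $x\defeq f(s,\alpha)$ satisfies $\alpha_x=\alpha$, so the map surjects onto $\eta$.

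For the ``furthermore'', suppose $\aleph(X)=\eta$. Then $\alpha_s<\eta$ for every $s$, yet $\sup_s\alpha_s=\eta$, so the image $A=\Set{\alpha_s\mid s\in S}\subseteq\eta$ is a well-orderable cofinal subset of $\eta$. Any cofinal subset of $\eta$ must have cardinality at least $\cf(\eta)$, so $A$ well-orderably surjects onto $\cf(\eta)$; composing with the surjection $s\mapsto\alpha_s$ from $S$ onto $A$ yields $\abs{\cf(\eta)}\leq^\ast\abs{S}$, that is $\cf(\eta)<\aleph^\ast(S)$.

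The only subtle step is the minimality argument pinning down $\sup_s\alpha_s=\eta$; once that is in hand, everything else is a short structural deduction from the skew-split surjection together with Peng's productivity theorem.
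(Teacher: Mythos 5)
Your proposal is correct and follows essentially the same route as the paper's proof: both extract the arrows $\eta\leq\aleph(X)$ and $\eta^+\leq\aleph^\ast(X)$ from the skew-split surjection via $\sup_s\alpha_s=\eta$ and the map $x\mapsto\alpha_x$, use Peng's \cref{thm:lindenbaum-wo-productivity} for the upper bound, and derive the furthermore clause by noting that $\{\alpha_s\mid s\in S\}$ is a cofinal subset of $\eta$ of order type less than $\aleph^\ast(S)$. No gaps.
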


\begin{proof}
Let \(f\colon S\times\eta\to X\) be skew-split. For each \(s\in S\), let \(\alpha_s\leq\eta\) be such that \(\dom(f)=\Set{\tup{s,\beta}\mid s\in S,\beta<\alpha_s}\) and, for each \(x\in X\), let \(\alpha_x\) be the unique ordinal such that, for some \(s\in S\), \(f(s,\alpha_x)=x\). Since \(\eta\) is minimal, we must have that \(\sup\Set{\alpha_s\mid s\in S}=\eta\) and so the function \(X\to\eta\) given by \(x\mapsto\alpha_x\) is a well-defined surjection. Thus, \(\aleph^\ast(X)\geq\eta^+\). Furthermore, for all \(s\in S\), \(f\res\Set{s}\times\alpha_s\) is an injection \(\alpha_s\to X\), so \(\aleph(X)\geq\eta\). Finally, we have that \(\abs{X}\leq^\ast\abs{S\times\eta}\), so \(\aleph^\ast(X)\leq\aleph^\ast(S\times\eta)=\aleph^\ast(S)\times\eta^+\) by \cref{thm:lindenbaum-wo-productivity}.

In the case that \(\aleph^\ast(X)>\aleph^\ast(S)\) we must have \(\aleph^\ast(S)\times\eta^+>\aleph^\ast(S)\), so \(\eta^+>\aleph^\ast(S)\). Therefore, we deduce
\begin{equation*}
\begin{tikzcd}
\eta^+\ar[r]&\aleph^\ast(X)\ar[r]&\eta^+\\
\eta\ar[r]\ar[u]&\aleph(X)\ar[u]
\end{tikzcd}
\end{equation*}
and the result follows.

Finally, if \(\aleph(X)=\eta\) then \(\alpha_s<\eta\) for all \(s\in S\). Hence, \(\eta=\sup\Set{\alpha_s\mid s\in S}\). Let \(\tup{\beta_\gamma\mid\gamma<\tau}\) be the unique increasing enumeration of \(\Set{\alpha_s\mid s\in S}\). Then \(\tau<\aleph^\ast(S)\) and \(\eta=\sup\Set{\beta_\gamma\mid\gamma<\tau}\), so \(\cf(\eta)\leq\tau<\aleph^\ast(S)\) as required.
\end{proof}

\begin{cor}\label{cor:svc-bounds-on-eccentricity}
Assume \(\SVC(S)\). Let \(X\) and \(\lambda\) be such that \(\aleph(X)\leq\lambda<\aleph^\ast(X)\). Either:
\begin{enumerate}
\item \(\aleph^\ast(X)\leq\aleph^\ast(S)\); or
\item \(\lambda=\aleph(X)<\aleph^\ast(X)=\lambda^+\) and \(\cf(\lambda)<\aleph^\ast(S)\).
\end{enumerate}
\end{cor}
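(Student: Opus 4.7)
The plan is to deduce this corollary directly from \cref{thm:product-restriction}, using $\SVC(S)$ to produce the least $\eta$ such that $\abs{X}\leq^\ast\abs{S\times\eta}$ and then reading off cases from the inequalities the theorem supplies. First I would note that any such minimal $\eta$ is automatically a cardinal: if $\abs{\alpha}=\abs{\eta}$ with $\alpha<\eta$, a bijection on the second coordinate produces a surjection $S\times\alpha\to S\times\eta\twoheadrightarrow X$, contradicting minimality. So once $\eta$ is fixed, \cref{thm:product-restriction} gives simultaneously $\eta\leq\aleph(X)$, $\eta^+\leq\aleph^\ast(X)\leq\aleph^\ast(S)\times\eta^+$, and the conditional statement that $\aleph(X)=\eta$ forces $\cf(\eta)<\aleph^\ast(S)$.

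Next, I would split into the two cases exactly as in the statement. If $\aleph^\ast(X)\leq\aleph^\ast(S)$, conclusion~(1) holds and there is nothing more to prove. Otherwise the ``in particular'' clause of \cref{thm:product-restriction} activates: $\aleph^\ast(X)=\eta^+$ and $\eta\leq\aleph(X)\leq\eta^+$. Combining this with the hypothesis $\aleph(X)\leq\lambda<\aleph^\ast(X)=\eta^+$, and using that $\aleph(X)$ is a cardinal bounded strictly below $\eta^+$, we obtain $\aleph(X)\leq\eta$ and hence $\aleph(X)=\eta$. The same pincer applied to $\lambda$ (a cardinal with $\eta\leq\lambda<\eta^+$) gives $\lambda=\eta=\aleph(X)$, whence $\aleph^\ast(X)=\eta^+=\lambda^+$. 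Finally, the conditional clause of \cref{thm:product-restriction} yields $\cf(\lambda)=\cf(\eta)<\aleph^\ast(S)$, establishing conclusion~(2).

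There is no real obstacle here: the corollary is essentially a bookkeeping exercise on \cref{thm:product-restriction}. The only subtlety is to remember that between a cardinal $\eta$ and its successor $\eta^+$ there are no other cardinals, which is what collapses the inequalities $\eta\leq\aleph(X)\leq\eta^+$ and $\eta\leq\lambda<\eta^+$ to the identifications $\aleph(X)=\lambda=\eta$.
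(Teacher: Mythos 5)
Your proof is correct and follows exactly the intended route: the paper states this as an immediate corollary of \cref{thm:product-restriction}, obtained by taking the least \(\eta\) with \(\abs{X}\leq^\ast\abs{S\times\eta}\) (which exists by \(\SVC(S)\)) and collapsing the inequalities \(\eta\leq\aleph(X)\leq\lambda<\aleph^\ast(X)=\eta^+\) in the case \(\aleph^\ast(X)>\aleph^\ast(S)\). Your observation that \(\eta\) is a cardinal, and the use of the theorem's final clause to get \(\cf(\lambda)<\aleph^\ast(S)\), are exactly the bookkeeping the paper leaves implicit.
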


\begin{prop}\label{prop:svc-singular-successor}
Assume \(\SVC(S)\). For all cardinals \(\lambda\), if \(\lambda^+\) is singular then \(\lambda^+<\aleph^\ast(S)\). In particular, if \(\aleph^\ast(S)\) is a successor then it is regular.
\end{prop}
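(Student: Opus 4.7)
The plan is to prove the contrapositive: assuming \(\aleph^\ast(S) \leq \lambda^+\), I shall show that \(\lambda^+\) is regular. Suppose otherwise, fixing \(\kappa = \cf(\lambda^+) < \lambda^+\), and aim to derive a contradiction from the interplay between the skew-split decomposition of \(\lambda^+\) afforded by \(\SVC(S)\) and the short cofinal sequence in \(\lambda^+\).

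First, I apply \(\SVC(S)\) to \(X = \lambda^+\) to obtain a skew-split surjection \(f \colon S \times \eta \to \lambda^+\) with \(\eta\) minimal. \cref{thm:product-restriction} gives the bound \(\lambda^{++} = \aleph^\ast(\lambda^+) \leq \aleph^\ast(S) \times \abs{\eta}^+\); together with \(\aleph^\ast(S) \leq \lambda^+\) this forces \(\abs{\eta} \geq \lambda^+\). Complementarily, the skew-split structure assigns each \(x \in \lambda^+\) a unique \(\alpha_x < \eta\), and the minimality of \(\eta\) (which ensures \(\sup_{s} \alpha_s = \eta\)) makes \(x \mapsto \alpha_x\) a surjection \(\lambda^+ \to \eta\), yielding \(\abs{\eta} \leq \lambda^+\). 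These two bounds pin down \(\abs{\eta} = \lambda^+\).

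Now I exploit singularity. Choose a cofinal sequence \(\tup{\xi_\gamma \mid \gamma < \kappa}\) in \(\lambda^+\) and set \(A_\gamma = \Set{\alpha_x \mid x < \xi_\gamma}\), which has \(\abs{A_\gamma} \leq \abs{\xi_\gamma} \leq \lambda\) as the image of an initial segment of size at most \(\lambda\) under \(x \mapsto \alpha_x\). Cofinality of \(\tup{\xi_\gamma}\) together with surjectivity gives \(\bigcup_{\gamma < \kappa} A_\gamma = \eta\), so \(\abs{\eta} \leq \kappa \times \lambda = \lambda\) (using \(\kappa \leq \lambda\), which follows from \(\kappa < \lambda^+\)). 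This contradicts \(\abs{\eta} = \lambda^+\) and completes the proof.

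The only delicate point is establishing the two-sided control \(\abs{\eta} = \lambda^+\): the upper bound is immediate from the well-orderability of \(\lambda^+\), while the lower bound draws on Peng's well-ordered productivity of Lindenbaum numbers, already packaged into \cref{thm:product-restriction}. Once \(\abs{\eta} = \lambda^+\) is in hand, the rest is a straightforward covering-by-small-sets argument against the singularity of \(\lambda^+\).
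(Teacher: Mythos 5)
There is a genuine gap at the final covering step; everything before it is sound. The skew-split surjection exists, \cref{thm:product-restriction} does give \(\lambda^{++}=\aleph^\ast(\lambda^+)\leq\aleph^\ast(S)\times\abs{\eta}^+\) and hence \(\abs{\eta}\geq\lambda^+\) under your contrapositive hypothesis, and \(x\mapsto\alpha_x\) is indeed a surjection \(\lambda^+\to\eta\). The problem is the inference from ``\(\eta=\bigcup_{\gamma<\kappa}A_\gamma\) with each \(\abs{A_\gamma}\leq\lambda\)'' to ``\(\abs{\eta}\leq\kappa\times\lambda\)''. That union bound is not a theorem of \(\ZF\): to realise it you must choose, for each \(\gamma<\kappa\), an injection \(A_\gamma\to\lambda\) (equivalently an injection \(\xi_\gamma\to\lambda\)), and nothing in your setup supplies these uniformly. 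What you get canonically is only the least-preimage injection \(A_\gamma\to\xi_\gamma\), and \(\xi_\gamma\) is merely an ordinal below \(\lambda^+\), so the union bound you can actually justify is \(\abs{\eta}\leq\lambda^+\), which you already had. The inference pattern itself fails in \(\ZF\): in the Feferman--Levy model \(\omega_1\) is singular of cofinality \(\omega\), so \(\omega_1=\bigcup_{n<\omega}\xi_n\) with each \(\xi_n\) a countable ordinal, yet \(\abs{\omega_1}\nleq\aleph_0\times\aleph_0\). Taking the fixed surjection to be the identity on \(\omega_1\) shows that the extra structure you have---images of initial segments under a single fixed map---does not rescue the bound.

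The missing ingredient is a second use of \(\SVC(S)\) to uniformise the small enumerations, and this is exactly how the paper proceeds: it applies \(\SVC(S)\) not to \(\lambda^+\) but to the set \(I\) of bijections \(\lambda\to\lambda_\alpha\) (for \(\tup{\lambda_\alpha\mid\alpha<\mu}\) cofinal in \(\lambda^+\)), obtains a surjection \(g\colon S\times\eta\to I\), and uses a least-index selection over the \(\eta\)-coordinate to assemble a genuine surjection \(S\times\lambda\times\mu\to\lambda^+\); Peng's well-ordered productivity (\cref{thm:lindenbaum-wo-productivity}) then gives \(\aleph^\ast(S)\times\lambda^+=\aleph^\ast(S\times\lambda\times\mu)\geq\lambda^{++}\), whence \(\aleph^\ast(S)>\lambda^+\) directly. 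Your argument could be repaired along the same lines (surject \(S\times\eta'\) onto the set of injections of the \(\xi_\gamma\) into \(\lambda\) and select canonically), but as written the step \(\abs{\eta}\leq\kappa\times\lambda\) is invalid and the contradiction is not reached.
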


\begin{proof}
Assume that \(\lambda^+\) is singular, with cofinal sequence \(\tup{\lambda_\alpha\mid\alpha<\mu}\), where \(\mu=\cf(\lambda^+)\) and \(\lambda_\alpha\geq\lambda\) for all \(\alpha<\mu\). Let \(I\) be the set of bijections \(\lambda\to\lambda_\alpha\) for some \(\alpha<\mu\) and let \(g\colon S\times\eta\to I\) be a surjection for some \(\eta\). We define
\begin{align*}
f\colon S\times\lambda\times\mu&\to\lambda^+\\
\tup{s,\beta,\alpha}&\mapsto\begin{cases}
g(s,\gamma)(\beta)&\text{$\gamma$ least such that $g(s,\gamma)\colon\lambda\to\lambda_\alpha$, if defined}\\
0&\text{otherwise.}
\end{cases}
\end{align*}
\(f\) is a surjection, so \(\aleph^\ast(S\times\lambda\times\mu)=\aleph^\ast(S)\times\lambda^+>\lambda^+\) and, thus, \(\lambda^+<\aleph^\ast(S)\) as required.
\end{proof}

\begin{cor}\label{cor:reg-suc}
Assume \(\SVC(S)\). For all \(X\), if \(\aleph^\ast(X)>\aleph^\ast(S)\) then \(\aleph^\ast(X)\) is a regular successor.
\end{cor}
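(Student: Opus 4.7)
The plan is to combine \cref{thm:product-restriction} with \cref{prop:svc-singular-successor}, both of which have just been established. Assume \(\SVC(S)\) and \(\aleph^\ast(X) > \aleph^\ast(S)\), and let \(\eta\) be least such that \(\abs{X} \leq^\ast \abs{S \times \eta}\) (which exists by \(\SVC(S)\)).

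First I would show that \(\aleph^\ast(X)\) is a successor. By \cref{thm:product-restriction}, the minimality of \(\eta\) gives
\begin{equation*}
\eta^+ \leq \aleph^\ast(X) \leq \aleph^\ast(S) \times \eta^+.
\end{equation*}
Since we are assuming \(\aleph^\ast(X) > \aleph^\ast(S)\), the theorem's ``in particular'' clause applies and yields \(\aleph^\ast(X) = \eta^+\). Thus \(\aleph^\ast(X)\) is indeed a successor cardinal.

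Next I would establish regularity by contradiction. Suppose \(\aleph^\ast(X) = \eta^+\) were singular. Then \cref{prop:svc-singular-successor} (applied with \(\lambda = \eta\)) tells us that \(\eta^+ < \aleph^\ast(S)\), i.e.\ \(\aleph^\ast(X) < \aleph^\ast(S)\), which directly contradicts the hypothesis \(\aleph^\ast(X) > \aleph^\ast(S)\). Hence \(\aleph^\ast(X)\) is a regular successor, as required.

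There is no real obstacle here: the corollary is a clean packaging of the two preceding results, and the only thing to verify is that the hypothesis \(\aleph^\ast(X) > \aleph^\ast(S)\) is strong enough to trigger the ``successor'' conclusion of \cref{thm:product-restriction} in the first place (which it is, by the chain of inequalities above collapsing to equality at \(\eta^+\)).
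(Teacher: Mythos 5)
Your proposal is correct and follows essentially the same route as the paper: apply \cref{thm:product-restriction} to get \(\aleph^\ast(X)=\eta^+\) (hence a successor), then invoke \cref{prop:svc-singular-successor} to rule out singularity, since that would force \(\eta^+<\aleph^\ast(S)\), contradicting the hypothesis. The only difference is cosmetic — you phrase the second step as an explicit contradiction where the paper states it contrapositively.
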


\begin{proof}
If \(\aleph^\ast(X)>\aleph^\ast(S)\) then from \cref{thm:product-restriction} we get that \(\aleph^\ast(X)=\eta^+\), where \(\eta\) is least such that \(\abs{X}\leq^\ast\abs{S\times\eta}\), so certainly \(\aleph^\ast(X)\) is a successor. Given that \(\eta^+=\aleph^\ast(X)>\aleph^\ast(S)\), \(\eta^+\) must be regular by \cref{prop:svc-singular-successor}.
\end{proof}

\begin{cor}\label{cor:svc-seed-lindenbaum-regular-iff-successor}
Assume \(\SVC(S)\). Then \(\aleph^\ast(S)\) is regular if and only if it is a successor.
\end{cor}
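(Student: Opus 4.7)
The plan is to prove the two directions of the biconditional separately. The \emph{if} direction --- if $\aleph^\ast(S)$ is a successor, then it is regular --- is precisely the second clause of Proposition~\ref{prop:svc-singular-successor}.

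For the \emph{only if} direction, I argue by contrapositive: assume $\kappa = \aleph^\ast(S)$ is a limit cardinal, and show that $\kappa$ must be singular. Apply Theorem~\ref{thm:product-restriction} to $X = \kappa$. Since $\aleph^\ast(\kappa) = \kappa^+ > \kappa = \aleph^\ast(S)$, letting $\eta$ be the least ordinal such that $\abs{\kappa} \leq^\ast \abs{S \times \eta}$, the theorem yields $\aleph^\ast(\kappa) = \eta^+$ and hence $\eta = \kappa$. Fix a skew-split surjection $f \colon S \times \kappa \to \kappa$ witnessing this, with parameters $(\alpha_s)_{s \in S}$; by minimality of $\eta$ we have $\sup_{s \in S} \alpha_s = \kappa$. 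The assignment $s \mapsto \alpha_s$ surjects $S$ onto $A = \Set{\alpha_s \mid s \in S} \subseteq \kappa + 1$, so $\aleph^\ast(A) \leq \aleph^\ast(S) = \kappa$. Because $A$ is well-orderable, $\abs{A} < \kappa$ and so $\ot(A) < \kappa$.

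Split into two cases. If no $\alpha_s$ equals $\kappa$, then $A \subseteq \kappa$ is a cofinal subset of order type strictly less than $\kappa$, witnessing $\cf(\kappa) \leq \ot(A) < \kappa$; this is the desired singularity. If some $\alpha_{s_0} = \kappa$, let $T = \Set{s \in S \mid \alpha_s = \kappa}$ and $\gamma = \sup\Set{\alpha_s \mid s \in S \setminus T}$. If $\gamma = \kappa$, the same cofinality argument applied to $\Set{\alpha_s \mid s \in S \setminus T}$ still yields singularity. Otherwise $\gamma < \kappa$, and one argues that the residual $\kappa \setminus \bigcup_{s \in T}f``(\Set{s}\times\kappa)$ is surjected onto by $(S \setminus T) \times \gamma$; Peng's well-ordered productivity (Theorem~\ref{thm:lindenbaum-wo-productivity}) then gives that this residual has cardinality strictly less than $\kappa$. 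Combined with the bijections $f \upharpoonright \Set{s} \times \kappa \to f``(\Set{s}\times\kappa)$ for $s \in T$, this structural information suffices to recover a surjection $S \to \kappa$, contradicting $\aleph^\ast(S) = \kappa$.

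The main obstacle is the subcase where $\gamma < \kappa$ and some $\alpha_{s_0} = \kappa$: the short cofinal sequence in $\{\alpha_s\}$ is no longer directly available, and the argument must instead thread together the full-length columns indexed by $T$ and use the smallness of the residual to produce the surjection $S \to \kappa$. I expect this to be the most delicate part, requiring careful bookkeeping of how the elements of $T$ and the values $(f(s, \alpha))_{s \in T, \alpha < \kappa}$ combine under the skew-split axioms, and likely reducing via a refinement of the skew-split to the clean first case.
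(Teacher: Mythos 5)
Your \emph{if} direction is exactly the paper's: it is the second clause of \cref{prop:svc-singular-successor}. The problem is the \emph{only if} direction, where the unresolved final case is not a matter of ``delicate bookkeeping'' but a genuine dead end. The root cause is that \(X=\kappa\) is the wrong set to feed into the skew-split machinery. Since \(\aleph(\kappa)=\kappa^+>\eta=\kappa\), nothing prevents a column from being full, and indeed the map \(f(s_0,\beta)=\beta\) for a single fixed \(s_0\in S\) (with \(\alpha_s=0\) for every other \(s\)) is already a legitimate skew-split surjection \(S\times\kappa\to\kappa\): it satisfies all four clauses, including minimality of \(\eta=\kappa\), because \(\aleph^\ast(S\times\eta')=\aleph^\ast(S)\times(\eta')^+=\kappa\) for every \(\eta'<\kappa\). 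This surjection exists whatever \(S\) is and however large or regular \(\kappa\) is; it carries no information about \(S\) beyond the one point \(s_0\), so there is no hope of ``recovering a surjection \(S\to\kappa\)'' from it, nor of concluding that \(\kappa\) is singular. Your Case 2 with \(\gamma<\kappa\) is therefore not completable from the data you have fixed: the useful conclusion of \cref{thm:product-restriction} (the cofinality bound \(\cf(\eta)<\aleph^\ast(S)\)) is only available when \(\aleph(X)=\eta\), i.e.\ when no column can be full, and \(X=\kappa\) fails this by design.

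The paper's route is different: it manufactures an auxiliary \emph{eccentric} set and feeds it to \cref{cor:svc-bounds-on-eccentricity}. Assuming \(\lambda=\aleph^\ast(S)\) is a regular limit, it invokes \cref{thm:going-up-regular} to obtain \(X\) with \(\aleph(X)=\lambda\) and \(\aleph^\ast(X)=\lambda^+\), whereupon \cref{cor:svc-bounds-on-eccentricity} forces \(\cf(\lambda)<\aleph^\ast(S)=\lambda\), a contradiction. (Be aware that this step also needs care: \cref{thm:going-up-regular} requires the \emph{strict} inequality \(\cf(\lambda)<\aleph^\ast(W)\) for its input set \(W\), which fails for \(W=S\) when \(\cf(\lambda)=\lambda=\aleph^\ast(S)\); one must first argue that a suitable eccentric input exists. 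This is the same strictness issue that defeats your approach.) If you wish to salvage your argument, the skew-split analysis has to be applied to a set \(X\) with \(\aleph(X)=\eta\) --- one that cannot absorb a full column --- which is precisely what the lifting constructions of \cref{s:hartlin;ss:going-up} are built to produce.
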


\begin{proof}
By \cref{prop:svc-singular-successor} if \(\aleph^\ast(S)\) is a successor then it is regular. Suppose that \(\aleph(S) \leq \aleph^\ast(S) = \lambda\) is a regular limit. By \cref{thm:going-up-regular} there is \(X\) such that \(\aleph(X) = \lambda\) and \(\aleph^\ast(X) = \lambda^+\). However, by \cref{cor:svc-bounds-on-eccentricity} \(\cf(\aleph(X)) = \lambda < \aleph^\ast(S) = \lambda\), a contradiction. Hence if \(\aleph^\ast(S)\) is a limit then it is singular.
\end{proof}

While the fact that \(\aleph^\ast(X)\) is a successor was is already known for large \(\aleph^\ast(X)\) \cite[Lemma~4.14]{ryan-smith_acwo_2024}, the fact that \(\aleph^\ast(X)\) is necessarily regular is new. \cref{cor:svc-seed-lindenbaum-regular-iff-successor} gives us exactly two possibilities for \(\aleph^\ast(S)\): either it is a regular successor or it is a singular limit. We know that the first class is possible, witnessed by Cohen's first model that satisfies \(\SVC^+([A]^\lomega)\) and \(\aleph^\ast([A]^\lomega) = \aleph_1\), but we are unaware of any models of the second class.

\begin{qn}\label{qn:is-aleph-ast-s-regular}
Does \(\SVC(S)\) imply that \(\aleph^\ast(S)\) is regular?
\end{qn}

Combining \cref{cor:svc-bounds-on-eccentricity} with \cref{thm:going-up-regular,thm:going-up-singular-or-limit}, we obtain \cref{thm:svc-spectrum}.

\begin{thm}\label{thm:svc-spectrum}
Let \(M\models\SVC(S)\). Then there is a set \(C\) of regular cardinals less than \(\aleph^\ast(S)\) and a cardinal \(\Omega\) such that:
\begin{enumerate}
\item For all \(X\), if \(\aleph(X)<\aleph^\ast(X)\) then \(\cf(\aleph(X))\in C\);
\item for all \(X\), if \(\aleph(X)^+<\aleph^\ast(X)\) then \(\aleph^\ast(X)\leq\aleph^\ast(S)\); and
\item for all \(\lambda\geq\Omega\), if \(\cf(\lambda)\in C\) then there is \(X\) such that \(\aleph(X)=\lambda\) and \(\aleph^\ast(X)=\lambda^+\).
\end{enumerate}
\end{thm}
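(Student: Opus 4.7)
The plan is to define $C$ and $\Omega$ explicitly in terms of witnesses of eccentricity, then to verify items (1)--(3) using the tools already developed. The natural choice is
\[
C = \{\cf(\aleph(X)) : X \in M,\ \aleph(X) < \aleph^\ast(X)\},
\]
and I would assemble $\Omega$ as a supremum of thresholds, one for each $\mu \in C$.

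First I would check that $C$ is a set of regular cardinals strictly below $\aleph^\ast(S)$. Cofinalities are regular, and if $\aleph(X) < \aleph^\ast(X)$ then \cref{cor:svc-bounds-on-eccentricity} applied with $\lambda = \aleph(X)$ gives either $\aleph^\ast(X) \leq \aleph^\ast(S)$ (whence $\cf(\aleph(X)) \leq \aleph(X) < \aleph^\ast(S)$) or $\cf(\aleph(X)) < \aleph^\ast(S)$ directly. Hence $C \subseteq \aleph^\ast(S)$, and item~(1) is immediate from the definition of $C$. For item~(2), if $\aleph(X)^+ < \aleph^\ast(X)$ then clause~(2) of \cref{cor:svc-bounds-on-eccentricity} (which would force $\aleph^\ast(X) = \aleph(X)^+$) is ruled out, so clause~(1) applies, giving $\aleph^\ast(X) \leq \aleph^\ast(S)$.

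The real content lies in item~(3). For each $\mu \in C$ I fix a witness $X_\mu$ with $\cf(\aleph(X_\mu)) = \mu$ and $\aleph(X_\mu) < \aleph^\ast(X_\mu)$, and split on whether $\aleph(X_\mu) = \mu$ is regular or $\aleph(X_\mu) > \mu$ is singular. In the regular case, \cref{thm:going-up-regular} applied to $X_\mu$ produces, for any cardinal $\lambda \geq \aleph^\ast(X_\mu)$ with $\cf(\lambda) = \mu$, a set $Y$ with $\aleph(Y) = \lambda$ and $\aleph^\ast(Y) = \aleph^\ast(X_\mu) \times \lambda^+ = \lambda^+$. In the singular case I instead apply \cref{thm:going-up-singular-or-limit}, which gives the analogous output once $\lambda$ exceeds $\sup\{\aleph(\alpha^\alpha) : \alpha < \aleph(X_\mu)\}$ and the Lindenbaum factor becomes $\aleph^\ast(\Inj{{<}\aleph(X_\mu)}{X_\mu})$. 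Accordingly I set
\[
\Omega_\mu = \max\bigl\{\aleph^\ast\bigl(\Inj{{<}\aleph(X_\mu)}{X_\mu}\bigr),\ \sup\{\aleph(\alpha^\alpha) : \alpha < \aleph(X_\mu)\}\bigr\}
\]
in the singular case and $\Omega_\mu = \aleph^\ast(X_\mu)$ in the regular case, then take $\Omega$ to be any cardinal at least $\sup\{\Omega_\mu : \mu \in C\}$, which is legitimate because $C$ is a set.

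I do not anticipate a substantive obstacle; the scheme is essentially bookkeeping once the going-up theorems are in hand. The one point requiring care is that $\lambda^+$ absorb the Lindenbaum factor in each output: for a well-ordered cardinal $\kappa$ one has $\kappa \times \lambda^+ = \max\{\kappa,\lambda^+\}$, so the choice $\lambda \geq \Omega_\mu$ makes $\lambda^+$ strictly larger than the relevant Lindenbaum number and collapses the product. Beyond that one needs only to match $\cf(\lambda) = \mu$ with the cofinality hypotheses of \cref{thm:going-up-regular,thm:going-up-singular-or-limit}, which is built into the definition of $C$.
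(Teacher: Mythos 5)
Your proposal is correct and matches the paper's intended argument exactly: the paper supplies no separate proof, saying only that the theorem follows by combining \cref{cor:svc-bounds-on-eccentricity} with \cref{thm:going-up-regular,thm:going-up-singular-or-limit}, which is precisely the bookkeeping you carry out (including the observation that $\kappa\times\lambda^+$ collapses to $\lambda^+$ once $\lambda$ clears the relevant threshold). The one point to tidy in \(\ZF\) is that simultaneously \emph{fixing} a witness \(X_\mu\) for every \(\mu\in C\) is a choice over a well-ordered family of nonempty classes; this is avoided by instead defining \(\Omega_\mu\) as the least \(\theta\) such that every \(\lambda\geq\theta\) with \(\cf(\lambda)=\mu\) is realised (a nonempty class of \(\theta\) by your argument applied to any single witness), since item~(3) only ever requires one witness at a time.
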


\section{The future}\label{s:future}

Let us re-iterate the open questions posed throughout this text. In some cases, additional exposition or context from the referenced question have been removed.

\begin{enumerate}
\item Is every \(\kappa\)-descending distributive forcing also strongly \(\kappa\)-descending distributive? (\cref{qn:generous-is-strong})

\item If \(\bbP\) and \(\bbQ\) are both \(\kappa\)-descending distributive, must \(\bbP \times \bbQ\) also be? (\cref{qn:product-of-generous})

\item Let \(\kappa\) be weakly inaccessible. Does `for all regular, non-zero \(\alpha<\kappa\), \(\lnot \AL_\alpha\)' imply \(\lnot \AL_\kappa\)? If not, what is the weakest large cardinal assumption on \(\kappa\) required to make this true? (\cref{qn:do-we-need-weirdness,qn:large-cardinal-continuity})

\item Does \(\SVC(S) \land \AL_\alpha(S)\) imply \(\AL_\alpha\)? (\cref{qn:improve-lrc})

\item Do \(\kappa\)-descending distributive symmetric systems preserve \(\EDC_\kappa\)? (\cref{qn:generous-adc})

\item Does \(\SVC(S)\) imply that \(\aleph^\ast(S)\) is regular? (\cref{qn:is-aleph-ast-s-regular})
\end{enumerate}

\subsection*{Acknowledgements}

The author would like to thank Asaf Karagila for many helpful discussions regarding descending distributivity and extendable choice. The author would also like to thank the set theorists of Leeds for their input on the name `descending distributive'.

\renewcommand*{\bibfont}{\normalfont\small}
\printbibliography

\end{document}